\newcommand{\myfatslash}{\mathbin{\mkern-6mu\fatslash}}
\let\@wraptoccontribs\wraptoccontribs
\UseAllTwocells \xyoption{frame} \CompileMatrices
\newtheorem{prop}{Proposition}[section]
\newtheorem{lem}[prop]{Lemma}
\newtheorem{cor}[prop]{Corollary}
\newtheorem{thm}[prop]{Theorem}
\newtheorem{rmk}[prop]{Remark}
\newtheorem{defn}[prop]{Definition}
\newtheorem{con}[prop]{Conjecture}
\newcommand{\noprint}[1]{}
\newcommand{\Ext}{\mbox{Ext}}
\newcommand{\Hom}{\mbox{Hom}}
\newcommand{\tw}{\mbox{\tiny tw}}
\newcommand{\eF}{\mathop{\sf F}\nolimits}
\newcommand{\Z}{\mathop{\sf Z}\nolimits}
\newcommand{\N}{\mathcal{N}}
\newcommand{\XX}{{\mathfrak X}}
\renewcommand{\SS}{{\mathfrak S}}
\newcommand{\YY}{{\mathfrak Y}}
\newcommand{\zz}{{\mathbb Z}}
\newcommand{\hh}{{\mathbb H}}
\newcommand{\aaa}{{\mathbb A}}
\renewcommand{\ll}{{\mathbb L}}
\newcommand{\qq}{{\mathbb Q}}
\newcommand{\pp}{{\mathbb P}}
\newcommand{\cc}{{\mathbb C}}
\newcommand{\rr}{{\mathbb R}}
\newcommand{\sT}{{\mathcal T}}
\newcommand{\sD}{{\mathcal D}}
\newcommand{\sV}{{\mathcal V}}
\newcommand{\sK}{{\mathcal K}}
\newcommand{\sC}{{\mathcal C}}
\newcommand{\sE}{{\mathcal E}}
\newcommand{\sI}{{\mathcal I}}
\newcommand{\sL}{{\mathcal L}}
\newcommand{\sS}{{\mathcal S}}
\newcommand{\sP}{{\mathcal P}}
\newcommand{\sU}{{\mathcal U}}
\newcommand{\sO}{{\mathcal O}}
\newcommand{\sW}{{\mathcal W}}
\newcommand{\sY}{{\mathcal Y}}
\newcommand{\sM}{{\mathcal M}}
\newcommand{\hH}{{\mathcal H}}
\newcommand{\sZ}{{\mathcal Z}}
\newcommand{\sF}{{\mathcal F}}
\newcommand{\sA}{{\mathcal A}}
\newcommand{\Coh}{\mbox{Coh}}
\newcommand{\rE}{\mathscr{E}}
\newcommand{\sB}{\mathscr{B}}
\DeclareMathOperator{\Var}{Var}
\DeclareMathOperator{\id}{id}
\DeclareMathOperator{\St}{St}
\DeclareMathOperator{\Sch}{Sch}
\DeclareMathOperator{\DT}{DT}
\DeclareMathOperator{\Hilb}{Hilb}
\DeclareMathOperator{\Aut}{Aut}
\DeclareMathOperator{\At}{At}
\DeclareMathOperator{\Cone}{Cone}
\DeclareMathOperator{\vir}{vir}
\DeclareMathOperator{\Pic}{Pic}
\DeclareMathOperator{\Ch}{Ch}
\DeclareMathOperator{\CR}{CR}
\DeclareMathOperator{\even}{even}
\DeclareMathOperator{\Br}{Br}
\DeclareMathOperator{\per}{per}
\DeclareMathOperator{\Td}{Td}
\DeclareMathOperator{\NS}{NS}
\DeclareMathOperator{\SU}{SU}
\DeclareMathOperator{\PGL}{PGL}
\DeclareMathOperator{\coker}{coker}
\DeclareMathOperator{\sr}{sr}
\DeclareMathOperator{\rel}{rel}
\DeclareMathOperator{\ex}{ex}
\DeclareMathOperator{\td}{td}
\DeclareMathOperator{\Stab}{Stab}
\DeclareMathOperator{\PT}{PT}
\DeclareMathOperator{\GW}{GW}
\DeclareMathOperator{\na}{na}
\DeclareMathOperator{\Hodge}{Hodge}
\DeclareMathOperator{\free}{free}
\newcommand{\rk}{\mathop{\rm rk}}
\newcommand{\cl}{\mathop{\rm cl}\nolimits}
\newcommand{\tr}{\mathop{\rm tr}\nolimits}
\renewcommand{\Re}{\mathop{\rm Re}}
\renewcommand{\Im}{\mathop{\rm Im}}
\newcommand{\red}{\mathop{\rm red}\nolimits}
\newcommand{\supp}{\mathop{\rm supp}}
\newcommand{\spec}{\mathop{\rm Spec}\nolimits}
\newcommand{\tor}{\mathop{\rm tor}\nolimits}
\numberwithin{equation}{subsection}
\newcommand {\mat}      [1] {\left(\begin{array}{#1}}
\newcommand {\rix}          {\end{array}\right)}
\title[Multiple cover formula for local K3 gerbes]{On multiple cover formula for local  K3 gerbes}
\author{Yunfeng Jiang}
\address{Department of Mathematics\\ University of Kansas\\ 405 Snow Hall 1460 Jayhawk Blvd\\Lawrence KS 66045 USA} 
\email{y.jiang@ku.edu}
\author{Hsian-Hua Tseng}
\address{Department of Mathematics\\ Ohio State University\\ 100 Math Tower 231 West 18th
Ave.\\ Columbus OH 43210 USA}
\email{hhtseng@math.ohio-state.edu}
\begin{document}
\sloppy \maketitle
\begin{abstract}
We generalize the multiple cover formula of Y. Toda  (proved by Maulik-Thomas)  for  counting invariants for semistable coherent sheaves on local K3 surfaces to semistable twisted sheaves over twisted local K3 surfaces.   The formula has an application  to prove any rank S-duality conjecture for K3 surfaces. 
\end{abstract}

\maketitle

\tableofcontents

\section{Introduction}

Let $S$ be a smooth projective K3 surface, and $X=S\times \cc$ the local K3 surface. 
Toda's multiple 
cover formula for the counting invariants for semistable coherent sheaves on local K3 surface $X$ is a powerful tool for calculating the Donaldson-Thomas  invariants,  see \cite[Corollary 6.8, 6.10]{MT}, \cite[Conjecture 1.3]{Toda_JDG}.   The formula was proved in \cite{MT} and used by Tanaka-Thomas in \cite{TT2} to calculate any rank Vafa-Witten invariants for K3 surfaces. 
In this paper we generalize and prove a multiple cover formula for the counting  invariants of semistable twisted sheaves on the local  optimal K3 gerbe 
$\XX:=\SS\times\cc$.  This twisted version of  multiple cover formula is used in \cite{Jiang-Tseng-K3} to calculate the $\SU(r)/\zz_r$-Vafa-Witten invariants  and prove the 
S-duality conjecture for K3 surfaces in any rank.  The twisted multiple cover formula is also useful for the calculation of Donaldson-Thomas invariants for local K3 gerbes.

Let $S$ be a smooth projective K3 surface.  
The isomorphism classes of $\mu_r$-gerbes on $S$ are classified by the \'etale cohomology group
$H^2(S,\mu_r)$. 
Let  $\SS\to S$ be a  $\mu_r$-gerbe on $S$ which is given by a class $[\SS]\in H^2(S,\mu_r)$.
Then $\SS$ determines a  class 
$\varphi([\SS])=\alpha\in H^2(S,\sO_S^*)_{\tor}$, where $\varphi$
is the map in the long  exact sequence 
$$\cdots\to H^1(S,\sO_S^*)\to H^2(S,\mu_r)\stackrel{\varphi}{\longrightarrow} H^2(S,\sO_S^*)\to \cdots$$
induced by the short exact sequence:
$$1\to \mu_r\to \sO_S^*\stackrel{(\cdot)^r}{\longrightarrow} \sO_S^*\to 1.$$
The cohomology $H^2(S,\sO_S^*)_{\tor}$ is, by definition, the cohomological Brauer group 
$\Br^\prime(S)$, and from de Jong's theorem \cite{de_Jong} the Brauer group 
$\Br(S)=\Br^\prime(S)$. 
A $\mu_r$-gerbe $\SS\to S$ is called {\em optimal} if the class $\varphi([\SS])=\alpha\in H^2(S,\sO_S^*)_{\tor}$ is nonzero and has order $r$. 
We call $(S,\alpha)$,  a K3 surface $S$ together with a Brauer class $\alpha\in \Br^\prime(S)$ a twisted K3 surface as in \cite{HS}. 
We also call the optimal gerbe $\SS\to S$ a twisted K3 surface since its class in  $\Br^\prime(S)$ is $\alpha$.  We always use these two notions. 
The twisted Mukai vectors were constructed in \cite{HMS} and \cite{Yoshioka2}. 
Let $\Coh(\SS)$ be the category of coherent sheaves on the gerbe $\SS$.  From \cite{TT-Adv}, the category has a decomposition 
$$\Coh(\SS)=\bigoplus_{0\leq i\leq r-1} \Coh(\SS)_i$$ 
where $\Coh(\SS)_i$ is the subcategory of coherent sheaves on $\SS$ with $\mu_r$-weight $i$.  The subcategory $\Coh(\SS)_1$ is the category of twisted sheaves and we use 
$\Coh(S,\alpha)$ or $\Coh^{\tw}(\SS)$ to represent this category on a twisted K3 surface. 

Let $\XX:=\SS\times\cc$ which is the local K3 gerbe.   It is an optimal $\mu_r$-gerbe over $X=S\times\cc$ and its class 
in $H^2(X,\mu_r)\cong H^2(S,\mu_r)$ is also given by $\alpha$. 
We let $\Coh(X,\alpha)$ or $\Coh^{\tw}(\XX)$ be the category of twisted sheaves on a local  twisted K3 surface $(X,\alpha)$. 
Let $H(\sA^{\tw}_{\XX})$ be the Hall algebra of the category $\sA_{\XX}^{\tw}=\Coh^{\tw}(\XX)$ as in \cite{Bridgeland10}, \cite{JS}. 
On our gerbes $\SS$ and $\XX$, we always fix a generating sheaf $\Xi=\bigoplus_{i=0}^{r-1}\xi^{i}$, where $\xi$ is a fixed  $\SS$ and $\XX$-twisted locally free sheaf.  The modified Gieseker stability is defined in \cite{Nironi}.
We use the geometric stability in \cite{Lieblich_Duke} or modified Gieseker stability for twisted sheaves and  the moduli stack was constructed in \cite{Lieblich_Duke} and \cite{Nironi}. 
Let $\Gamma_0:=\zz\oplus\NS(S)\oplus\qq$ measuring the Mukai vectors for  $S$. 
Then for each Mukai vector $v\in \Gamma_0$ there is a virtual indecomposable element 
$$
\epsilon_{\omega,\XX}(v):=
\sum_{\substack{\ell\geq 1, v_1+\cdots+v_{\ell}=v, v_i\in\Gamma_0\\
p_{\omega,v_i}=p_{\omega,v}(m)}}
\frac{(-1)^{\ell}-1}{\ell}\delta_{\omega,\XX}(v_1)\star\cdots\star \delta_{\omega,\XX}(v_{\ell})
$$
where $p_{\omega,v_i}(m)$ is the reduced modified Hilbert polynomial, and 
$$\delta_{\omega,\XX}(v):=[\sM_{\omega,\XX}(v)\hookrightarrow \widehat{\sM}(\XX) ]\in H(\sA^{\tw}_{\XX})$$
is an element in the Hall algebra. Here 
$\star$ is the Hall algebra product,  $\omega\in \NS(S)$ is an ample divisor,  and $\sM_{\omega,\XX}(v)\hookrightarrow \widehat{\sM}(\XX) $ is the moduli stack of semistable twisted sheaves 
with Mukai vector $v$ where $\widehat{\sM}(\XX)$ is the stack of coherent twisted sheaves on $\XX$. 
The Joyce invariant $J(v)$ is defined by the Poincar\'e polynomial of $\epsilon_{\omega,\XX}(v)$, see Definition \ref{defn_invariants_Jv_XX}.  
We prove a multiple cover formula for the invariant $J(v)$.

\begin{thm}\label{thm_twisted_multiple_cover_intro}
Let $\SS\to S$ be an optimal $\mu_r$-gerbe over a smooth projective K3 surface $S$.  Then for each Mukai vector $v\in \Gamma_0$, 
we have  a formula for the Joyce invariant: 
$$J(v)=\sum_{k\geq 1; k|v}\frac{1}{k^2}\chi(\Hilb^{\langle v/k,v/k\rangle+1}(S))$$
where $k|v$ means $k$ divides every component of $v$,  $\Hilb^{\langle v/k,v/k\rangle+1}(S)$ is the Hilbert scheme of $\langle v/k,v/k\rangle+1$ points on $S$, and $\langle v/k,v/k\rangle$ is the Mukai pairing. 
\end{thm}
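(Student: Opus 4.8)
The plan is to adapt the proof of Maulik--Thomas \cite{MT} of Toda's formula in the untwisted case, replacing ordinary coherent sheaves by twisted sheaves throughout and supplying the twisted analogues of the geometric inputs they use. Since $J(v)$ is defined (Definition \ref{defn_invariants_Jv_XX}) as the Poincar\'e-polynomial, i.e.\ Euler-characteristic, invariant attached to the virtual indecomposable $\epsilon_{\omega,\XX}(v)$, rather than as a Behrend-weighted count, the first reduction is to pass from the local gerbe $\XX=\SS\times\cc$ to the surface gerbe $\SS$. Every twisted sheaf contributing to $\delta_{\omega,\XX}(v)$ with $v\in\Gamma_0$ is compactly supported on the central fibre, so translation in the $\cc$-factor identifies the moduli stack $\sM_{\omega,\XX}(v)$ with the moduli stack $\sM_{\omega,\SS}(v)$ of semistable twisted sheaves on $\SS$. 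Hence every Hall-algebra element, and in particular the Poincar\'e polynomial of $\epsilon_{\omega,\XX}(v)$, is computed from moduli of twisted sheaves on the twisted K3 surface $(S,\alpha)$, and $J(v)$ becomes a topological invariant of that surface.

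Next I would settle the primitive case, which is the $k=1$ term. For $v$ primitive the notions of semistability and stability coincide, so $\epsilon_{\omega,\SS}(v)$ equals $\delta_{\omega,\SS}(v)$ and $\sM_{\omega,\SS}(v)$ is a $\Gm$-gerbe over a smooth projective holomorphic-symplectic variety $M_{\omega,\SS}(v)$. By Yoshioka's structure theory for moduli of stable twisted sheaves \cite{Yoshioka2}, $M_{\omega,\SS}(v)$ is nonempty and deformation equivalent to the Hilbert scheme $\Hilb^{\langle v,v\rangle+1}(S)$. The $(q-1)$-normalisation in Joyce's integration map removes the $\Gm$-gerbe, so the Poincar\'e-polynomial invariant of $\delta_{\omega,\SS}(v)$ is that of $M_{\omega,\SS}(v)$; specialising to Euler characteristics and invoking deformation invariance of $\chi$ gives $\chi(M_{\omega,\SS}(v))=\chi(\Hilb^{\langle v,v\rangle+1}(S))$, which is exactly the $k=1$ contribution.

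For divisible $v$ the moduli of semistable twisted sheaves is singular and genuinely strictly semistable objects appear, so one must use Joyce's wall-crossing formalism. Writing $v=m v_0$ with $v_0$ primitive, I would apply Joyce's no-poles theorem to $\epsilon_{\omega,\XX}(v)$ in $H(\sA^{\tw}_{\XX})$ and expand it, via the configuration and wall-crossing identities of \cite{JS}, into contributions indexed by the primitive classes $(m/k)v_0$. The universal coefficients in this expansion are determined by the \'etale-local structure of the moduli stack; since twisted sheaves on the gerbe $\SS$ are \'etale-locally ordinary coherent sheaves, these local models are isomorphic to the ones occurring in \cite{MT}, so the coefficients agree term-by-term with the untwisted computation. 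Combined with the primitive evaluation of the previous step, this produces precisely the sum $\sum_{k\mid v}k^{-2}\,\chi(\Hilb^{\langle v/k,v/k\rangle+1}(S))$, in which the factor $k^{-2}$ records the square of the stacky automorphisms of the $k$-fold covers.

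The main obstacle is the divisible case together with the foundational compatibilities it requires: one must verify that Joyce's integration and wall-crossing machinery---the Lie-algebra morphism out of the stack Hall algebra and the no-poles theorem---applies verbatim to the twisted Hall algebra $H(\sA^{\tw}_{\XX})$, and that Yoshioka's deformation-equivalence results cover the moduli, and their strictly semistable strata, appearing for non-primitive $v$. An alternative to the step-by-step transport, which I would keep in reserve, is to show that $J(v)$ depends only on $\langle v,v\rangle$ and on the divisibility of $v$, using deformation invariance together with twisted Fourier--Mukai equivalences acting on the twisted Mukai lattice $\Gamma_0$, and then to evaluate it on the torsion Mukai vector whose moduli is literally a Hilbert scheme of points; matching the two sides there would give the formula in general. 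The delicate point in either route is controlling how the twisting interacts with stability and with the autoequivalences, which is where the structure of the optimal gerbe $\SS$ enters.
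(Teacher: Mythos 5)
Your main route for the divisible case has a genuine gap. You assert that after expanding $\epsilon_{\omega,\XX}(v)$ by Joyce's wall-crossing identities, ``the universal coefficients in this expansion are determined by the \'etale-local structure of the moduli stack'' and therefore agree with the untwisted computation of \cite{MT}. But there is no such local computation in \cite{MT} to agree with: the coefficients $1/k^2$ are precisely the content of the multiple cover conjecture, which is \emph{not} a formal output of Joyce's integration map even in the untwisted case. Maulik--Thomas prove it globally, by identifying $N_{n,\beta}$ with reduced stable pair invariants, passing through Toda's wall-crossing to Gromov--Witten theory, and invoking the KKV formula of \cite{PT_KKV}. That geometric input cannot be replaced by an \'etale-local comparison of twisted with untwisted sheaves, so your step three does not close.

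Your reserve route is in fact the one the paper takes, but it too is missing its key ingredient. The paper first proves the automorphic property $J(gv)=J(v)$ for Hodge isometries $g$ of the twisted Mukai lattice (Corollary \ref{cor_Jv_Jgv}, via twisted Fourier--Mukai transforms and the invariance $N_\sigma(v)$ under change of stability condition, Theorems \ref{thm_stability_twisted_K3_sD}, \ref{thm_invariants_Nv_changing}, \ref{thm_Jbar_N}), and uses it to move an arbitrary $v$ to a rank-zero class $(0,\beta,n)$. At that point the moduli is \emph{not} ``literally a Hilbert scheme'' when $v$ is divisible; instead one observes that a one-dimensional twisted sheaf on $(S,\alpha)$ is the same as an ordinary one-dimensional sheaf (the Brauer class is trivial on the support), so $J(0,\beta,n)=N_{n,\beta}$ coincides with the untwisted invariant, for which the multiple cover formula is Maulik--Thomas's theorem (Proposition \ref{prop_multiple_cover_K3}, resting on KKV). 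Your proposal never makes the reduction of one-dimensional twisted sheaves to untwisted ones, and without it neither the evaluation at the rank-zero class nor any other base point of the Hodge-isometry orbit yields the $k^{-2}$ sum. Your primitive ($k=1$) evaluation via Yoshioka's deformation equivalence is correct and matches the remark following Definition \ref{defn_invariants_Jv_XX}, but it does not extend to the divisible case by the arguments you give.
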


The method to prove Theorem \ref{thm_twisted_multiple_cover_intro} is as follows.  
In \cite{Toda_JDG}, Toda actually defined the curve counting invariants $N_{n,\beta}$ for the rank zero semistable sheave on a K3 surface $S$. 
In \cite{Toda_JAMS} Toda has proved a wall crossing formula of Pandharipande-Thomas stable pair invariants 
$\PT(X)$ in \cite{PT}  for any Calabi-Yau threefold $X$ in terms of the invariants $N_{n,\beta}$ and some limit stable invariants $L_{n,\beta}$, see (\ref{eqn_Toda66}). 
We prove Toda's wall-crossing formula for $\mu_r$-gerbes $\XX\to X$ over a Calabi-Yau threefold $X$. 

In \cite{Toda_JDG}, Toda used wall crossing formula  of weak stability conditions on the derived category of  coherent sheaves on a local K3 surface 
$X=S\times\cc$ to conjecture a wall crossing formula for the stable pair invariants $\PT(X)$ in terms of the Gopakumar-Vafa invariants, see (\ref{eqn_Toda_68}). 
Then Toda showed that for a local K3 surface $X$, these two formula are equivalent if the multiple cover formula 
\begin{equation}\label{eqn_multiple_cover_Nnbeta}
N_{n,\beta}=\sum_{k\geq 1; k|v}\frac{1}{k^2}n_0^{\beta/k}
\end{equation}
holds, where $n_0^{\beta/k}$ is the genus zero  Gopakumar-Vafa invariant.  The formula  (\ref{eqn_multiple_cover_Nnbeta}) now is true since in \cite{MT}, Maulik-Thomas proved the 
multiple cover formula  using KKV formula for K3 surfaces \cite{PT_KKV}. 

For our purpose, it is not necessary to introduce the stable pair theory for twisted local K3 surfaces (which are optimal $\mu_r$-gerbes over  local K3 surfaces). We only need a stable pair theory for arbitrary  $\mu_r$ gerbes.
Thus we generalize and survey some results of stable pair theory for the cyclic $\mu_r$-gerbe $\SS\to S$ over a smooth projective K3 surface $S$, and the reduced stable pair theory on the  local K3 gerbe $\XX=\SS\times\cc$ over $X=S\times \cc$.  We also generalize Oberdieck's theorem for equating the reduced stable pair invariants to Behrend's weighted Euler characteristic  for  \'etale gerbes over surfaces, in which 
similar result also holds  for Joyce-Song twisted stable pairs in \cite{Jiang-Tseng-K3}.
Toda's wall crossing results for counting invariants $N_{n,\beta}$ are also generalized  to cyclic gerbes.  All of these results are routine generalizations for cyclic gerbes, but will have applications to the twisted multiple cover formula.

Now for a twisted K3 surface $(S,\alpha)$, and a local twisted K3 surface $(X=S\times\cc,\alpha)$,  we prove that the counting invariants $N_{n,\beta}$ for these one dimensional sheaves 
keep the same as for trivial $\mu_r$-gerbe $\SS_0$, and all the counting invariants for trivial $\mu_r$-gerbe $\SS_0$ are the same as the K3 surface $S$. 
This is because counting twisted one dimensional sheaves on a twisted local K3 surface is the same as counting usual untwisted one dimensional sheaves on a local K3 surface.
Then we still have the multiple cover formula (\ref{eqn_multiple_cover_Nnbeta}).  
The invariants $N_{n,\beta}$ keep the same for the optimal 
$\mu_r$-gerbe $\SS\to S$ and $\XX\to X$. 
Then we prove a similar result as in \cite[Theorem 1.2]{Toda_JDG} for the twisted sheaves on $\SS$ and 
$\XX$, which we call the twisted Hodge isometry theorem, see Corollary \ref{cor_Jv_Jgv}.  Therefore we get the multiple cover formula in Theorem \ref{thm_twisted_multiple_cover_intro}. 

\subsection{Outline} 

After   we set up basic notions and notations of \'etale gerbes and stacks in \S \ref{sec_gerbes_stacks_notations},  
we introduce stable pair theory of Pandharipande-Thomas  on \'etale gerbes over surfaces  in \S \ref{sec_stable_pair_threefold_DM}; and  prove Oberdieck's theorem for equating the reduced stable pair invariants to Behrend's weighted Euler characteristic  for  \'etale gerbes over surfaces
in \S \ref{sec_Behrend_Oberdieck}.  In \S \ref{sec_KKV_Toda} we generalize the wall crossing formula of Toda to Calabi-Yau \'etale gerbes and review the multiple cover formula of Toda.
In \S \ref{sec_twisted_K3} we use twisted sheaves on optimal  local K3 gerbes, and  study the twisted multiple cover formula for twisted sheaves.      In the Appendix, we generalize Toda's counting invariants 
for semistable objects for K3 surfaces to twisted K3 surfaces and prove that the invariants do not depend on the stability conditions, which is used in  \S \ref{sec_twisted_K3}.

\subsection{Convention}
We work over complex number  $\cc$   throughout of the paper.    We use Roman letter $E$ to represent a coherent sheaf on a projective DM stack or an \'etale gerbe  $\SS$, and use curly letter $\sE$ to represent the sheaves on the total space 
Tot$(\sL)$ of a line bundle $\sL$ over $\SS$. 
We reserve {\em $\rk$} for the rank of the torsion free coherent sheaves $E$.   

For a  K3 gerbe in this paper  we mean a cyclic $\mu_r$-gerbe $\SS\to S$ where $S$ is a K3 surface.

\subsection*{Acknowledgments}

Y. J. would like to thank  Kai Behrend,  Amin Gholampour, Martijn Kool,  and   Richard Thomas for valuable discussions on the Vafa-Witten invariants, and Yukinobu Toda for the discussion of the multiple cover formula for K3 and twisted K3 surfaces.  Y. J. is partially supported by  NSF DMS-1600997. H.-H. T. is supported in part by Simons foundation collaboration grant.


\section{\'Etale gerbes and stacks, notations}\label{sec_gerbes_stacks_notations}

 Our main reference for stacks  is \cite{LMB} and \cite{Stack_Project}.  The notion of  \'etale $\mu_r$-gerbes over schemes and  $\cc^*$-gerbes on a scheme was reviewed in \cite[\S 2]{Jiang_2019}.
 We fix the following notations.
 
 $\bullet$  Let $S$ be a smooth K3 surface, and $p: \SS\to S$ a $\mu_r$-gerbe.  Let $X=S\times\cc$ be the local K3 surface. 
 
 $\bullet$ $p: \XX\to X$ always represents a $\mu_r$-gerbe over a smooth scheme $X$.  
 
 $\bullet$  The isomorphism classes of $\mu_r$-gerbes on $S$ or any other scheme are classified by $H^2(S,\mu_r)$.  The exact sequence 
 $$1\to \mu_r\to \sO_S^*\stackrel{(\cdot)^r}{\longrightarrow}\sO_S^*\to 1$$
 induces a long exact sequence:
 $$\cdots\to H^1(S, \sO_S^*)\stackrel{\psi}{\longrightarrow} H^2(S,\mu_r)\stackrel{\varphi}{\longrightarrow}H^2(S,\sO_S^*)\to \cdots$$
 We call a $\mu_r$-gerbe $p: \SS\to S$ ``essentially trivial" if it is lying in the image of the map $\psi$ in the above exact sequence; and 
 ``optimal" if the order $|\varphi(\SS)|=r$ in $H^2(S,\sO_S^*)_{\tor}$.

$\bullet$  For the surface $S$, the Brauer group $\Br(S)$ is by definition the group of isomorphism classes of Azumaya algebras over $S$; which is equal to its cohomological Brauer group 
$\Br^\prime(S):=H^2(S,\sO_S^*)_{\tor}$.

$\bullet$  In the \'etale cohomology group $H^2(S,\mu_r)$, we call a class $g\in H^2(S,\mu_r)$ ``algebraic" if it comes from a class in $H^1(S, \sO_S^*)$, i.e., a line bundle over $S$ in the above exact sequence. 
We call $g\in H^2(S,\mu_r)$ ``non-algebraic" if its image in $H^2(S, \sO_S^*)$  under $\varphi$ is nonzero.

\section{Stable pair theory on some threefold DM stacks}\label{sec_stable_pair_threefold_DM}

In this section we list some basic materials on  the stable pair theory of counting curves by Pandharipande-Thomas on threefold DM stacks, and mainly focus on the threefold Calabi-Yau DM stack $\XX:=\SS\times \cc$ and 
$$\YY:=\SS\times E,$$ where $\SS\to S$ is a $\mu_r$-gerbe over a smooth projective surface $S$, and 
$E$ is an elliptic curve over $\cc$.  

\subsection{Stable pair theory}\label{subsec_stable_pair_invariants_DM}

Let $p: \XX\to X$ be a smooth threefold DM stack, which is a $\mu_r$-gerbe over a smooth threefold $X$.  Let  $\Xi$ be a generating sheaf on $\XX$.  The definition and property of generating sheaves can be found in \cite{Nironi}.

\begin{defn}\label{defn_stable_pair_threefold_DM}
A stable pair on $\XX$ is given by a morphism 
$$\sO_{\XX}\otimes \Xi\stackrel{s}{\longrightarrow} F$$
where 
\begin{enumerate}
\item $F$ is a pure dimension one sheaf on $\XX$;
\item $\coker(s)$ is zero dimensional. 
\end{enumerate}
\end{defn}

We explain the reason to make such a definition comparing with the stable pair theory on smooth threefold $X$ in \cite{PT}.  For the $\mu_r$-gerbe $\XX\to X$, recall the geometric stability for a coherent sheaf 
$E$ on $\XX$, which is given by the geometric Hilbert polynomial 
$$P^g(E,m)=\chi^g(E\otimes p^*\sO_X(m)):=[I\XX:\XX]\cdot \deg(\Ch(E(m))\cdot \Td_{\XX}))$$
is defined by the geometric Euler characteristic in \cite{Lieblich_Duke}. 
We assume that $E$ is supported in dimension one, then 
$$\Ch(E)=(0,0,\Ch_2(E)=-c_2(E), \Ch_3(E)=\frac{1}{6}c_3(E))\in H^*(\XX,\qq).$$
Thus let $c_1(p^*\sO_{\XX}(1))=x$, 
we calculate 
$$\chi^g(E\otimes p^*\sO_X(m))=
r\Big[\int_{\XX}(-c_2(E))mx+\deg(\Ch(E)\cdot \Td_{\XX})\Big].$$
Since the sheaf $E$ is supported on a dimension cycle $\beta\in H^4(\XX,\qq)$, the second Chern class 
$c_2(E)=-\beta$ as a class, and we can write the above as 
$$\chi^g(E\otimes p^*\sO_X(m))=
r\Big[m\int_{\beta}x+\deg(\Ch(E)\cdot \Td_{\XX}) \Big].$$

Then fixing a K-group class for a purely one dimensional sheaf 
$E$ in $K_0(\XX)$ is the same as fixing the cycle $\beta\in H^2(\XX,\qq)$ and 
a rational number $n\in \qq$ representing $\Ch_3(E)$.  Let 
$\rk(E):=r\cdot \int_{\beta}x$. The stability of pairs $(E,s)$ is defined as 
$q$-stable if 
\begin{enumerate}
\item $$\frac{\chi^g(F(m))}{\rk(F)}< \frac{\chi^g(E(m))+q(m)}{\rk(E)}, \quad  m>>0$$
for any proper subsheaf $F\subset E$; and 
\item $$\frac{\chi^g(F(m))+q(m)}{\rk(F)}< \frac{\chi^g(E(m))+q(m)}{\rk(E)}, \quad  m>>0$$
for any proper subsheaf $F\subset E$ such that $s$ factors through. 
\end{enumerate}
A similar argument as in \cite[Lemma 1.3]{PT} shows that for sufficient large $q$, semistablity coincides with stability and $(E,s)$ is stable if and only if 
the two conditions in Definition \ref{defn_stable_pair_threefold_DM} hold. 
Fix a cycle $\beta\in H^4(\XX,\qq)$, and a rational number $n\in \qq$ such that $n=\deg(\Ch(E)\cdot \Td_{\XX})$, we define the moduli stack $$P_n^{\Xi}(\XX,\beta)$$ of stable pairs of 
$\XX$ by fixing the generating sheaf $\Xi$. 

\subsection{Stable pairs on $\YY=\SS\times E$.}

In this section we consider the stable pairs on the Calabi-Yau threefold DM stack $\YY=\SS\times E$, where 
$p: \SS\to S$ is a $\mu_r$-gerbe over a smooth projective K3 surface $S$, and $E$ is an elliptic curve.  A curve class in $H_2(\SS,\qq)\subset H_2(\YY,\qq)$ is given by a line bundle in 
$\Pic(\SS)$, if $d\in\zz_{\geq 0}$ is a nonnegative integer, we let 
$(\beta,d)\in H_2(\YY,\qq)$ be the class 
$$(\beta,d):=\iota_{\SS*}(\beta)+\iota_{E*}(d[E])$$
where $\iota_{\SS}: \SS\hookrightarrow \YY$ and $\iota_{E}: E\hookrightarrow \YY$ are inclusions.  
The Calabi-Yau threefold DM stack 
$$p: \YY=\SS\times E\to Y:=S\times E$$ is also a $\mu_r$-gerbe over $Y$ and its class in $H^2(Y, \mu_r)$ is given by the class of the 
gerbe $[\SS]\in H^2(S,\mu_r)$. 
We consider the moduli stack $P_{\alpha}(\YY, (\beta,d))$ of stable pairs $(E,s)$ on $\YY$ with curve class $(\beta,d)$ and 
K-group class $\alpha\in K_0(\YY)$.  
Form the commutative diagram:
\begin{equation}\label{eqn_diagram_1}
\xymatrix{
\SS\ar@{^{(}->}[r]^{\iota_{\SS}}\ar[d]_{p}& \SS\times E\ar[d]^{p}\\
S\ar@{^{(}->}[r]^{\iota_{S}}& S\times E, 
},\quad \quad
\xymatrix{
S\times E\ar[d]& \SS\times E\ar[l]_{p}\ar[r]^{p_{E}}\ar[d]^{p_{\SS}}& E\\
S& \SS\ar[l]_{p}
}
\end{equation}
\begin{lem}\label{lem_stable_pair_rational_2-Chern_class}
The stable pair $(E,s)$ in $P_{\alpha}(\YY, (\beta,d))$ only depends on the curve class 
$(\beta,d)$ and a rational number  $n$ given by 
$$n=\int_{Y}\Ch(p_*E)\cdot \Td_{Y}.$$
\end{lem}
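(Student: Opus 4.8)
The plan is to reduce the statement to a Chern-character computation: I will show that the numerical $K$-group class $\alpha\in K_0(\YY)$ of the one-dimensional sheaf $E$ underlying a stable pair $(E,s)$ is determined by, and determines, the pair $\big((\beta,d),n\big)$. Once this bijection is in hand, the index $\alpha$ in $P_\alpha(\YY,(\beta,d))$ may be replaced by the curve class together with the single rational number $n$, which is precisely the assertion.

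First I would record the shape of the Chern character. Since $E$ is pure of dimension one on the threefold $\YY$, its rational Chern character is $\Ch(E)=(0,0,\Ch_2(E),\Ch_3(E))$ in $H^*(\YY,\qq)$, so the numerical class is controlled entirely by the degree-$4$ component $\Ch_2(E)\in H^4(\YY,\qq)$ and the degree-$6$ component $\Ch_3(E)\in H^6(\YY,\qq)\cong\qq$. The component $\Ch_2(E)$ is, up to sign, the support cycle and is fixed as soon as the curve class $(\beta,d)$ is fixed (exactly as in the computation $c_2(E)=-\beta$ carried out for $\XX$ in \S\ref{subsec_stable_pair_invariants_DM}). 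Hence the only remaining freedom in $\alpha$ is the single rational number $\Ch_3(E)$, and it suffices to match this with $n$.

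Second, I would identify $\Ch_3(E)$ with $n$ via Grothendieck--Riemann--Roch for the gerbe $p\colon\YY\to Y$. Because $\mu_r$ is linearly reductive in characteristic $0$ one has $R^{>0}p_*=0$, so $Rp_*E=p_*E$ and $\chi(\YY,E)=\chi(Y,p_*E)$ by Leray; combined with ordinary Hirzebruch--Riemann--Roch on the smooth threefold $Y$ this gives $n=\int_Y\Ch(p_*E)\,\Td_Y=\chi(Y,p_*E)=\chi(\YY,E)$. Since the relative cotangent complex of the gerbe vanishes, $\Td_{\YY}=p^*\Td_Y$, and GRR with the projection formula yields $\Ch(p_*E)=p_*\Ch(E)$. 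As $Y$ is Calabi--Yau (a product of a K3 surface and an elliptic curve), $c_1(Y)=0$, so the only surviving top-degree contribution to $\Ch(p_*E)\,\Td_Y$ is $\Ch_3(p_*E)$, whence $n=\deg\Ch_3(p_*E)$. Finally, because $p^*$ (equivalently $p_*$, up to the scalar $r$ coming from the $\mu_r$-banding) is an isomorphism on rational cohomology --- the gerbe has the same rational cohomology as its base --- the assignment $\Ch(E)\mapsto p_*\Ch(E)=\Ch(p_*E)$ is invertible, so $\big((\beta,d),n\big)$ recovers both $\Ch_2(E)$ and $\Ch_3(E)$, hence $\alpha$. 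The step I expect to be the main obstacle is this second one: making the gerbe Riemann--Roch bookkeeping rigorous, and in particular checking that the $\mu_r$-gerbe structure (its inertia, and the weight decomposition of $E$) introduces no numerical invariant beyond $\Ch_2$ and $\Ch_3$, so that the honest Euler characteristic $n$ really exhausts the last degree of freedom. Routing everything through $p_*E$ on the scheme $Y$ and invoking $R^{>0}p_*=0$ is what lets me bypass the inertia-stack (orbifold) Riemann--Roch and land on the classical statement on $Y$.
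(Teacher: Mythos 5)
Your proposal is correct and follows essentially the same route as the paper: both arguments reduce the $K$-group class of the pure one-dimensional sheaf $E$ to the pair $(\Ch_2(E),\Ch_3(E))$, identify $\Ch_2(E)$ with the curve class $(\beta,d)$, and show that the remaining degree of freedom is captured by $n=\int_Y\Ch(p_*E)\cdot\Td_Y$ via pushforward along the $\mu_r$-gerbe $p\colon\YY\to Y$. The only cosmetic difference is that you route the identification through $\chi(\YY,E)=\chi(Y,p_*E)$ using $R^{>0}p_*=0$ and Hirzebruch--Riemann--Roch on $Y$, whereas the paper expands $\Ch(\alpha)\cdot\Td_{\YY}$ directly and uses the degree relation $\int_{\YY}=\tfrac{1}{r}\int_Y p_*$.
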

\begin{proof}
We calculate 
\begin{align*}
\int_{\YY}\Ch(\alpha)\cdot\Td_{\SS\times E}&=\int_{\YY}(-c_2(\alpha)+c_3(\alpha))(1+c_1(\YY))+\frac{1}{12}c_1(\YY)^2+c_2(\YY)\\
&=\int_{\YY}\left((-c_2(\alpha))\cdot c_1(\YY)+c_3(\alpha)\right)\\
&=\int_{\YY}\left((\iota_{\SS*}(\beta)+\iota_{E*}(d[E]))(c_1(\SS)+c_1(E))+c_3(\alpha)\right)\\
&=\int_{\YY}\left((\iota_{\SS*}(\beta)\cdot c_1(E)+\iota_{E*}(d[E])\cdot c_1(\SS))+c_3(\alpha)\right)\\
&=\int_{\YY}\iota_{\SS*}(\beta)\cdot c_1(E)+\int_{\YY}\iota_{E*}(d[E])\cdot c_1(\SS)+\int_{\YY}c_3(\alpha)\\
&=\frac{1}{r}\Big[\int_{Y}p_*(\iota_{\SS*}(\beta)\cdot c_1(E))+\int_{Y}p_*(\iota_{E*}(d[E])\cdot c_1(\SS))+\int_{Y}p_*c_3(\alpha)\Big]
\end{align*}
Here we let 
$$p_*[E]=p_*\alpha=\iota_{S*}\overline{\beta}+\iota_{E*}(d[E])\in K_0(Y)$$
and $\overline{\beta}=p_*\beta\in H_2(S,\qq)$, and the last equality is from the $\mu_r$-gerbe structure. 
Thus according to diagrams  in Diagram (\ref{eqn_diagram_1}), the above integral is equal to 
$$\int_{Y}\Ch(p_*E)\cdot \Td_{Y}$$
which we define as the rational number $n$.  
\end{proof}

\begin{rmk}
The number $n=\int_{Y}\Ch(p_*E)\cdot \Td_{Y}$ is actually an integer by Riemann-Roch theorem. 
In the following we let  $$P_{n}(\YY, (\beta,d))$$ be the moduli stack of stable pairs $(E,s)$ on $\YY$ with curve class $(\beta,d)$ and rational number 
$n$ above. 
\end{rmk}

\subsection{The elliptic curve $E$-action}

We also consider the elliptic curve $E$ action on $\YY=\SS\times E$ by the translation of the factor $E$, 
$$m_{\YY}: E\times \YY\to \YY$$
by:
$$(x, (s,e))\mapsto (s, e+x).$$
Denote by  $P:=P_{n}(\YY, (\beta,d))$. 
Let $t_x$ for $x\in E$ be the translation of $\YY$ determined by $x$.  Let 
$\iota: E\to E$ be the inverse map by $x\mapsto -x$, and let 
$$\Psi: E\times \YY\times P\stackrel{\iota\times \id_{\YY\times P}}{\longrightarrow}E\times \YY\times P
\stackrel{m_{\YY}\times \id_{P}}{\longrightarrow}\YY\times P
$$
be the composition.  We let 
$$\mathbb{I}=[\sO_{\YY\times P}\to \eF]$$ be the universal stable pair on $\YY\times P$.  Then 
$\Psi^*(\mathbb{I})$ defines a family of stable pairs on $\YY$ over $E\times P$.  By the universal property of $P$, this gives a map 
$$m_{P}: E\times P\to P$$
such that $\Psi^*(\mathbb{I})\cong m_{P}^*(\mathbb{I})$. 
The map $m_{P}$ defines a group action on $P$. Points $x\in E$ act on $I\in P$ by
$$I+x:=m_P(x,I)=t_{-x}^*(I)=t_{x*}(I).$$
Let 
$$m_{\YY\times P}: E\times \YY\times P\to \YY\times P$$
be the diagonal action by
$$(e,x,I)\mapsto (x+e, I+e).$$ Then $\Psi^*(\mathbb{I})\cong m_{P}^*(\mathbb{I})$ gives 
$$m_{\YY\times P}^*(\mathbb{I})= \pi_{E}^*(\mathbb{I})$$
where $\pi_E: E\times \YY\times P\to \YY\times P$ is the projection.  These data satisfy the cocycle condition which descends to the quotient:
$$\rho: \YY\times P\to (\YY\times P)/E$$
with the diagonal action $m_{\YY\times P}$.  The universal stable pair $\mathbb{I}$ descends along $\rho$ and gives $\overline{\mathbb{I}}$ on the quotient 
$ (\YY\times P)/E$ and 
$\rho^*(\overline{\mathbb{I}})\cong \mathbb{I}$. 

\subsection{The reduced perfect obstruction theory}\label{subsec_reduced_POT}

For the K3 surface $S$, the $\mu_r$-gerbe $\SS\to S$ is a holomorphic symplectic DM stack.  We follow Oberdieck \cite{Oberdieck} and Kool-Thomas \cite{KT} to construct a reduced perfect obstruction theory
on $P$.  Let $\pi_{\YY}: \YY\times P\to \YY$ and 
$\pi_{P}: \YY\times P\to P$ be projections. 
We consider the Atiyah class:
$$\At_{\YY}(\mathbb{I})\in \Ext^1_{\YY\times P}(\mathbb{I}, \mathbb{I}\otimes \ll_{\YY\times P})$$
in \cite{Illusie}.  Since 
$\ll_{\YY\times P}\cong \pi_{\YY}^*(\Omega_{\YY})\oplus \pi_{P}^*(\ll_{P})\to \pi_{P}^*\ll_{P}$, we use the composition maps
\begin{multline*}
\Ext^1_{\YY\times P}(\mathbb{I}, \mathbb{I}\otimes \ll_{\YY\times P})\to  \Ext^1_{\YY\times P}(R\Hom(\mathbb{I}, \mathbb{I}), \pi_{P}^*\ll_{P})
\to   \Ext^1_{\YY\times P}(R\Hom(\mathbb{I}, \mathbb{I})_0, \pi_{P}^*\ll_{P}) \\
\stackrel{\cong}{\longrightarrow}\Hom_{P}(R\pi_{P*}R\Hom(\mathbb{I},\mathbb{I})_0\otimes \omega_{P}[2], \ll_{P})
\end{multline*}
and the last isomorphism is from relative Verdier duality.  Then the image $\At_{P}(\mathbb{I})$ in the above image gives a perfect obstruction theory:
$$E^{\bullet}=R\pi_{P*}\left(R\Hom(\mathbb{I},\mathbb{I})_0\otimes \omega_{P}\right)[2]\to \ll_{P}.$$
As in \cite[\S 2.3]{KT}, we take cup product of the obstruction sheaf of $E^\bullet$ with the Atiyah class $\At_{\YY}(\mathbb{I})$ and get a semi-regularity map:
\begin{equation}\label{eqn_semi-regular}
\sr:  (E^{\bullet})^{\vee}\to H^{1,3}(\YY)\otimes \sO_{P}[-1]
\end{equation}
by 
\begin{multline*}
\Ext^2_{\pi_{P}}(\mathbb{I}, \mathbb{I})_0\hookrightarrow  \Ext^2_{\pi_{P}}(\mathbb{I}, \mathbb{I})\stackrel{\At(\mathbb{I})}{\longrightarrow}
 \Ext^3_{\pi_{P}}(\mathbb{I}, \mathbb{I}\otimes \ll_{\YY\times P}) 
\to \Ext^3_{\pi_{P}}(\mathbb{I}, \mathbb{I}\otimes \pi_{\YY}^*\Omega_{\YY}) \\
\stackrel{\tr}{\longrightarrow}R^3\pi_{P*}\pi_{\YY}^* \Omega_{\YY}\stackrel{\cong}{\longrightarrow}H^{1,3}(\YY)\otimes \sO_{P},
\end{multline*}
which is surjective since $\SS$ is a K3 gerbe. 
Taking the dual of the semi-regularity map $\sr$ we get:
$$H^{1,3}(\YY)^{\vee}\otimes \sO_{P}[1]\to E^{\bullet}$$
and let $E^{\bullet}_{\red}:=\Cone(H^{1,3}(\YY)^{\vee}\otimes \sO_{P}[1]\to E^{\bullet} )$, the cone the above morphism. 
Then from \cite[Theorem 2.7]{KT}: 

\begin{prop}\label{prop_reduced_POT}
There exists a perfect obstruction theory 
$$E^{\bullet}_{\red}\to \ll_{P}$$
for $P$ with virtual dimension $h^{1,3}(\YY)=h^{0,2}(\SS)$.
\end{prop}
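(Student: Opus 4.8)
The plan is to follow the construction of Kool--Thomas \cite{KT} and Oberdieck \cite{Oberdieck} essentially verbatim, the only genuinely new input being that every step goes through for the $\mu_r$-gerbe $\YY = \SS \times E$ because $\SS \to S$ is a holomorphic symplectic DM stack. We already have in hand the standard (unreduced) perfect obstruction theory
$$E^{\bullet}=R\pi_{P*}\left(R\Hom(\mathbb{I},\mathbb{I})_0\otimes \omega_{P}\right)[2]\to \ll_{P}$$
extracted from the Atiyah class, together with the surjective semi-regularity map $\sr$ in (\ref{eqn_semi-regular}). The task is to deduce from these that the cone $E^{\bullet}_{\red}$ is a perfect two-term complex, that the induced map $E^{\bullet}_{\red}\to\ll_P$ is again an obstruction theory, and that its virtual rank has dropped by exactly $h^{1,3}(\YY)=h^{0,2}(\SS)$.

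First I would verify that the semi-regularity map $\sr$ really is surjective in the gerbe setting: this is where the K3 hypothesis enters. The composite defining $\sr$ ends with $R^3\pi_{P*}\pi_{\YY}^*\Omega_\YY \xrightarrow{\cong} H^{1,3}(\YY)\otimes\sO_P$, and surjectivity amounts to the statement that the trace pairing with the holomorphic symplectic form on $\SS$ (pulled back to $\YY=\SS\times E$) is nondegenerate, exactly as in \cite[\S2.3]{KT}. Since $\SS$ is a K3 gerbe it carries a nowhere-degenerate class in $H^{0,2}(\SS)$, and $E$ contributes $H^{1,0}(E)$, so $H^{1,3}(\YY)$ is one-dimensional and the argument of \cite{KT} applies unchanged. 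Second, I would take the dual map $H^{1,3}(\YY)^{\vee}\otimes\sO_P[1]\to E^{\bullet}$ and form $E^{\bullet}_{\red}=\Cone(\cdots)$, then run the standard homological-algebra check that $E^{\bullet}_{\red}$ is perfect of amplitude $[-1,0]$: the surjectivity of $\sr$ guarantees that the added term splits off the constant piece of the obstruction sheaf cleanly, so no higher cohomology is introduced. The virtual dimension then drops by $\dim H^{1,3}(\YY)=h^{1,3}(\YY)$, which equals $h^{0,2}(\SS)$ by the Künneth decomposition on $\YY=\SS\times E$.

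Finally I would invoke \cite[Theorem 2.7]{KT} to conclude that $E^{\bullet}_{\red}\to\ll_P$ is a genuine perfect obstruction theory, i.e.\ that the reduction is compatible with the truncated cotangent complex and that $h^0$ and $h^{-1}$ behave correctly. The one point requiring care, rather than literal citation, is that all of the cohomology and duality statements used by \cite{KT} for schemes must be read in the equivariant/gerbe sense: relative Verdier duality on the DM stack $\YY\times P$, the Künneth formula for the $\mu_r$-gerbe, and the identification of the relevant Hodge pieces must all be phrased for the stack $\SS$ rather than $S$. Because $\SS\to S$ is \'etale, these are routine, and the decomposition $\Coh(\SS)=\bigoplus_i\Coh(\SS)_i$ ensures the $\mu_r$-weight-graded pieces carry through without interaction.

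I expect the main obstacle to be the surjectivity of the semi-regularity map $\sr$ in the gerbe context, since that is the single place where the holomorphic symplectic geometry of $\SS$ is used in an essential way; verifying that the trace/cup-product pairing with the symplectic form on the gerbe remains nondegenerate is the crux, whereas the cone construction and the bookkeeping of the virtual dimension are formal once surjectivity is established.
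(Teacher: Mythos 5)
There is a genuine gap: you treat the reduction as essentially formal once surjectivity of the semi-regularity map is known, but surjectivity of $\sr$ only guarantees that the cone $E^{\bullet}_{\red}$ is a perfect complex of the expected rank. It does \emph{not} by itself show that the map $E^{\bullet}\to\ll_P$ factors through $E^{\bullet}_{\red}$, nor that the factored map still satisfies the obstruction-theory axioms --- i.e.\ that every actual obstruction to deforming a stable pair lies in the kernel of $\sr$. That is the hard content of \cite[Theorem 2.7]{KT}, and since that theorem is stated for K3 \emph{surfaces}, you cannot simply invoke it for the gerbe $\YY=\SS\times E$; its proof has to be rerun, and the proof is not the cone bookkeeping but a geometric deformation argument.

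Concretely, the paper's proof proceeds via the algebraic twistor family, and this is the step your proposal omits entirely. One first extends the gerbe over the first-order twistor base $B=\spec(\sO_V/\mathfrak{m}^2)$: because $B$ is affine, $H^2(\overline{\sS}_B,\mu_r)\cong H^2(S,\mu_r)$, so the class $[\SS]$ spreads out to a $\mu_r$-gerbe $\sS_B\to\overline{\sS}_B$ with central fiber $\SS\to S$. One then shows $P_n(\YY,(\beta,d))\cong P_n(\YY_B/B,(\beta_B,d))$, using that $\beta$ remains of type $(1,1)$ only over $0\in B$ (\cite[Lemma 2.1, Proposition 2.2]{KT}). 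Finally, the relative perfect obstruction theory $E^{\bullet}_{\rel}\to\ll_{P/B}$ of the family induces an absolute perfect obstruction theory $F^{\bullet}\to\ll_P$, and one identifies $F^{\bullet}\cong E^{\bullet}_{\red}$; it is this identification, not the surjectivity of $\sr$, that certifies $E^{\bullet}_{\red}\to\ll_P$ as an obstruction theory of virtual dimension $h^{0,2}(\SS)$. So the crux you flag (nondegeneracy of the trace pairing on the gerbe) is a necessary but minor check; the genuinely new input for the gerbe is the existence of the twistor family of gerbes and the rigidity of the stable-pair moduli space over it, without which your argument does not close.
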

\begin{proof}
We generalize the proof in \cite[Theorem 2.7]{KT} to the gerbe setting. 

\textbf{Step 1:} We study the twistor family for $\mu_r$-gerbes on a K3 surface $S$.  
First let us recall the twisted family for K3 surfaces.  Consider $H^1(T_S)$ and let $\mathfrak{m}$ be the maximal ideal of the origin 
$0\in H^1(T_S)$. The first order neighbourhood of the origin is 
$\spec(\sO_{H^1(T_S)}/\mathfrak{m}^2)$. The cotangent sheaf restricted to $0\in H^1(T_S)$  gives $H^1(T_S)^*$. Let 
$\overline{\sS}\to \spec(\sO_{H^1(T_S)}/\mathfrak{m}^2)$ be the tautological flat family of K3 surfaces.  As in \cite[\S 2.1]{KT}, there exists a subspace $V\subset H^1(T_S)$
such that 
$\cup_{\beta}: V\stackrel{\cong}{\longrightarrow}H^2(\sO_{S})$
is an isomorphism.  Then restricting $\overline{\sS}$ to $B:=\spec(\sO_{V}/\mathfrak{m}^2)$ gives the flat twistor family 
$\overline{\sS}_B$ of K3 surfaces. 
Since the twistor family  $\overline{\sS}_B\to B$ is a family over an affine scheme we have:
$$H^2( \overline{\sS}_B, \mu_r)\cong H^2(S,\mu_r).$$
Then the class of the gerbe $[\SS]\in H^2(S,\mu_r)$ determines a $\mu_r$-gerbe 
on $\overline{\sS}_B$. We denote $p: \sS_B\to \overline{\sS}_B$ for this $\mu_r$-gerbe whose central fiber gives 
$p: \SS\to S$.  This is the twistor family for the K3 gerbe $\SS$. 
We also let 
$$\overline{\YY}_B:= \overline{\sS}_B\times E; \quad  \YY_B:= \sS_B\times E$$
be the families of $\SS\times E$ over $B$. 

\textbf{Step 2:}  Let
\[
\xymatrix{
\SS\ar@{^{(}->}[r]^{j}\ar[d] & \sS_B\ar[d]\\
S\ar@{^{(}->}[r]^{j}& \overline{\sS}_B;
}
\quad 
\xymatrix{
\SS\times E\ar@{^{(}->}[r]^{j}\ar[d] & \YY_B\ar[d]\\
S\times E\ar@{^{(}->}[r]^{j}& \overline{\YY}_B;
}
\]
be the inclusions of the central fibers. 
We claim that the natural morphisms 
$$j_*: P_n(\SS,\beta)\stackrel{\cong}{\longrightarrow} P_n(\sS_B/B, \beta_B)$$
and 
$$j_*: P_n(\YY,(\beta,d))\stackrel{\cong}{\longrightarrow} P_n(\YY_B/B, (\beta_B,d))$$
are isomorphisms, where 
$\beta_B=\beta\otimes 1\in H^2(\sS_B/B)\cong H^2(\SS,\cc)\otimes \sO_B$.
We follow the proof in \cite[Proposition 2.2]{KT}. First 
$P_n(\sS_B,\beta_B)\otimes_{B}\{0\}\cong P_n(\SS,\beta)$. We need to prove that if there is an Artinian scheme 
$A$ with a morphism to $B$, and proper flat family 
$\mathbb{I}_A\to A$ of stable pairs over $\sS_B$ such that it pulls back to a stable pair 
$\mathbb{I}_0$ over $\SS$ and $h_{0*}[\beta]=\beta_B$, then $A\to B$ factors through $0\in B$. But this is from 
\cite[Lemma 2.1]{KT} since we fix $\beta\in H^2(\SS,\qq)$ such that 
$\overline{\beta}\in H^{1,1}(S)\cap (H^2(S,\zz)/\tor)$. The case of $\YY$ and $\YY_B$ is similar. 

\textbf{Step 3:}  We then generalize \cite[Theorem 2.7]{KT} to the gerbe case and get the perfect obstruction theory. 
Consider the algebraic twistor family 
$\sS_B\to B$ and the family  $\YY_B=\sS_B\times E\to B$, we have 
the moduli stack of stable pairs 
$P:=P_n(\YY_B/B, \iota_*\beta_B)\to B$ on the fibers is isomorphic to the moduli stack 
$P_n(\YY, \beta)$ on $\YY=\SS\times E$. 
For the stack $\YY_B$ over $B$, since for  the stable pair 
$$I^{\bullet}\in P_n(\YY_B/B, \iota_*\beta_B)$$
the deformation and obstruction are given by 
$\Ext^1_{\YY_B}(I^\bullet, I^\bullet)_0$ and $\Ext^2_{\YY_B}(I^\bullet, I^\bullet)_0$, the argument in \cite[\S 3]{MPT} works for this 
$\mu_r$-gerbe case and gives a relative perfect obstruction theory:
$$E^{\bullet}_{\rel}:=
R\pi_{P*}(R\Hom(\mathbb{I}, \mathbb{I})_0\otimes\omega_{P\times_{B}\YY_B/P})[2]\to \ll_{P/B}.
$$
This perfect obstruction theory fits into the following diagram:
\[
\xymatrix{
F^\bullet\ar[r]\ar[d]& E^{\bullet}_{\rel}\ar[r]\ar[d] &\Omega_{B}[1]\ar[d]^{=}\\
\ll_P\ar[r]& \ll_{P/B}\ar[r]&  \Omega_{B}[1]
}
\]
and defines a perfect absolute obstruction theory $F^\bullet$ over $P$.  All the stable pairs of $P$ lie scheme theoretically on the central fiber $\YY$, and 
$E^{\bullet}_{\rel}$ is the usual complex of stable pair theory on $\YY$. Hence 
$F^{\bullet}$ has virtual dimension $h^{2,0}(\SS)$. The same proof in  \cite[Theorem 2.7]{KT} shows that 
$F^{\bullet}\to E^{\bullet}\to  E^{\bullet}_{\red}$ is an isomorphism. 
\end{proof}

\subsection{Symmetric obstruction theory of the quotient}

\subsubsection{Global vector field on $P$ by the $E$-direction on $\YY$}

Let $m:E\times P\to P$ be the action, then we have 
the complex 
$T_E\otimes \sO_{E\times P}\hookrightarrow T_{E\times P}\stackrel{dm}{\longrightarrow}m^*T_P$ where $dm$ is the differential. We restrict it to 
$0_E\times P\hookrightarrow E\times P$ and get a global vector field:
\begin{equation}\label{eqn_vector_field}
\textbf{v}: H^0(T_E)\otimes \sO_P=T_{E,0_E}\otimes \sO_P\to T_P\cong \sE xt^{1}_{\pi_P}(\mathbb{I}, \mathbb{I})_0. 
\end{equation}
Similar arguments in \cite[Lemma 1]{Oberdieck} gives:
$$
H^0(T_E)\otimes\sO_P \stackrel{\cong}{\longrightarrow} H^0(\YY, T_{\YY})\otimes \sO_P\stackrel{\At_{\YY}(\mathbb{I})}{\longrightarrow}
\Ext^1(\mathbb{I}, \mathbb{I})\otimes\sO_P\to  \sE xt^{1}_{\pi_P}(\mathbb{I}, \mathbb{I})
$$
which is the same as (\ref{eqn_vector_field}). 
Since we have $H^0(E,T_E)=H^0(\YY, T_{\YY})$, let 
$$s: H^0(T_E)^{\vee}\otimes\sO_P\stackrel{\cong}{\longrightarrow} H^{1,3}(\YY)\otimes\sO_P$$
be the isomorphism induced by the non-degenerate pairing:
$$H^0(\YY,T_{\YY})\otimes H^3(\YY, \Omega_{\YY})\to H^3(\YY, \sO_{\YY})\cong \cc.$$
Then from  \cite[Lemma 2]{Oberdieck}, we have the following commutative diagram:
\[
\xymatrix{
E^{\bullet}\ar[r]^--{\partial^{\vee}}\ar[d]_{\theta}& H^0(T_E)^{\vee}\otimes \sO_P\ar[d]^{s}\\
(E^{\bullet})^{\vee}[1]\ar[r]^-{\sr[1]}& H^{1,3}(\YY)\otimes \sP_P
}
\]
where $\partial^{\vee}: E^{\bullet}\to h^0(E^{\bullet})=\Omega_P\to 
H^0(T_E)^{\vee}\otimes \sO_P$ is the composition map $E^{\bullet}\to h^0(E^{\bullet})$ with the dual of (\ref{eqn_vector_field})
and $\sr$ is the semi-regularity map. 

\subsubsection{Symmetric complex of the quotient}

We form the complex 
$$\sI^{\bullet}:=\Cone(E^{\bullet}\stackrel{\partial^{\vee}}{\longrightarrow} H^0(T_E)^{\vee}\otimes \sO_P)[-1]. $$
The reduced complex $E_{\red}^{\bullet}$ fits into the following:
$$H^{1,3}(\YY)^{\vee}\otimes \sO_P\stackrel{\sr^{\vee}}{\longrightarrow} E^{\bullet}\to E^{\bullet}_{\red}.$$
Consider the following diagram of exact triangles:
\[
\xymatrix{
\sI^{\bullet}\ar[r]\ar[d]^{\widetilde{\theta}}& E^{\bullet}\ar[d]^{\theta}\ar[r]^--{\partial^{\vee}}& H^0(T_E)^{\vee}\otimes \sO_P\ar[d]^{s}\\
(E^{\bullet}_{\red})^{\vee}[1]\ar[r]& (E^{\bullet})^{\vee}[1]\ar[r]& H^{1,3}(\YY)\otimes\sO_P
}
\]
Since $\theta$ and $s$ are isomorphisms, so is $\widetilde{\theta}$. We form the following diagram:
\[
\xymatrix{
H^{1,3}(\YY)^{\vee}\otimes\sO_P[1]\ar[r]^{=}\ar[d]^{\sr^{\vee}}& H^{1,3}(\YY)^{\vee}\otimes\sO_P[1]\ar[r]^{}\ar[d]^{\sr^{\vee}}&0\ar[d]^{}\\
\sI^{\bullet}\ar[r]\ar[d]^{}& E^{\bullet}\ar[d]^{}\ar[r]^--{\partial^{\vee}}& H^0(T_E)^{\vee}\otimes \sO_P\ar[d]^{=}\\
G^{\bullet}\ar[r]& E_{\red}^{\bullet}\ar[r]^--{\partial^{\vee}}& H^0(T_E)^{\vee}\otimes \sO_P
}
\]
where 
$G^{\bullet}:=\Cone(H^{1,3}(\YY)^{\vee}\otimes\sO_P[1]\to \sI^{\bullet})$ or 
$G^{\bullet}:=\Cone(E_{\red}^{\bullet}\stackrel{\partial^{\vee}}{\longrightarrow} H^0(T_E)^{\vee}\otimes \sO_P[-1])$. 
Then the above induces the following diagram:
\[
\xymatrix{
H^{1,3}(\YY)^{\vee}\otimes\sO_P[1]\ar[r]^--{\sr^{\vee}}\ar[d]^{s^{\vee}[1]}& \sI^{\bullet}\ar[r]\ar[d]^{\widetilde{\theta}}& G^{\bullet}\ar[d]^{\lambda}\\
H^0(T_E)\otimes \sO_{P}[1]\ar[r]^--{\partial}& (E^{\bullet}_{\red})^{\vee}[1]\ar[r]& (G^{\bullet})^{\vee}[1]
}
\]
where $\lambda$ is induced by the morphism in the left square. Then from \cite[Proposition 2]{Oberdieck}, $\lambda^{\vee}[1]\cong \lambda$ and 
$\lambda: G^{\bullet}\to (G^{\bullet})^{\vee}[1]$ is a non-degenerate symmetric bilinear form of degree one. 

\subsubsection{The morphism $q: P\to E$}

Since we work on $\SS\times E=\YY$, and $P=P_n(\YY, (\beta,d))$ is the moduli stack.  Let 
$\beta\in \Pic(\SS)$ be such that 
$\overline{\beta}\in \Pic(S)$ is a curve class.  We generalize the morphism $p_c$ in \cite[\S 2.2]{Oberdieck} (which we call it $q_c$) to this case. 
First we have for each $k\in \zz$, 
\begin{equation}\label{eqn_useful_morphism1}
\sigma_k: E\times E\to E
\end{equation}
given by $(x,e)\mapsto e+kx$ which is the action of $E$ on itself by $k$-times. Also 
\begin{equation}\label{eqn_useful_morphism2}
m_P: E\times P\to P
\end{equation}
is given by 
$(x,I)\mapsto I+x=t_{x*}I$.  We recall the construction of the morphism 
\begin{equation}\label{eqn_key_morphism}
q_c: P_n(\YY, (\beta, d))\to E
\end{equation}
We let $\widehat{E}:=\Pic^0(E)$ be the dual of $E$ which is isomorphic to $E$. 
Let $L\to E\times \widehat{E}$ be the Poincar\'e line bundle which is defined by 
\begin{enumerate}
\item $L_{\xi}=L|_{E\times\xi}$ is isomorphic to $\xi$ for any $\xi\in \Pic^0(E)$;
\item $L|_{0\times \widehat{E}}\cong \sO_{\widehat{E}}$. 
\end{enumerate}
So $L_X=L|_{x\times \widehat{E}}$ is isomorphic to $x\in E$ since $\Pic^0(\widehat{E})\cong E$. 
Let 
$\widehat{\YY}=\SS\times \widehat{E}$ and $L_{\YY}\to \YY\times\widehat{\YY}$ be the pullback of $L$ from 
$\YY\times\widehat{\YY}\to E\times\widehat{E}$. Let 
$$\Phi_{L_{\YY}}: D^b(\YY)\to D^b(\widehat{\YY})$$
be the equivalence defined by:
$$E\mapsto Rp_{\widehat{\YY}*}(L_{\YY}\otimes p_{\YY}^*(E)).$$
This is the Fourier-Mukai transform. For a line bundle $\sL\in \Pic(\SS)$, such that 
$c_1(\sL)=c$. Let 
$$\pi_{\SS}: \widehat{\YY}\to \SS$$
be the projection. Define
$$(\cdot)\otimes \pi_{\SS}^*\sL: D^b(\widehat{\YY})\to D^b(\widehat{\YY})$$
to be:
$$E\mapsto E\otimes \pi_{\SS}^*\sL.$$
Let $\pi_{\widehat{E}}: \widehat{\YY}\to \widehat{E}$ be the projection to  $\widehat{E}$. We define $q_c$ in the level of geometric points:
\begin{multline*}
P(\cc)=P_n(\YY, (\beta,d))(\cc)\hookrightarrow D^b(\YY)
\stackrel{\Phi_{L_{\YY}}}{\longrightarrow} D^b(\widehat{\YY})\stackrel{(\cdot)\otimes \pi_{\SS}^*\sL}{\longrightarrow}
D^b(\widehat{\YY})\stackrel{R\pi_{\widehat{E}*}}{\longrightarrow} D^b(\widehat{E})\stackrel{\det}{\longrightarrow}
\Pic^m(\widehat{E})=E
\end{multline*}
for some $m$. 

The algebraic morphism $q_c$ can be constructed by giving a line bundle on 
$P_n(\YY, (\beta,d))\times \widehat{E}$. The line bundle is given by the determinant of the following
$$R\pi_{\widehat{E}\times P *}(\Phi_{L_{\YY\times P}}(\mathbb{I})\otimes \pi_{\SS}^*\sL)$$
where $\mathbb{I}$ is the universal twisted stable pair on $\YY\times P$. 
We have a similar result as in \cite[Proposition 3]{Oberdieck}.

\begin{prop}\label{prop_morphism_q}
There exists a morphism
$$q_c: P_n(\YY, (\beta,d))\to E$$
such that for $k=\langle c,\beta\rangle+n$, it is $E$-equivariant with respect to $m_{P}$ and $\sigma_k$ defined before. 
Moreover, for $I\in P$, $x\in E$, $q_c(I+x)=q_c(I)+kx$.
\end{prop}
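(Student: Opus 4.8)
The plan is to follow Oberdieck's argument in \cite[Proposition 3]{Oberdieck}, transplanting it to the $\mu_r$-gerbe $\YY=\SS\times E$. The morphism $q_c$ has already been described on geometric points and, algebraically, via the determinant of the complex $R\pi_{\widehat{E}\times P*}(\Phi_{L_\YY}(\mathbb{I})\otimes\pi_\SS^*\sL)$. First I would check that this determinant is a well-defined line bundle on $P_n(\YY,(\beta,d))\times\widehat{E}$, so that the theory of the determinant of cohomology together with the identification $\Pic^m(\widehat{E})\cong E$ produces an honest algebraic morphism $q_c\colon P\to E$. Because the Fourier--Mukai kernel $L_\YY$ and all the operations in the composition are defined relatively over $\SS$, while the gerbe structure lives entirely in the $\SS$-direction, this construction goes through verbatim on the gerbe.

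For the equivariance, the key input is the recalled isomorphism $m_P^*(\mathbb{I})\cong\Psi^*(\mathbb{I})$, which says that acting by $x\in E$ on $P$ corresponds to applying $t_{x*}$ in the $E$-factor of $\YY$. I would then invoke the fundamental exchange property of the Fourier--Mukai transform with Poincar\'e kernel: translation by $x$ in the $E$-direction is intertwined by $\Phi_{L_\YY}$ with tensoring by $p_{\widehat{E}}^*L_x$, where $L_x=L|_{x\times\widehat{E}}\in\Pic^0(\widehat{E})$ is the degree-zero line bundle corresponding to $x$ under $\Pic^0(\widehat{E})\cong E$ (property (3) of the Poincar\'e bundle above). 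Tensoring with $\pi_\SS^*\sL$ commutes with tensoring by $p_{\widehat{E}}^*L_x$, and by the projection formula the factor $L_x$ passes through $R\pi_{\widehat{E}*}$. Finally, for a complex $V$ twisted by a line bundle $M$ one has $\det(V\otimes M)\cong\det(V)\otimes M^{\otimes\rk V}$; applied here this multiplies the translation on $\Pic^m(\widehat{E})\cong E$ by the fibrewise rank $k:=\rk R\pi_{\widehat{E}*}(\Phi_{L_\YY}(\mathbb{I})\otimes\pi_\SS^*\sL)$, yielding exactly $q_c(I+x)=q_c(I)+kx$, i.e. $E$-equivariance with respect to $m_P$ and $\sigma_k$.

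It then remains to identify the multiplier $k$. The rank $k$ is the Euler characteristic of the restriction of $\Phi_{L_\YY}(I)\otimes\pi_\SS^*\sL$ to a general fibre $\SS$ of $\widehat{\YY}\to\widehat{E}$. Applying Grothendieck--Riemann--Roch along $\pi_{\widehat{E}}$, and using the rank-one defining property of the Poincar\'e kernel to evaluate the transform in the $E$-factor, this reduces to an intersection computation on $\SS$. As in Lemma \ref{lem_stable_pair_rational_2-Chern_class}, the $\mu_r$-gerbe contributes an overall factor that is absorbed by the normalisation of the geometric Euler characteristic, and the surviving terms are the pairing of $c=c_1(\sL)$ against the curve class $\beta$ together with the degree $n=\int_Y\Ch(p_*E)\cdot\Td_Y$, giving $k=\langle c,\beta\rangle+n$.

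The main obstacle will be this last step, the precise evaluation $k=\langle c,\beta\rangle+n$: one must carry out the Grothendieck--Riemann--Roch bookkeeping on the gerbe carefully, checking that the gerbe factor $r$ cancels exactly as in Lemma \ref{lem_stable_pair_rational_2-Chern_class} and that the Fourier--Mukai transform is genuinely compatible with the product decomposition $\YY=\SS\times E$ (equivalently, that the transform is performed only in the $E$-direction and leaves the gerbe factor untouched). Once this compatibility is established, the numerics coincide with Oberdieck's and the formula for $k$ follows.
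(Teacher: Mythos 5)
Your proposal is correct and follows essentially the same route as the paper: both transplant Oberdieck's Proposition 3 (his Lemmas 4--6) to the gerbe, the paper by direct citation together with the remark that the $\mu_r$-twist only modifies the Chern character by a phase $e^{2\pi i/r}$ and that $\langle c,\beta\rangle+n=\langle \overline{c},\overline{\beta}\rangle+n$ descends to the coarse space, you by spelling out the content of those lemmas (the Fourier--Mukai exchange of translation with tensoring by $L_x$, the determinant identity $\det(V\otimes M)\cong\det(V)\otimes M^{\otimes\rk V}$, and the Grothendieck--Riemann--Roch evaluation of the multiplier $k$). The difference is only in the level of detail, not in substance.
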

\begin{proof}
Since we consider the twisted stable pairs 
$I=(\Xi\otimes \sO_{\YY}\to F)$, where our $\mu_r$-gerbe twist means the $\mu_r$-action on the generating sheaf is given by the whole primitive action of the generator of $\mu_r$.
Then all the calculations in \cite[Lemma 4, Lemma 5, Lemma 6]{Oberdieck} keep the same except we need to multiply by  
$e^{2\pi i/r}$ on the Chern character. Therefore the calculation in \cite[Lemma 6]{Oberdieck} still holds for twisted stable pairs. Also 
for $c\in\Pic(\SS)$, we have $\langle c,\beta\rangle+n=\langle \overline{c}, \overline{\beta}\rangle+n$, where 
$\overline{c}$ and $\overline{\beta}$ are the images in $H^2(\SS,\qq)\to H^2(S,\qq)$. 
\end{proof}

\subsubsection{Virtual class}

Consider the morphism $q_c: P\to E$. We will construct a perfect obstruction theory on
$$K:=q_c^{-1}(0_E).$$ 
Let 
$0_E\times P\hookrightarrow E\times P$ be the inclusion, then the derivative map:
$$d: \ll_P\to \Omega_{E,0}\otimes \sO_P$$
is the second one in $\pi^*\ll_{P/E}\to \ll_P\to \ll_{\pi}\cong \sO_P$ induced from $\pi: P\to P/E$.  From the analysis of obstruction theory above we form the following diagram:
\[
\xymatrix{
G^\bullet\ar[r]\ar[d]^{\varphi}& E^{\bullet}_{\red}\ar[r]\ar[d]& \Omega_{E,0}\otimes\sO_P\ar[d]^{\cong}\\
\pi^*\ll_{P/E}\ar[r]& \ll_P\ar[r]& \Omega_{E,0}\otimes\sO_P
}
\]
where $h^0(T_E)^{\vee}=\Omega_{E,0}$. Then it indues a morphism:
$$\varphi: G^{\bullet}\to \pi^*P/E$$
As in \cite[Proposition 4]{Oberdieck}, let 
$\iota: L\hookrightarrow P$ be the inclusion, then 
$L\iota^*(\varphi): L\iota^*G^{\bullet}\to L\iota^*\pi^*\ll_{P/E}\cong \widetilde{\pi}^*(\ll_{P/E})$
where $\widetilde{\pi}: K\to P\to P/E$; and $\widetilde{\pi}$ is \'etale. 
Hence $\widetilde{\pi}^*(\ll_{P/E})\cong\ll_{K}$ and the composition 
$$L\iota^*G^{\bullet}\stackrel{L\iota^*(\varphi)}{\longrightarrow}\ll_K$$
defines a symmetric obstruction theory on $K$.  Thus this symmetric obstruction theory gives a virtual fundamental class 
$[K]^{\vir}\in A_0(K)$ such that 
$$\frac{1}{|G|}\pi_*([K]^{\vir})=[P/E]^{\vir}\in A_0(P/E)$$
where $G$ is the finite group of the \'etale map $\widetilde{\pi}$.

Now we show Oberdieck's theorem in \cite{Oberdieck} for $\mu_r$-gerbe $\SS$ and $\YY=\SS\times E$. The reduced perfect obstruction theory 
$E^{\bullet}_{\red}$ defined before gives a one-dimensional virtual fundamental cycle 
$[P]^{\red}\in A_1(P)$. 
Let $\omega\in H^2(E,\zz)$ be the class of a point and $\beta^{\vee}\in H^2(\SS,\qq)$ such that $\langle \beta, \beta^{\vee}\rangle=1$. 
\begin{defn}\label{defn_reduced_invariant_YY}
Define
$$\widetilde{N}^{\YY}_{n,(\beta,d)}:=\int_{[P^{\red}]}
\tau_0(\pi_{\SS}^*(\beta^{\vee})\cup \pi_{E}^*(\omega))$$
where $\tau_0(\cdot)$ is the insertion operator defined in \cite[\S 3.6]{PT}, and 
$\pi_{\SS}: \SS\times E\to \SS$, $\pi_E: \SS\times E\to E$ are projections. This is called the incidence DT-invariant. 
\end{defn}

There is another invariant given by the Behrend function \cite{Behrend}. The elliptic curve $E$ acts on $P$ and we have the quotient 
$P/E$. Let $\nu_P: P/E\to \zz$ be the Behrend function on the quotient.  

\begin{defn}\label{defn_reduced_invariant_YY_Behrend}
Define
$$N^{\YY}_{n,(\beta,d)}:=\chi(P/E,\nu_P)$$
the weighted Euler characteristic by the Behrend function. 
\end{defn}

\begin{thm}[\cite{Oberdieck}]\label{thm_oberdieck_reduced_Behrend}
We have:
$$\widetilde{N}^{\YY}_{n,(\beta,d)}=N^{\YY}_{n,(\beta,d)}.$$
\end{thm}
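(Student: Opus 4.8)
The plan is to transport Oberdieck's proof in \cite{Oberdieck} to the $\mu_r$-gerbe $\YY=\SS\times E$, using the morphism $q_c\colon P\to E$ of Proposition \ref{prop_morphism_q}, its fibre $K=q_c^{-1}(0_E)$, and the symmetric obstruction theory assembled above. Concretely, I will factor the desired equality through the virtual degree of the quotient stack: first establish $\widetilde N^{\YY}_{n,(\beta,d)}=\deg[P/E]^{\vir}$, and then $\deg[P/E]^{\vir}=\chi(P/E,\nu_P)=N^{\YY}_{n,(\beta,d)}$, where the second step is Behrend's theorem for symmetric obstruction theories.

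\emph{Reduction of the incidence invariant.} The reduced obstruction theory of Proposition \ref{prop_reduced_POT} gives a one-dimensional class $[P]^{\red}\in A_1(P)$, and by Proposition \ref{prop_morphism_q} the morphism $q_c$ is $E$-equivariant with weight $k=\langle c,\beta\rangle+n=\langle\overline c,\overline\beta\rangle+n$, i.e. $q_c(I+x)=q_c(I)+kx$. Pushing forward, $q_{c*}[P]^{\red}$ is a multiple of $[E]\in A_1(E)$, and the primary insertion $\tau_0(\pi_\SS^*(\beta^\vee)\cup\pi_E^*(\omega))$ — normalized by $\langle\beta,\beta^\vee\rangle=1$ and carrying the point class $\omega$ in the $E$-direction matched to the target of $q_c$ — slices this one-dimensional class transversally to the $E$-orbits. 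Tracking the translation weight $k$ against the degree $|G|=|E[k]|=k^2$ of the \'etale quotient $\widetilde\pi\colon K\to P/E$, together with the relation $\frac1{|G|}\pi_*([K]^{\vir})=[P/E]^{\vir}$ from the previous subsubsection, yields $\widetilde N^{\YY}_{n,(\beta,d)}=\deg[P/E]^{\vir}$. This is exactly the bookkeeping of \cite[Lemmas 4, 5, 6]{Oberdieck}, which survives the $e^{2\pi i/r}$ twist on Chern characters as recorded in the proof of Proposition \ref{prop_morphism_q}.

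\emph{Behrend's theorem.} The complex $G^\bullet$ carries a nondegenerate symmetric form $\lambda\colon G^\bullet\to(G^\bullet)^\vee[1]$ of degree one, so $L\iota^*G^\bullet\to\ll_K$ is a symmetric obstruction theory on $K$; since $\widetilde\pi$ is \'etale with $\ll_K\cong\widetilde\pi^*\ll_{P/E}$, it descends to a symmetric obstruction theory on the Deligne--Mumford stack $P/E$. Behrend's theorem \cite{Behrend} then gives $\deg[P/E]^{\vir}=\chi(P/E,\nu_P)$. Equivalently, one may apply it on $K$, where $\deg[K]^{\vir}=\chi(K,\nu_K)$, and descend via $\nu_K=\widetilde\pi^*\nu_P$ and the multiplicativity $\chi(K,\widetilde\pi^*\nu_P)=|G|\,\chi(P/E,\nu_P)$ of weighted Euler characteristics under a finite \'etale cover; this is consistent with $\deg[K]^{\vir}=|G|\deg[P/E]^{\vir}$. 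Combining with the first step gives $\widetilde N^{\YY}_{n,(\beta,d)}=N^{\YY}_{n,(\beta,d)}$.

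The main obstacle is the first step: proving rigorously that the reduced incidence invariant equals $\deg[P/E]^{\vir}$ with the correct constant. This demands careful control of how the primary insertion interacts with the $E$-translation and with the Fourier--Mukai construction of $q_c$, and in particular a check that the twist by the primitive $\mu_r$-character does not alter the translation weight $k$ or the slicing multiplicity. By contrast the second step is essentially formal, provided Behrend's theorem is available for symmetric obstruction theories on the quotient Deligne--Mumford stack $P/E$; this follows from the scheme-theoretic case by \'etale descent of the Behrend function along $\widetilde\pi$.
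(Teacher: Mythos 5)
Your overall architecture agrees with the paper's: both arguments factor the desired equality through $\deg[P/E]^{\vir}$, and your second half --- Behrend's theorem applied to the symmetric obstruction theory $L\iota^*G^{\bullet}\to \ll_K$ descended along the finite \'etale map $\widetilde{\pi}\colon K\to P/E$, with $\nu_K=\widetilde{\pi}^*\nu_P$ and multiplicativity of weighted Euler characteristics --- is correct and is exactly what identifies $\deg[P/E]^{\vir}$ with $\chi(P/E,\nu_P)$.

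The gap is in your first step, which you yourself flag as the main obstacle: as written it is not a proof, and the mechanism you propose is not the one that works. The weight $k$ of $q_c$ and the order $|G|=k^2$ of $\widetilde{\pi}$ enter only into the \emph{construction} of $[P/E]^{\vir}$ via $\tfrac{1}{|G|}\pi_*[K]^{\vir}=[P/E]^{\vir}$ and cancel out; they play no role in comparing the incidence integral with $\deg[P/E]^{\vir}$, and Oberdieck's Lemmas 4--6 concern only the construction and equivariance of $q_c$, not the virtual classes. The ingredient actually needed is the identity $[P]^{\red}=\pi^*[P/E]^{\vir}$. The paper proves this by (i) descending the universal pair $\mathbb{I}$ along $\rho\colon \YY\times P\to(\YY\times P)/E$ to $\overline{\mathbb{I}}$, so that $E^{\bullet}=\pi^*\hH^{\bullet}$ with $\hH^{\bullet}=R\pi_{P/E*}R\Hom(\overline{\mathbb{I}},\overline{\mathbb{I}})_0[2]$, and (ii) Siebert's Fulton-class description of virtual classes, $[\,\cdot\,]^{\vir}=\{s((\cdot)^{\vee})\cap c_F\}$, together with the \'etale invariance of Fulton classes applied to the square relating $E\times K$, $K$, $P$ and $P/E$. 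Once that pullback identity is in hand, the incidence invariant is a projection-formula computation: $\Ch_2(\mathbb{I})=\rho^*\Ch_2(\overline{\mathbb{I}})$, the factor $\pi_E^*(\omega)$ integrates out the $E$-fibre of $\rho$, and one is left with $\langle D,\beta\rangle[P/E]^{\vir}=[P/E]^{\vir}$ --- no transversal ``slicing'' of $E$-orbits and no appearance of $k$. Without establishing $[P]^{\red}=\pi^*[P/E]^{\vir}$ (or an equivalent descent statement for the reduced obstruction theory), your first step does not close.
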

\begin{proof}
Let 
$\pi: P\to P/E$ be the projection. We first show that 
$\pi^*[P/E]^{\vir}=[P^{\red}]$.  We use the diagram
\[
\xymatrix{
E\times K\ar[r]^{\pi_K}\ar[d]_{m}& K\ar[d]^{\widetilde{\pi}}\\
P\ar[r]^{\pi}& P/E
}
\]
From the construction of reduced obstruction theory $E^{\bullet}_{\red}$ (which differs from $E^{\bullet}$ by $\sO_P$), we have:
$$[P]^{\red}=\{s((E^{\bullet})^{\vee})\cap c_F(P)\}_{1}$$
where $s((E^{\bullet})^{\vee})$ is the Segre class and $c_F(P)$ is the Fulton class of $P$. Here we use the description of virtual class of Siebert \cite{Siebert} using Fulton class. 
The map $m$ is \'etale, and $\deg(\widetilde{\pi})=|G|$, we have:
$$[P]^{\red}=\frac{1}{|G|}m_{*}
\{s((m^*E^{\bullet})^{\vee})\cap m^*c_F(P)\}_{1}.$$
Form the following diagram:
\[
\xymatrix{
\YY\times P\ar[r]^{\pi_P}\ar[d]_{\rho}& P\ar[d]^{\pi}\\
(\YY\times P)/E\ar[r]^{\quad\pi_{P/E}}& P/E
}
\]
there exists a universal pair $\overline{\mathbb{I}}\in (\YY\times P)/E$ such that $\rho^*\overline{\mathbb{I}}=\mathbb{I}$, and $\mathbb{I}$ is the universal stable pair 
on $\YY\times P$. Let 
$$\hH^{\bullet}:=R\pi_{P/E*}R\Hom(\overline{\mathbb{I}}, \overline{\mathbb{I}})_0[2].$$
Then $\hH^{\bullet}$ is of amplitude $[-1,0]$ and 
$$\pi^*\hH^{\bullet}=E^{\bullet}=R\pi_{P*}R\Hom(\mathbb{I}, \mathbb{I})_0[2].$$
From Fulton's Chern class explanation of virtual classes, if we let $h: Y\to P/E$ be any proper \'etale morphism of $\deg(h)$ with $Y$ a scheme, we have: 
$$[P/E]^{\vir}=\frac{1}{\deg(h)}h_*\{s((h^*\hH^{\bullet})^{\vee})\cap c_F(Y)\}_0.$$

Thus since Fulton's Chern classes are invariant under \'etale pullback,  and 
$m^*E^{\bullet}=m^*\pi^*\hH^{\bullet}=\pi_{K}^*\widetilde{\pi}^*\hH^{\bullet}$, we have 
$$[P]^{\red}=\frac{1}{|G|}m_*\{s((\pi_K^*\widetilde{\pi}^*\hH^{\bullet})^{\vee})\cap \pi_K^* c_F(K)\}_1$$
$$=\frac{1}{|G|}m_*\pi_K^*\{s((\widetilde{\pi}^*\hH^{\bullet})^{\vee})\cap \pi_K^* c_F(K)\}_0
=\pi^*[P/E]^{\vir}.$$
From the definition of the invariants in Definition \ref{defn_reduced_invariant_YY} and Definition \ref{defn_reduced_invariant_YY_Behrend}, and look at the diagram:
\[
\xymatrix{
\YY\ar[r]^{\pi_{\YY}}& \YY\times P\ar[r]^{\pi_P}\ar[d]_{\rho}& P\ar[d]^{\pi}\\
&(\YY\times P)/E\ar[r]^{\quad \pi_{P/E}}& P/E
}
\]
let 
$D\in H^2(\SS,\qq)$ be such that $\langle \beta, D\rangle=1$, then we have 
$\widetilde{N}^{\YY}_{n, (\beta,d)}$ is the degree of  
$$\textbf{D}:=(-\Ch_2(\mathbb{I})\cdot \pi_{\YY}^*(\pi_{\SS}^*(D)\cup \pi_E^*(\omega))\cap \pi_{P}^*[P]^{\red}.$$
We need to show $(\pi_{P/E}\circ \rho)_*\textbf{D}=[P/E]^{\vir}$.
We have 
$\Ch_2(\mathbb{I})=\rho^*\Ch_2(\overline{\mathbb{I}})$, therefore 
$\rho_*\textbf{D}=\rho_*(\pi_E^*(\omega)\cap\rho^*(\alpha))$ for 
$\alpha=(-\Ch_2(\overline{\mathbb{I}})\cup\pi_{\SS}^*(D))\cap \pi_{P/E}^*[P/E]^{\vir}$.
Then the same argument in \cite[\S 4.4]{Oberdieck} shows that 
$\rho_*\textbf{D}=\alpha$, and 
$(\pi_{P/E}\circ\rho)_*\textbf{D}=\pi_{P/E*}\alpha=\langle D, \beta\rangle [P/E]^{\vir}
=[P/E]^{\vir}$.
\end{proof}

\section{Behrend equals to Reduced invariants of Oberdieck}\label{sec_Behrend_Oberdieck}

In this section we perform the following invariants on the Calabi-Yau threefold DM stack 
$\XX:=\SS\times \cc$ for the $\mu_r$-gerbe $\SS\to S$ over a K3 surface $S$. 

For the Calabi-Yau DM stack $\XX=\SS\times \cc$, we choose a generating sheaf  on $\XX$ such that it is the pullback of the generating sheaf $\Xi$ on $\SS$ by the projection $\XX\to \SS$. 
We still denote this generating sheaf by $\Xi$. 
We consider the stable pairs on $(\sO_{\XX}\otimes\Xi\stackrel{s}{\rightarrow}F)$ on $\XX$. 
For $n\in \qq$, we let $P_n(\XX,\iota_*\beta)=P_n(\SS\times\cc, \iota_*\beta)$, where  
$\iota: \SS\hookrightarrow \XX$ is the inclusion,  be the moduli stack of stable pairs on $\XX$ with number
$n$ given by 
$$n=\int_{\SS\times\pp^1}\Ch(F)\cdot \Td_{\SS\times\pp^1}$$
and class $[F]=\iota_*\beta$. 
Since $\XX$ is also a Calabi-Yau DM stack, $P_n(\XX,\iota_*\beta)$ admits a symmetric obstruction theory and there is virtual fundamental class 
$[P_n(\XX,\beta)]^{\vir}$.  
There is a $\cc^*$-action on $\XX$ by scaling the fiber $\cc$ and induces an action on the moduli stack $[P_n(\XX,\beta)]^{\vir}$.  From \cite{GP}, there is an induced virtual fundamental class on the fixed locus 
$(P_n(\XX,\beta))^{\cc^*}$.  
\begin{defn}\label{defn_red_invariant_XX}
$$P_{n,\beta}^{\red}(\XX):=\int_{[(P_n(\XX,\beta))^{\cc^*}]^{\vir}}\frac{1}{e(N^{\vir})}\in\qq$$
where $e(N^{\vir})$ is the Euler class of the virtual normal bundle of the $\cc^*$-action.
\end{defn}

On the other hand, we have the Behrend function 
$$\nu_{P}: P_n(\XX,\iota_*\beta)\to \zz$$
on the moduli stack and the invariant 
$\chi(P_n(\XX,\iota_*\beta), \nu_P)$ is Behrend's weighted Euler characteristic.  Similar to \cite{MT}, 
let us form the generating function:
\begin{equation}\label{eqn_generating_function1}
\Z_{P}^{\red}(\XX; q, v):=\sum_{\alpha=(\beta,n)}P_{n,\beta}^{\red}(\XX)q^n v^{\beta}
\end{equation}
which is the generating function of rational residue reduced stable pair invariants of $\XX$. 

We also have
\begin{equation}\label{eqn_generating_function2}
\Z_{P}^{\chi}(\XX; q, v):=\sum_{\alpha=(\beta,n)}\chi(P_n(\XX,\iota_*\beta), \nu_P)q^n v^{\beta}
\end{equation}
which is the generating function of Behrend invariants of $\XX$. 
Our goal in this section is to show a similar result in \cite[Theorem 1.1]{MT}.

First we prove
\begin{lem}\label{lem_quotient_generating_function}
Let us form the generating function 
$$Z_{P}^{\chi}(\YY/E; q, v):=\sum_{\alpha=(\beta,n)}\chi(P_n(\YY,\iota_*\beta)/E, \nu_P)q^n v^{\beta},$$
where $\YY=\SS\times E$ and we consider the quotient of the moduli stack of stable pairs.  Then we have:
$$\Z_{P}^{\red}(\YY/E; q, v)=-\log(1+\Z_{P}^{\chi}(\XX; q, v)).$$
\end{lem}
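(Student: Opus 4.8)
The plan is to reduce the whole statement to an identity of Behrend-weighted Euler characteristics and then run the support-stratification / configuration-space argument of Maulik--Thomas in the gerbe setting. The first move is to invoke Theorem \ref{thm_oberdieck_reduced_Behrend}: since $\widetilde{N}^{\YY}_{n,(\beta,0)}=N^{\YY}_{n,(\beta,0)}=\chi(P_n(\YY,\iota_*\beta)/E,\nu_P)$, the reduced series and the Behrend series of the quotient coincide, so $\Z_P^{\red}(\YY/E;q,v)=Z_P^\chi(\YY/E;q,v)$. It therefore suffices to prove $Z_P^\chi(\YY/E;q,v)=-\log(1+\Z_P^\chi(\XX;q,v))$ at the level of $\nu$-weighted Euler characteristics.

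For the main identity I would stratify the moduli by the support over the base curve. A stable pair $(F,s)$ on $\YY=\SS\times E$ with $[F]=\iota_*\beta$ of class $(\beta,0)$ has $F$ finite over $E$, so its set-theoretic image $Z=\pi_E(\Supp F)\subset E$ is a finite set; I stratify $P(\YY)$ by $k:=|Z|$. On the locus where the support consists of $k$ distinct points $\{e_1,\dots,e_k\}$, disjointness of supports forces a canonical splitting $F=\bigoplus_i F_i$, $s=(s_i)$, with $(F_i,s_i)$ a stable pair supported set-theoretically at $e_i$, and conversely such tuples glue back. Since $E$ is analytically $\cc$ near each $e_i$ and $\YY=\SS\times E$, the local pairs at $e_i$ are identified with pairs on $\XX=\SS\times\cc$ supported at $0$; I let $b:=\sum\chi(P^{\mathrm{fib}}_0,\nu)\,q^n v^\beta$ denote their generating series over nonzero classes, formed with the same generating sheaf $\Xi$. Étale-locally over the configuration of support points the stratum is a product of the configuration space with $\prod_i P^{\mathrm{fib}}_{e_i}$, so by multiplicativity $\nu_{M\times N}=\nu_M\boxtimes\nu_N$ of Behrend functions the $\nu$-weighted Euler characteristic factorizes.

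Two Euler-characteristic computations then give the result. For $\XX$ the configuration lives on $\cc$: $\mathrm{Conf}_k(\cc)$ is smooth of dimension $k$ (Behrend sign $(-1)^k$) with $\chi_{\mathrm{top}}(\mathrm{Conf}_k(\cc))=\prod_{j=0}^{k-1}(1-j)$, which vanishes for $k\geq 2$ and equals $1$ for $k=1$; hence only single-point clusters survive and $\Z_P^\chi(\XX)=-b$. For $\YY/E$ the configuration lives on $E$ modulo translation: $\mathrm{Conf}_k(E)/E$ is smooth of dimension $k-1$ (Behrend sign $(-1)^{k-1}$) and, pinning $e_1=0$ and using $\chi(E)=0$, has $\chi_{\mathrm{top}}=(-1)^{k-1}(k-1)!$, so its $\nu$-weighted Euler characteristic is $(-1)^{k-1}(-1)^{k-1}(k-1)!=(k-1)!$. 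The $k$-th stratum of $P(\YY)/E$ thus contributes $\tfrac{1}{k!}(k-1)!\,b^k=\tfrac{1}{k}b^k$, whence $Z_P^\chi(\YY/E)=\sum_{k\geq1}\tfrac{1}{k}b^k=-\log(1-b)$. Substituting $b=-\Z_P^\chi(\XX)$ gives $Z_P^\chi(\YY/E)=-\log(1+\Z_P^\chi(\XX))$, as claimed. Note that the asymmetry between $\XX$ (no quotient, $\chi(\cc)=1$) and $\YY$ (quotient by $E$, $\chi(E)=0$) is exactly what produces the logarithm, and that the Behrend signs on the support-location directions are what convert $\sum \tfrac1k b^k$ into $-\log(1+\Z_P^\chi(\XX))$ with the correct sign.

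I expect the main obstacle to be justifying the factorization of the Behrend function in the stacky/gerbe setting: namely that near a pair with distinct support the moduli \emph{stack} is an étale product of the configuration directions and the local fibre moduli, so that $\nu_P$ genuinely splits, and that the power-structure / exponential formalism for $\nu$-weighted Euler characteristics (in the sense of Behrend--Fantechi and Joyce--Song) applies verbatim on the gerbe. One must also check that the $\mu_r$-twist and the chosen generating sheaf $\Xi$ enter identically on both sides, so that they only affect what $b$ is while leaving $\Z_P^\chi(\XX)=-b$ and $Z_P^\chi(\YY/E)=-\log(1-b)$ intact. For schemes this is the content of \cite{MT}; since every issue is local on $\SS$ and the ambient spaces are products $\SS\times(\text{base})$, the generalization is routine, with the bookkeeping of the twisted Behrend function being the only point requiring care.
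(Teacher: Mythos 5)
Your strategy is the paper's strategy (and Maulik--Thomas's): stratify the moduli of pairs on $\YY=\SS\times E$ by the set of fibers supporting the pair, factor each stratum as (fibre data)\,$\times$\,(configuration of support points), and extract the logarithm from $\chi\bigl((E^k\setminus\Delta_k)/E\bigr)=(-1)^{k-1}(k-1)!$ against the vanishing of $\chi(\cc^k\setminus\Delta_k)$ for $k\geq 2$. The final formula is correct, and your preliminary reduction via Theorem \ref{thm_oberdieck_reduced_Behrend} is a reasonable reading of the left-hand side (which the paper's own proof simply takes to be $\Z_P^{\chi}(\YY/E;q,v)$).

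The gap is in the Behrend-function bookkeeping, precisely at the point you flag as the main obstacle. You put the sign $(-1)^{\dim}$ on the configuration directions and the \emph{intrinsic} Behrend function on the fibre moduli $P^{\mathrm{fib}}_0$. But the support strata are locally closed, not open, in $P$: a pair supported on one fiber deforms to pairs supported on several, so $P^{\mathrm{fib}}_0\times\cc$ is not a local model of $P_n(\XX,\iota_*\beta)$ near the single-fiber locus, and $\nu_P$ restricted to that locus is not $\pm$ the intrinsic Behrend function of $P^{\mathrm{fib}}_0\times\cc$. In particular your identity $\Z_P^{\chi}(\XX)=-b$ is not established: by $\cc^*$-localization of the weighted Euler characteristic (equivalently, because $\nu_P$ is \emph{constant} along the $\cc$-translation direction, which therefore contributes $\chi(\cc)=+1$ with no sign), one has $\Z_P^{\chi}(\XX)=+\sum_\alpha\chi\bigl(P^0_\alpha,\nu_P|_{P^0_\alpha}\bigr)Q^\alpha$, and there is no reason for $\chi(P^0_\alpha,\nu_P|_{P^0_\alpha})$ to equal $-\chi(P^0_\alpha,\nu_{P^0_\alpha})$. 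The correct inputs, which is what the paper's proof uses, are: (i) translation-invariance of $\nu_P$, so the configuration directions carry no Behrend sign, only their topological Euler characteristic; (ii) near a pair with $k$ distinct support points, $P(\YY)$ is analytically a product of $k$ local models of $P(\XX)$ (each already containing its own support-location direction), whence $\nu_{P(\YY)}=\prod_j\nu_{P(\XX)}$ there; and (iii) a single overall sign from $\nu_{P(\YY)}=-\pi^*\nu_{P(\YY)/E}$ for the relative-dimension-one quotient. These give the $k$-th stratum contribution $\frac{(-1)^k}{k}\bigl(\Z_P^{\chi}(\XX)\bigr)^k$ directly. Your version lands on the same answer only because the same unjustified sign convention is applied to both $\XX$ and $\YY/E$ and the discrepancies cancel through the substitution $b=-\Z_P^{\chi}(\XX)$; as written, neither of the two sub-identities $\Z_P^{\chi}(\XX)=-b$ and $\Z_P^{\chi}(\YY/E)=-\log(1-b)$ is proved.
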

\begin{proof}
The proof is the same as in \cite[Proposition 3.2]{MT}, since in that proof  the key points only happen at the elliptic curve 
$E$. We sketch it here. 

By the $\cc^*$-action on $\XX=\SS\times \cc$, the induced action on $P_n(\XX,\iota_*\beta)$ preserves the Behrend function $\nu_P$
and let $P^0_{\alpha}\subset P_n(\XX,\iota_*\beta)$ be the fixed point locus, which consists of stable pairs with set-theoretic supports on 
$\SS\times\{0\}\subset \SS\times\cc$.  Therefore the weighted Euler characteristic $\chi(P_n(\XX,\iota_*\beta), \nu_P)$ can be localized to 
$P_{\alpha}=\chi(P_{\alpha}^0, \nu_P|_{P_{\alpha}^0})$. 

For the Calabi-Yau DM stack $\YY=\SS\times E$, the exponential map $e^{(-)}$ gives an isomorphism on an analytic neighborhood of any point $p\in E$ and a neighborhood of $0\in\cc$. Then this translates 
stable pairs from $\SS\times\{0\}\subset \XX$ to 
$\SS\times\{p\}\subset \YY$, and 
$P_{\alpha}^0\times E$ is the moduli stack of stable pairs on $\YY$ supported set theoretically on a single fiber $\SS\times\{p\}$.
Then we stratify the moduli stack $P_n(\YY,\iota_*\beta)$ by the minimal number $k$ of fibers $\SS$ on which the stable pairs are set-theoretically supported.  Let the distinct charges be 
$\alpha_1,\cdots, \alpha_l$.  Let $k_i$ be the number of fibers $\SS$ such that the charge is $\alpha_i=(\beta_i, n_i)$, then 
$$\sum_{i=1}^{l}k_i=k; \quad  \sum_{i=1}^{l}k_i\alpha_i=\alpha.$$
Then the same argument in \cite[Proposition 3.2]{MT} shows that the stratum of $P_n(\YY,\iota_*\beta)$ with this data is:
\begin{equation}\label{eqn_lem1}
\left(P^0_{\alpha_1}\times\cdots\times P^0_{\alpha_1}\right)\times\cdots\times 
\left(P^0_{\alpha_l}\times\cdots\times P^0_{\alpha_l}\right)\times \left(E^k\setminus \Delta_k\right)/(S_{k_1}\times\cdots\times S_{k_l})
\end{equation}
where the action acts freely and $\Delta_k$ is the big diagonal. 
Therefore the weighted Euler characteristic of (\ref{eqn_lem1}) is:
$$-\frac{P_{\alpha_1}^{k_1}}{k_1!}\cdot \frac{P_{\alpha_2}^{k_2}}{k_2!}\cdots \frac{P_{\alpha_l}^{k_l}}{k_l!}
\chi\left(\frac{E^k\setminus \Delta_k}{E}\right)=
(-1)^k\frac{1}{k}\mat{c} k\\
k_1,k_2,\cdots, k_l\rix P_{\alpha_1}^{k_1}\cdots P_{\alpha_l}^{k_l}.
$$
Then summing over all strata and all $\alpha$, we have:
\begin{align*}
&\sum_{k=1}^{\infty}\sum_{\substack{l, k_i, \alpha_i  \text{distinct}\\
\sum_{i=1}^{l}k_i=k}} \frac{(-1)^k}{k}\mat{c} k\\
k_1,k_2,\cdots, k_l\rix (P_{\alpha_1} Q^{\alpha_1})^{k_1}\cdots  (P_{\alpha_l} Q^{\alpha_l})^{k_l} \\
&=\sum_{k=1}^{\infty}\frac{1}{k}\left(-\sum_{\alpha}P_{\alpha}Q^{\alpha}\right)^{k}\\
&=-\log (1+\sum_{\alpha}P_{\alpha}Q^{\alpha}).
\end{align*}
\end{proof}

\begin{thm}\label{thm_red_Behrend_MT}
We have
$$\Z_{P}^{\red}(\XX; q, v)=-\log(1+\Z_{P}^{\chi}(\XX; q, v)).$$
\end{thm}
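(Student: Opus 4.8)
The plan is to combine the two results already established in this section, namely Lemma \ref{lem_quotient_generating_function} and Theorem \ref{thm_oberdieck_reduced_Behrend}, by showing that the reduced invariants of $\XX=\SS\times\cc$ agree with those of the quotient $\YY/E$ where $\YY=\SS\times E$. Concretely, the target equation reads $\Z_{P}^{\red}(\XX;q,v)=-\log(1+\Z_{P}^{\chi}(\XX;q,v))$, while Lemma \ref{lem_quotient_generating_function} already gives $\Z_{P}^{\red}(\YY/E;q,v)=-\log(1+\Z_{P}^{\chi}(\XX;q,v))$. Thus the theorem reduces entirely to proving the identity of generating functions
\begin{equation*}
\Z_{P}^{\red}(\XX;q,v)=\Z_{P}^{\red}(\YY/E;q,v),
\end{equation*}
i.e.\ a coefficientwise equality $P_{n,\beta}^{\red}(\XX)=\widetilde N^{\YY}_{n,(\beta,d)}$ (or rather its $E$-quotient incarnation) for each fixed $(\beta,n)$.

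First I would unwind the two coefficient definitions. The left side $P_{n,\beta}^{\red}(\XX)$ is the $\cc^*$-localized residue reduced stable pair invariant of Definition \ref{defn_red_invariant_XX}, computed on the $\cc^*$-fixed locus $(P_n(\XX,\beta))^{\cc^*}$, which by the argument in the proof of Lemma \ref{lem_quotient_generating_function} consists of stable pairs set-theoretically supported on $\SS\times\{0\}$. The right side, via Theorem \ref{thm_oberdieck_reduced_Behrend}, is $N^{\YY}_{n,(\beta,d)}=\chi(P/E,\nu_P)$, equivalently the reduced incidence invariant $\widetilde N^{\YY}_{n,(\beta,d)}$ of Definition \ref{defn_reduced_invariant_YY}. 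The key geometric input is already isolated in the proof of Lemma \ref{lem_quotient_generating_function}: the exponential map identifies an analytic neighborhood of any $p\in E$ with a neighborhood of $0\in\cc$, so the $\cc^*$-fixed locus $P^0_\alpha\subset P_n(\XX,\iota_*\beta)$ and the fiber of the moduli stack of stable pairs on $\YY$ supported on a single $\SS\times\{p\}$ are identified. I would promote this set-theoretic identification to an equality of the relevant virtual contributions, matching the $\cc^*$-residue computation on $\XX$ with the reduced obstruction theory computation on $\YY/E$ established in Section \ref{sec_stable_pair_threefold_DM}.

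The cleanest route is to appeal directly to the structure of the proof of \cite[Theorem 1.1]{MT}, which is exactly the $r=1$ case: the equality of the two $\log$-expressions there is proved by comparing the $\cc^*$-fixed virtual residue on $S\times\cc$ with the $E$-quotient reduced theory on $S\times E$, and every step of that comparison is local near the support fiber and hence unaffected by the $\mu_r$-gerbe structure. Having generalized both Oberdieck's theorem (Theorem \ref{thm_oberdieck_reduced_Behrend}) and the stratification lemma (Lemma \ref{lem_quotient_generating_function}) to the gerbe setting, I would simply observe that the remaining comparison $\Z_{P}^{\red}(\XX)=\Z_{P}^{\red}(\YY/E)$ is formally identical to the untwisted case and carries over verbatim, since the reduced perfect obstruction theory of Proposition \ref{prop_reduced_POT} and the symmetric obstruction theory on the quotient are set up precisely so that the local-to-global matching used by Maulik--Thomas applies.

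The main obstacle I anticipate is the bookkeeping that guarantees the two virtual theories assign the \emph{same} local contribution to each stable pair, not merely the same underlying locus. On $\XX$ one uses a symmetric obstruction theory together with a $\cc^*$-localization and the Euler class of the virtual normal bundle $e(N^{\vir})$, whereas on $\YY/E$ one uses the reduced/symmetric obstruction theory constructed via the global vector field $\textbf{v}$ and the semi-regularity map $\sr$. Checking that the deformation-obstruction spaces $\Ext^i_{\YY}(I,I)_0$ restricted to a support fiber reduce, under the exponential identification, to the $\cc^*$-fixed deformation-obstruction spaces on $\XX$ with compatible perfect pairings—so that the residue and the reduced degree coincide term by term—is where the real content lies; but this is precisely the computation Maulik--Thomas perform, and the earlier gerbe generalizations in this paper ensure each ingredient survives. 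I would therefore present this final step as a reduction to \cite[Theorem 1.1]{MT} via the already-established gerbe analogues, emphasizing that all modifications are local near a single fiber and hence insensitive to the $\mu_r$-twist.
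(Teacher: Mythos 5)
Your reduction is the same as the paper's: combine Lemma \ref{lem_quotient_generating_function} with Theorem \ref{thm_oberdieck_reduced_Behrend} so that the whole theorem comes down to the coefficientwise identity $P^{\red}_{n,\beta}(\XX)=\widetilde{N}^{\YY}_{n,(\beta,0)}$. Up to that point your skeleton matches the paper exactly.

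The gap is in how you propose to prove that identity. You want to ``promote'' the exponential-map identification of a neighborhood of $p\in E$ with a neighborhood of $0\in\cc$ to an equality of virtual contributions, and you describe the Maulik--Thomas comparison as being ``local near the support fiber.'' That is not what their argument (or the paper's) does, and the local strategy does not obviously close: $\widetilde{N}^{\YY}_{n,(\beta,0)}$ is the degree of the one-dimensional reduced class $[P_n(\YY,\iota_*\beta)]^{\red}$ capped with the incidence insertion $\tau_0(\pi_{\SS}^*(\beta^{\vee})\cup\pi_E^*(\omega))$, and there is no torus action on $\YY=\SS\times E$ whose fixed locus is a single fiber, so nothing localizes the reduced class to $\SS\times\{p\}$. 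The exponential map is used only in Lemma \ref{lem_quotient_generating_function} to stratify a Behrend-weighted Euler characteristic, which is a constructible statement, not a virtual-class one. The actual proof degenerates $E$ to a one-nodal rational curve and applies the degeneration formula for reduced stable pair invariants in the DM-stack setting (via \cite{LW}, \cite{Zhou}) twice, together with a vanishing argument showing that only the term with $(\beta_2,n_2)=(\beta,n)$ contributes, so that $\widetilde{N}^{\YY}_{n,(\beta,0)}=\int_{[P_n(\SS\times\pp^1/\SS_{\infty},\iota_*\beta)]^{\red}}\tau_0(\iota_*\beta^{\vee})$; only then does one perform $\cc^*$-localization on $\pp^1$, where the equivariant lift of the insertion evaluates to the equivariant parameter and yields $P^{\red}_{n,\beta}(\XX)$. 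These inputs --- the degeneration formula for reduced classes on gerbes and the vanishing of the remaining degeneration terms --- are global statements that are not supplied by the observation that the $\mu_r$-twist only affects things locally; a complete writeup has to invoke them explicitly.
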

\begin{proof}
From Lemma \ref{lem_quotient_generating_function}, we need to prove:
$$\Z_P^{\chi}(\YY/E; q,v)=\Z_P^{\red}(\XX; q,v).$$
We show that 
$$P_{n,\beta}^{\red}(\XX)=\chi(P/E, \nu_P)=N_{n,\beta}^{\YY},$$
where $P:=P_n(\YY, \iota_*\beta)$.  From Theorem \ref{thm_oberdieck_reduced_Behrend}, $N_{n,\beta}^{\YY}=\widetilde{N}_{n,\beta}^{\YY}$.  Thus we need to show 
$$P_{n,\beta}^{\red}(\XX)=\widetilde{N}_{n,\beta}^{\YY}.$$
Here $\widetilde{N}_{n,\beta}^{\YY}=\widetilde{N}_{n,(\beta,0)}^{\YY}$ and 
\begin{equation}\label{eqn_formula_thm_1}
\widetilde{N}_{n,(\beta,0)}^{\YY}=\int_{[P_n(\YY, \iota_*\beta)]^{\red}}\tau_0(\iota_*\beta^{\vee})
\end{equation}
where we let $\beta^{\vee}\in H^2(\SS,\qq)$ such that $\int_{\SS}\beta\cup \beta^{\vee}=1$.
We take the degeneration of the elliptic  $E$ to a one-nodal rational elliptic curve. The degeneration formula 
\cite{LW}, \cite{Zhou} expresses the right side of (\ref{eqn_formula_thm_1}) as:
\begin{equation}\label{eqn_formula_thm_2}
\int_{[P_n(\SS\times\pp^1/(\SS_0\cup \SS_{\infty}),\iota_*\beta)]^{\red}}\tau_0(\beta^{\vee})
\end{equation}
We use the degeneration formula again to calculate (\ref{eqn_formula_thm_2}), and degenerate $\SS\times\pp^1/\SS_{\infty}$ to:
$$(\SS\times\pp^1)/\SS_{\infty}\bigcup_{\SS_{\infty}\sim \SS_0}(\SS\times\pp^1)/(\SS_0\cup\SS_{\infty})$$ and the degenerate formula gives:
\begin{equation}\label{eqn_formula_thm_3}
\int_{[P_n(\SS\times\pp^1/\SS_{\infty},\iota_*\beta)]^{\red}}\tau_0(\iota_*\beta^{\vee})
=
\sum_{(\beta_i, n_i)}\int_{[P_{n_1}(\SS\times\pp^1/\SS_{\infty},\iota_*\beta_1)\times 
P_{n_2}(\SS\times\pp^1/(\SS_0\cup \SS_{\infty}),\iota_*\beta_2)]^{\red}}1\times \tau_0(\iota_*\beta^{\vee})
\end{equation}
where the sum is over all $(\beta_1, n_1), (\beta_2, n_2)$ whose sum is $(\beta,n)$. Same analysis as in \cite[Theorem 4.2]{MT} shows that only the case 
$(\beta_2, n_2)=(\beta,n)$ and $(\beta_1, n_1)=0$ contributes to (\ref{eqn_formula_thm_2}). 
Therefore from  (\ref{eqn_formula_thm_3}), 
\begin{equation}\label{eqn_formula_thm_4}
\widetilde{N}_{n,(\beta,0)}^{\YY}=\int_{[P_n(\SS\times\pp^1/\SS_{\infty},\iota_*\beta)]^{\red}}\tau_0(\iota_*\beta^{\vee})
\end{equation}
Now we use virtual localization to the $\cc^*$-action on $\pp^1$ with weight $+1$ on tangent space at $0$. Lift $\iota_*\beta^{\vee}$ to $H^*_{\cc^*}(\SS\times\pp^1)$ by the map
$\iota: \SS\times\{0\}\hookrightarrow \SS\times\pp^1$.  Then localization formula reduces the integral (\ref{eqn_formula_thm_4}) to:
$$\int_{[P_n(\XX, \iota_*\beta)^{\cc^*}]^{\red}}\frac{1}{e(N^{\vir})}\tau_0(\iota_*\beta^{\vee}).$$
Since over $\SS_0$, $\tau_0(\iota_*\beta^{\vee})$ is 
$c_1(t)\int_{\SS}\beta\cup\beta^{\vee}=t$, we have 
$$\int_{[P_n(\XX, \iota_*\beta)^{\cc^*}]^{\red}}\frac{1}{e(N^{\vir})}=P_{n,\beta}^{\red}(\XX).$$
\end{proof}

\begin{rmk}
A similar formula as in Theorem \ref{thm_red_Behrend_MT} for  twisted Joyce-Song stable pair invariants also holds with the same proof 
in  \cite[Proposition 2.6]{Jiang-Tseng-K3}.
\end{rmk}

\section{Wall crossing for  Calabi-Yau gerbes and Toda's multiple cover formula}\label{sec_KKV_Toda}

In this section we generalize Toda's wall crossing formula for the D0-D2-D6 bound states to $\mu_r$-gerbes and get the multiple cover formula for 
counting semistable coherent sheaves on the gerbe. 

\subsection{Wall-crossing in D0-D2-D6 bound states for $\mu_r$-gerbes}\label{subsec_D0-D2-D6}

In this section we let $\XX$ be a $\mu_r$-gerbe over a smooth projective Calabi-Yau threefold $X$.  We study the wall crossing formula of \cite[Formula (28)]{Toda_Kyoto}, and also \cite{Toda_JAMS} on the $\mu_r$-gerbe 
$\XX$.  We follow the techniques in Toda's papers. The arguments should work for any Calabi-Yau threefold DM stack $\XX$. 

\subsubsection{Toda's weak stability conditions on the category of D0-D2-D6 bound states}\label{subsubsec_Toda_stability)D0-D2-D6}

Let $\XX\to X$ be a $\mu_r$-gerbe over $X$, which is given by a class $[\XX]\in H^2(X,\mu_r)$.  Let 
$\Coh_{\leq 1}(\XX)$ be the subcategory of the category of coherent sheaves $\Coh(\XX)$ on $\XX$ consisting of coherent sheaves 
$E$ on $\XX$ such that 
$\supp(E)\leq 1$.  Let 
$$\sA_{\XX}:=\langle \sO_{\XX}, \Coh_{\leq 1}(\XX)[-1]\rangle_{\ex}.$$
Then $\sA_{\XX}$ is the heart of a bounded $t$-structure on a derived category $D_{\XX}$ (this is proved in \cite[Lemma 3.5]{Toda_JAMS} which works for the gerbe $\XX$):
$$\sD_{\XX}:=\langle \sO_{\XX}, \Coh_{\leq 1}(\XX)\rangle_{\tr}\subset D^b(\Coh(\XX)).$$ 
So $\sA_{\XX}$ is an abelian category. From physics, $\sD_{\XX}$ is called the category of D0-D2-D6 bound states for the gerbe $\XX$. 

\begin{rmk}
It is possible to apply stability conditions and wall crossing in \cite{Toda_JAMS}, \cite{Bridgeland_JAMS} to this triangulated category $\sD_{\XX}$ to prove the 
DT/PT-correspondence for the counting invariants for $\XX\to X$.  Since the decomposition conjecture of  DT and PT invariants for the $\mu_r$-gerbe $\XX\to X$ is proved in 
\cite{GT}, the DT/PT-correspondence is already known. 
\end{rmk}

Let us recall how we construct the category $\sA_{\XX}$. Let 
$\Coh_{\geq 2}(\XX)$ be the subcategory of $\Coh(\XX)$ given by 
$$\Coh_{\geq 2}(\XX):=\{E\in \Coh(\XX)| \Hom(\Coh_{\leq 1}(\XX), E)=0\}.$$
Then $(\Coh_{\leq 1}(\XX), \Coh_{\geq 2}(\XX))$ is a torsion pair in the sense of \cite[Definition 3.1]{Toda_JAMS} or \cite{HRS}. There is an abelian category 
$\Coh^{\dag}(\XX)$
with respect to this torsion pair and 
$$\sA^{p}_{\XX}:=\Coh^{\dag}(\XX)=\langle\Coh_{\geq 2}(\XX)[1], \Coh_{\leq 1}(\XX)\rangle_{\ex}.$$
The subcategory $\Coh_{\leq 1}(\XX)\subset \sA^{p}_{\XX}$ is closed under subobjects and quotients in  $\sA^{p}_{\XX}$. The category $\sA_{\XX}$ is the intersection
$$\sA_{\XX}=\sD_{\XX}\cap \sA^{p}_{\XX}[-1].$$
Let us construct the weak stability conditions on $\sD_{\XX}$ following Toda. 

\begin{defn}\label{defn_NS_group}
For the Calabi-Yau threefold DM stack $\XX\to X$, set 
$$\Gamma_0:=\qq\oplus H_2(\XX,\qq)$$
and the group homomorphism 
$$\cl_0: K(\Coh_{\leq 1}(\XX))\to \Gamma_0$$
is given by
$E\mapsto (\Ch_3(E), \Ch_2(E))$.   Define
$$\Gamma:=\qq\oplus H_2(\XX,\qq)\oplus\zz=\Gamma_0\oplus\zz.$$
\end{defn}

The general Chern character (not orbifold Chern character)
$$\cl: K(\sA_{\XX})\to \Gamma$$
is given by: $E\mapsto (\Ch_3(E), \Ch_2(E), \Ch_0(E))$ and defines a group homomorphism. This is because $\sA_{\XX}$ is generated by
$\sO_{\XX}$, $E\in\Coh_{\leq 1}(\XX)[-1]$, and by Poincare duality, 
$\Ch_3(E)\in H^6(\XX,\qq)$, $\Ch_2(E)\in H^4(\XX,\qq)\cong H_2(\XX,\qq)$.
We recall the weak stability condition in \cite{Toda_JAMS} and \cite{Toda_Kyoto}.
Let $\sA_{\XX}$ be an abelian category and $\Gamma$ be a finitely generated abelian group. 
Fix
$\hh\subset \cc$ such that 
$$\hh=\{\rho\cdot e^{\pi i \phi}| \rho>0, 0<\phi\leq 1\}.$$

\begin{defn}\label{defn_Bridgeland_stability}
A (Bridgeland) stability condition on $\sA_{\XX}$ is a group homomorphism 
$$\sZ: \Gamma\to\cc$$
satisfying the following conditions:
\begin{enumerate}
\item For any nonzero $E\in\sA_{\XX}$, $\sZ(E)=\sZ(\cl(E))\in\hh$ and the argument:
$\arg(\sZ(E))\in (0,\pi]$ is well-defined.  The object $E\in\sA_{\XX}$ is called $\sZ$-(semi)stable if for any nonzero subobject 
$0\neq F\subsetneq E$, $\arg\sZ(F)< (\leq)\arg\sZ(E)$.
\item For any object $E\in\sA_{\XX}$, there is a Harder-Narasimhan filtration:
$$0=E_0\subset E_1\subset\cdots\subset E_m=E$$
such that each $F_i=E_i/E_{i-1}$ is $\sZ$-semistable and 
$\arg\sZ(F_1)>\arg\sZ(F_2)>\cdots>\arg\sZ(F_m)$. 
\end{enumerate}
\end{defn}

Now for the group $\Gamma$, we fix a filtration:
$$0=\Gamma_{-1}\subsetneq \Gamma_0\subsetneq\Gamma_1\subsetneq\cdots\subsetneq\Gamma_m=\Gamma$$
such that the quotient $\Gamma_i/\Gamma_{i-1}$ is a free abelian group. 

\begin{defn}[\cite{Toda_JAMS}]\label{defn_weak_stability_Toda}
A weak stability condition on $\sA_{\XX}$ is given by 
$$\sZ=\{\sZ_i\}_{i=0}^{m}\in \prod_{i=0}^{m}\Hom(\Gamma_i/\Gamma_{i-1}, \cc)$$
such that the following conditions are satisfied:
\begin{enumerate}
\item If for any nonzero $E\in\sA_{\XX}$, $\cl(E)\in \Gamma_i/\Gamma_{i-1}$, then 
$$\sZ(E):=\sZ_i([\cl(E)])\in\hh$$
where 
$[\cl(E)]$ is the class of $\cl(E)$ in $\Gamma_i/\Gamma_{i-1}$. 
The objects $E\in \sA_{\XX}$ is $\sZ$-(semi)stable if for any exact sequence
$$0\to F\to E\to G\to 0$$
in $\sA_{\XX}$, and 
$$\arg\sZ(F)< (\leq) \arg\sZ(G).$$
\item There is a Harder-Narasimhan filtration for any $E\in \sA_{\XX}$.
\end{enumerate}
\end{defn}
It is not hard to see that if $m=0$, any weak stability condition is a stability condition. 
We follow Toda to construct the following weak stability condition on $\sA_{\XX}$. Recall our
$\Gamma=\Gamma_0\oplus \zz$, and we take the following 2-step filtration on $\Gamma$:
$$0=\Gamma_{-1}\subsetneq \Gamma_0\subsetneq \Gamma_1=\Gamma.$$
The embedding $\Gamma_0\hookrightarrow \Gamma$ is given by $(n,\beta)\mapsto (n,\beta,0)$.
So 
$$
\begin{cases}
\Gamma_0/\Gamma_{-1}=\qq\oplus H_2(\XX,\qq);\\
\Gamma_1/\Gamma_0=\qq.
\end{cases}
$$
We give the following data:
$$
\begin{cases}
\omega\in H^2(\XX,\qq)\cong H^2(X,\qq), \omega \text{~is ample};\\
0<\theta<1.
\end{cases}
$$
and let
$$\sZ_{\omega,\theta}=\left(\sZ_{\omega,\theta,0}, \sZ_{\omega,\theta,1}\right)\in \prod_{i=0}^{1}
\Hom(\Gamma_i/\Gamma_{i-1},\cc)=\Hom(\Gamma_0/\Gamma_{-1},\cc)\times\Hom(\Gamma_1/\Gamma_{0},\cc)$$
which is defined as:
$$
\begin{cases}
\sZ_{\omega,\theta,0}(n,\beta)=n-(\omega\cdot \beta)\sqrt{-1};\\
\sZ_{\omega,\theta,1}(\rk)=\rho\cdot e^{\pi i \theta},
\end{cases}
$$
where $(n,\beta)\in\Gamma_0$ and $\rho\in\zz$, $(n,\beta,\rk)\in \Gamma$.

\begin{lem}\label{lem_weak_stability_sAXX}(\cite[Lemma 5.2]{Toda_Kyoto})
The homomorphisms $(\sZ_{\omega,\theta,i})$ give a weak stability condition on the category $\sA_{\XX}$.
\end{lem}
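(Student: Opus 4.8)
The plan is to check the two conditions in Definition \ref{defn_weak_stability_Toda}---that every nonzero object is sent into $\hh$, and that Harder--Narasimhan filtrations exist---and then to observe that Toda's proof of \cite[Lemma 5.2]{Toda_Kyoto} invokes no property of $X$ that fails for the $\mu_r$-gerbe $\XX$.

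First I would establish positivity by splitting on the rank $\Ch_0(E)$. Since $\sA_{\XX}=\langle \sO_{\XX},\Coh_{\leq 1}(\XX)[-1]\rangle_{\ex}$ is the extension closure of its two generators, every object carries a finite filtration whose subquotients are copies of $\sO_{\XX}$ together with objects of $\Coh_{\leq 1}(\XX)[-1]$; as $\Ch_0$ is additive and vanishes on $\Coh_{\leq 1}(\XX)[-1]$, it follows that $\Ch_0(E)$ is a nonnegative multiple of $\Ch_0(\sO_{\XX})$, so $\Ch_0(E)\ge 0$ with equality exactly when no copy of $\sO_{\XX}$ appears. When $\Ch_0(E)\neq 0$ we have $\cl(E)\in\Gamma_1\setminus\Gamma_0$ and $\sZ(E)=\Ch_0(E)\,e^{\pi i\theta}$, whose argument $\pi\theta$ lies in $(0,\pi)\subset(0,\pi]$. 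When $\Ch_0(E)=0$ the object lies in $\Coh_{\leq 1}(\XX)[-1]$, so writing $E=F[-1]$ with $\Ch_2(F)=\beta$ effective and $\Ch_3(F)=n$ gives $\sZ(E)=\sZ_{\omega,\theta,0}(-n,-\beta)=-n+(\omega\cdot\beta)\sqrt{-1}$; here $\omega\cdot\beta\ge 0$ because $\omega$ is ample and $\beta$ effective, and in the boundary case $\omega\cdot\beta=0$ one has $\beta=0$, so $F$ is zero-dimensional and $n=\mathrm{length}(F)>0$, forcing $\sZ(E)=-n<0$. In all cases the argument lies in $(0,\pi]$, which is exactly the required condition $\sZ(E)\in\hh$.

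The more substantial step is the existence of Harder--Narasimhan filtrations, and here I would reproduce Toda's argument. Its inputs are (i) that $\Coh_{\leq 1}(\XX)$ is Noetherian, (ii) that $\cl$ lands in the finitely generated group $\Gamma$ with its two-step filtration $0\subset\Gamma_0\subset\Gamma_1=\Gamma$, and (iii) that $\sA_{\XX}$ is the heart of a bounded $t$-structure, already supplied for $\XX$ by \cite[Lemma 3.5]{Toda_JAMS}. Granting these, any putative infinite chain of destabilizing subobjects or quotients would produce either an infinite strictly decreasing sequence of values $\omega\cdot\beta$ of effective classes, bounded below by $0$, or an infinite chain inside the Noetherian category $\Coh_{\leq 1}(\XX)$; both are impossible, and the finiteness of the phases attained on a bounded set of classes yields termination.

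The hard part, such as it is, will be verifying that the three inputs above hold on the gerbe $\XX$ itself and not merely on its coarse space $X$. This is mild: $\Coh(\XX)$ is the category of coherent sheaves on a smooth projective Calabi--Yau DM stack and is therefore Noetherian, the numerical Grothendieck group of $\XX$ is finitely generated, and the torsion-pair and boundedness statements used to build $\sA_{\XX}$ were recorded for $\XX$ in \cite[Lemma 3.5]{Toda_JAMS}. Once these identifications are in place, Toda's proof of \cite[Lemma 5.2]{Toda_Kyoto} transfers verbatim, and I expect this bookkeeping to constitute most of the write-up.
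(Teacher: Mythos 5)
Your verification of condition (1) is correct and in fact more explicit than the paper's: the observation that $\Ch_0(E)$ equals the number of $\sO_{\XX}$-factors in a filtration of $E$ (hence is nonnegative, and positive exactly when $\cl(E)\notin\Gamma_0$), together with the case analysis $\omega\cdot\beta>0$ versus $\beta=0$, $n>0$, matches what the paper asserts when it says $\sZ_{\omega,\theta}(E)\in\rr_{>0}e^{i\pi\theta}$ for $i=1$ and $\sZ_{\omega,\theta}(E)=\sZ_{\omega}(E[1])\in\hh$ for $i=0$.

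The gap is in the Harder--Narasimhan step. You describe Toda's argument as a termination argument on chains of destabilizing subobjects, controlled by Noetherianity of $\Coh_{\leq 1}(\XX)$ and discreteness of the values $\omega\cdot\beta$. That is the mechanism for an honest Bridgeland stability condition on a Noetherian heart, but it does not transfer as stated to a \emph{weak} stability condition: here $\sZ$ is only defined on the graded pieces $\Gamma_i/\Gamma_{i-1}$, so the phase of a subobject $F\subset E$ is not constrained by the phase of $E$ through additivity of a single central charge, and a chain of destabilizing subobjects of a rank-one object can keep alternating between rank one and rank zero without producing a monotone sequence of $\omega\cdot\beta$'s. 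The paper's proof (following \cite[Lemma 5.2]{Toda_Kyoto} and \cite[Lemma 2.16--2.27]{Toda_Duke}) instead builds HN filtrations structurally: it introduces the torsion pair $(\sA_{\XX,1},\sA_{\XX,1/2})$ with $\sA_{\XX,1}=\langle\sO_x[-1]\rangle_{\ex}$ the maximal-phase part, characterizes $\sZ_{\omega,\theta}$-semistable objects as those lying in $\sA_{\XX,1}$ or in $\sA_{\XX,1/2}$ with the usual phase inequality, and then uses that both pieces are of finite length with respect to strict epi/monomorphisms (which for the gerbe is reduced to the untwisted case via the equivalences of \cite{TT-Adv} and \cite{HS}). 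This decomposition is the actual content of the lemma; your write-up would need to include it, or at least cite it precisely, rather than replace it with the generic Noetherian induction. Your closing point---that none of the inputs uses anything about $X$ that fails for $\XX$---is the same reduction the paper makes, so once the torsion-pair argument is restored the proof goes through.
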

\begin{proof}
The proof is similar to \cite[Lemma 5.2]{Toda_Kyoto}. For completeness, we provide a proof here. 
We check Definition \ref{defn_weak_stability_Toda}. Let $E\in\sA_{\XX}$ be such that 
$\cl(E)\in\Gamma_i/\Gamma_{i-1}$.  In the case $i=1$, 
$\sZ_{\omega,\theta}(E)\in \rr_{>0}e^{i\pi \theta}\subset \hh$ and in the case $i=0$, 
$\sZ_{\omega,\theta}(E)=\sZ_{\omega}(E[1])\in \hh$ since 
$E\in\Coh_{\leq 1}(\XX)[-1]$.  Here 
$\sZ_{\omega}(E)=n-(\omega\cdot \beta)\sqrt{-1}$.
Also $\sZ_{\omega,\theta}(E)$ defined in this way is a stability condition on $\Coh_{\leq 1}(\XX)$, see
\cite[Example 3]{Toda_Kyoto}. Therefore condition $(1)$  in  Definition \ref{defn_weak_stability_Toda} is satisfied. 

To  check the Harder-Narasimhan property, we introduce a torsion pair $(\sA_{1}^{p}, \sA_{1/2}^{p})$ on the category $\sA^p_{\XX}$, see \cite[Lemma 2.16]{Toda_Duke}, which is defined by
$$\sA_1^{p}:=\langle F[1], \sO_{x}| F \text{~is pure 2-dimensional~}, x\in \XX\rangle_{\ex}$$
and 
$$\sA_{1/2}^{p}:=\langle E\in \sA^p_{\XX}| \Hom(F, E)=0 \text{~for any~} F\in \sA_1^p\rangle.$$
From the definition of torsion pair, 
for any $T\in\sA_1^p$, and $F\in \sA_{1/2}^{p}$,  we have  $\Hom(T,F)=0$ and any $E\in \sA^p_{\XX}$ fits into the exact sequence:
$$0\to T\to E\to F\to 0$$
such that  $T\in\sA_1^p$ and $F\in \sA_{1/2}^{p}$.
Let 
$$\sA_{\XX,1}:=\sA_1^{p}[-1]\cap\sA_{\XX}=\langle\sO_{x}[-1]: x\in\XX\rangle_{\ex}$$
and 
$$\sA_{\XX,1/2}:=\sA_{1/2}^{p}[-1]\cap\sA_{\XX}=\{E\in\sA_{\XX}|\Hom(\sA_{\XX,1},E)=0\}.$$
Then $(\sA_{\XX,1}, \sA_{\XX,1/2})$ is a torsion pair on $\sA_{\XX}$. Also if 
$E\in\sA^p_{1/2}[-1]$, and $\rk(E)=0$ or $1$, and $c_1(E)=0$, then from 
\cite[Lemma 5.1]{Toda_Kyoto}, $E\in\sA_{\XX,1/2}$. The proof is as follows. 
Rank zero case is obvious.  In the rank one case, $\hH^{-1}(E)$ is torsion free of rank one, so there exists a gerby curve $C\subset \XX$ such that 
$\hH^{-1}(E)\to \hH^{-1}(E)^{\vee\vee}\cong\sO_{\XX}$ and an exact sequence 
$I_C\to E\to F[-1]$. $I_C, F[-1]\in\sA_{\XX}$ imply that $E\in\sA_{\XX}$.
Thus for any $E\in\sA_{\XX}$, there exists an exact sequence:
\begin{equation}\label{eqn_ecaxt_sequence_thm1}
0\to T\to E\to F\to 0
\end{equation}
such that $T\in\sA_{\XX,1}$ and $F\in\sA_{\XX,1/2}$. Also the categories $\sA_{\XX,1}, \sA_{\XX,1/2}$ are of finite length with respect to  strict 
epimorphism and strict monomorphism. This is due to \cite[Lemma 2.19]{Toda_Duke} and the category of twisted sheaves on $X$ (which is the same as category of coherent sheaves on $\XX$) is equivalent to the categories of untwisted shaves 
as in \cite{HS}. 
Since $\sZ_{\omega,\theta}$ is defined by:
$$
\begin{cases}
\sZ_{\omega,\theta,0}(n,\beta)=n-(\omega\cdot \beta)\sqrt{-1};\\
\sZ_{\omega,\theta,1}(\rk)=\rk\cdot e^{i\pi\theta}.
\end{cases}
$$
The same proof as in \cite[Lemma 2.27]{Toda_Duke} provides that $E\in\sA_{\XX}$ is $\sZ_{\omega,\theta}$-semistable if and only if 
one of the following conditions holds:
\begin{enumerate}
\item $E\in\sA_{\XX,1}$;
\item $E\in\sA_{\XX,1/2}$ and for any exact sequence
$$0\to F\to E\to G\to 0$$
in $\sA_{\XX}$ with $F,G\in \sA_{\XX,1/2}$, and 
$\arg\sZ_{\omega,\theta}(F)\leq \arg\sZ_{\omega,\theta}(G)$.
\end{enumerate}
Thus the Harder-Narasimhan filtration for any $E\in \sA_{\XX}$ is given by the exact sequence (\ref{eqn_ecaxt_sequence_thm1}) for the torsion pair and the  
Harder-Narasimhan filtration for any $F$ is also given by the exact sequence (\ref{eqn_ecaxt_sequence_thm1}). 
\end{proof}

\subsubsection{Moduli stack of semistable objects in $\sA_{\XX}$}

We use the following big moduli stack $\widehat{\sM}_{\XX}$ on $\sA_{\XX}$ counting perfect complexes satisfying certain conditions, see 
\cite{Lieblich_JAG}, \cite{Toda_Kyoto}. Let us define 
$$\widehat{\sM}_{\XX}: (\Sch_{\cc})\to (\text{groupoids})$$
to be the stack that sends 
$$S\mapsto \{\sE| \sE\in D(\Coh(\XX\times S))| \text{Condition*} \}/\cong$$
where $\text{Condition*}$ is:
$$
\begin{cases}
\bullet  \,\,\sE \text{~is relatively perfect};\\
\bullet  \,\,\sE_s=Li_s^*\sE\in D^b(\Coh(\XX));\\
\bullet  \,\,\Ext^i(\sE_s, \sE_s)=0, i<0 \text{~for any~} s\in S.
\end{cases}
$$
Then $\widehat{\sM}_{\XX}$ is an Artin stack locally of finite type. We define a substack 
$$\widehat{\sM}(\sA_{\XX})\subset \widehat{\sM}_{\XX}$$
to be the substack consisting of all $S$-valued point $\sE\in \widehat{\sM}(S)$ such that 
$\sE_s\in\sA_{\XX}$ for all the points $s\in S$.  Then we may write 
\begin{equation}\label{eqn_decomposition_stacks}
\widehat{\sM}(\sA_{\XX})=\bigsqcup_{v\in\Gamma}
\widehat{\sM}^{v}(\sA_{\XX})
\end{equation}
where $\widehat{\sM}^{v}(\sA_{\XX})$ is the substack of objects $E\in\sA_{\XX}$ such that 
$\cl(E)=v$. 
Note here that $\cl(E)=(\Ch_3(E), \Ch_2(E), \Ch_0(E))\in\Gamma$, since we work on $\mu_r$-gerbes $\XX\to X$.
In general the Chern character $\Ch$ should be taken as the  orbifold Chern character $\widetilde{\Ch}: K(\XX)\to H_{\CR}^*(\XX)$. 
Since $H_{\CR}^*(\XX)=H^*(\XX)\sqcup\cdots\sqcup H^*(\XX)$, up to the action of $\mu_r$ on $E$, $\cl(E)$ keeps the same on each component. Later on 
we only care about the first component given by $\XX$-twisted sheaves.

Then $\widehat{\sM}^{v}(\sA_{\XX})\subset \widehat{\sM}_{\XX}$ is an open immersion for 
$v=(n,\beta,\rk)\in\Gamma$ with $\rk=0$ or $1$; and 
$\widehat{\sM}^{v}(\sA_{\XX})$ is an Artin stack locally of finite type. 

\begin{defn}\label{defn_moduli_stack_stable}
Define
$$\widehat{\sM}_{n,\beta}(\omega,\theta)\subset \widehat{\sM}^{v}(\sA_{\XX})$$
to be the stack of $\sZ_{\omega,\theta}$-semistable objects $E\in\sA_{\XX}$ with 
$\cl(E)=v=(-n,-\beta,1)$.
\end{defn}

Here is a similar result as in \cite[Proposition 5.4]{Toda_Kyoto}:

\begin{prop}\label{prop_Toda_Kyoto_5.4}
We have:
\begin{enumerate}
\item The stack $\widehat{\sM}_{n,\beta}(\omega,\theta)$ is an Artin stack of finite type. 
\item In the case $\theta\to 1$, $\widehat{\sM}_{n,\beta}(\omega,\theta)\cong [P_n(\XX,\beta)/\cc^*]$ which is the trivial 
$\cc^*$-gerbe.
\item  
$$\widehat{\sM}_{n,\beta}(\omega,\theta)\cong \widehat{\sM}_{-n,\beta}(\omega, 1-\theta)$$ is given by
$E\mapsto R\Hom(E,\sO_{\XX})$.
\item $\widehat{\sM}_{n,\beta}(\omega,\frac{1}{2})=\emptyset$ for $|n|>>0$.
\end{enumerate}
\end{prop}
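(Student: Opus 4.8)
The plan is to follow Toda's arguments in \cite[Proposition 5.4]{Toda_Kyoto} and adapt each step to the $\mu_r$-gerbe setting, using the structural results already established above. The four statements are largely independent, so I would treat them in turn, leaning heavily on the fact (proved in the excerpt) that the category of $\XX$-twisted sheaves is equivalent to the category of untwisted sheaves on a twisted space, so that finiteness and boundedness arguments transport directly.

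For part (1), finiteness of type reduces to showing that the family of $\sZ_{\omega,\theta}$-semistable objects $E$ with fixed $\cl(E)=(-n,-\beta,1)$ is bounded. Here I would use the torsion pair $(\sA_{\XX,1},\sA_{\XX,1/2})$ and the characterization of semistability from Lemma \ref{lem_weak_stability_sAXX}: a semistable $E$ of rank one lies in $\sA_{\XX,1/2}$, so $\hH^{-1}(E)$ is torsion-free of rank one and the relevant $\Coh_{\leq 1}$-part has bounded discrete invariants. Combined with the general representability of $\widehat{\sM}^v(\sA_{\XX})$ as an Artin stack locally of finite type, boundedness upgrades this to finite type; openness of the semistable locus is standard. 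For part (2), as $\theta\to 1$ the weak stability condition degenerates so that $\sZ_{\omega,\theta,1}$ dominates, forcing the semistable objects to be exactly the stable pairs $(\sO_{\XX}\otimes\Xi\to F)$ of Definition \ref{defn_stable_pair_threefold_DM}; I would match the stability inequalities with the two conditions defining $q$-stable pairs and then identify the automorphisms, whose group is the scaling $\cc^*$ acting on the section $s$, giving the trivial $\cc^*$-gerbe structure $[P_n(\XX,\beta)/\cc^*]$.

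For part (3), the duality isomorphism $E\mapsto R\Hom(E,\sO_{\XX})$ is the derived dual; I would verify that it sends $\sA_{\XX}$ to (a shift of) itself and that it reverses the sign of $\Ch_3$ while preserving $\Ch_2$ and $\rk$, so that $\cl$ transforms as $(-n,-\beta,1)\mapsto(n,-\beta,1)$, and that it exchanges the phase $\theta$ with $1-\theta$ (since $e^{\pi i\theta}\mapsto e^{\pi i(1-\theta)}$ under conjugation). The key technical point is that $\sO_{\XX}$ is a line bundle on the gerbe and $R\Hom(-,\sO_{\XX})$ is an exact anti-equivalence preserving the generating-sheaf structure; here I would invoke that the derived dual commutes with the $\mu_r$-weight decomposition, so the relevant calculations reduce to the untwisted case in \cite[Proposition 5.4]{Toda_Kyoto}. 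For part (4), vanishing at $\theta=\tfrac12$ for $|n|\gg0$, I would argue that at the central phase the semistability inequality becomes an equality-type constraint forcing a decomposition incompatible with large $|n|$; concretely, a $\sZ_{\omega,\theta}$-semistable object at $\theta=\tfrac12$ with rank one would have to admit a destabilizing subobject coming from $\sA_{\XX,1/2}$ once $|\Ch_3|$ exceeds the bound dictated by $\omega$ and $\beta$.

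The main obstacle I expect is part (1), specifically the boundedness of the semistable locus, because this is where the gerbe structure genuinely enters: one must ensure that the orbifold/twisted Chern character data $\cl(E)=(\Ch_3(E),\Ch_2(E),\Ch_0(E))$ truly constrains the geometry of $E$ on $\XX$, and that the equivalence with untwisted sheaves on a twisted K3 is compatible with the weak stability condition $\sZ_{\omega,\theta}$. The finite-length property of $\sA_{\XX,1}$ and $\sA_{\XX,1/2}$ invoked in the proof of Lemma \ref{lem_weak_stability_sAXX} (via \cite[Lemma 2.19]{Toda_Duke}) should supply the Harder--Narasimhan and boundedness inputs, but verifying that these carry over without extra hypotheses on the gerbe $\XX\to X$ is the delicate step. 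The remaining parts (2)--(4) are then formal once the correct dictionary between $q$-stable pairs and $\sZ_{\omega,\theta}$-semistable objects is in place.
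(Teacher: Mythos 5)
Your proposal is correct in outline, but it takes a noticeably different route from the paper. The paper's proof does not argue each of the four statements directly from the weak stability condition; instead, its centerpiece is a translation step: it generalizes Toda's $\mu_{B+i\omega}$-limit stability on $\sA^p_{\XX}$ to the gerbe, proves the phase-comparison lemma (the analogue of \cite[Lemma 3.8]{Toda_Duke}), and shows that for $B=k\omega$ with $k=\tfrac{1}{2\tan(\theta)}$ (and $k=0$ at $\theta=\tfrac12$), an object $E\in\sA_{\XX}$ with $\cl(E)=(-n,-\beta,1)$ is $\sZ_{\omega,\theta}$-semistable if and only if $E[1]$ is $\mu_{k\omega+i\omega}$-limit semistable. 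Once this dictionary is in place, all four assertions are imported wholesale from Toda's results on limit stable objects: (1) from \cite[Proposition 3.17]{Toda_ASPM} (openness in $\widehat{\sM}_{\XX}$), (2) from \cite[Theorem 3.21]{Toda_ASPM} and \cite[Theorem 4.7]{Toda_Duke}, and (3)--(4) from \cite[Lemma 2.28]{Toda_Duke} and \cite[Lemma 4.4]{Toda_ASPM}, with the gerbe issues handled exactly as you anticipate, via the equivalence between twisted and untwisted sheaves. Your direct approach --- proving boundedness via the torsion pair for (1), matching $q$-stability of pairs with the $\theta\to1$ limit for (2), computing the derived dual for (3), and a destabilization argument for (4) --- would in effect reconstruct those cited lemmas from scratch; it is workable, and your Chern-character bookkeeping for (3) is right, but the two places you correctly flag as delicate (boundedness in (1) and the vanishing in (4)) are precisely the steps that the limit-stability dictionary lets the paper dispose of by citation rather than re-derivation. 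If you want to carry out your version, you should expect the bulk of the work to be in re-proving the finiteness of the set of numerical classes of destabilizing subobjects, which in Toda's treatment rests on the Bogomolov-type inequalities behind the limit-stability results.
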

\begin{proof}
We first recall the $\mu$-limit stability of Toda in \cite{Toda_Duke} and generalize it to $\mu_r$-gerbes $\XX\to X$. 
Let 
$B+i\omega\in H^2(\XX,\cc)$, where $\omega\in H^2(X,\rr)$ is an ample divisor. 
Then in \cite{Toda_Duke} Toda defined the so called $\mu_{B+i\omega}$-limit stability on $\sA^p_{\XX}$. 
We recall it here. Let 
$\sigma=B+i\omega\in A(\XX)_{\cc}=\{B+i\omega | \omega \text{~ample~}\}$, and let 
$$\sZ_{\sigma}: K(\XX)\to \cc$$
be given by 
\begin{equation}\label{eqn_integration_Toda_Kyoto_5.1}
E\mapsto -\int_{\XX}e^{-(B+i\omega)}\Ch(E)\sqrt{\Td_{\XX}}\in \cc.  
\end{equation}
Here we only use the original Chern character on $\XX$ and therefore Toda's arguments in \cite[\S 3]{Toda_Duke}, \cite[\S 3]{Toda_ASPM} go through for sheaves on $\sA^p_{\XX}$
for $\XX$.  Then one can write down 
$$\sZ_{\sigma}(E)=\left(-v_3^B(E)+\frac{1}{2}\omega^2 v_1^B(E)\right)
+\left(\omega v_2^B(E)-\frac{1}{6}\omega^3 v_0^B(E)\right)i,
$$
where 
$$v^B(E)=e^{-B}\Ch(E)\cdot\sqrt{\td_{\XX}}=(v_0^B(E), v_1^B(E), v_3^B(E))\in H^{\even}(\XX,\rr)\cong H^{\even}(X,\rr). $$

\begin{rmk}
The integration (\ref{eqn_integration_Toda_Kyoto_5.1}) may depend on the gerbe structure $\XX$.
\end{rmk}

Then from \cite[Lemma 2.20]{Toda_Duke}, for any nonzero $E\in\sA^p_{\XX}$, and $\sigma_m=B+im\omega$, 
$$\sZ_{\sigma_m}(E)\in\{\rho\cdot e^{i\pi\phi} | \rho>0, \frac{1}{4}<\phi<\frac{5}{4}\}$$
for $m>>0$. Therefore 
$$\phi_{\sigma_m}(E)=\frac{1}{\pi}\Im \log\sZ_{\sigma_m}(E)\in (\frac{1}{4}, \frac{5}{4}).$$
Then a nonzero $E\in\sA^p_{\XX}$ is called $\sigma$-limit stable (or $\sigma$-limit semistable) if for any nonzero
$F\subsetneq E$, $\phi_{\sigma}(F)<\phi_{\sigma}(E)$, (or $\phi_{\sigma}(F)\leq \phi_{\sigma}(E)$).

Here a lemma as in \cite[Lemma 3.8]{Toda_Duke}:

\begin{lem}\label{lem_Toda_Duke_3.8}
For $\sigma=B+i\omega\in A(\XX)_{\cc}$, $E\in\sA^p_{1/2}$, such that 
$\det(E)=\sO_{\XX}$, and 
$\Ch(E)=(-1,0,\beta,n)$. Let 
$F\in \Coh_{\leq 1}(\XX)$. Then $\phi_{\sigma}(F)<\phi_{\sigma}(E)$, (or $\phi_{\sigma}(F)>\phi_{\sigma}(E)$)
if and only if one of the following conditions holds:
\begin{enumerate}
\item $\mu_{\sigma}(F)<-\frac{3B\omega^2}{\omega^3}$, (resp. $\mu_{\sigma}(F)>-\frac{3B\omega^2}{\omega^3}$).
\item  $\mu_{\sigma}(F)=-\frac{3B\omega^2}{\omega^3}$ and 
$\omega v_2^B(E)\mu_{\sigma}(F)<v_3^B(E)$, (resp. $\omega v_2^B(E)\mu_{\sigma}(F)>v_3^B(E)$).
\end{enumerate}
\end{lem}
\begin{proof}
This is from similar calculations of \cite[Lemma 3.8]{Toda_Duke}.
\end{proof}

Then from Lemma \ref{lem_Toda_Duke_3.8} we have:
let $B=k\omega$,  
$E[1]\in\sA^p_{\XX}$ is $\mu_{B+i\omega}$-limit semistable if and only if $E\in\sA^p_{1/2}$ and 
\begin{enumerate}
\item for any one dimensional sheaf $F\neq 0$, $F\hookrightarrow E[1]$ is in $\sA^p_{1/2}$, we have 
$\frac{\Ch_3(F)}{\omega\cdot \Ch_2(F)}\leq -2k$;
\item  if we have $E[1]\twoheadrightarrow G$ for $G$ a one-dimensional object in $\sA^p_{1/2}$, we have $\frac{\Ch_3(G)}{\omega\cdot \Ch_2(G)}\geq -2k$.
\end{enumerate}
So following Toda, let 
$$
k=
\begin{cases}
\frac{1}{2\tan(\theta)}, & \theta\neq \frac{1}{2};\\
0, & \theta=\frac{1}{2}.
\end{cases}
$$
Then from the above argument:
For $E\in \sA^p_{\XX}[-1]$, $E[1]\in \sA^p_{\XX}$ is $\mu_{k\omega+i\omega}$-semistable if and only if $E\in \sA_{\XX}$
and $E$ is $\sZ_{\omega,\theta}$-semistable such that 
$\cl(E)=v=(-n,-\beta,1)\in\Gamma$. 

Proof of the proposition now can be done using the same method of Toda. 
\cite[Proposition 3.17]{Toda_ASPM} showed that $\widehat{\sM}_{n,\beta}(\omega,\theta)$ is an open substack of the stack 
$\widehat{\sM}_{\XX}$, hence an Artin stack locally of finite type. 
When $\theta\to 1$, the stack 
$\widehat{\sM}_{n,\beta}(\omega,\theta)=[P_n(\XX,\beta)/\cc^*]$ comes from 
\cite[Theorem 3.21]{Toda_ASPM} and \cite[Theorem 4.7]{Toda_Duke}. We omit the details.  All of the results in $(3), (4)$ are from \cite[Lemma 2.28]{Toda_Duke} and 
\cite[Lemma 4.4]{Toda_ASPM}.
\end{proof}

\subsubsection{Counting invariants and wall crossing}

We define the counting invariants in the abelian category $\sA_{\XX}$. 
If $\widehat{\sM}_{n,\beta}(\omega,\theta)=[\widehat{M}_{n,\beta}(\omega,\theta)/\cc^*]$  is the moduli stack of 
$\sZ_{\omega,\theta}$-stable $E\in\sA_{\XX}$ satisfying $\cl(E)=(-n,-\beta, 1)$, we define

\begin{defn}\label{defn_invariants_stable}
$$\DT_{n,\beta}(\omega,\theta)=\chi(\widehat{M}_{n,\beta}(\omega,\theta), \nu_{\widehat{M}}),$$
is the weighted Euler characteristic, 
where $\nu_{\widehat{M}}$ is the Behrend function. 
\end{defn}

Our goal is to count the semistable objects in $\sA_{\XX}$.  We use Joyce-Song method to consider the Hall algebra 
$$H(\sA_{\XX})=K(\St/\widehat{\sM}_{\XX})$$
with $\star$-product.  We use the definition of Hall algebra of Joyce and Bridgeland \cite{Bridgeland10}. 
First for a pair $(n,\beta)\in \Gamma_0$, let $\sM_{n,\beta}(\omega)\subset \widehat{\sM}_{\XX}$ be the substack 
parametrizing $\sZ_{\omega}$-semistable $E\in \Coh_{\leq 1}(\XX)$ such that the data is given by $[E]=\beta$ and 
$\int_{\XX}\Ch(E)\cdot \Td_{\XX}=n$.  This is an open substack of finite type. 

The elements in the Hall algebra $H(\sA_{\XX})$ are given by 
$$\widehat{\delta}_{n,\beta}(\omega)=[\sM_{n,\beta}(\omega)\stackrel{i}{\hookrightarrow} \widehat{\sM}_{\XX}]$$
where $i$ sends $E$ to $E[-1]\in\sA_{\XX}$, 
and 
$$\widehat{\delta}_{n,\beta}(\omega,\theta)=[\widehat{\sM}_{n,\beta}(\omega,\theta)\to  \widehat{\sM}_{\XX}].$$
We have its logarithm:
$$\widehat{\epsilon}_{n,\beta}(\omega,\theta)=$$
$$\sum_{\substack{\ell\geq 1, 1\leq e\leq\ell\\
(n_i,\beta_i)\in\qq\oplus H_2(\XX,\qq)\\
n_1+\cdots+n_{\ell}=n\\
\beta_1+\cdots+\beta_{\ell}=\beta\\
\sZ_{\omega,\theta}(-n_i,-\beta_i)\in\rr_{>0}e^{i\pi\theta},i\neq e}}
\frac{(-1)^{\ell-1}}{\ell}
\widehat{\delta}_{n_1,\beta_1}(\omega)\star\cdots\star\widehat{\delta}_{n_{e-1},\beta_{e-1}}(\omega)\star
\widehat{\delta}_{n_e,\beta_e}(\omega,\theta)\star\cdots\star\widehat{\delta}_{n_{\ell},\beta_{\ell}}(\omega)
$$
\begin{defn}\label{defn_DT_generalized_JS}
Define 
$$\DT_{n,\beta}(\omega,\theta)=
\lim_{t\to 1}(t^2-1)P_t(-\nu\cdot \widehat{\epsilon}_{n,\beta}(\omega,\theta))$$
Here 
\begin{enumerate}
\item $\nu: H(\sA_{\XX})\to H(\sA_{\XX})$ is the map by inserting the Behrend function given by
$$[\sY\stackrel{\rho}{\longrightarrow}\widehat{\sM}_{\XX}]\mapsto \sum_{i\in \zz}i\cdot [\rho|_{\sY_i}: \sY_i\to \widehat{\sM}_{\XX}] $$
where $\sY_i=(\nu_{\sM}\circ \rho)^{-1}(i)$.
\item $P_t: K(\St/\widehat{\sM}_{\XX})\to \qq(t)$ is the map 
$$P_t\left([\rho: [Y/GL_m(\cc)]\to \widehat{\sM}_{\XX}]\right)
\mapsto \frac{P_t([Y])}{P_t([GL_m(\cc)])}$$
and $P_t([Y])$ is the Poincar\'e polynomial of the quasi-projective variety $Y$.
\end{enumerate}
\end{defn}

\subsubsection{Limit stable invariants $L_{n,\beta}$}  As in \cite{Toda_Kyoto}, we define 
\begin{defn}\label{defn_limit_stable_invariants}
Define
$$L_{n,\beta}:=\DT_{n,\beta}(\omega, \theta=\frac{1}{2}).$$
\end{defn}

We have the following fact:
\begin{enumerate}
\item When $\theta\to 1$, $\DT_{n,\beta}(\omega,\theta)=P_{n,\beta}$;
\item $L_{n,\beta}=L_{-n,\beta}$ and they are zero when $|n|>>0$.
\end{enumerate}

\subsubsection{Rank zero invariants $N_{n,\beta}$} 
In the category of $\Coh_{\leq 1}(\XX)$, recall that we have 
$$\widehat{\delta}_{n,\beta}(\omega)=[\sM_{n,\beta}(\omega)\stackrel{i}{\hookrightarrow} \widehat{\sM}_{\XX}]$$
where $i$ sends $E$ to $E[-1]\in\sA_{\XX}$, 
we define 
$$\widehat{\epsilon}_{n,\beta}(\omega)=\sum_{\substack{\ell\geq 1, 1\leq i\leq\ell\\
(n_i,\beta_i)\in\qq\oplus H_2(\XX,\qq)\\
n_1+\cdots+n_{\ell}=n\\
\beta_1+\cdots+\beta_{\ell}=\beta\\
\sZ_{\omega}(-n_i,-\beta_i)=\sZ_{\omega}(-n,-\beta)}}
\frac{(-1)^{\ell-1}}{\ell}
\widehat{\delta}_{n_1,\beta_1}(\omega)\star\cdots\star\widehat{\delta}_{n_{\ell},\beta_{\ell}}(\omega)
$$
\begin{defn}\label{defn_DT_generalized_JS_N-nbeta}
Define 
$$N_{n,\beta}(\omega)=
\lim_{t\to 1}(t^2-1)P_t(-\nu\cdot \widehat{\epsilon}_{n,\beta}(\omega)).$$
\end{defn}

The invariants $N_{n,\beta}(\omega)$ counts rank zero $\sZ_{\omega}$-semistable coherent sheaves in $\Coh_{\leq 1}(\XX)$. From \cite[Lemma 4.8]{Toda_Kyoto}, it is independent to the choice of $\omega$ and we just use $N_{n,\beta}:=N_{n,\beta}(\omega)$. 

\subsubsection{Wall-crossing formula of Toda}

The wall crossing formula of Toda in \cite[Theorem 5.7]{Toda_Kyoto} and \cite[Theorem 5.8, Theorem 8.10]{Toda_JAMS} works for coherent sheaves and stable pairs $[\sO_{\XX}\to F]$
on $\XX$. For this we let 
$$\DT(\omega,\theta):=\sum_{n,\beta}\DT_{n,\beta}(\omega,\theta)q^n t^{\beta}$$
be the generating series for $0<\theta<\frac{1}{2}$. 
The wall and chamber structure for weak stability conditions are given by \cite[\S 5.1]{Toda_JAMS}. It is defined in a subset 
$\sV\subset \Stab_{\Gamma_{\bullet}}(\sD_{\XX})$ introduced in \cite[\S 5.1]{Toda_JAMS}. Then we have the limit:
$$\DT(\omega, \phi_{\pm}):=
\lim_{\theta\to \phi\pm 0}\DT(\omega,\theta).$$
The following result is from \cite[Theorem 5.8, Theorem 8.10]{Toda_JAMS} and 
\cite[Theorem 5.7]{Toda_Kyoto}.

\begin{thm}\label{thm_wall_crossing}
Let $0<\phi<\frac{1}{2}$, then we have the wall crossing formula:
$$DT(\omega,\phi_{+})=\DT(\omega,\phi_-)\cdot 
\prod_{\substack{n>0, \beta>0\\
-n+(\omega\cdot \beta)i\in \rr_{>0}e^{i\pi\phi}}}\exp((-1)^{n-1}n N_{n,\beta}q^nt^{\beta})$$
\end{thm}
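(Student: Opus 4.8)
The plan is to follow Toda's proofs of \cite[Theorem 5.7]{Toda_Kyoto} and \cite[Theorem 5.8, Theorem 8.10]{Toda_JAMS} inside the Joyce--Song Hall-algebra framework, checking at each step that the arguments survive the passage to the $\mu_r$-gerbe $\XX$. The structural inputs are already in place: Lemma \ref{lem_weak_stability_sAXX} supplies the weak stability conditions $\sZ_{\omega,\theta}$ on $\sA_{\XX}$, and Proposition \ref{prop_Toda_Kyoto_5.4} shows the moduli stacks $\widehat{\sM}_{n,\beta}(\omega,\theta)$ are Artin stacks of finite type with $\widehat{\sM}_{n,\beta}(\omega,\theta)\cong[P_n(\XX,\beta)/\cc^*]$ as $\theta\to 1$. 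Crucially, the proof of Lemma \ref{lem_weak_stability_sAXX} uses that $\Coh^{\tw}(\XX)$ is equivalent to a category of untwisted sheaves on a scheme (via \cite{HS}), so the Hall algebra $H(\sA_{\XX})$, its Lie subalgebra of virtual indecomposables, and the integration map $\lim_{t\to1}(t^2-1)P_t(-\nu\cdot(-))$ all inherit their formal properties from Toda's setting.

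First I would fix a wall at argument $\phi$ and analyze how $\sZ_{\omega,\theta}$-semistability of a rank-one object $E\in\sA_{\XX}$ with $\cl(E)=(-n,-\beta,1)$ changes as $\theta$ crosses $\phi$. The only way stability can change is through a subobject or quotient lying in $\Coh_{\leq 1}(\XX)[-1]$ whose central charge $\sZ_{\omega}(-n,-\beta)$ lies on the ray $\rr_{>0}e^{i\pi\phi}$; this is the wall-and-chamber description of \cite[\S 5.1]{Toda_JAMS}, which depends only on the numerical data in $\Gamma$ and therefore transfers verbatim to $\XX$.

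Next I would record the wall-crossing identity for the logarithm elements $\widehat{\epsilon}_{n,\beta}(\omega,\theta)$. Joyce's universal wall-crossing formula, together with the no-poles theorem, expresses the $\phi_+$ elements through iterated $\star$-products of the $\phi_-$ elements against the rank-zero logarithm elements $\widehat{\epsilon}_{n,\beta}(\omega)$ whose central charge lies on the wall-ray. Applying the Behrend-weighted integration map $\lim_{t\to1}(t^2-1)P_t(-\nu\cdot(-))$ converts these $\star$-products into iterated Lie brackets, where the bracket of a rank-one class against a rank-zero class $(n,\beta)$ yields the antisymmetrized Euler pairing, equal to $n$ up to sign. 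Since each bracket adds ranks and only ranks $0$ and $1$ occur, all iterated brackets of length $\geq 2$ involving two rank-one states vanish, so the wall-crossing operator reduces to the exponential of a single adjoint action; this collapses to the product $\prod\exp((-1)^{n-1}nN_{n,\beta}q^nt^{\beta})$ exactly as in \cite[Theorem 5.7]{Toda_Kyoto}, with $N_{n,\beta}=N_{n,\beta}(\omega)$ from Definition \ref{defn_DT_generalized_JS_N-nbeta}. Summing over $(n,\beta)$ then yields the asserted identity.

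The main obstacle will be confirming that the integration map remains a Lie-algebra homomorphism over $\XX$, that is, that the Behrend-function identities underpinning Joyce--Song integration hold for the moduli stack $\widehat{\sM}_{\XX}$ of twisted perfect complexes. This is precisely where the equivalence of $\Coh^{\tw}(\XX)$ with an untwisted category does the essential work: it transports the local symmetric-obstruction (d-critical) structure, and hence the multiplicativity of the Behrend function across short exact sequences, from Toda's situation to ours, so that no genuinely new local analysis of the gerbe is required.
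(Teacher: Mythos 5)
Your proposal is correct and follows essentially the same route as the paper: the paper's proof is precisely the reduction you identify, namely that $\Coh(\XX)$ is equivalent (via \cite{TT-Adv} and \cite{HS}) to a category of untwisted sheaves on copies of $X$, after which Toda's wall-crossing arguments from \cite[Theorem 5.8, Theorem 8.10]{Toda_JAMS} and \cite[Theorem 5.7]{Toda_Kyoto} apply verbatim. You simply spell out more of the internal mechanics of Toda's argument (the wall-and-chamber analysis, the Hall-algebra logarithms, and the collapse of the iterated brackets to a single exponential) than the paper does.
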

\begin{proof}
Since we work on the category $\sA_{\XX}$ of objects in the derived category of coherent sheaves on the gerbe $\XX$, and from \cite{TT-Adv}, the category of coherent sheaves on $\XX$
is isomorphic to the category of twisted sheaves on the rigidified inertia stack $X\sqcup\cdots\sqcup X$ by some $\cc^*$-gerbe determined by the $\mu_r$-gerbe $\XX$.  The category of  $\cc^*$-gerbe twisted sheaves is equivalent 
 \cite{HS} to untwisted sheaves on $X\sqcup\cdots\sqcup X$. Therefore the method of Toda in \cite[Theorem 5.8, Theorem 8.10]{Toda_JAMS} work in this case and we get the wall-crossing formula. 
\end{proof}

\begin{cor}\label{cor_PT_wall-crossing}
we have 
\begin{equation}\label{eqn_Toda66}
\PT(\XX)=
\prod_{\substack{n>0, \beta>0}}e^{(-1)^{n-1}n N_{n,\beta}q^nt^{\beta}}
\cdot\left(\sum_{n,\beta}L_{n,\beta}q^nt^{\beta}\right).
\end{equation}
\end{cor}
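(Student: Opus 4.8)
The plan is to integrate the wall-crossing formula of Theorem \ref{thm_wall_crossing} over the one-parameter family of weak stability conditions $\sZ_{\omega,\theta}$ as $\theta$ runs from $\tfrac12$ to $1$, reading off the two endpoints of the generating series as the limit stable series and the Pandharipande--Thomas series respectively. Concretely, set $\DT(\omega,\theta)=\sum_{n,\beta}\DT_{n,\beta}(\omega,\theta)q^nt^\beta$. By Definition \ref{defn_limit_stable_invariants} the value at the central wall is $\DT(\omega,\tfrac12)=\sum_{n,\beta}L_{n,\beta}q^nt^\beta$, while Proposition \ref{prop_Toda_Kyoto_5.4}(2) identifies $\widehat{\sM}_{n,\beta}(\omega,\theta)\cong[P_n(\XX,\beta)/\cc^*]$ as $\theta\to 1$, so that $\DT_{n,\beta}(\omega,\theta)\to P_{n,\beta}$ and hence $\lim_{\theta\to 1}\DT(\omega,\theta)=\PT(\XX)$.

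First I would record the local finiteness of the wall-and-chamber structure on the slice $\sV\subset\Stab_{\Gamma_{\bullet}}(\sD_{\XX})$ from \cite[\S 5.1]{Toda_JAMS}: fixing a class $(n,\beta)$, only the D0--D2 classes $(n',\beta')$ with $0\le\beta'\le\beta$ and bounded $n'$ can destabilize an object of class $(-n,-\beta,1)$, so only finitely many values of $\theta\in(\tfrac12,1)$ are walls in each fixed $(q,t)$-degree. This makes the infinite product in (\ref{eqn_Toda66}) well defined degree by degree and lets me compose the finitely many wall-crossings in any fixed degree.

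Next I would apply Theorem \ref{thm_wall_crossing} at each wall in turn. A D0--D2 class $(n',\beta')$ with $n'>0$, $\beta'>0$ satisfies $-n'+(\omega\cdot\beta')\sqrt{-1}\in\rr_{>0}e^{i\pi\phi}$ for a \emph{unique} $\phi\in(\tfrac12,1)$, so it contributes to exactly one wall, with factor $\exp\!\big((-1)^{n'-1}n'N_{n',\beta'}q^{n'}t^{\beta'}\big)$; here I would invoke the $R\Hom(-,\sO_{\XX})$-duality of Proposition \ref{prop_Toda_Kyoto_5.4}(3) to transport Theorem \ref{thm_wall_crossing} into the range $(\tfrac12,1)$. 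Composing all wall-crossings from $\theta=\tfrac12$ up to $\theta\to 1$ multiplies these factors together and yields
\[
\PT(\XX)=\Big(\prod_{n>0,\,\beta>0}e^{(-1)^{n-1}nN_{n,\beta}q^nt^\beta}\Big)\cdot\Big(\sum_{n,\beta}L_{n,\beta}q^nt^\beta\Big),
\]
which is (\ref{eqn_Toda66}), since these generating series form a commutative ring and the factors commute. Finally I would note that the product is independent of $\omega$ because each $N_{n,\beta}$ is, by the remark following Definition \ref{defn_DT_generalized_JS_N-nbeta}.

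The main obstacle is the bookkeeping at the walls: verifying that the walls relevant to the PT chamber all lie in $(\tfrac12,1)$, that each destabilizing class $(n,\beta)$ with $n>0,\beta>0$ is crossed exactly once with the correct sign $(-1)^{n-1}$ and multiplicity $n$, and that the orientation in composing Theorem \ref{thm_wall_crossing} across the range $(\tfrac12,1)$, rather than the literally stated range, is handled consistently by reflecting through the duality of Proposition \ref{prop_Toda_Kyoto_5.4}(3). Everything else is a formal manipulation of generating series once the finiteness and single-crossing statements are in place.
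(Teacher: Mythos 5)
Your proposal is correct and follows essentially the same route as the paper: identify the two endpoints $\lim_{\theta\to 1}\DT(\omega,\theta)=\PT(\XX)$ (Proposition \ref{prop_Toda_Kyoto_5.4}) and $\DT(\omega,\tfrac12)=\sum_{n,\beta}L_{n,\beta}q^nt^\beta$, then compose the wall-crossing factors of Theorem \ref{thm_wall_crossing} over $\theta\in(\tfrac12,1)$. Your extra care about local finiteness of walls and the single-crossing/orientation bookkeeping is a reasonable elaboration of what the paper treats implicitly, but it is not a different argument.
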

\begin{proof}
Proposition \ref{prop_Toda_Kyoto_5.4} tells us that 
$$\lim_{\theta\to 1}\DT(\omega,\theta)=\PT(\XX).$$
The elements $F\in \Coh_{\leq 1}(\XX)$ such that 
$\sZ_{\omega,1/2}(F[-1])\in\rr_{>0}\sqrt{-1}$ will have 
$\chi(F)=0$. 
Then the elements with phase $\frac{1}{2}$ are pure dimensional one sheaves. The wall crossing formula \cite[Theorem 5.8, Theorem 8.10]{Toda_JAMS}
tells us $\lim_{\theta\to\frac{1}{2}}\DT(\omega,\theta)=\DT(\omega,\theta=\frac{1}{2})=\sum_{n,\beta}L_{n,\beta}q^n t^{\beta}$. Then the result is obtained by applying 
$\theta=\frac{1}{2}$ to $\theta=1$ from the Theorem above. 
\end{proof}

\subsection{Decomposition formula for \'etale gerbes}\label{subsec_decomposition_gerbes}

\subsubsection{GW theory for $\mu_r$-gerbes}

The gerbe $\XX\to X$ is given by an element in $H^2(X,\mu_r)$, therefore is banded.  We recall some results in \cite{AJT_JDG} and \cite[\S 6]{AJT}. 
Let $\sK_g(\XX,\beta)$ be the moduli stack of twisted stable maps of genus $g$ twisted curves to $\XX$ of degree 
$\beta\in H_2(\XX,\qq)=H_2(X,\qq)$.  We borrow the following diagram from \cite[Formula (41)]{AJT}:
\[
\xymatrix{
\sK_g(\XX,\beta)\ar[r]^{t}\ar[dr]\ar@/^2pc/[rr]^{p} & P_g\ar[r]^--{q^\prime}\ar[d]& \overline{M}_g(X,\beta)\ar[d]\\
&\mathfrak{M}_g^{\tw}\ar[r]^{q}& \mathfrak{M}_g
}
\]
where 
\begin{enumerate}
\item $\mathfrak{M}_g^{\tw}$ is the moduli stack of prestable twisted curves of genus $g$;
\item $\mathfrak{M}_g$ is the moduli stack of prestable  curves of genus $g$;
\item $q$ maps a twisted curve to the underlying prestable curve;
\item $\overline{M}_g(X,\beta)$ is the moduli space of stable genus $g$ stable maps to $X$ of degree $\beta$;
\item The right vertical arrow is the forgetful morphism sending a stable map 
$f: C\to X$ to $C$;
\item The stack $P_g$ is defined as the fiber product;
\item The natural morphism
$$p: \sK_g(\XX,\beta)\to \overline{M}_g(X,\beta)$$
sends a twisted stable map $[f: \sC\to\XX]$ to the underlying stable map
$[f:C\to X]$ between the coarse moduli spaces;
\item The morphism $\sK_g(\XX,\beta)\to \mathfrak{M}_g^{\tw}$ is defined by the forgetful morphism again;
\item The morphism $t$ is defined by the universal property of fiber product. 
\end{enumerate}
In \cite[Proposition 5.1, Lemma 5.2]{AJT}, the authors show that $t$ is \'etale and factors through 
$$\sK_g(\XX,\beta)\to\sK_g(\XX,\beta)\myfatslash\mu_r\to P_g$$
where $\sK_g(\XX,\beta)\myfatslash\mu_r$ is the rigidification of $\sK_g(\XX,\beta)$ and $\sK_g(\XX,\beta)\to\sK_g(\XX,\beta)\myfatslash\mu_r$ is 
a $\mu_r$-gerbe.  In \cite[\S 6]{AJT}, the authors talked about the pushforward formula for virtual fundamental classes. 

\begin{thm}\label{thm_pushforward}(\cite[Theorem 6.8]{AJT})
We have:
$$p_*[\sK_g(\XX,\beta)]^{\vir}=r^{2g-1}\cdot [ \overline{M}_g(X,\beta)]^{\vir}.$$ 
\end{thm}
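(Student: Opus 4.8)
The plan is to exploit the factorization $p=q^\prime\circ t$ in the given diagram, together with Costello's pushforward principle for virtual classes, reducing the computation of $p_*$ to a comparison of perfect obstruction theories followed by two explicit degree computations. First I would establish that the obstruction theories are compatible. The relative perfect obstruction theory of $\sK_g(\XX,\beta)$ over $\mathfrak{M}_g^{\tw}$ is controlled by $R\pi_*f^*T_{\XX}$, where $\pi\colon\sC\to\sK_g(\XX,\beta)$ is the universal twisted curve and $f\colon\sC\to\XX$ the universal map, while that of $\overline{M}_g(X,\beta)$ over $\mathfrak{M}_g$ is controlled by $R\bar\pi_*\bar f^*T_X$. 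Since $\XX\to X$ is a $\mu_r$-gerbe it is cohomologically \'etale, so $L_{\XX/X}=0$ and hence $T_{\XX}\cong p^*T_X$; therefore $f^*T_{\XX}\cong\bar f^*T_X$ is pulled back from the coarse curve $C$. Working in characteristic zero with tame twisted curves, the coarse map satisfies $R(\text{coarse})_*\sO=\sO$, so the cohomology of such a pullback does not see the stacky structure and $R\pi_*f^*T_{\XX}\cong R\bar\pi_*\bar f^*T_X$. This identifies the obstruction theory on $\sK_g(\XX,\beta)$ with the pullback under $t$ of the one on $P_g:=\mathfrak{M}_g^{\tw}\times_{\mathfrak{M}_g}\overline{M}_g(X,\beta)$, so that $[\sK_g(\XX,\beta)]^{\vir}=t^*[P_g]^{\vir}$, while the fiber-product structure endows $P_g$ with a virtual class pulled back from $[\overline{M}_g(X,\beta)]^{\vir}$.

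Next I would carry out the degree computations along the factorization of $t$ through the rigidification, $\sK_g(\XX,\beta)\to\sK_g(\XX,\beta)\myfatslash\mu_r\to P_g$, recorded in \cite[Proposition 5.1, Lemma 5.2]{AJT}, writing $\gamma$ and $\bar t$ for the two maps. The map $\gamma$ is a $\mu_r$-gerbe, so pushing forward contributes a factor $1/r$, giving $\gamma_*[\sK_g(\XX,\beta)]^{\vir}=\tfrac{1}{r}[\sK_g(\XX,\beta)\myfatslash\mu_r]^{\vir}$. The map $\bar t$ is representable and \'etale, and its degree is computed fibrewise over a stable map $\bar f\colon C\to X$ with $C$ a smooth genus $g$ domain: the rigidified twisted lifts of $\bar f$ to $\XX$ form a torsor under $H^1(C,\mu_r)\cong(\zz/r)^{2g}$, so $\deg(\bar t)=r^{2g}$ and $\bar t_*\bar t^*[P_g]^{\vir}=r^{2g}[P_g]^{\vir}$. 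Combining, $t_*[\sK_g(\XX,\beta)]^{\vir}=\tfrac{1}{r}\cdot r^{2g}[P_g]^{\vir}=r^{2g-1}[P_g]^{\vir}$. Finally, since $\mathfrak{M}_g^{\tw}\to\mathfrak{M}_g$ is an isomorphism over the locus of smooth untwisted curves, its base change $q^\prime$ has degree one over the generic point of every component of $\overline{M}_g(X,\beta)$ carrying the virtual class, so $q^\prime_*[P_g]^{\vir}=[\overline{M}_g(X,\beta)]^{\vir}$; assembling the three steps yields $p_*[\sK_g(\XX,\beta)]^{\vir}=q^\prime_*t_*[\sK_g(\XX,\beta)]^{\vir}=r^{2g-1}[\overline{M}_g(X,\beta)]^{\vir}$.

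The step I expect to be the main obstacle is the obstruction-theory comparison of the first paragraph. It is not enough to note the pointwise isomorphism $T_{\XX}\cong p^*T_X$; one must check that the induced morphisms to the (twisted) cotangent complexes are genuinely compatible with $t$ and with the base change defining $P_g$, so that the two virtual classes agree as cycles and not merely in dimension. Making this precise is exactly what licenses the Costello-type pushforward argument in the gerbe setting, and it is where the tameness of the twisted curves and the vanishing of $L_{\XX/X}$ must be used with care.
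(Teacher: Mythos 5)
The paper does not actually prove this statement: it is quoted verbatim from \cite[Theorem 6.8]{AJT}, so there is no internal argument to compare against, and your outline is in effect a reconstruction of the proof in \cite{AJT}. Its architecture is the right one — compatibility of the perfect obstruction theories along $t$ (which is exactly the Abramovich--Graber--Vistoli/\cite{AJT} comparison, and your treatment via $T_{\XX}\cong p^*T_X$ and $R(\text{coarse})_*\sO=\sO$ is how it is done), followed by the degree count through the factorization $\sK_g(\XX,\beta)\to\sK_g(\XX,\beta)\myfatslash\mu_r\to P_g$. The step you flag as the main obstacle is therefore not where the real difficulty lies.

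The genuine gap is in the degree count for $\bar t$. Over a stable map $\bar f\colon C\to X$ with smooth unmarked domain, the lifts of $\bar f$ to $\XX$ are the sections of the banded $\mu_r$-gerbe $\bar f^*\XX\to C$; these form a torsor under $H^1(C,\mu_r)\cong(\zz/r)^{2g}$ \emph{only when nonempty}, and nonemptiness is obstructed by the class $\bar f^*[\XX]\in H^2(C,\mu_r)\cong\zz/r$, which equals the pairing $\langle[\XX],\beta\rangle \bmod r$. A smooth domain with no markings carries no stacky structure, so the twisted-curve formalism gives no extra flexibility to kill this obstruction. When $\langle[\XX],\beta\rangle\neq 0$ the fibers of $\bar t$ over the smooth-domain locus are empty, $\bar t$ is not surjective, $\bar t_*\bar t^*\neq r^{2g}\cdot\mathrm{id}$, and in fact $p_*[\sK_g(\XX,\beta)]^{\vir}=0$ while the right-hand side need not vanish (already in genus zero: for an essentially trivial gerbe $\XX=L^{1/r}$ a lift of $\bar f$ is an $r$-th root of $\bar f^*L$, which exists only if $r$ divides $\deg_\beta L$). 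So your argument — and the statement as quoted — needs the hypothesis that the gerbe class pairs trivially with $\beta$; you should either impose it or verify it wherever the theorem is invoked. Granting that, the remaining bookkeeping ($1/r$ from the gerbe $\gamma$, $r^{2g}$ from the $H^1$-torsor, degree one for $q^\prime$) is correct.
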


We are working on the twisted stable maps to $\XX$ without marked points.  Therefore 
$$N^{\GW}_{g,\beta}(\XX):=\int_{[\sK_g(\XX,\beta)]^{\vir}}1=
r^{2g-1}\cdot \int_{[\overline{M}_g(X,\beta)]^{\vir}}1=:r^{2g-1}N^{\GW}_{g,\beta}(X).$$
We let 
$$F_{\GW}(\XX; u, v)=\sum_{g\geq 0}\sum_{\beta\neq 0}N^{\GW}_{g,\beta}(\XX)u^{2g-2}v^{\beta}$$
and 
$$F_{\GW}(X; u, v)=\sum_{g\geq 0}\sum_{\beta\neq 0}N^{\GW}_{g,\beta}(X)u^{2g-2}v^{\beta}$$
be the generating function of Gromov-Witten invariants for $\XX$ and $X$. 

\begin{thm}\label{thm_GW_gerbe_formula}
$$F_{\GW}(\XX; u, v)=r\cdot F_{\GW}(X; ru, v).$$
\end{thm}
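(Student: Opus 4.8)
The plan is to reduce the generating-function identity to the already-established pointwise relation between the Gromov--Witten invariants of $\XX$ and those of $X$, and then to perform a straightforward bookkeeping of the genus-dependent powers of $r$. The only geometric input is Theorem \ref{thm_pushforward}, which gives $p_*[\sK_g(\XX,\beta)]^{\vir}=r^{2g-1}\cdot[\overline{M}_g(X,\beta)]^{\vir}$; everything else is a formal manipulation of the two power series.

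First I would record the consequence of the pushforward formula for numerical invariants. Since we work without marked points, integrating $1$ against the virtual class and using the projection formula along $p\colon \sK_g(\XX,\beta)\to \overline{M}_g(X,\beta)$ yields
$$N^{\GW}_{g,\beta}(\XX)=\int_{[\sK_g(\XX,\beta)]^{\vir}}1=r^{2g-1}\int_{[\overline{M}_g(X,\beta)]^{\vir}}1=r^{2g-1}N^{\GW}_{g,\beta}(X)$$
for every genus $g\geq 0$ and every nonzero class $\beta$. This is exactly the displayed relation stated just before the theorem.

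Next I would substitute this into the defining series for $F_{\GW}(\XX;u,v)$ and split the exponent as $r^{2g-1}=r\cdot r^{2g-2}$, pairing $r^{2g-2}$ with $u^{2g-2}$ to form $(ru)^{2g-2}$:
$$F_{\GW}(\XX;u,v)=\sum_{g\geq 0}\sum_{\beta\neq 0}r^{2g-1}N^{\GW}_{g,\beta}(X)u^{2g-2}v^{\beta}=r\sum_{g\geq 0}\sum_{\beta\neq 0}N^{\GW}_{g,\beta}(X)(ru)^{2g-2}v^{\beta}.$$
The inner double sum is by definition $F_{\GW}(X;ru,v)$, so the right-hand side equals $r\cdot F_{\GW}(X;ru,v)$, which is the claimed identity.

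As for difficulty: there is essentially no obstacle in this final step --- it is a term-by-term repackaging in the variables $(u,v)$, and the substitution $u\mapsto ru$ in the coarse-space generating function precisely absorbs the factor $r^{2g-2}$ while the remaining single factor of $r$ is pulled out front. All the genuine content sits in Theorem \ref{thm_pushforward}, whose proof (the \'etale structure of the morphism $t$ and the virtual pushforward of \cite{AJT}) is what produces the exponent $2g-1$; once that is granted, the generating-function statement is immediate.
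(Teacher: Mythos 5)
Your proposal is correct and follows exactly the paper's own argument: deduce $N^{\GW}_{g,\beta}(\XX)=r^{2g-1}N^{\GW}_{g,\beta}(X)$ from the pushforward formula of Theorem \ref{thm_pushforward}, then split $r^{2g-1}=r\cdot r^{2g-2}$ and absorb $r^{2g-2}$ into $(ru)^{2g-2}$ in the generating series. There is nothing to add; the paper's proof is the same term-by-term substitution.
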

\begin{proof}
We calculate 
\begin{align*}
F_{\GW}(\XX; u, v)&=\sum_{g\geq 0}\sum_{\beta\neq 0}N^{\GW}_{g,\beta}(\XX)u^{2g-2}v^{\beta}\\
&=\sum_{g\geq 0}\sum_{\beta\neq 0}r^{2g-1}N^{\GW}_{g,\beta}(X)u^{2g-2}v^{\beta}\\
&=r\cdot \sum_{g\geq 0}\sum_{\beta\neq 0}N^{\GW}_{g,\beta}(X)(ru)^{2g-2}v^{\beta}\\
&=r\cdot F_{\GW}(X; ru, v).
\end{align*}
\end{proof}

\subsubsection{Gopakumar-Vafa invariants and the formula of Toda}

The GW invariants $N^{\GW}_{g,\beta}(\XX)$ and $N^{\GW}_{g,\beta}(X)$ defined above are rational numbers.  From Gopakumar-Vafa \cite{GV}, we have the following 
integrality conjecture which gives the Gopakumar-Vafa invariants. This is given by string duality between type IIA string theory and M-theory. 

\begin{con}\label{con_GV}(Gopakumar-Vafa conjectural invariants)
There exist integers $n_g^{\beta}\in\zz$, for $g\geq 0$ and $\beta\in H_2(X,\zz)$ such that 
$$\sum_{g\geq 0,\beta\geq 0}N^{\GW}_{g,\beta}(X)u^{2g-2}t^{\beta}=\sum_{\substack{g\geq 0,\beta> 0\\
k\in\zz_{\geq 1}}}\frac{n_g^{\beta}}{k}\left(2\sin\left(\frac{ku}{2}\right)^{2g-2}\right)t^{k\beta}.$$
\end{con}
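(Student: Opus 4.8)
The statement is the Gopakumar--Vafa integrality conjecture for $X$, and it is clarifying to split it into two parts: the existence of \emph{rational} numbers $n_g^{\beta}$ realizing the displayed identity, and the \emph{integrality} of those numbers. The first part is purely formal, and I would establish it by inverting the multiple-cover transform term by term. Fixing a class $\beta$ and collecting the coefficient of $t^{\beta}$ on the right-hand side, only triples with summation class $\beta'$ and multiplicity $k$ satisfying $k\beta'=\beta$ contribute, so that
$$\sum_{g\geq 0}N^{\GW}_{g,\beta}(X)u^{2g-2}=\sum_{k\mid\beta}\frac{1}{k}\sum_{g'\geq 0}n_{g'}^{\beta/k}\left(2\sin\left(\frac{ku}{2}\right)\right)^{2g'-2}.$$
Since $2\sin(u/2)=u+O(u^{3})$, the series $(2\sin(u/2))^{2g-2}$ has leading term $u^{2g-2}$, so for fixed $\beta$ the family $\{(2\sin(u/2))^{2g-2}\}_{g\geq 0}$ is triangular in $u$ with unit leading coefficients. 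Arguing by induction on the divisibility of $\beta$ (so that every $n_{g'}^{\beta/k}$ with $k>1$ is already determined) and then matching powers of $u$ starting from the lowest, one solves uniquely and recursively for the $n_g^{\beta}\in\qq$.

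The real content of the conjecture is that these uniquely determined rationals are in fact integers, and this is the step I expect to be the genuine obstacle: in this generality it is precisely the open Gopakumar--Vafa conjecture and cannot be reached by the formal inversion above. The plan for integrality is to reinterpret $n_g^{\beta}$ sheaf-theoretically rather than through Gromov--Witten theory. Following Maulik--Toda, I would consider the moduli space of one-dimensional stable sheaves on $X$ in class $\beta$ together with its Hilbert--Chow morphism to the Chow variety of curves, and define candidate BPS numbers from the Lefschetz $\mathfrak{sl}_2$ decomposition of the pushforward of the intersection complex; these are manifestly integers. The remaining task is then to show that these sheaf-theoretic invariants reproduce the $n_g^{\beta}$ produced above, i.e. that the Gromov--Witten side is recovered from this perverse filtration.

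For the threefolds actually relevant here the obstacle can be bypassed using known results. When $X=S\times\cc$ is a local K3 surface the ordinary invariants vanish and one works with the reduced theory, whose analogue of the displayed formula is governed by the Katz--Klemm--Vafa formula proved by Pandharipande--Thomas; this expresses the reduced invariants through explicitly integral BPS counts and so yields integrality in the cases used below. Alternatively one may route through the Gromov--Witten/stable-pairs correspondence, using that the stable-pair series of $X$ admits a BPS rewriting with integer coefficients and transferring this integrality across the correspondence. In every route I expect the decisive difficulty to be the matching step—identifying the formally defined $n_g^{\beta}$ with an a priori integral quantity—since the triangular inversion itself is routine.
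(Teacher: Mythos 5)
The statement you were asked to prove is labeled as a \emph{Conjecture} in the paper (Conjecture \ref{con_GV}): the authors give no proof, attributing it to Gopakumar--Vafa and to string duality between type IIA and M-theory, and they only use it later through cases where it is known. Your proposal is therefore not comparable to a proof in the paper, but your diagnosis of its status is accurate and essentially matches how the paper actually deploys the statement. The formal half of your argument is correct: for fixed $\beta$ the transform is triangular, since $(2\sin(ku/2))^{2g-2}=(ku)^{2g-2}(1+O(u^2))$, so induction on divisors of $\beta$ followed by matching powers of $u$ from the most negative upward determines unique $n_g^{\beta}\in\qq$; no content of the conjecture lies there beyond integrality. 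Your identification of integrality as the genuinely open part is also right, and your proposed bypass for the threefolds relevant to this paper is exactly the route the authors take: for $X=S\times\cc$ they invoke the reduced theory and the Katz--Klemm--Vafa formula of Pandharipande--Thomas (equation (\ref{eqn_KKV}) and Theorem \ref{thm_MT_6.3}), whose manifestly integral $n_{g,h}$ supply the BPS numbers, and they never need the conjecture in full generality --- only the genus-zero multiple cover consequence, Conjecture \ref{eqn_multiple_cover_formula}, which is then a theorem by Maulik--Thomas (Proposition \ref{prop_multiple_cover_K3}). The Maulik--Toda perverse-sheaf definition you sketch is a reasonable modern approach to the general integrality problem, but, as you say yourself, the matching step with Gromov--Witten theory is open, so that part of your proposal should be read as a research program rather than a proof; within the scope of this paper it is also unnecessary.
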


We recall the GW/PT-correspondence  in \cite{PT}, \cite{MPT}.
Let 
$$\sZ_{\GW}(X;u,t):=\exp(F_{\GW}(X; u, t))=1+\sum_{\beta\neq 0}\sZ_{\GW}(X;u)_{\beta}t^{\beta}.$$
Then the GW/PT-correspondence is given by:
\begin{equation}\label{eqn_GW/PT}
\sZ_{\GW}(X;u,t)=\sZ_{\PT}(X; q,t),
\end{equation}
for $q=-e^{iu}$. Here 
$$\sZ_{\PT}(X; q,t)=\PT(X)^{\prime}=1+\sum_{\beta\neq 0}\PT(X)^\prime_{\beta}$$
and 
$\PT(X)^{\prime}=\sum_{\beta\neq 0}\sum_{n\in\zz}P_{n,\beta}(X)q^n t^{\beta}$. 
Let us define 
$$\PT(X):=\sum_{\beta\geq 0}\sum_{n\in\zz}P_{n,\beta}(X)q^n t^{\beta}=1+\sum_{\beta>0}\PT_{\beta}(X)$$
for $\PT_{\beta}(X)=\sum_{n\in\zz, \beta>0}P_{n,\beta}(X)q^n t^{\beta}$. 
\cite[Conjecture 6.2]{Toda_Kyoto} made the following conjecture:
\begin{con}\label{con_Toda_68}
There exist integers $n_g^{\beta}\in\zz$, for $g\geq 0$ and $\beta\in H_2(X,\zz)$ such that 
\begin{equation}\label{eqn_Toda_68}
\PT(X)=\prod_{\beta>0}\left(\prod_{j=1}^{\infty}(1-(-q)^{j}t^{\beta})^{j n_0^{\beta}}\cdot 
\prod_{g=1}^{\infty}\prod_{k=0}^{2g-2}(1-(-q)^{g-1-k}t^{\beta})^{(-1)^{k+g}\cdot n_g^{\beta}\tiny\mat{c} 2g-2\\
k\rix}\right).
\end{equation}
\end{con}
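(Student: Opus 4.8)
The plan is to deduce the product expansion (\ref{eqn_Toda_68}) from the Gopakumar--Vafa conjecture (Conjecture \ref{con_GV}) together with the GW/PT correspondence (\ref{eqn_GW/PT}); the integers $n_g^\beta$ occurring in (\ref{eqn_Toda_68}) are to be taken as precisely the Gopakumar--Vafa integers produced by Conjecture \ref{con_GV}. Concretely, I would start from $\sZ_{\GW}(X;u,t)=\exp(F_{\GW}(X;u,t))$, insert the genus-expanded Gopakumar--Vafa form of $F_{\GW}$, and then perform the substitution $q=-e^{iu}$ dictated by (\ref{eqn_GW/PT}), turning the exponential of a trigonometric sum into an infinite product in $q$ and $t^\beta$. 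By (\ref{eqn_GW/PT}) the resulting series equals $\sZ_{\PT}(X;q,t)=\PT(X)^{\prime}$, and absorbing the $\beta=0$ normalization yields the stated formula for $\PT(X)$.

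The heart of the argument is a formal manipulation of power series. Writing $Q=-q$ so that $e^{iu}=Q$, one has $(2\sin(ku/2))^2=-(Q^{k/2}-Q^{-k/2})^2$. For the genus-zero part I would use the identity $\sum_{j\geq 1} j x^j = x/(1-x)^2$ with $x=Q^k$: since $(2\sin(ku/2))^{-2}=-Q^k/(1-Q^k)^2$, the genus-zero Gopakumar--Vafa sum exponentiates to $\prod_{\beta>0}\prod_{j\geq 1}(1-Q^jt^\beta)^{jn_0^\beta}$, matching the first factor of (\ref{eqn_Toda_68}). For $g\geq 1$ I would expand
\[
(2\sin(ku/2))^{2g-2}=\sum_{l=0}^{2g-2}\binom{2g-2}{l}(-1)^{g-1+l}Q^{k(g-1-l)}
\]
by the binomial theorem, and then apply $\sum_{k\geq 1}\frac{1}{k}Q^{k(g-1-l)}t^{k\beta}=-\log(1-Q^{g-1-l}t^\beta)$ termwise in $l$. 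Exponentiating converts the genus-$g$ contribution into $\prod_{\beta>0}\prod_{l=0}^{2g-2}(1-(-q)^{g-1-l}t^\beta)^{(-1)^{g+l}n_g^\beta\binom{2g-2}{l}}$, which is exactly the genus-$g$ factor in (\ref{eqn_Toda_68}) after renaming $l=k$ and using $(-1)^{g+k}=(-1)^{k+g}$.

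The genuinely hard input, and the main obstacle, is not this series algebra but the two structural results it rests on. First, the integrality of the $n_g^\beta$ is the content of the Gopakumar--Vafa conjecture, which is open in general; in the cases relevant to this paper (K3 and local K3) it is available through the KKV formula, and this is the step I would invoke rather than prove. Second, the GW/PT correspondence (\ref{eqn_GW/PT}) must hold for $X$, which for Calabi--Yau threefolds follows from \cite{PT}, \cite{MPT}. A secondary technical point is that the substitution $q=-e^{iu}$ is an analytic change of variable, so I would carry it out at the level of formal expansions in a fixed region (say $|Q|<1$) and check $t^\beta$-adic convergence of the infinite products; these points are routine once the two structural inputs are granted, so I expect the real difficulty to be concentrated entirely in establishing the Gopakumar--Vafa integrality.
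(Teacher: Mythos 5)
The statement you are proving is labeled a \emph{Conjecture} in the paper (it is quoted from Toda's work), and the paper offers no proof of it: immediately after stating (\ref{eqn_Toda_68}) the authors take logarithms of both (\ref{eqn_Toda66}) and (\ref{eqn_Toda_68}) and equate them to extract the multiple cover formula, which is the statement they actually go on to establish (for local K3, via Maulik--Thomas and KKV). So there is no ``paper's proof'' to compare against, and your write-up should be read as a derivation of an implication rather than a proof of the statement.

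On its own terms your derivation is correct: the substitution $q=-e^{iu}$, the identity $\sum_{j\geq 1}jx^j=x/(1-x)^2$ for the genus-zero factor, and the binomial expansion of $(2\sin(ku/2))^{2g-2}$ followed by $\sum_k \frac{1}{k}y^k=-\log(1-y)$ for $g\geq 1$ all check out, and exponentiating does reproduce the two factors of (\ref{eqn_Toda_68}) with the signs as stated. You also correctly locate where the content lies. But be clear about the logical status: what you have shown is that Conjecture \ref{con_Toda_68} \emph{follows from} Conjecture \ref{con_GV} together with the GW/PT correspondence (\ref{eqn_GW/PT}); since Conjecture \ref{con_GV} is itself open for a general Calabi--Yau threefold $X$ (and GW/PT is not known in that generality either), this does not prove the statement as phrased, it only reduces one conjecture to another. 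That reduction is standard and is essentially why Toda phrased (\ref{eqn_Toda_68}) as a conjecture in the first place. If your goal is to match what the paper actually does with this formula, the relevant step is the reverse direction carried out in the following subsection: assuming the wall-crossing identity (\ref{eqn_Toda66}), formula (\ref{eqn_Toda_68}) is shown to be \emph{equivalent} to the multiple cover formula Conjecture \ref{eqn_multiple_cover_formula}, and it is the latter that gets proved (for local K3 surfaces) by independent means.
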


For a $\mu_r$-gerbe $\XX\to X$, the GW potential of $\XX$ is related to $\PT(X)$ through the GW potential of $X$ by:

\begin{prop}\label{prop_GW_gerbe_XX}
Let $\sZ_{\GW}(\XX;u,t):=\exp(F_{\GW}(\XX; u, t))$. Then we have
$$\sZ_{\GW}(\XX;u,t)=(\sZ_{\GW}(X;ru,t))^{r}.$$
\end{prop}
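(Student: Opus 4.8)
The plan is to deduce the identity directly by exponentiating the scaling relation between Gromov-Witten potentials recorded in Theorem \ref{thm_GW_gerbe_formula}; the entire geometric content is already packaged there (and ultimately in the pushforward formula of Theorem \ref{thm_pushforward}), so what remains is a purely formal manipulation of generating series.

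First I would unwind the definitions: by construction $\sZ_{\GW}(\XX;u,t)=\exp(F_{\GW}(\XX;u,t))$ and $\sZ_{\GW}(X;u,t)=\exp(F_{\GW}(X;u,t))$, where the potentials are the generating series in $u$ and the Novikov variable $t^{\beta}$ summed over $\beta\neq 0$. Substituting $u\mapsto ru$ in the second definition gives $\sZ_{\GW}(X;ru,t)=\exp(F_{\GW}(X;ru,t))$, which is precisely the object appearing on the right-hand side of the claimed formula.

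Next I would invoke Theorem \ref{thm_GW_gerbe_formula}, which asserts $F_{\GW}(\XX;u,t)=r\cdot F_{\GW}(X;ru,t)$. Exponentiating both sides and using the elementary identity $\exp(r\cdot f)=(\exp f)^{r}$, valid for the positive integer $r$ as an identity of formal series in the $t^{\beta}$ with Laurent-in-$u$ coefficients, yields
$$\sZ_{\GW}(\XX;u,t)=\exp\bigl(r\cdot F_{\GW}(X;ru,t)\bigr)=\bigl(\exp F_{\GW}(X;ru,t)\bigr)^{r}=\bigl(\sZ_{\GW}(X;ru,t)\bigr)^{r},$$
which is exactly the asserted equality.

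There is essentially no obstacle here: the only point meriting care is that the formal exponential is a well-defined operation on these series. Because the sums defining $F_{\GW}$ range over $\beta\neq 0$ (equivalently $\beta>0$), each power of the Novikov variable receives only finitely many contributions and the exponential is an honest formal power series in the $t^{\beta}$; the substitution $u\mapsto ru$ acts only on the Laurent-in-$u$ coefficients and commutes with taking the exponential, so the identity $\exp(rf)=(\exp f)^{r}$ holds verbatim. Thus the proposition follows immediately from Theorem \ref{thm_GW_gerbe_formula}, and all the genuine work is in that result and in the virtual pushforward of Theorem \ref{thm_pushforward}.
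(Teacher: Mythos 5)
Your proposal is correct and matches the paper's own proof exactly: both simply apply Theorem \ref{thm_GW_gerbe_formula} to get $F_{\GW}(\XX;u,t)=r\cdot F_{\GW}(X;ru,t)$ and then exponentiate, using $\exp(rf)=(\exp f)^{r}$. The extra remark on formal well-definedness of the exponential is a harmless (and reasonable) addition.
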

\begin{proof}
This is calculated by
\begin{align*}\sZ_{\GW}(\XX;u,t)&=\exp(F_{\GW}(\XX; u, t))\\
&= \exp(r\cdot F_{\GW}(X; ru, t))\\
&=(\sZ_{\GW}(X;ru,t))^{r}.
\end{align*}
\end{proof}

\subsection{Multiple cover formula.}

In this section we fix $X$ to be a smooth projective Cababi-Yau threefold; and $p:\XX\to X$ a 
$\mu_r$-gerbe. 

Recall the wall crossing formula (\ref{eqn_Toda66}) in Corollary \ref{cor_PT_wall-crossing}.  If the gerbe $\XX\to X$ is trivial, or we just count the stable pairs 
$[\sO_{\XX}\to F]$ for the generating sheaf $\Xi=\sO_{\XX}$, then the wall crossing  Formula  (\ref{eqn_Toda66}) is just the same as Toda's formula in \cite[Formula (66)]{Toda_Kyoto} based on the following:

\begin{prop}\label{prop_trivial_gerbe}
If the gerbe $\XX\to X$ is trivial,  then every  coherent sheaf on $\XX$ is pulled back from its coarse moduli space $X$. 
\end{prop}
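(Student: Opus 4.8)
The plan is to trivialize the gerbe and then invoke the representation theory of the band $\mu_r$. Since $\XX\to X$ is trivial, its class in $H^2(X,\mu_r)$ vanishes, so the gerbe admits a section and is isomorphic to the product $\XX\cong X\times B\mu_r$, where $B\mu_r$ is the classifying stack of $\mu_r$. Under this identification the projection $p:\XX\to X$ is the first projection and $X$ is the coarse moduli space of $\XX$. The key observation to exploit is that a coherent sheaf on $X\times B\mu_r$ is the same datum as a $\mu_r$-equivariant coherent sheaf on $X$ for the \emph{trivial} action of $\mu_r$ on $X$; equivalently, a coherent sheaf $E$ on $\XX$ carries an action of $\mu_r$ on its fibers.

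First I would make the identification $\XX\cong X\times B\mu_r$ precise and record that $p_*$ is the functor of taking $\mu_r$-invariants (the weight-zero part), while $p^*$ produces sheaves on which $\mu_r$ acts trivially. Next, since we work over $\cc$ and $\mu_r$ is a finite abelian group, every finite-dimensional representation of $\mu_r$ is semisimple and splits into one-dimensional characters $\chi^0,\dots,\chi^{r-1}$. Applying this fiberwise to the equivariant sheaf $E$ yields the canonical weight decomposition
$$E\cong \bigoplus_{i=0}^{r-1} p^*(E_i)\otimes \chi^i,\qquad E_i := p_*\bigl(E\otimes\chi^{-i}\bigr),$$
where each $E_i$ is a coherent sheaf on $X$ and $\chi^i$ is the line bundle on $\XX$ pulled back from the $i$-th character of the $B\mu_r$-factor. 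This is exactly the decomposition $\Coh(\XX)=\bigoplus_i \Coh(\XX)_i$ specialized to the trivial gerbe, each summand $\Coh(\XX)_i$ being equivalent to $\Coh(X)$ via $F\mapsto p^*F\otimes\chi^i$.

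The conclusion then follows: on a trivial gerbe every coherent sheaf is, summand by summand, the pullback $p^*E_i$ of a sheaf on the coarse space $X$, twisted only by the character line bundle $\chi^i$ coming from the $B\mu_r$-factor. The one point that must be stated with care — and which I expect to be the only genuine subtlety, since the proof is otherwise just semisimplicity for a finite abelian group — is the role of these characters: the weight-zero part $p^*E_0$ is a literal pullback from $X$, whereas the higher-weight pieces are pullbacks tensored with $\chi^i$. For the counting problem at hand this is harmless, because after rigidification the category of twisted sheaves on $\XX$ (the weight-one part) is identified via \cite{TT-Adv} with $\Coh(X)$ itself; hence all the invariants computed on a trivial gerbe coincide with the corresponding invariants on $X$, which is precisely the use to which the proposition is put in the surrounding wall-crossing discussion.
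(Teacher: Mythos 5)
The paper states Proposition \ref{prop_trivial_gerbe} with no proof at all, so there is nothing to compare your argument against; it has to be judged on its own. Your route --- trivial gerbe implies $\XX\cong X\times B\mu_r$, coherent sheaves on $X\times B\mu_r$ are fiberwise $\mu_r$-representations, and semisimplicity of $\mu_r$ over $\cc$ gives the isotypic decomposition $E\cong\bigoplus_{i}p^*(E_i)\otimes\chi^i$ --- is the standard argument and is correct. The caveat you flag is not a cosmetic one: the proposition as literally stated is slightly too strong, since for $i\neq 0$ the character line bundle $\chi^i$ is a nontrivial element of $\Pic(X\times B\mu_r)$ that is not pulled back from $X$, so only the weight-zero summand is an honest pullback. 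What your proof actually establishes is the decomposition $\Coh(\XX)=\bigoplus_{i}\Coh(\XX)_i$ with each summand equivalent to $\Coh(X)$ via $F\mapsto p^*F\otimes\chi^i$ (consistent with the decomposition from \cite{TT-Adv} quoted in the introduction), and that refined statement is exactly what the surrounding text needs --- the proposition is invoked only to identify the counting theory on a trivial gerbe (with generating sheaf $\sO_{\XX}$, hence weight zero) with the counting theory on $X$. So your write-up is, if anything, more precise than the statement it is proving; no gap.
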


We follow \cite[\S 6.2]{Toda_Kyoto} to derive the multiple cover formula. 
First take  logarithm of RHS of  (\ref{eqn_Toda_68}) yields:
\begin{align}\label{eqn_RHS_68}
&\log \prod_{\beta>0}\left(\prod_{j=1}^{\infty}(1-(-q)^{j}t^{\beta})^{j n_0^{\beta}}\cdot 
\prod_{g=1}^{\infty}\prod_{k=0}^{2g-2}(1-(-q)^{g-1-k}t^{\beta})^{(-1)^{k+g}\cdot n_g^{\beta}\tiny\mat{c} 2g-2\\
k\rix}\right) \\ \nonumber
&=\sum_{\beta>0}\sum_{j=1}^{\infty}j\cdot n_0^{g}\sum_{k\geq 1}\frac{(-1)^{jk-1}}{k}q^{jk} t^{k\beta}+
\sum_{\beta>0}\sum_{g=1}^{\infty}\sum_{a\geq 1}\frac{n_g^{\beta}}{a}\sum_{k=0}^{2g-2}\tiny\mat{c} 2g-2\\
k\rix(-(-q)^a)^{g-1-k}t^{a\beta}\\  \nonumber
&=\sum_{\beta>0}\sum_{n=1}^{\infty}\sum_{\substack{k\geq 1\\ 
k| (\beta,n)}}\frac{(-1)^{n-1}n}{k^2}n_0^{\beta/a}q^{n} t^{\beta}+
\sum_{\beta>0}\sum_{g=1}^{\infty}\sum_{\substack{a\geq 1\\ 
a|\beta}}\frac{n_g^{\beta/a}}{a} f_g(-(-q)^a)t^{\beta}, 
\end{align}
where 
$$f_g(q):=\sum_{k=0}^{2g-2}\tiny\mat{c} 2g-2\\
k\rix q^{g-1-k}=q^{1-g}(1+q)^{2g-2}.$$
The second term is a polynomial in $q^{\pm 1}$ and is invariant under $q\leftrightarrow \frac{1}{q}$. 

Now we take the logarithm of   Formula (\ref{eqn_Toda66}) :
\begin{align}\label{eqn_log_66}
\log (\PT(\XX))&=\sum_{\beta>0}\sum_{n>0}(-1)^{n-1}n\cdot N_{n,\beta}q^n t^{\beta}+
\log\left(\sum_{n,\beta}L_{n,\beta}q^n t^{\beta}\right). 
\end{align}
Here we write:
$$\sum_{\beta>0}L_\beta(q)t^{\beta}:=\log\left(\sum_{n,\beta}L_{n,\beta}q^n t^{\beta}\right).$$
Then we equal these two formula (\ref{eqn_RHS_68}) and   (\ref{eqn_log_66}) and get:

\begin{equation}\label{eqn_key_formula1}
\sum_{n>0}(-1)^{n-1}n N_{n,\beta}q^n=\sum_{n=1}^{\infty}\sum_{\substack{k\geq 1\\
k|(\beta,n)}}\frac{(-1)^{n-1}n}{k^2}n_0^{\beta/k} q^n
\end{equation}
and 
\begin{equation}\label{eqn_key_formula2}
L_\beta(q)=\sum_{g=1}^{\infty}\sum_{\substack{a\geq 1\\
a|\beta}}\frac{n_g^{\beta/a}}{a}f_g(-(-q)^a).
\end{equation}

Formula (\ref{eqn_key_formula1}) gives the multiple cover formula:

\begin{con}\label{eqn_multiple_cover_formula}
$$N_{n,\beta}=\sum_{\substack{k\geq 1\\
k|(\beta,n)}}\frac{1}{k^2}n_0^{\beta/k}$$
where $N_{1,\beta}=n_0^{\beta}$.   
\end{con}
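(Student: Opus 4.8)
The plan is to obtain the stated formula directly from the identity (\ref{eqn_key_formula1}) by comparing coefficients of powers of $q$. That identity was itself produced by equating the logarithm of Toda's conjectural product expansion (\ref{eqn_Toda_68}), expanded in (\ref{eqn_RHS_68}), against the logarithm of the wall crossing relation (\ref{eqn_Toda66}), expanded in (\ref{eqn_log_66}), and then isolating the part that carries strictly positive powers of $q$ (the remaining, $q\leftrightarrow 1/q$ symmetric, part being recorded separately in (\ref{eqn_key_formula2})). Thus for each fixed curve class $\beta>0$ both sides of (\ref{eqn_key_formula1}) are formal power series in the single variable $q$, and the equation asserts their equality.

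First I would fix an arbitrary $n>0$ and read off the coefficient of $q^n$ on each side. On the left this coefficient is $(-1)^{n-1}n\, N_{n,\beta}$, and on the right it is $\sum_{k\geq 1,\,k|(\beta,n)}\frac{(-1)^{n-1}n}{k^2}n_0^{\beta/k}$. Equating them for every $n$ gives
$$(-1)^{n-1}n\, N_{n,\beta}=\sum_{\substack{k\geq 1\\ k|(\beta,n)}}\frac{(-1)^{n-1}n}{k^2}n_0^{\beta/k}.$$
The common factor $(-1)^{n-1}n$ is nonzero for every $n>0$, so cancelling it produces exactly the asserted multiple cover formula. The boundary case is automatic: for $n=1$ the condition $k|(\beta,1)$ forces $k=1$, so the right hand side collapses to $n_0^{\beta}$, recovering $N_{1,\beta}=n_0^{\beta}$.

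Finally I would make the logical status explicit, since the genuine content is not in this last step. The passage from (\ref{eqn_key_formula1}) to the conclusion is a purely formal coefficient comparison and carries no difficulty. The substance of the argument lies entirely in its inputs, namely the wall crossing Corollary \ref{cor_PT_wall-crossing} together with Toda's conjectural expansion in Conjecture \ref{con_Toda_68}, equivalently the Gopakumar--Vafa integrality Conjecture \ref{con_GV}. The main obstacle is therefore that these inputs are not available for a general Calabi--Yau threefold gerbe $\XX\to X$, which is precisely why the multiple cover formula can only be stated here as a conjecture. For the local K3 situation governing Theorem \ref{thm_twisted_multiple_cover_intro} the needed input is supplied by the Maulik--Thomas proof via the KKV formula, and there the conjecture is upgraded to a theorem.
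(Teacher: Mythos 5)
Your proposal is correct and follows the paper's own derivation exactly: the paper obtains (\ref{eqn_key_formula1}) by equating the logarithms (\ref{eqn_RHS_68}) and (\ref{eqn_log_66}) and then reads off the multiple cover formula by the same coefficient comparison and cancellation of $(-1)^{n-1}n$ that you describe. Your remark on the logical status — that the statement remains conditional on Conjecture \ref{con_Toda_68} for a general Calabi–Yau gerbe and is only upgraded to a theorem in the local K3 case via Maulik–Thomas — also matches the paper's framing.
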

This conjecture is equivalent to Conjecture \ref{con_Toda_68}, and 
$n_g^{\beta}$ for $g\geq 1$ are written down by the invariants $L_{n,\beta}$. 
Since we don't need the higher genus Gopakumar-Vafa invariants in this paper, we leave  these invariants for the  future research.

\section{Multiple cover formula for  twisted K3 surfaces}\label{sec_twisted_K3}

In this section we prove a multiple cover formula for counting semistable sheaves in the category of twisted sheaves on 
a twisted K3 surface. 
Let $S$ be a smooth projective K3 surface.  We let $p:\SS_{\alpha}\to S$ be an optimal $\mu_r$-gerbe, which means that 
the order of the corresponding  $[\alpha]\in H^2(S,\sO_S^*)_{\tor}$ in the cohomological Brauer group is $r$.   The pair $(S,\alpha)$ or the 
gerbe $\SS_{\alpha}$ is called a twisted K3 surface.

\subsection{KKV formula for K3 surfaces and the theorem of Maulik-Thomas} 

Let us recall how Maulik-Thomas \cite{MT} prove the multiple cover formula for K3 surfaces. 
Let $X:=S\times \cc$ be the local K3 surface.  Let $\iota: S\to X$ be the inclusion and 
$$N_{g,\beta}^{\red}:=\int_{[\overline{M}_{g}(X,\iota_*\beta)]^{\vir}}\frac{1}{e(N^{\vir})}$$
be the reduced connected residue GW invariants of $X$ by $\cc^*$-localization. 
We write its generating series in terms of ``BPS" form as in Conjecture \ref{con_GV}: 
$$\sZ_{\GW}^{\red}(X;u,t)=
\sum_{g\geq 0,\beta\neq 0}N^{\red}_{g,\beta}(X)u^{2g-2}t^{\beta}=\sum_{\substack{g\geq 0,\beta\neq 0}}
n_g^{\beta} u^{2g-2}\sum_{k>0}\frac{1}{k}\left(\frac{\sin(ku/2)}{u/2}\right)^{2g-2}t^{k\beta}.$$
The Gopakumar-Vafa invariants $n_g^{\beta}$ are in fact integers $n_{g,h}\in\zz$ which depend only on 
$h$ and 
$$\int_{S}\beta^2=2h-2.$$
These invariants are nonzero only for $0\leq g\leq h$ and are determined by the KKV formula
\begin{equation}\label{eqn_KKV}
\sum_{g\geq 0}\sum_{h\geq 0}(-1)^g n_{g,h}(y^{\frac{1}{2}}-y^{-\frac{1}{2}})^{2g}q^h
=\prod_{n\geq 1}\frac{1}{(1-q^n)^{20} (1-yq^n)^{2} (1-y^{-1}q^n)^{2}}.
\end{equation}
This KKV formula is proved in \cite{PT_KKV}.
In \cite{MT}, Maulik-Thomas prove the following result:
\begin{thm}\label{thm_MT_6.3}(\cite[Theorem 6.3]{MT})
$$\sZ_{\GW}^{\red}(X;u,t)=\Z_{\PT}^{\red}(X;q,t)=-\log (1+\Z_{\PT}^{\chi}(X;q,t))=-\log (1+\Z_{\PT}^{\na}(X;-q,t))$$
after $q=e^{-iu}$ and $\Z_{\PT}^{\na}(X;-q,t)$ is the generating series of the naive Euler characteristic of the stable pair moduli spaces. Here  $Z_{\PT}^{\red}(X;q,t), \Z_{\PT}^{\chi}(X;q,t)$ were written as 
$Z_{P}^{\red}(X;q,t), \Z_{P}^{\chi}(X;q,t)$ in Section \ref{sec_Behrend_Oberdieck}. 
\end{thm}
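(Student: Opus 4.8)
The plan is to split the four-term chain into three equalities and treat them separately. The middle equality, $\Z_{\PT}^{\red}(X;q,t) = -\log(1+\Z_{\PT}^{\chi}(X;q,t))$, is already furnished by Theorem \ref{thm_red_Behrend_MT} applied to the trivial gerbe (take $r=1$, so $\SS = S$ and $\XX = X$), together with Lemma \ref{lem_quotient_generating_function} and the Oberdieck comparison of Theorem \ref{thm_oberdieck_reduced_Behrend}; this costs nothing here. The real work lies in the two outer equalities: the reduced GW/PT correspondence $\sZ_{\GW}^{\red}(X;u,t) = \Z_{\PT}^{\red}(X;q,t)$ after $q = e^{-iu}$, and the passage from Behrend-weighted to naive Euler characteristics, $\Z_{\PT}^{\chi}(X;q,t) = \Z_{\PT}^{\na}(X;-q,t)$.

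For the reduced GW/PT equality I would route through the honest Calabi-Yau threefold $\YY = S\times E$ rather than attack $X = S\times\cc$ directly, mirroring the device already used on the PT side in Lemma \ref{lem_quotient_generating_function}. Both the reduced residue GW series and the reduced residue PT series of $X$ are identified, via the $E$-translation action and the localization/degeneration comparison of \cite{Oberdieck}, with the ordinary (unreduced) series of $\YY$. On the compact Calabi-Yau threefold $\YY$ the usual GW/PT correspondence of \cite{MPT} applies, while the Gopakumar-Vafa/BPS content of the K3 direction is pinned down in closed form by the KKV formula (\ref{eqn_KKV}), proved in \cite{PT_KKV}. Matching the two generating functions under $q = e^{-iu}$ then yields the first equality.

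For the last equality it suffices to establish $\Z_{\PT}^{\chi}(X;q,t) = \Z_{\PT}^{\na}(X;-q,t)$ coefficient by coefficient, i.e. $\chi(P_n(X,\iota_*\beta),\nu_P) = (-1)^n\chi(P_n(X,\iota_*\beta))$; the outer $-\log(1+\cdot)$ then copies over. Here I would use that the relevant compact $\cc^*$-fixed loci of $P_n(X,\iota_*\beta)$ are smooth --- a consequence of the holomorphic symplectic structure of $S$, which forces the obstruction to vanish --- so that by Behrend's theorem $\nu_P$ is the constant $(-1)^{\dim}$ on them. It then remains to check the parity $\dim P_n(X,\iota_*\beta) \equiv n \pmod 2$; granting this, $\nu_P = (-1)^n$ and the weighted count is exactly $(-1)^n$ times the naive one, which is the substitution $q\mapsto -q$.

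The main obstacle is the reduced GW/PT correspondence of the second paragraph: it is by far the deepest ingredient, resting on the KKV evaluation of \cite{PT_KKV} and on the full GW/stable-pairs correspondence for $\YY$, and it requires identifying the reduced residue invariants of $X$ with the honest invariants of $\YY$ compatibly on both the GW and the PT sides. By comparison, the smoothness-and-parity bookkeeping of the third paragraph is delicate but essentially routine once smoothness of the fixed loci is in hand.
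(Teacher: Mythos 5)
First, a point of comparison: the paper does not prove this statement at all --- it is quoted verbatim as \cite[Theorem 6.3]{MT}, and the only ingredient the paper re-proves (in the more general gerbe setting) is the middle equality, via Theorem \ref{thm_red_Behrend_MT}. So your treatment of the middle equality, specializing Theorem \ref{thm_red_Behrend_MT} (with Lemma \ref{lem_quotient_generating_function} and Theorem \ref{thm_oberdieck_reduced_Behrend}) to the trivial gerbe $r=1$, is exactly in line with what the paper supplies; for the outer two equalities there is nothing in the paper to compare against, and they must in any case be imported.

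On the merits of those outer equalities, there are two genuine gaps. For the first, you cannot invoke ``the usual GW/PT correspondence of \cite{MPT}'' on the compact threefold $\YY=S\times E$: the GW/stable-pairs correspondence is not an off-the-shelf theorem for compact Calabi--Yau threefolds, and \cite{MPT} only handles (essentially) primitive classes. The actual input is the reduced residue GW/Pairs correspondence for $S\times\cc$ in all divisibilities, which is one of the main theorems of \cite{PT_KKV}; as written, your paragraph assumes the statement it is meant to establish. For the last equality, the claim that the relevant $\cc^*$-fixed loci of $P_n(X,\iota_*\beta)$ are smooth is false in general (already the stratum $P_n(S,\beta)$ of pairs supported on the zero fiber is singular for imprimitive $\beta$); the holomorphic symplectic form only produces a trivial factor of the obstruction space, which kills the virtual class but does not smooth the moduli space. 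Moreover, even where a fixed locus is smooth this would not compute $\nu_P$: $\nu_P$ is the Behrend function of the ambient, generally singular, space $P_n(X,\iota_*\beta)$, so its restriction to a smooth fixed locus is not $(-1)^{\dim}$ of that locus. What is actually needed is the $\cc^*$-localization of Behrend-weighted Euler characteristics for a symmetric obstruction theory, the identity $\nu_P(p)=(-1)^{\dim T_pP}$ at fixed points, and a parity statement relating $\dim T_pP$ to $n$ --- none of which follows from smoothness of the fixed locus, and all of which is the content of \cite[\S 5]{MT}. The clean fix is to cite \cite{PT_KKV} for the first equality and \cite{MT} for the last, rather than to re-derive them.
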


The KKV formula of \cite{PT_KKV} implies that the PT-stable pair generating series $\Z_{\PT}^{\chi}(X;q,t)$ satisfies the ``BPS rationality" condition, and 
Toda \cite[Theorem 6.4]{Toda_Kyoto} shows that the ``BPS rationality" condition is equivalent to the multiple cover formula (\ref{eqn_multiple_cover_formula}). 
Thus we obtain:

\begin{prop}[\cite{MT}]\label{prop_multiple_cover_K3}
The multiple cover formula Conjecture \ref{eqn_multiple_cover_formula} holds for K3 surfaces. 
\end{prop}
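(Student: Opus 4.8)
The plan is to assemble the three inputs already in place: the chain of identities in Theorem \ref{thm_MT_6.3}, the KKV formula (\ref{eqn_KKV}) of \cite{PT_KKV}, and Toda's equivalence between ``BPS rationality'' and the multiple cover formula. First I would record that by Theorem \ref{thm_MT_6.3} the reduced stable pair series $\Z_{\PT}^{\red}(X;q,t)$ coincides with $-\log(1+\Z_{\PT}^{\chi}(X;q,t))$, and equivalently with $-\log(1+\Z_{\PT}^{\na}(X;-q,t))$ after the substitution $q=e^{-iu}$. This identifies the reduced Gromov--Witten generating series with a logarithm of the Behrend-weighted (equivalently naive) stable pair series, and thereby transfers the integrality and rationality content of the reduced GW side of $X=S\times\cc$ directly onto the PT generating function.

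Second, I would use the KKV formula (\ref{eqn_KKV}) to deduce the BPS rationality property of $\Z_{\PT}^{\chi}(X;q,t)$. The point is that (\ref{eqn_KKV}) pins down the Gopakumar--Vafa invariants as explicit integers $n_{g,h}\in\zz$ depending only on $h$ with $\int_S\beta^2=2h-2$, nonzero only in the range $0\le g\le h$. Substituting these integers into the BPS product form of the series, one finds that for each fixed class $\beta$ the generating function $\sum_n P_{n,\beta}\,q^n$ is a rational function of $q$, invariant under $q\leftrightarrow 1/q$, whose pole order is controlled by the genus bound coming from $h$. This is exactly the ``BPS rationality'' condition in the sense required by Toda.

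Third, I would invoke \cite[Theorem 6.4]{Toda_Kyoto}, which asserts that this BPS rationality of the PT series is equivalent to the multiple cover formula (\ref{eqn_multiple_cover_formula}) for the rank-zero counting invariants $N_{n,\beta}$. Chaining the second and third steps then yields the multiple cover formula for the local K3 surface $X=S\times\cc$, with the base case $N_{1,\beta}=n_0^{\beta}$, which is precisely the assertion of the proposition.

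The genuinely hard geometric input, the KKV formula itself, is supplied wholesale by \cite{PT_KKV}, so the obstacle here is not the final logical chaining, which is short. Rather, the delicate point is matching the precise formulation of BPS rationality on the two sides: one must verify that the rationality and functional-equation statement extracted from the KKV product form in the second step is literally the hypothesis of \cite[Theorem 6.4]{Toda_Kyoto}, keeping careful track of the sign conventions $q\leftrightarrow -q$ and the substitution $q=e^{-iu}$ that thread through Theorem \ref{thm_MT_6.3}. Once these conventions are aligned, the remainder of the argument is bookkeeping.
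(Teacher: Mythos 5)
Your proposal matches the paper's argument exactly: the paper derives the proposition in the paragraph preceding it by chaining Theorem \ref{thm_MT_6.3}, the KKV formula (\ref{eqn_KKV}) of \cite{PT_KKV} to obtain BPS rationality of $\Z_{\PT}^{\chi}(X;q,t)$, and Toda's equivalence \cite[Theorem 6.4]{Toda_Kyoto} between BPS rationality and the multiple cover formula. Your additional remark about tracking the sign conventions $q\leftrightarrow -q$ and $q=e^{-iu}$ is a fair caveat, but it does not change the route, which is the same as the paper's.
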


\subsection{Category of twisted sheaves on $\overline{\XX}=\SS\times \pp^1$}

We let $X:=S\times \cc$ be the local K3 surface, and $\overline{X}=S\times\pp^1$. We have
$$H^2(S\times\pp^1,\mu_r)\cong H^2(S,\mu_r)\oplus H^2(\pp^1,\mu_r)\cong H^2(S,\mu_r)\oplus \zz_r.$$
Let $\SS:=\SS_{\alpha}\to S$ be an optimal $\mu_r$-gerbe for $\alpha\in H^2(S,\mu_r)$. 
Then  
$$p: \overline{\XX}:=\SS\times\pp^1\to S\times\pp^1$$
is a $\mu_r$-gerbe given by the class 
$\alpha\in H^2(S,\mu_r)\hookrightarrow H^2(S\times\pp^1, \mu_r)$, which is also optimal.  
Let 
\[
\xymatrix{
\overline{\XX}\ar[r]^{\pi}\ar[d]^{p}& \pp^1\\
S\times \pp^1\ar[ur]&
}
\]
be the projection.  
The gerbe twisted sheaves were introduced in \cite{Lieblich_Duke}, and was reviewed in \cite[\S 3]{Jiang_2019}.  We consider  $\overline{\XX}$-twisted sheaves on $\overline{\XX}=\SS\times \pp^1$.

\begin{prop}\label{prop_twisted_sheaf_barXX}
Any $\alpha$-twisted coherent sheaf $E$ on $\overline{\XX}$ is tensor product of a pullback $\alpha$-twisted sheaf on $\SS$ and a coherent sheaf on $\pp^1$. 
\end{prop}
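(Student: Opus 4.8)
The plan is to reduce the twisted statement to an untwisting calculation driven by the generating sheaf, using that the gerbe $\overline{\XX}\to\overline{X}$ is pulled back from $\SS\to S$. Concretely, the K\"unneth decomposition $H^2(S\times\pp^1,\mu_r)\cong H^2(S,\mu_r)\oplus H^2(\pp^1,\mu_r)$ recorded just above the statement shows that the class of $\overline{\XX}$ is $\alpha$ with trivial $\pp^1$-component; equivalently $\overline{\XX}=\SS\times\pp^1$ is the base change of $\SS\to S$ along the first projection $\pr_S:\overline{X}\to S$, so the $\mu_r$-banding is constant in the $\pp^1$-direction. First I would fix the $\SS$-twisted locally free sheaf $\xi$ and pull it back along $p_{\SS}:\overline{\XX}\to\SS$ to obtain an $\overline{\XX}$-twisted locally free sheaf $\tilde\xi:=p_{\SS}^*\xi$; its endomorphism sheaf is of $\mu_r$-weight $0$, descends to the Azumaya algebra $\cEnd(\xi)$ on $S$, and $\cEnd(\tilde\xi)$ is its pullback $\pr_S^*\cEnd(\xi)$ to $\overline{X}=S\times\pp^1$.

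Second, I would invoke the weight decomposition $\Coh(\overline{\XX})=\bigoplus_i\Coh(\overline{\XX})_i$ together with the Morita-type equivalence of Lieblich \cite{Lieblich_Duke} and Huybrechts--Stellari \cite{HS}: the functor $E\mapsto\cHom(\tilde\xi,E)$ carries the $\alpha$-twisted sheaves $\Coh^{\tw}(\overline{\XX})=\Coh(\overline{\XX})_1$ isomorphically onto the category of $\cEnd(\tilde\xi)$-modules on $\overline{X}$, with inverse the Morita reconstruction $E\cong\tilde\xi\otimes_{\cEnd(\tilde\xi)}\cHom(\tilde\xi,E)$; the same construction on $\SS$ identifies $\Coh^{\tw}(\SS)$ with $\cEnd(\xi)$-modules on $S$. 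Since $\cEnd(\tilde\xi)=\pr_S^*\cEnd(\xi)$ is pulled back from $S$, an object $E\in\Coh^{\tw}(\overline{\XX})$ corresponds to a module $F:=\cHom(\tilde\xi,E)$ on $S\times\pp^1$ whose algebra action involves only the $S$-factor. The key step is then to split off the $\pp^1$-direction: because the algebra and the twisting are constant along $\pi$, I would write $F$ as a tensor product $\pr_S^*F_0\otimes\pi^*H$ with $F_0$ a $\cEnd(\xi)$-module on $S$ and $H$ a coherent sheaf on $\pp^1$, and transport this back through the equivalence to obtain $E\cong p_{\SS}^*G\otimes\pi^*H$, where $G\in\Coh^{\tw}(\SS)$ is the twisted sheaf corresponding to $F_0$.

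The \emph{main obstacle} is precisely this factorization of the $\pp^1$-direction, since a coherent module on a product need not be a box product; the content of the proposition is that the pulled-back twisting forces such a splitting. I expect to handle it by exploiting that the gerbe is trivial along $\pi$ --- equivalently that every local trivialization of the twisting is pulled back from $\SS$ --- so that after untwisting with $\tilde\xi$ one is comparing $\cEnd(\xi)$-linear structures that are literally constant in the $\pp^1$-parameter, and the descent along $\pr_S$ is compatible with the product. The remaining points, namely that $E\otimes\tilde\xi^{\vee}$ has weight $0$ and hence descends to $\overline{X}$, that $\tilde\xi$ has the correct rank so that the reconstruction isomorphism holds, and that all these identifications are functorial and compatible with the decomposition $\Coh(\overline{\XX})=\bigoplus_i\Coh(\overline{\XX})_i$, are routine once the base-change description $\overline{\XX}=\SS\times_S\overline{X}$ and the product structure of $\overline{X}=S\times\pp^1$ are in place.
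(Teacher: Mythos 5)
Your Morita/untwisting reduction is a sound and legitimately different route from the paper's: the paper argues directly with the $\mu_r$-weight decomposition of $\Coh(\overline{\XX})$ and the fact that the band is pulled back from $S$, whereas you first untwist by $\tilde\xi=p_{\SS}^*\xi$ and pass to $\cEnd(\tilde\xi)=\pr_S^*\cEnd(\xi)$-modules on $\overline{X}=S\times\pp^1$. That preliminary step is correct and does cleanly locate all the twisting data on the $\SS$-factor.

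However, there is a genuine gap, and it sits exactly where you flag your ``main obstacle'': the claim that the resulting module on $S\times\pp^1$ splits as $\pr_S^*F_0\otimes\pi^*H$. The constancy of the gerbe and of the algebra along $\pi$ does not produce such a splitting; after untwisting you are asking whether an arbitrary coherent sheaf (or $\cEnd(\xi)$-module) on a product $S\times\pp^1$ is an external tensor product, and this is false in general --- the ideal sheaf of a point of $S\times\pp^1$, or the structure sheaf of a curve not of product form, already give counterexamples, and tensoring these with $\tilde\xi$ gives $\alpha$-twisted counterexamples. So the phrase ``I expect to handle it by exploiting that the gerbe is trivial along $\pi$'' does not close the argument: triviality of the twist in the $\pp^1$-direction reduces the problem to the untwisted statement, which still fails. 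To be fair, the paper's own proof commits the identical sin --- it writes ``since any coherent sheaf $E=\pi_{\SS}^*E_1\otimes\pi^*E_2$\,\dots'' and then only argues that the $\mu_r$-action must live on the first factor --- so the factorization is assumed rather than proved there as well. Your write-up is in that sense more honest, since it isolates the unproved step explicitly; but as a proof it is incomplete for the same reason, and the statement can only hold under some restriction (e.g.\ on the class of sheaves considered, such as those supported on fibers of $\pi$ that the paper actually uses later), not for arbitrary $\alpha$-twisted coherent sheaves on $\overline{\XX}$.
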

\begin{proof}
Let $\pi_{\SS}: \overline{\XX}\to \SS$ be the projection.  Since $\alpha\in H^2(S,\mu_r)\hookrightarrow H^2(\overline{\XX},\mu_r)$, from the definition of twisted sheaves in 
\cite[\S 3.1]{Jiang_2019}, we have a commutative diagram:
\[
\xymatrix{
\mu_r\times E\ar[r]\ar[d]_{\chi}& E\ar[d]^{\id}\\
\cc^*\times E\ar[r]& E
}
\]
where $\chi: \mu_r\to \cc^*$ is the inclusion. 
Since any coherent sheaf 
$E=\pi_{\SS}^*E_1\otimes \pi^*E_2$ for $E_1$ and $E_2$ are coherent sheaves on $\SS$ and $\pp^1$,  the gerbe structure implies that $\mu_r$ only acts on $\pi_{\SS}^*E_1$.  Hence 
$E_1$ defines a $\SS$-twisted sheaf on $\SS$. 
\end{proof}

We define 
$$\Coh_{\pi}^{\tw}(\overline{\XX})\subset \Coh(\overline{\XX})$$
to be the subcategory of gerbe $\alpha$-twisted sheaves on $\overline{\XX}$ supported on the fiber of $\pi$, and let 
$$\sD_0^{\SS}:=D^b(\Coh_{\pi}^{\tw}(\overline{\XX}))$$
be the corresponding derived category.  As in \cite[Definition 2.1]{Toda_JDG}, we define 
$$\sD:=\langle \pi^*\Pic(\pp^1), \Coh_{\pi}^{\tw}(\overline{\XX})\rangle_{\tr}\subset D^b(\Coh^{\tw}(\overline{\XX}))$$
to be the triangulated category. 

\subsubsection{Twisted Chern character}\label{subsubsec_twisted_Chern_character}

We introduce the twisted Chern character on $\SS$ and twisted Mukai pairs. 
Let us first review the Brauer-Severi variety.  
The gerbe $\alpha\in \Br^\prime(S)$, which is the same as $\Br(S)$. Thus it determines a projective bundle $P\to S$ of rank $r-1$.  We have  a vector bundle $G$ over $P$ is defined by the Euler sequence
$$0\to \sO_P\rightarrow G\rightarrow T_{P/S}\to 0$$
of the projective bundle $P\to S$. 

Yoshioka \cite{Yoshioka2} defined the subcategory $\Coh(S;P)\subset \Coh(P)$ of coherent sheaves on $P$ to be the subcategory of sheaves 
$$E\in \Coh(S;P)$$
if and only if $$E|_{P_i}\cong p^*(E_i)\otimes \sO_{P_i}(\lambda_i)$$
where $\{U_i\}$ is an open covering of $S$, $P_i=U_i\times\pp^{r-1}$, $E_i\in\Coh(U_i)$; and there exists an equivalence:
\begin{equation}\label{eqn_equivalence_twist_untwist}
\Coh(S;P)\stackrel{\cong}{\longrightarrow}\Coh(S,\alpha)
\end{equation}
by
$$E\mapsto p_*(E\otimes L^{\vee}).$$
Here $\alpha=o([\XX])\in H^2(X,\sO_X^*)_{\tor}$ is the image of the $\mu_r$-gerbe $[\SS]$. The line bundle $L\in\Pic(P)$ is the line bundle with the property that 
$$L|_{p^{-1}(x)}=\sO_{p^{-1}(x)}(-1),$$
and $\Coh(S,\alpha)$ is the category of $\alpha$-twisted coherent sheaves on $S$.
We call $E\in \Coh(S;P)$ a $P$-sheaf.

A $P$-sheaf $E$ is of dimension $d$ if $p_*E$ is of dimension $d$ on $S$.  Yoshioka defined the Hilbert polynomial 
$$P_E^G(m)=\chi(p_*(G^\vee\otimes E)(m))=\sum_{i=0}^{d}\alpha_i^{G}(E)\cdot \mat{c} m+i\\
i\rix.$$
The stability and semistability can be defined using this Hilbert polynomial.

There is an integral structure on $H^*(S,\qq)$ following Huybrechts-Stellari \cite{HS}. 
The integer 
 $r$ is the minimal rank on $\SS$
such that there exists a rank $r$ $\SS$-twisted locally free sheaf $E$ on the generic scheme $S$. 
Recall that 
$\langle, \rangle$ the Mukai pairing on $H^*(S,\zz)$. 

\begin{defn}\label{defn_Mukai_vector_Psheaf}
For a $P$-sheaf $E$, define a Mukai vector of $E$ as:
$$v_{G}(E):=\frac{\Ch(Rp_*(E\otimes G^{\vee}))}{\sqrt{\Ch(Rp_*(G\otimes G^\vee))}}\sqrt{\td}_S=(\rk, \zeta, b)\in H^*(S,\qq),$$
where $p^*(\zeta)=c_1(E)-\rk(E)\frac{c_1(G)}{\rk(G)}, b\in \qq$.
\end{defn}
One can check that 
$$\langle v_{G}(E_1), v_{G}(E_2)\rangle=-\chi(E_1, E_2).$$
We define for $\xi\in H^2(S,\zz)$, an injective homomorphism:
$$T_{-\frac{\xi}{r}}: H^*(S,\zz)\longrightarrow H^*(S,\qq)$$
by
$$x\mapsto e^{-\frac{\xi}{r}}\cdot x$$
and 
$T_{-\frac{\xi}{r}}$ preserves the bilinear form  $\langle, \rangle$. The following result is from \cite[Lemma 3.3]{Yoshioka2}. 

\begin{prop}\label{prop_repn_Mukai_vector}(\cite[Lemma 3.3]{Yoshioka2})
Let $\xi\in H^2(S,\zz)$ be a representation of $\omega(G)\in H^2(S,\mu_r)$, where $\rk(G)=r$. Set 
$$(\rk(E), D, a):=e^{\frac{\xi}{r}}\cdot v_G(E).$$
Then $(\rk(E), D, a)\in H^*(S,\zz)$ and $D \mod r=\omega(E)$.
\end{prop}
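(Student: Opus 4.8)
The plan is to compute $e^{\xi/r}\cdot v_G(E)$ componentwise on the surface and verify integrality degree by degree, which is exactly the bookkeeping behind \cite[Lemma 3.3]{Yoshioka2}. Write $v_G(E)=(\rk(E),\zeta,b)$ as in Definition \ref{defn_Mukai_vector_Psheaf}. Since $S$ is a surface the exponential truncates, $e^{\xi/r}=1+\frac{\xi}{r}+\frac{\xi^2}{2r^2}$, and cup multiplication gives
$$e^{\frac{\xi}{r}}\cdot v_G(E)=\left(\rk(E),\ \zeta+\frac{\rk(E)}{r}\xi,\ b+\frac{\zeta\cdot\xi}{r}+\frac{\rk(E)\,\xi^2}{2r^2}\right),$$
so $D=\zeta+\frac{\rk(E)}{r}\xi$ and $a=b+\frac{\zeta\cdot\xi}{r}+\frac{\rk(E)\,\xi^2}{2r^2}$. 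The degree-zero slot is literally $\rk(E)\in\zz$, so nothing is needed there.

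For the integrality of $D$ I would pull back to the Brauer--Severi variety $P$ and invoke the defining relation $p^*\zeta=c_1(E)-\frac{\rk(E)}{r}c_1(G)$ from Definition \ref{defn_Mukai_vector_Psheaf}, which yields $p^*D=c_1(E)+\frac{\rk(E)}{r}\bigl(p^*\xi-c_1(G)\bigr)$. The key input is that, because $\xi\in H^2(S,\zz)$ is chosen to represent $\omega(G)\in H^2(S,\mu_r)$ and $G$ is the rank-$r$ bundle attached to $P$ through the Euler sequence, one has $c_1(G)\equiv p^*\xi\pmod r$ in $H^2(P,\zz)$; writing $c_1(G)-p^*\xi=r\eta$ with $\eta$ integral makes $p^*D=c_1(E)-\rk(E)\,\eta$ visibly integral. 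As $p^*\colon H^2(S,\zz)\to H^2(P,\zz)$ is injective with saturated image (the fibers being projective spaces), integrality of $p^*D$ forces $D\in H^2(S,\zz)$. Reducing the same expression modulo $r$ gives $p^*D\equiv c_1(E)\pmod r$, and since $\omega(E)$ is by construction the descent of $c_1(E)\bmod r$, we obtain $D\bmod r=\omega(E)$.

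The main obstacle is the integrality of the degree-four term $a$, which does not drop out of naive manipulation of the fractional classes. Here I would use that $T_{-\xi/r}=e^{-\xi/r}\cdot$ preserves the Mukai pairing, together with the identity $\langle v_G(E_1),v_G(E_2)\rangle=-\chi(E_1,E_2)$ recorded after Definition \ref{defn_Mukai_vector_Psheaf}; combining them gives $\langle e^{\xi/r}v_G(E_1),e^{\xi/r}v_G(E_2)\rangle=-\chi(E_1,E_2)\in\zz$ for all $P$-sheaves $E_1,E_2$. Taking $E_1=E_2=E$ and using the already-established integrality of $\rk(E)$ and $D$ yields $2\,\rk(E)\,a=\chi(E,E)+D^2\in\zz$, which pins $a$ down to a bounded denominator; the sharp conclusion $a\in\zz$ then comes from Yoshioka's direct Riemann--Roch computation of $e^{\xi/r}v_G(E)$ on $P$, which I would adapt verbatim to the gerbe $\SS$. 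Thus the proposition is a transcription of \cite[Lemma 3.3]{Yoshioka2}, the only delicate points being the congruence $c_1(G)\equiv p^*\xi\pmod r$ and the Riemann--Roch integrality of the degree-four component.
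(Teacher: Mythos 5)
The paper itself offers no argument here: Proposition \ref{prop_repn_Mukai_vector} is stated with a bare citation to \cite[Lemma 3.3]{Yoshioka2}, so there is no in-paper proof to measure you against. Judged on its own terms, your reconstruction is on the right track for the easy parts. The componentwise expansion of $e^{\xi/r}\cdot v_G(E)$ is correct, and the degree-two argument works: writing $c_1(G)-p^*\xi=r\eta$ (which is what it means for $\xi$ to lift $\omega(G)$) makes $p^*D$ integral, and $p^*D$ restricts to zero on fibers, so by the Leray spectral sequence for $p\colon P\to S$ (with $R^1p_*\zz=0$ and $H^2(S,\zz)=E_\infty^{2,0}$ injecting into $H^2(P,\zz)$) it lies in the image of $p^*H^2(S,\zz)$; that is the precise justification behind your parenthetical ``the fibers being projective spaces,'' which as stated is too thin for a genuine Brauer--Severi variety where the fiberwise hyperplane class need not lift integrally. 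The congruence $D\bmod r=\omega(E)$ then follows as you say.

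The genuine gap is the integrality of the degree-four component $a$. Your pairing argument only yields $2\,\rk(E)\,a=D^2+\chi(E,E)\in\zz$, which bounds the denominator but does not give $a\in\zz$, and it says nothing at all when $\rk(E)=0$ (torsion $P$-sheaves, which are exactly the objects used later in the multiple cover formula). You acknowledge this and fall back on ``Yoshioka's direct Riemann--Roch computation'' --- but that computation \emph{is} the content of \cite[Lemma 3.3]{Yoshioka2}, so at the one step where something nontrivial must be checked, your proof reduces to the same citation the paper makes for the entire statement. To close the gap one must actually run Grothendieck--Riemann--Roch for $Rp_*(E\otimes G^{\vee})$ on $P$ and track how the half-integral terms $\frac{\zeta\cdot\xi}{r}+\frac{\rk(E)\xi^2}{2r^2}$ combine with $\sqrt{\td_S}$ and $\sqrt{\Ch(Rp_*(G\otimes G^{\vee}))}$ into an Euler characteristic; nothing in your sketch substitutes for that.
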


In \cite{H-St}, Huybrechts and  Stellari defined a weight $2$ Hodge structure on the lattice 
$(H^*(S,\zz), \langle,\rangle)$ as:
$$
\begin{cases}
H^{2,0}(H^*(S,\zz)\otimes \aaa^1_{\kappa}):=T_{-\frac{\xi}{r}}^{-1}(H^{2,0}(S));\\
H^{1,1}(H^*(S,\zz)\otimes \aaa^1_{\kappa})):=T_{-\frac{\xi}{r}}^{-1}(\oplus_{p=0}^{2}H^{p,p}(S));\\
H^{0,2}(H^*(S,\zz)\otimes \aaa^1_{\kappa})):=T_{-\frac{\xi}{r}}^{-1}(H^{0,2}(S)).
\end{cases}
$$
and this polarized Hodge structure is denoted by 
$$\left(H^*(S,\zz), \langle, \rangle, -\frac{\xi}{r}\right).$$
From \cite[Lemma 3.4]{Yoshioka2}, this Hodge structure 
$\left(H^*(S,\zz), \langle, \rangle, -\frac{\xi}{r}\right)$
depends only on the Brauer class 
$o([\xi \mod r])$, where 
$o(\xi)$ is the image under the map $o: H^2(S,\mu_r)\to H^2(S,\sO_S^*)$.

\begin{defn}\label{defn_integral_structure_K3}
For the projective bundle $P\to S$ and $G$ the locally free $P$-sheaf. Let 
$\xi\in H^2(S,\zz)$ be a lifting of 
$\omega(G)\in H^2(S,\mu_r)$, where $\rk(G)=r$. 
\begin{enumerate}
\item Define an integral Hodge structure of $H^*(S,\qq)$ as:
$$T_{-\frac{\xi}{r}}\left(\left(H^*(S,\zz), \langle, \rangle, -\frac{\xi}{r}\right)\right).$$
\item $v=(\rk, \zeta, b)$ is a Mukai vector if $v\in T_{-\frac{\xi}{r}}\left(H^*(S,\zz)\right)$ and $\zeta\in \Pic(S)\otimes \qq$.
Moreover, if $v$ is primitive in  $T_{-\frac{\xi}{r}}\left(H^*(S,\zz)\right)$, then $v$ is primitive. 
\end{enumerate}
\end{defn}

Now we let 
$$\cl_0: K(\sD_0^{\SS})\stackrel{\pi_{\SS*}}{\longrightarrow}K(\SS)\stackrel{\Ch}{\longrightarrow}\Gamma_0$$
with 
$$\Gamma_0=\widetilde{H}(\SS,\qq)\cap \widetilde{H}^{1,1}(\SS)=\qq\oplus\NS(\SS)\oplus \qq$$
and $\pi_{\SS*}E\in K(\SS)$. 
The twisted Mukai vector is given by: 
\begin{equation}\label{eqn_twisted_Mukai_vector}
v_{G}: K(\sD_0^{\SS})\stackrel{\pi_{\SS*}}{\longrightarrow}K(\SS)\stackrel{v_G}{\longrightarrow}\Gamma_0.
\end{equation}

We have the general Chern character:
$$\Ch: K(\overline{\XX})\to H^*(\overline{\XX},\qq).$$
Let 
$$\Gamma:=H^0(\overline{\XX},\qq)\oplus (\Gamma_0\boxtimes H^2(\pp^1,\qq))\subset H^*(\overline{\XX},\qq).$$
Then 
$$\cl=\Ch: K(\sD)\to \Gamma=\qq\oplus\Gamma_0$$
and every element in $\Gamma$ can be written as: 
$v=(R,\rk,\beta,n)$, where $R, \rk$ are integers and $\beta\in\NS(\SS)$.  For any $E\in\sD$, we have:
$$\cl(E)=(\Ch_0(E),\Ch_1(E), \Ch_2(E), \Ch_3(E)).$$
Here $\Ch_3(E)\in\qq$ since we work on $H^*(\overline{\XX},\qq)$. The rank of 
$v$ is $\rk(v)=R$ and 
$\Ch_1(E)=c_1(E)=\rk\cdot [\SS]$ and $\rk$ is an integer. 

\subsubsection{Moduli stack of twisted sheaves}

In \cite{Yoshioka2}, the moduli stack $\sM^{P,G}_{H,ss}(v)$ (or $\sM^{P,G}_{H,s}(v)$)
of $G$-twisted semistable (or stable) $P$-sheaves $E$ with $v_G(E)=v$ is defined.  Let 
$M^{P,G}_{H,ss}(v)$ (or $M^{P,G}_{H,s}(v)$) be its coarse moduli space.  The moduli stack $\N^{P,G}_{H,ss}(v)$ (or $\N^{P,G}_{H,s}(v)$)
of $G$-twisted semistable (or stable) Higgs $P$-sheaves $(E,\phi)$ with $v_G(E)=v$ is similarly defined, where  
$E$ is a $G$-twisted $P$-sheaf and 
$$\phi: p_*E\to p_*(E)\otimes K_{\SS}$$
is the Higgs field.  This makes sense since $p_*(E\otimes L^{\vee})$ is a $G$-twisted sheaf on $S$. 
Let 
$N^{P,G}_{H,ss}(v)$ (or $N^{P,G}_{H,s}(v)$) be its coarse moduli space. 

 On the other hand, for the optimal  $\mu_r$-gerbe $\SS\to S$, using geometric stability as in \cite{Lieblich_Duke},  reviewed in \cite[\S 3]{Jiang_2019} Lieblich defined the 
 moduli stack of $\sM^{s,\tw}_{\SS}(v)$ of stable $\SS$-twisted sheaves with Mukai vector $v$. We have the following result:
 
 \begin{thm}\label{thm_moduli_mur_gerbe}
The natural map 
 $\sM^{s,\tw}_{\SS}(v)\to \sM^{P,G}_{H,s}(v)$ is a $\mu_r$-gerbe. 
 \end{thm}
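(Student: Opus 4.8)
The plan is to build the comparison morphism from Yoshioka's equivalence and then verify the three characterizing properties of a gerbe: that the morphism is an epimorphism, that it is locally trivial, and that its relative inertia is the group $\mu_r$. First I would promote the equivalence (\ref{eqn_equivalence_twist_untwist}), $E\mapsto p_*(E\otimes L^\vee)$, from an equivalence of abelian categories to an equivalence of the associated moduli functors. Since this equivalence is local on $S$ and commutes with arbitrary base change, it carries a flat family of $G$-twisted $P$-sheaves over a test scheme $T$ to a flat family of $\SS$-twisted sheaves over $T$, and conversely; by Definition \ref{defn_Mukai_vector_Psheaf} it matches the Mukai vector $v_G$ of a $P$-sheaf with the twisted Mukai vector of the corresponding $\SS$-twisted sheaf, and it identifies $G$-twisted Gieseker (semi)stability with Lieblich's geometric (semi)stability. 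This produces the natural morphism $\sM^{s,\tw}_{\SS}(v)\to\sM^{P,G}_{H,s}(v)$.

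Next I would check that this morphism is an epimorphism which is locally trivial in the \'etale topology. Essential surjectivity of the equivalence shows that every stable $G$-twisted $P$-sheaf arises, \'etale locally on $S$ where the gerbe $\SS\to S$ and the Brauer--Severi scheme $P\to S$ are split, from a stable $\SS$-twisted sheaf; descending along the moduli base then gives a section of the morphism \'etale locally on $\sM^{P,G}_{H,s}(v)$, and any two such lifts differ only by the choice of a splitting of the $\mu_r$-gerbe $\SS\to S$, hence are \'etale locally isomorphic over the target.

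The crux is to identify the relative inertia. For a stable object $E$ on either side one has $\Aut(E)=\cc^*$, and the point is that the $\mu_r$-band of the gerbe $\SS\to S$ acts by weight one on every $\SS$-twisted sheaf and thus sits inside these scalar automorphisms as a distinguished copy of $\mu_r$ that becomes invisible once $E$ is replaced by its $P$-sheaf on $S$; I would show that this $\mu_r$ is exactly the relative automorphism group scheme of the morphism. The main obstacle is precisely this last computation: the underlying equivalence of abelian categories by itself induces an isomorphism on automorphism groups, so one must extract the extra $\mu_r$ from the stack-theoretic difference between Lieblich's families, which remember the $\mu_r$-gerbe $\SS\to S$, and Yoshioka's families on $S$, and verify that this difference contributes exactly $\mu_r$ and no more. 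Granting this, the combination of epimorphism, \'etale-local triviality, and relative inertia $\mu_r$ yields by descent that $\sM^{s,\tw}_{\SS}(v)\to\sM^{P,G}_{H,s}(v)$ is a $\mu_r$-gerbe, proving Theorem \ref{thm_moduli_mur_gerbe}.
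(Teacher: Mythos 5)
The paper states Theorem \ref{thm_moduli_mur_gerbe} with no proof at all --- it is recorded as a fact, with the surrounding text deferring to \cite{Lieblich_Duke}, \cite{Yoshioka2} and \cite{Jiang_2019} for the constructions of the two moduli stacks. So there is no in-paper argument to measure your proposal against; I can only assess the proposal on its own terms.

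On those terms, your outline sets up the right formal framework (promote Yoshioka's equivalence $E\mapsto p_*(E\otimes L^{\vee})$ to the level of moduli functors, then verify local essential surjectivity, local connectedness of fibers, and compute the relative inertia), but the decisive step is left as an assumption rather than proved, and the heuristic you offer for it does not work as stated. You say the band $\mu_r$ of $\SS\to S$ ``sits inside the scalar automorphisms and becomes invisible'' on the $P$-sheaf side; but the $\mu_r$-inertia action on an $\SS$-twisted sheaf is part of the defining condition (weight one), not an extra automorphism, and for a stable object on either side the automorphism group is $\Gm$, with the categorical equivalence inducing the \emph{identity} on these scalars. Taken at face value, your construction therefore produces an isomorphism of stacks, not a $\mu_r$-gerbe, so the argument as written cannot detect the band. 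What is missing is an identification of exactly where, in passing from a $T$-family on $\SS\times T$ to a $T$-family on $P\times T$, a $\mu_r$-torsor's worth of ambiguity (and no more) enters --- for instance through the choice of the tautological twisted line bundle $L$ on $P\times_S\SS$ and the precise conventions in Lieblich's versus Yoshioka's family functors. Until that computation is carried out (or an explicit reference supplying it is invoked), the claim that the relative inertia is $\mu_r$ --- which is the entire content of the theorem --- remains unjustified, and you have acknowledged as much with the phrase ``granting this.''
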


\subsection{The invariants $N(v_{G})$}\label{subsec_invariants_Nv_Jv}

\subsubsection{The heart of a bounded $t$-structure on $\sD_0^{\SS}$ and on $\sD$}\label{subsubsec_heart_sD}

We first recall the geometric stability of $\alpha$-twisted sheaves on $\SS\to S$ or 
$\overline{\XX}\to \overline{X}=S\times\pp^1$.  We present twisted sheaves on $\overline{\XX}$, the case of $\SS$ is similar. The geometric Hilbert polynomial of a twisted sheaf 
$E$ is defined as:
$$P^g(E,m)=\chi^g(E\otimes\sO_{\overline{X}}(m))$$
where $\chi^g(E):=[I\overline{\XX}:\overline{\XX}]\deg(\Ch(E)\cdot \Td_{\overline{\XX}})$. 
The geometric Hilbert polynomial can be written as:
$$P^g(E,m)=\sum_{i=0}^{\dim(E)}a_i(E)\frac{m^i}{i!}.$$
The rank is defined as:
$$\rk(E)=\frac{a_d(E)}{a_d(\sO_{\overline{\XX}})}$$
and 
$\deg(E):=a_{d-1}(E)-\rk(E)\cdot a_{d-1}(\sO_{\overline{\XX}})$. The slope is:
$$\mu(E)=\mu_{\omega}(E)=\frac{\deg(E)}{\rk(E)}. $$
Since we are interested in twisted sheaves $E$, where the $\mu_r$ action on $E$ is given by $\lambda^1$-action. By Grothendieck-Riemann-Roch theorem for DM stacks, for a twisted sheaf on $\SS$, 
\begin{equation}\label{eqn_slope_twisted_sheaf}
\mu_{\omega}(E)=\frac{\int_{\SS}c_1(E)p^*\omega}{\rk(E)},
\end{equation}
where $\omega$ is an ample divisor on $S$ (usually take $\omega=\sO_S(1)$). 

On the category $\Coh_{\pi}^{\tw}(\overline{\XX})$, for an element $E$ such that 
$$\cl_0(E)=(\rk,\beta,n)\in \zz\oplus \NS(\SS)\oplus \qq$$
the Hilbert polynomial $P^g(E,m)$ and the slope $\mu(E)$ is defined before.  Then an object 
$E\in  \Coh_{\pi}^{\tw}(\overline{\XX})$ is called Gieseker (semi)stable and $\mu_{\omega}$-(semi)stable if for any subsheaf $F\subsetneq E$, 
$p^g(F,m)< (\leq) p^g(E,m)$, $\mu_{\omega}(F)< (\leq) \mu_{\omega}(E)$ respectively.  Here $p^g$ is the reduced Hilbert polynomial. 
As in \cite[\S 2.5]{Toda_JDG}, for the $\mu_{\omega}$-stability on  $\Coh_{\pi}^{\tw}(\overline{\XX})$,  we have the Harder-Narasimhan filtration
$$0\to E_0\subset E_1\subset\cdots\subset E_{\ell}=E$$
for $E\in  \Coh_{\pi}^{\tw}(\overline{\XX})$ which satisfies
$$\mu_{\omega}(E_1/E_0)>\cdots>\mu_{\omega}(E_{\ell}/E_{\ell-1}).$$
Using this we construct a torsion pair. Let 
$$\mu_{\omega,+}:=\mu_{\omega}(E_1/E_0); \quad  \mu_{\omega,-}:=\mu_{\omega}(E_{\ell}/E_{\ell-1}).$$

\begin{defn}\label{defn_torsion_pair}
Define $(\sT_{\omega}, \sF_{\omega})$ to be the torsion pair of full subcategories in $\Coh_{\pi}^{\tw}(\overline{\XX})$ as:
$$\sT_{\omega}:=\{E\in \Coh_{\pi}^{\tw}(\overline{\XX})| \mu_{\omega,-}(E)>0\}$$ and 
$$\sF_{\omega}:=\{E\in \Coh_{\pi}^{\tw}(\overline{\XX})| \mu_{\omega,+}(E)\leq 0\}.$$
\end{defn}
Then the pair $(\sT_{\omega}, \sF_{\omega})$ is a torsion pair in the sense that:
\begin{enumerate}
\item Any $T\in\sT_{\omega}$, $F\in \sF_{\omega}$, we have $\Hom(T,F)=0$;
\item For any $E\in \Coh_{\pi}^{\tw}(\overline{\XX})$, there exists an exact sequence
$$0\to T\to E\to F\to0$$
such that $T\in\sT_{\omega}$, and $F\in\sF_{\omega}$. 
\end{enumerate}
Let 
$$\sB_{\omega}:=\langle \sF_{\omega}, \sT_{\omega}[-1]\rangle_{\ex}\subset \sD_0^{\SS}$$
be the tilting in \cite{HRS}. Then $\sB_{\omega}$ is the heart of a bounded $t$-structure on $\sD_0^{\SS}$.
We remark that $\sB_{t\omega}$ is the same as $\sB_{\omega}$ for $t>0$.

\begin{prop}
Define 
$$\sA_{\omega}:=\langle\pi^*\Pic(\pp^1), \sB_{\omega}\rangle_{\ex}\subset \sD$$
then $\sA_{\omega}$ is the heart of a bounded $t$-structure on $\sD$. 
\end{prop}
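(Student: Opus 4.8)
The plan is to realize $\sA_{\omega}$ as a Happel--Reiten--Smal{\o} tilt of a simpler, ``untilted'' heart on $\sD$, following the strategy of \cite[\S 2]{Toda_JDG} and transferring every twisted computation to the untwisted picture via the equivalence (\ref{eqn_equivalence_twist_untwist}) so that Toda's arguments apply. First I would introduce the extension closure
$$\sA_{\omega}^{\circ}:=\langle \pi^*\Pic(\pp^1),\ \Coh_{\pi}^{\tw}(\overline{\XX})\rangle_{\ex}\subset \sD$$
and show that it is already the heart of a bounded $t$-structure on $\sD$. Since $\sB_{\omega}=\langle \sF_{\omega},\sT_{\omega}[-1]\rangle_{\ex}$ is precisely the tilt of $\Coh_{\pi}^{\tw}(\overline{\XX})$ at the torsion pair $(\sT_{\omega},\sF_{\omega})$ of Definition \ref{defn_torsion_pair}, the category $\sA_{\omega}$ will then be obtained from $\sA_{\omega}^{\circ}$ by tilting in the ``fibre directions'' only.

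To establish that $\sA_{\omega}^{\circ}$ is a bounded heart I would apply the standard criterion that a full additive, extension-closed subcategory is a bounded heart as soon as (i) $\Hom_{\sD}(A_1,A_2[k])=0$ for all $A_1,A_2\in\sA_{\omega}^{\circ}$ and $k<0$, and (ii) every object of $\sD$ admits a finite filtration whose graded pieces are shifts of objects of $\sA_{\omega}^{\circ}$. For (i) it suffices to test the two families of generators. The vanishing among the $\pi^*\sO_{\pp^1}(n)$, and the vanishing $\Hom^{k}(\pi^*\sO_{\pp^1}(n),F)=H^{k}(\overline{\XX},F\otimes \pi^*\sO_{\pp^1}(-n))=0$ for $k<0$ and $F\in\Coh_{\pi}^{\tw}(\overline{\XX})$, are immediate since (hyper)cohomology is concentrated in nonnegative degrees. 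The only substantive point is the reverse vanishing $\Hom^{k}(F,\pi^*\sO_{\pp^1}(n))=0$ for $k<0$: using $K_{\overline{\XX}}\cong \pi^*\sO_{\pp^1}(-2)$ and Serre duality on the smooth proper three-dimensional DM stack $\overline{\XX}$, this group is dual to $H^{3-k}(\overline{\XX},F\otimes\pi^*\sO_{\pp^1}(-n-2))$, which vanishes for $k<0$ because $F$ is supported on a fibre and hence has support of dimension $2$. For (ii) I would use that $\sD=\langle \pi^*\Pic(\pp^1),\Coh_{\pi}^{\tw}(\overline{\XX})\rangle_{\tr}$ by definition and that the standard $t$-structures on the two generating pieces are bounded; after passing to untwisted sheaves on the Brauer--Severi scheme the assembly of these filtrations is exactly the one in \cite{Toda_JDG}.

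Granting that $\sA_{\omega}^{\circ}$ is a bounded heart, I would then define a torsion pair $(\sT_{\omega},\ \sF_{\omega}^{+})$ on it, with torsion part $\sT_{\omega}$ and torsion-free part $\sF_{\omega}^{+}:=\langle \pi^*\Pic(\pp^1),\sF_{\omega}\rangle_{\ex}$. The defining vanishing $\Hom(\sT_{\omega},\sF_{\omega}^{+})=0$ reduces to $\Hom(\sT_{\omega},\sF_{\omega})=0$ (the torsion pair on $\Coh_{\pi}^{\tw}(\overline{\XX})$) together with $\Hom(\sT_{\omega},\pi^*\sO_{\pp^1}(n))=0$, the last being the $k=0$ instance of the Serre-duality computation above, since it is dual to $H^{3}$ of a sheaf with two-dimensional support. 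The torsion sequence for an arbitrary object of $\sA_{\omega}^{\circ}$ is assembled from the torsion sequences of its fibre-sheaf constituents, leaving the $\pi^*\Pic(\pp^1)$-part in the torsion-free side. The tilt of $\sA_{\omega}^{\circ}$ at this torsion pair is then
$$\langle \sF_{\omega}^{+},\ \sT_{\omega}[-1]\rangle_{\ex}=\langle \pi^*\Pic(\pp^1),\ \sF_{\omega},\ \sT_{\omega}[-1]\rangle_{\ex}=\langle \pi^*\Pic(\pp^1),\ \sB_{\omega}\rangle_{\ex}=\sA_{\omega},$$
which is therefore the heart of a bounded $t$-structure on $\sD$, as claimed.

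The main obstacle is the base step: proving that $\sA_{\omega}^{\circ}$ is a bounded heart, and in particular the orthogonality $\Hom^{k}(F,\pi^*\sO_{\pp^1}(n))=0$ for $k\le 0$ with $F$ supported on fibres. This is exactly where the geometry enters --- Serre duality on the non--Calabi--Yau threefold $\overline{\XX}$ together with the two-dimensionality of the fibre $\SS$ --- and it is the point that must be set up carefully in the twisted setting. I would handle the twisting by invoking the equivalence between $\alpha$-twisted sheaves and ordinary coherent sheaves on the associated Brauer--Severi scheme, so that Toda's untwisted verification of both (i) and (ii) transfers without change.
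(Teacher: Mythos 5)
Your argument is correct, and it is essentially the argument that the paper invokes: the paper's own proof is a one-line citation to \cite[Proposition 2.9]{Toda_JDG}, and your write-up reconstructs exactly the content of that proof — the semiorthogonality $\Hom^{\leq 0}(F,\pi^*L)=0$ via Serre duality with $K_{\overline{\XX}}\cong\pi^*\sO_{\pp^1}(-2)$ and the two-dimensionality of the fibre support, combined with Happel--Reiten--Smal{\o} tilting. The only organizational difference is that Toda tilts first (producing $\sB_{\omega}$ as a heart on $\sD_0^{\SS}$) and then glues in $\pi^*\Pic(\pp^1)$, whereas you glue first (forming $\sA_{\omega}^{\circ}$) and then tilt inside $\sD$; the two orders are interchangeable and rest on the same computations. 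Two small points worth tightening: (a) the existence of torsion decompositions in $\sA_{\omega}^{\circ}$ for the pair $(\sT_{\omega},\sF_{\omega}^{+})$ deserves a sentence more than ``assembled from the fibre-sheaf constituents,'' since objects of $\sA_{\omega}^{\circ}$ can a priori be nonsplit extensions mixing the two generating families; and (b) in the optimal twisted setting all cross-groups $\Hom^{k}(\pi^*L,F)$ and $\Hom^{k}(F,\pi^*L)$ vanish for every $k$ simply because $F$ has nontrivial $\mu_r$-weight, so both (a) and the Serre-duality step become trivial there — the duality argument is only genuinely needed in the untwisted case, which your proof also covers.
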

\begin{proof}
The argument is the same as \cite[Proposition 2.9]{Toda_JDG}.
\end{proof}

\subsubsection{Toda's weak stability condition on $\sD_0^{\SS}$ and $\sD$}

Recall we have Toda's weak stability condition as in Definition \ref{defn_weak_stability_Toda}. For our $\Gamma=\zz\oplus \Gamma_0$, let 
$$0=\Gamma_{-1}\subset \Gamma_0\subset \Gamma_1=\Gamma$$
be the filtration.  For $\omega$ an ample divisor on $S$ and $t\in\rr_{>0}$, let 
$$\sZ_{t\omega}\in \prod_{i=0}^{1}\Hom(\Gamma_i/\Gamma_{i-1},\cc)$$
be given by:
$$
\begin{cases}
\sZ_{t\omega,0}(v):=\int_{\SS}e^{-it\omega}v, & v\in\Gamma_0;\\
\sZ_{t\omega,1}(R)=R\sqrt{-1}, & R\in \Gamma_1/\Gamma_0=\zz.
\end{cases}
$$
Then \cite[Lemma 3.4]{Toda_JDG} says that 
$$\sigma_{t\omega}:=(\sZ_{t\omega}, \sA_{\omega})$$
is a weak stability condition. This can be checked that \cite[\S 8.2]{Toda_JDG} 
works for sheaves in $D^b(\Coh_{\pi}^{\tw}(\overline{\XX}))$ and $\sD$. 
The wall and chamber structure and some comparison with $\mu_{i\omega}$-limit semistable objects are in 
\cite[\S 3.4, \S 3.5]{Toda_JDG}.

\subsubsection{Hall algebra $H(\sA_{\omega})$ and counting invariants}

The Hall algebra $H(\sA_{\omega})$ for the abelian category 
$\sA_{\omega}$ is defined in \cite{Bridgeland10}.  The stable pair invariants of \cite{PT} can be defined on 
$\sA_{\omega}\subset \sD$ as in \S \ref{subsec_stable_pair_invariants_DM}, and Toda proves a wall crossing formula for the stable pair invariants.
Here we only care about the rank zero invariants. 

Let 
$\widehat{\sM}(\sA_{\omega})$ be the stack of objects in $\sA_{\omega}$, and 
$$\sM_{t\omega}(v)\subset \widehat{\sM}(\sA_{\omega})$$
the substack parametrizing $\sZ_{t\omega}$-semistable objects $E\in \sA_{\omega}$ and 
$\cl(E)=v\in\Gamma$ with $\rk(v)\leq 1$. Then the moduli stack gives an element
$$\delta_{t\omega}(v):=[\sM_{t\omega}(v)\hookrightarrow \widehat{\sM}(\sA_{\omega})]\in H(\sA_{\omega}).$$
As in Joyce \cite{Joyce03}, let 
\begin{equation}\label{eqn_epsilon_K3}
\epsilon_{t\omega}(v):=\sum_{\substack{\ell\geq 1, v_1+\cdots+v_{\ell}=v\\
v_i\in\Gamma\\
\arg\sZ_{t\omega}(v_i)=\arg\sZ_{t\omega}(v)}}\frac{(-1)^{\ell-1}}{\ell}\delta_{t\omega}(v_1)\star\cdots\star
\delta_{t\omega}(v_{\ell})
\end{equation}
This sum is a finite sum \cite[Lemma 4.8]{Toda_JDG}, and $\delta_{t\omega}(v)$ is well-defined.  The proof is from Bogomolov inequality for semistable sheaves. 
We define a morphism 
$$P_q: H(\sA_{\omega})\to \qq(q^{\frac{1}{2}})$$
by the virtual Poincar\'e polynomial 
as in Definition \ref{defn_DT_generalized_JS} with $t$ replaced by $q^{\frac{1}{2}}$. 
\cite[Theorem 6.2]{Joyce_Motivic_Invariants} shows that 
\begin{equation}\label{eqn_limit_epsilon}
\lim_{q^{\frac{1}{2}}\to 1}(q-1)P_q(\epsilon_{t\omega}(v))\in\qq
\end{equation}
exists. 

\begin{defn}\label{defn_DT_K3_gerbe}
For $v\in\Gamma_0$, define
$$\DT^{\chi}_{t\omega}(v):=\lim_{q^{\frac{1}{2}}\to 1}(q-1)P_q(\epsilon_{t\omega}(1,-v))$$
for $(1,-v)\in\Gamma=\zz\oplus\Gamma_0$. This is the Euler characteristic version of the DT-invariants. 
\end{defn}

\subsubsection{Counting rank zero invariants}

\cite[\S 4.6]{Toda_JDG} uses the DT-invariants $\DT^{\chi}_{t\omega}(v)$ to recover the invariants $L_{\beta,n}$ of the limit stable objects and study the wall crossing formula for the stable pair invariants on $\overline{\XX}$. 
Since in this paper we don't need this $\PT^{\tw}(\overline{\XX})$, we leave this for later study. 

To define the counting invariants in $\sA_{\omega}$ and $\sA_{\omega}[-1]$, let 
$$C(\sB_{\omega}):=\Im(\cl_0: \sB_{\omega}\to \Gamma_0).$$
From \cite[Definition 4.17]{Toda_JDG}, 
\begin{defn}\label{defn_invariants_Nv}
Define $N(v)\in\qq$ for $v\in\Gamma_0$, to be
\begin{enumerate}
\item If $v\in C(\sB_{\omega})$, 
$$N(v)=\lim_{q^{\frac{1}{2}}\to 1}(q-1) P_q(\epsilon_{\omega}(0,v));$$
\item If $-v\in C(\sB_{\omega})$, then $N(v):=N(-v)$;
\item If $\pm v\notin C(\sB_{\omega})$, $N(v)=0$.
\end{enumerate}
\end{defn}
This definition is the same as the invariants of $\sZ_{\omega,0}$-semistable objects $E\in\sB_{\omega}$
satisfying $\cl_0(E)=v$. 

\begin{rmk}
In the case of $v=(0,\beta,n)$, the invariant $N(0,\beta,n)$ in the above definition is the same as the one we defined in 
(\ref{eqn_Toda66}).
\end{rmk}

\subsection{Twisted Hodge isometry}\label{subsec_twisted_Hodge_isometry}

Let $\Stab^\circ_{\Gamma_0}(\sD_0^{\SS})$ be the component in $\Stab_{\Gamma_0}(\sD_0^{\SS})$  containing the stability conditions $(\sZ_{t\omega,0}, \sB_{\omega})$. 

For the twisted K3 surface $\SS_{\alpha}$ or $(S,\alpha)$, \cite{HMS} has generalized Bridgeland \cite{Bridgeland_K3} stability conditions on K3 surface to twisted K3 surfaces; and proved that there 
exists a component of maximal dimension
$$\Stab^\circ(S,\alpha)\subset \Stab(S,\alpha)$$
 in $\Stab(S,\alpha)$. 
 
 \begin{thm}\label{thm_stability_twisted_K3_sD}
 We have 
 $$\Psi: \Stab^\circ_{\Gamma_0}(\sD_0^{\SS})\stackrel{\sim}{\longrightarrow}\Stab^\circ(S,\alpha).$$
 \end{thm}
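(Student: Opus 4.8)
The plan is to follow the proof of Toda's Hodge isometry theorem \cite[Theorem 1.2]{Toda_JDG}, replacing Bridgeland's stability manifold of the untwisted K3 surface \cite{Bridgeland_K3} by the twisted stability manifold $\Stab^\circ(S,\alpha)$ constructed in \cite{HMS}. The starting observation is that the two sides carry the same numerical data: by definition $\cl_0=\Ch\circ\pi_{\SS*}$ identifies the numerical Grothendieck group of $\sD_0^{\SS}$ with the twisted Mukai lattice $\Gamma_0=\widetilde{H}(\SS,\qq)\cap\widetilde{H}^{1,1}(\SS)$ of \S\ref{subsubsec_twisted_Chern_character}, which is exactly the lattice on which \cite{HMS} build twisted Bridgeland stability conditions on $(S,\alpha)$. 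Thus both $\Stab_{\Gamma_0}(\sD_0^{\SS})$ and $\Stab(S,\alpha)$ are complex manifolds equipped with a central charge map, a local homeomorphism, to the same target $\Hom(\Gamma_0,\cc)$.

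First I would make precise the comparison of the underlying hearts. By Proposition \ref{prop_twisted_sheaf_barXX} an object of $\Coh_\pi^{\tw}(\overline{\XX})$ is built from twisted sheaves pushed forward from the fibers $\SS\times\{t\}\cong\SS$, so the $\mu_\omega$-slope, the torsion pair $(\sT_\omega,\sF_\omega)$ of Definition \ref{defn_torsion_pair}, and the tilted heart $\sB_\omega$ are computed purely in terms of the twisted slope \eqref{eqn_slope_twisted_sheaf} on $\Coh(S,\alpha)$. Consequently the geometric stability condition $(\sZ_{t\omega,0},\sB_\omega)$ on $\sD_0^{\SS}$ matches, central charge by central charge and heart by heart, the geometric twisted stability condition $\sigma_{t\omega}$ that \cite{HMS} attach to the same ample class $\omega$ and the twisted $B$-field $-\xi/r$ of Definition \ref{defn_integral_structure_K3}. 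This defines $\Psi$ on the geometric locus; I would then extend it to an isomorphism of the ambient complex manifolds by Bridgeland's deformation theorem, since by construction $\Psi$ intertwines the two central charge maps and both maps are local isomorphisms.

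Next I would promote this local comparison to an isomorphism of the distinguished connected components. Both $\Stab^\circ_{\Gamma_0}(\sD_0^{\SS})$ and $\Stab^\circ(S,\alpha)$ are, by definition, the components containing the geometric stability conditions just matched; and by \cite{Bridgeland_K3} and \cite{HMS} the central charge map restricts on $\Stab^\circ(S,\alpha)$ to a covering of the period domain $\sP_0^+(S,\alpha)$ with walls removed, whose deck group is generated by shifts and spherical twists. I would show that the covering $\Stab^\circ_{\Gamma_0}(\sD_0^{\SS})\to\sP_0^+(S,\alpha)$ has the same image and the same monodromy: the walls are cut out by the classes of $\sigma_{t\omega}$-semistable spherical twisted sheaves, and under the heart identification of the previous paragraph these spherical objects, and hence the wall-and-chamber decomposition, coincide on the two categories. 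Since a single geometric point is matched by $\Psi$ and the two covering maps agree, the components are connected covers of the same base determined by the same data, so $\Psi$ carries $\Stab^\circ_{\Gamma_0}(\sD_0^{\SS})$ biholomorphically onto $\Stab^\circ(S,\alpha)$.

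The main obstacle will be this last step: verifying that the wall-and-chamber structures, equivalently the sets of classes of spherical semistable twisted sheaves, genuinely agree on the two categories, so that the two covering maps have the same image and deck transformations. This is precisely what forces the local isomorphism $\Psi$ to be a global bijection onto the whole distinguished component rather than onto a proper sheeted subset. I expect to dispatch it as in \cite[\S 4]{Toda_JDG}, using that $\Coh^{\tw}(\SS)\cong\Coh(S,\alpha)$ is the same abelian category of twisted sheaves studied in \cite{HMS}, together with the equivalence \eqref{eqn_equivalence_twist_untwist} and Proposition \ref{prop_repn_Mukai_vector} relating twisted and untwisted Mukai vectors; the remaining inputs (openness of semistability, support and Harder--Narasimhan properties) are the routine gerbe generalizations recorded in \S\ref{subsec_invariants_Nv_Jv}.
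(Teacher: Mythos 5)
Your overall strategy agrees with the paper's: the proof in the text consists of recalling the Huybrechts--Macr\`i--Stellari construction of $\Stab^{\circ}(S,\alpha)$ (the hearts $\sA(\varphi)$ for $\varphi=e^{B+i\omega}\in P^{+}(S,\alpha)$) and then asserting that, because $\Coh^{\tw}_{\pi}(\overline{\XX})$ is a ``$\pp^1$-family of copies'' of $\Coh(S,\alpha)$, the argument of \cite[Theorem 6.5]{Toda_JDG} applies verbatim. Where you diverge is in the globalization step. The paper (following Toda) uses the explicit map $\theta$ of \cite[Proposition 5.1]{Toda_JDG}, induced by restriction to / pushforward from a single fiber, and the commutative square intertwining the two central charge maps; the homeomorphism of distinguished components then follows from the characterization of those components by the geometric stability conditions. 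You instead propose to match the two central-charge coverings of the period domain and compare their monodromy. Both can be made to work, but Toda's route is lighter: it does not require knowing that $\Stab^{\circ}(S,\alpha)\to P^{+}_0(S,\alpha)$ is a covering map, nor any control of its deck group, which is a genuinely stronger input (the deck-group description you invoke is, even in the untwisted case, part of Bridgeland's conjecture rather than a theorem).

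Two points in your write-up need repair. First, the statement that $(\sZ_{t\omega,0},\sB_{\omega})$ matches $\sigma_{t\omega}$ ``heart by heart'' cannot be taken literally: $\sB_{\omega}$ is a heart in $\sD_0^{\SS}$, whose objects are supported on finitely many fibers of $\pi$, while $\sA(\varphi)$ lives in $D^b(\Coh(S,\alpha))$. The correct relation is that $\sB_{\omega}$ is the extension closure of the pushforwards $i_{t*}\sA(\varphi)$ over $t\in\pp^1$, and making this compatible with Harder--Narasimhan filtrations and with deformation of the central charge is exactly the content of Toda's $\theta$; Proposition \ref{prop_twisted_sheaf_barXX} alone does not give it. Second, the claim that ``these spherical objects $\ldots$ coincide on the two categories'' is false as stated: for the fiber inclusion $i_t:\SS\hookrightarrow\overline{\XX}$ one has $\Ext^{\bullet}_{\overline{\XX}}(i_{t*}E,i_{t*}E)\cong\Ext^{\bullet}_{\SS}(E,E)\oplus\Ext^{\bullet-1}_{\SS}(E,E)$, so $i_{t*}E$ is never spherical in $\sD_0^{\SS}$ even when $E$ is spherical in $\Coh(S,\alpha)$. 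What survives, and what your wall-matching actually needs, is only the numerical statement that the walls on both sides are indexed by the same classes $v\in\Gamma_0$ with $\langle v,v\rangle=-2$, which holds because every $\sZ_{t\omega}$-stable object of $\sD_0^{\SS}$ is the pushforward of a stable object from a single fiber. With these two corrections your argument closes the same way the paper's does, but as written the monodromy-matching step rests on an incorrect identification of spherical objects.
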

 \begin{proof}
 We recall the stability condition $\Stab^\circ(S,\alpha)\subset \Stab(S,\alpha)$ in \cite{HMS}.
 We have
 $$\sZ: \Stab(S,\alpha)\to \NS(S,\alpha)\otimes\cc$$
 given by:
 $$\sigma\mapsto (\sZ,P_{\sigma})$$
 where $\NS(S,\alpha)=\widetilde{H}^{1,1}(S,\alpha,\zz)$. 
 Let $P(S,\alpha)\subset  \NS(S,\alpha)\otimes\cc$ be the open subset of vectors $\varphi$ such that the real part and imaginary part of $\varphi$
 generate a positive plane in  $\NS(S,\alpha)\otimes\cc$.
 $P(S,\alpha)$ has two connected components and let 
$P^{+}(S,\alpha)$ be the one containing 
$$\varphi=e^{B_0+i\omega}.$$
Here $B_0\in H^2(S,\qq)$ is a $B$-field lift of $\alpha$, i.e., 
$$H^2(S,\qq)\to H^2(S,\sO_S^*)$$
by
$$B\mapsto \alpha=\exp(B).$$
Let 
$B_1\in\NS(S)\otimes\rr$ and $B:=B_1+B_0$, then 
$$\varphi=e^{B+i\omega}\in \NS(S,\alpha)\otimes\cc$$
and 
$$\sZ_{\varphi}(E):=\langle v_{G}(E),\varphi\rangle.$$
This central charge 
$\sZ_{\varphi}(E)$ determines a torsion pair $(\sT,\sF)$, where $\sT\subset \Coh(S,\alpha)$ is given by twisted sheaves $E$ such that every 
nontrivial torsion free quotient $E\twoheadrightarrow E^\prime$ satisfies $\Im(\sZ_{\varphi}(E^\prime))>0$.  $\sF\subset \Coh(S,\alpha)$ is given by twisted sheaves 
$E\in\sF$ if $E$ is torsion free and every non-zero subsheaf $E^\prime\subset E$ satisfies 
$\Im(\sZ_{\varphi}(E^\prime))\leq 0$. Then every $G\in \Coh(S,\alpha)$ can be uniquely written as 
$$0\to E\to G\to F\to 0$$
for $E\in\sT$, $F\in \sF$. 
The heart of the induced $t$-structure is an abelian category:
$$\sA(\varphi):=\left\{E\in D^b(\Coh(S,\alpha))  \left| \begin{array}{l}
\text{$\bullet\,\, \hH^{i}(E)=0$ for $i\notin \{-1,0\}$}\\
\text{$\bullet\,\, \hH^{-1}(E)\in\sF$} \\
\text{$\bullet\,\, \hH^{0}(E)\in\sT$}
\end{array}\right
\}\right.$$
Then from \cite[Lemma 3.4]{HMS}, let 
$\varphi=e^{B+i\omega}$, the induced homomorphism
$$\sZ_{\varphi}: \sA(\varphi)\to \cc$$
is a stability function on $\sA(\varphi)$ if and only if for any spherical twisted sheaf
$E\in \Coh(S,\alpha)$, $\sZ_{\varphi}(E)\notin \rr_{\leq 0}$. 
From \cite[Proposition 3.6]{HMS}, the pair $(\sZ_{\varphi}, \sA(\varphi))$ defines a stability condition on 
$D^b(\Coh(S,\alpha))$.  Then let 
$\Stab^{\circ}(S,\alpha)\subset \Stab(S,\alpha)$ be the connected component of $\Stab(S,\alpha)$
that contains the stability conditions described above. Then since the category $\Coh_{\pi}^{\tw}$ can be taken as a family of K3 gerbes 
$\SS\to S$, the same argument as in \cite[Theorem 6.5]{Toda_JDG}
gives
\[
\xymatrix{
 \Stab^{\circ}(\sD_0^{\SS})\ar[r]^{\theta^{\circ}}\ar[d]_{\sZ}& \Stab^{\circ}(S,\alpha)\ar[d]^{\sZ}\\
 \NS(\overline{\XX})\otimes\cc\ar[r]^{i^*}& \NS(S)\otimes\cc
}
\]
We explain the notation:
$$\theta: \Stab(\sD_0^{\SS})\to \Stab(S,\alpha)$$
is a map defined in \cite[Proposition 5.1]{Toda_JDG}, and 
$\theta^{\circ}$ is the restriction to $\Stab^{\circ}(\sD_0^{\SS})$. Let 
$$i: \SS\hookrightarrow \SS\times\pp^1=\overline{\XX},$$
then 
$i^*: \NS(\overline{\XX})\to \NS(\SS)$ is the pullback. 
This diagram gives a homeomorphism between 
$\Stab^{\circ}(\sD_0^{\SS})$ and one of the connected components of $\Stab^{\circ}(S,\alpha)\times_{\NS(\SS)\otimes\cc}\NS(\overline{\XX})$.
\end{proof}

Now on the derived category $\sD_0^{\SS}$, if we have a stability condition 
$$\sigma=(\sZ,\sA)\in\Stab^{\circ}_{\Gamma_0}(\sD_0^{\SS})$$
we define the invariants $N_{\sigma}(v)\in\qq$ as the Euler characteristic version of the generalized Donaldson-Thomas invariants counting $\sZ$-semistable objects $E\in\sA$ or $\sA[-1]$ with 
$\cl_0(E)=v$ in Definition \ref{defn_invariants_Nv}. 

Here is a result proved in \cite[Theorem 4.21]{Toda_JDG}:

\begin{thm}\label{thm_invariants_Nv_changing}
The counting invariants $N_{\sigma}(v)$ do not depends on a choice of stability condition $\sigma\in\Stab^{\circ}_{\Gamma_0}(\sD_0^{\SS})$ and $N_{\sigma}(v)$ is also independent of $\omega$.
\end{thm}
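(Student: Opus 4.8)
The plan is to follow the proof of \cite[Theorem 4.21]{Toda_JDG}, checking that each step survives the passage to the twisted category $\sD_0^{\SS}$. First I would dispose of the dependence on $\omega$ by reducing it to the dependence on $\sigma$: for each ample $\omega$ and $t\in\rr_{>0}$ the pair $(\sZ_{t\omega,0},\sB_{\omega})$ is a point of $\Stab^\circ_{\Gamma_0}(\sD_0^{\SS})$ constructed in \S\ref{subsubsec_heart_sD}, and varying $\omega$ and $t$ traces a path inside this space, so once we know that $N_\sigma(v)$ is constant on $\Stab^\circ_{\Gamma_0}(\sD_0^{\SS})$ its independence of $\omega$ is automatic. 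Thus it suffices to prove that $N_\sigma(v)$ is a locally constant function of $\sigma\in\Stab^\circ_{\Gamma_0}(\sD_0^{\SS})$, since this space is connected: by Theorem \ref{thm_stability_twisted_K3_sD} it is homeomorphic to a connected component of $\Stab^\circ(S,\alpha)\times_{\NS(\SS)\otimes\cc}\NS(\overline{\XX})$, and $\Stab^\circ(S,\alpha)$ is connected by \cite{HMS}.

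Within a single chamber the moduli stacks of $\sZ$-semistable objects of class $v$ do not change, so the Hall-algebra element $\epsilon_\sigma(v)$ of \eqref{eqn_epsilon_K3} and hence the limit $N_\sigma(v)$ of Definition \ref{defn_invariants_Nv} are constant there; the content is therefore the behaviour across a wall. Here I would invoke the wall-crossing formula of Joyce and Joyce--Song in the motivic Hall algebra $H(\sA_\omega)$ (as used by Toda and following \cite{Bridgeland10, Joyce03}), which expresses $\epsilon_{\sigma_+}(v)$ in terms of iterated $\star$-commutators of the elements $\epsilon_{\sigma_-}(v_1),\dots,\epsilon_{\sigma_-}(v_\ell)$ attached to decompositions $v=v_1+\cdots+v_\ell$ with all $v_i$ on the wall. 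The decisive point is that the no-pole integration morphism $\lim_{q^{1/2}\to 1}(q-1)P_q(\cdot)$ of \eqref{eqn_limit_epsilon}, used in Definition \ref{defn_invariants_Nv}, is a Lie-algebra homomorphism to the Lie algebra of \cite{Joyce_Motivic_Invariants} whose bracket is governed by the \emph{antisymmetrized} Euler pairing $\overline{\chi}(v_1,v_2)=\chi(v_1,v_2)-\chi(v_2,v_1)$. Since $\sD_0^{\SS}$ is numerically a Calabi--Yau $2$-category, with Euler pairing equal to the negative of the symmetric twisted Mukai pairing $-\langle\,,\rangle$ on $\Gamma_0$ (the identity $\langle v_G(E_1),v_G(E_2)\rangle=-\chi(E_1,E_2)$ of Definition \ref{defn_Mukai_vector_Psheaf}), one has $\overline{\chi}\equiv 0$; the target Lie algebra is abelian, all commutator terms are killed by the integration map, and so $N_{\sigma_+}(v)=N_{\sigma_-}(v)$.

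The only genuinely new verifications are that the inputs Toda uses for an honest K3 surface hold for the twisted surface $(S,\alpha)$. The twisted Bridgeland stability theory is supplied by \cite{HMS} and packaged in Theorem \ref{thm_stability_twisted_K3_sD}; the twisted Mukai lattice $(\Gamma_0,\langle\,,\rangle)$ of \S\ref{subsubsec_twisted_Chern_character} carries the same formal structure as the K3 Mukai lattice, so the symmetry of $\chi$ used above is literally the symmetry of the Mukai pairing; and the Bogomolov-type inequality guaranteeing finiteness of the sum \eqref{eqn_epsilon_K3} and existence of the limit \eqref{eqn_limit_epsilon} holds for twisted semistable sheaves. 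Via the equivalence \eqref{eqn_equivalence_twist_untwist} between $\Coh(S,\alpha)$ and the category of $P$-sheaves, every Hall-algebra computation in $H(\sA_\omega)$ takes place in a category abstractly equivalent to one of untwisted sheaves, so the combinatorial coefficients and Euler-characteristic weights in the wall-crossing identity transport without change.

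I expect the main obstacle to be confirming that the vanishing $\overline{\chi}\equiv 0$ is exactly what the wall-crossing formula needs after the gerbe twist, i.e.\ that the symmetry of the twisted Euler pairing is respected by the $\lambda^1$-weight convention for $\alpha$-twisted sheaves and is not spoiled by the $\pp^1$-direction when one passes from $\Coh^{\tw}_\pi(\overline{\XX})$ to $\sD$. The delicate verification is for \emph{imprimitive} classes $v$, where strictly semistable objects force the full Joyce $\epsilon$-machinery rather than a single smooth moduli space; there one must check that the abelianness of the Joyce Lie algebra (and hence the triviality of the wall-crossing under the integration map) is genuinely inherited from the numerical Calabi--Yau $2$ structure of the twisted category, which is precisely the content transported across \eqref{eqn_equivalence_twist_untwist}.
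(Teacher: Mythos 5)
Your proposal follows the same overall strategy as the paper's proof: apply Joyce's wall-crossing formula in the Hall algebra and kill all correction terms by a vanishing of the relevant Euler-type pairing on $\Gamma_0$. The difference lies in how that vanishing is obtained. The paper writes the correction terms with the honest Euler pairing $\chi(v_i,v_j)$ and proves $\chi(v_1,v_2)=0$ outright for all $v_1,v_2\in\Gamma_0$, by Grothendieck--Riemann--Roch reducing twisted classes to untwisted Mukai vectors on $S$ and using that objects of $\sD_0^{\SS}$ are supported on fibres of $\overline{\XX}\to\pp^1$ with trivial normal bundle, so the threefold Euler form of two fibre classes is $\chi_S-\chi_S=0$. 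You instead argue that only the antisymmetrization $\overline{\chi}$ enters (via the Lie-algebra-homomorphism property of the integration map) and that $\overline{\chi}\equiv 0$ because the Mukai pairing is symmetric. The conclusion is right, but your identification of the Euler pairing of $\sD_0^{\SS}$ with $-\langle\,,\rangle$ is not: the identity $\langle v_G(E_1),v_G(E_2)\rangle=-\chi(E_1,E_2)$ of Definition \ref{defn_Mukai_vector_Psheaf} is the Euler pairing on the surface category $D^b(S,\alpha)$, whereas the Hall algebra $H(\sA_{\omega})$ lives on the threefold $\overline{\XX}$, where the Ext groups of fibre-supported sheaves acquire an extra shifted copy and the Euler form is identically zero. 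The paper's route is the more robust one: since $\chi$ itself vanishes, one never has to decide whether the transformation law is expressed through $\chi$ or $\overline{\chi}$, nor to verify that the Joyce pairing $\dim\Hom-\dim\Ext^1-\dim\Hom^{\rm op}+\dim\Ext^{1,{\rm op}}$ (which computed purely on the surface would \emph{not} vanish) coincides with the antisymmetrization you use; your argument does require those checks, and they succeed only because of the ambient fibration structure, not the CY2 property alone. Your preliminary reduction of the $\omega$-independence to connectedness of $\Stab^{\circ}_{\Gamma_0}(\sD_0^{\SS})$ is a useful point that the paper leaves implicit.
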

\begin{proof}
The proof is the same as \cite[Theorem 4.21]{Toda_JDG} by using Joyce's wall crossing formula:
$$N_{\sigma_1}(v)=N_{\sigma_0}(v)+
\sum_{v_1+v_2=v}U_{v_1,v_2}\chi(v_1,v_2)N_{\sigma_0}(v)\cdot N_{\sigma_1}(v)+\cdots\cdots$$
for some other terms (which involve product of $\chi(v_i, v_j)$). Here $\sigma_0$ and $\sigma_1$ are two different stability conditions and 
$\sigma_0$ is close to $\sigma_1$. It suffices to prove that 
$\chi(v_i, v_j)=0$ for any $v_1, v_2\in\Gamma_0$.
Let 
$$\widetilde{v}_1=(0,v_1), \quad \widetilde{v}_2=(0,v_2)\in\Gamma=\zz\oplus\Gamma_0.$$
Then by Grothendieck-Riemann-Roch theorem for stacks
$$\chi(\widetilde{v}_1, \widetilde{v}_2)=
\chi(\overline{v}_1, \overline{v}_2)=0$$
where $\overline{v}_1, \overline{v}_2$ are the corresponding Mukai vectors on $S$. 
\end{proof}

\subsubsection{Counting invariants on $\XX:=\SS\times\cc$}

We are actually  interested in the counting invariants on the open local  K3  gerbe $\XX:=\SS\times\cc$. Let 
$$\Coh_{\pi}^{\tw}(\XX)\subset \Coh_{\pi}^{\tw}(\overline{\XX})$$
be the subcategory consisting of sheaves supported on the fibers of $\pi|_{\XX}: \XX\to\cc$. 
Still let $\widehat{\sM}(\XX)$ be the stack of coherent twisted sheaves in $\Coh_{\pi}^{\tw}(\XX)$, which is an Artin stack locally of finite type. 
Let 
$\sA_{\XX}:=\Coh_{\pi}^{\tw}(\XX)$ and $H(\sA_{\XX})$ the Hall algebra in \cite{Bridgeland10}, which consists of elements over the stack $\widehat{\sM}(\XX)$. 
For $v\in\Gamma_0$, we let $\sM_{\omega,\XX}(v)\subset \widehat{\sM}(\XX)$ be the substack of $\omega$-Gieseker semistable twisted sheaves 
$E$ with $v_G(E)=v$.  This gives an element:
$$\delta_{\omega,\XX}(v):=[\sM_{\omega,\XX}(v)\hookrightarrow \widehat{\sM}(\XX) ]\in H(\sA_{\XX})$$
and define its ``logarithm":
$$
\epsilon_{\omega,\XX}(v):=
\sum_{\substack{\ell\geq 1, v_1+\cdots+v_{\ell}=v, v_i\in\Gamma_0\\
p_{\omega,v_i}=p_{\omega,v}(m)}}
\frac{(-1)^{\ell}-1}{\ell}\delta_{\omega,\XX}(v_1)\star\cdots\star \delta_{\omega,\XX}(v_{\ell})
$$
where $p_{\omega,v_i}(m)$ is the reduced geometric Hilbert polynomial. The sum is a finite sum and 
$\epsilon_{\omega,\XX}(v)$ is well-defined.  Let 
$$C(\XX):=\Im(v_G: \Coh_{\pi}^{\tw}(\XX)\to\Gamma_0)$$
we define the invariants: 

\begin{defn}\label{defn_invariants_Jv_XX}
For $v\in\Gamma_0$, define the invariant $J(v)\in\qq$ as follows:
\begin{enumerate}
\item  If $v\in C(\XX)$, 
$$J(v):=\lim_{q^{\frac{1}{2}}\to 1}(q-1) P_q(\epsilon_{\omega,\XX}(v)).$$
\item If $-v\in C(\XX)$, 
$J(v):=J(-v)$.
\item  If $\pm v\notin C(\XX)$, $J(v);=0$.
\end{enumerate}
\end{defn}

\begin{rmk}
If $v_G(E)=v$ is primitive, then 
$$J(v)=\chi(\sM_{\omega,\XX}^{\tw}(v))=\chi(\sM_{\omega,\SS}^{\tw}(v)\times\cc)$$
where $\sM_{\omega,\SS}^{\tw}(v)$ is the moduli stack of $\SS$-twisted sheaves with 
Mukai vector $v$. From Yoshioka \cite{Yoshioka2},  $\sM_{\omega,\SS}^{\tw}(v)$ is deformation equivalent to the Hilbert scheme $\Hilb^{1-\frac{\chi(v,v)}{2}}(S)$. This is essential to the calculation of Vafa-Witten invariants  in \cite{Jiang_2019}.
\end{rmk}

\subsubsection{The comparison between $N(v)$ and $J(v)$}

We also need to compare $N(v)$ and $J(v)$.  Similar to Definition \ref{defn_invariants_Jv_XX}, we can define the counting invariants 
$$\overline{J}(v)\in\qq$$
to be the invariants counting geometric $\omega$-Gieseker semistable twisted sheaves 
$E\in \Coh_{\pi}^{\tw}(\overline{\XX})$ with $v_G(E)=v\in\Gamma_0$. 
We have a generalization of \cite[Theorem 4.24]{Toda_JDG}:

\begin{thm}\label{thm_Jbar_N}
For any $v\in\Gamma_0$, we have:
$$\overline{J}\left(v\cdot \frac{\sqrt{\td_{S}}}{\sqrt{\Ch(Rp_*(G\otimes G^{\vee}))}}\right)=N(v)$$
\end{thm}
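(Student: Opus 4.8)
The plan is to follow Toda's proof of \cite[Theorem 4.24]{Toda_JDG}, adapting each step to the twisted setting by means of the equivalences $\Coh^{\tw}(\SS)\cong\Coh(S,\alpha)\cong\Coh(S;P)$ from \cite{Yoshioka2}, \cite{HS}. The two sides of the asserted identity count essentially the same collection of objects, but organized by two different bookkeeping devices: $\overline{J}$ records $\omega$-Gieseker semistable objects of $\Coh_\pi^{\tw}(\overline{\XX})$ graded by the twisted Mukai vector $v_G$, whereas $N$ records $\sZ_{\omega,0}$-semistable objects of the tilted heart $\sB_\omega\subset\sD_0^{\SS}$ graded by the ordinary class $\cl_0$. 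Accordingly, I would split the proof into a change-of-grading step and a change-of-stability step.

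For the change of grading, I would first verify that for every object $E\in\sD_0^{\SS}$ the twisted Mukai vector $v_G(E)$ and the ordinary class $\cl_0(E)$ differ by the universal, $E$-independent multiplicative factor
$$v_G(E)=\cl_0(E)\cdot\frac{\sqrt{\td_S}}{\sqrt{\Ch(Rp_*(G\otimes G^\vee))}}$$
in $\Gamma_0$. This is the content of Definition \ref{defn_Mukai_vector_Psheaf} together with Grothendieck-Riemann-Roch for the gerbe: the twisted Mukai vector is, by construction, the Chern character of the relevant pushforward multiplied by precisely this factor, and GRR shows the factor does not depend on $E$. Consequently the two indexing sets agree, $\{E:\cl_0(E)=v\}=\{E:v_G(E)=v\cdot\sqrt{\td_S}/\sqrt{\Ch(Rp_*(G\otimes G^\vee))}\}$, so that it suffices to compare the two invariants among objects of a fixed $\cl_0$.

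For the change of stability, I would invoke Theorem \ref{thm_invariants_Nv_changing} to compute $N(v)$ using the stability conditions $\sigma_{t\omega}$ and then pass to the Gieseker chamber. There the comparison of \cite[\S 3.4, \S 3.5, \S 8.2]{Toda_JDG} shows that the $\sZ_{t\omega,0}$-semistable objects of $\sB_\omega$ with $\cl_0=v$ are exactly the shifts of the $\omega$-Gieseker semistable twisted sheaves in $\Coh_\pi^{\tw}(\overline{\XX})$ of the same class: objects lying in the torsion part $\sT_\omega$ appear shifted by $[-1]$, those in $\sF_\omega$ appear unshifted, according to the sign of $\mu_\omega$. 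Since $[-1]$ induces an isomorphism of the corresponding moduli stacks, it does not alter the value of $P_q$, so the Hall-algebra logarithms defining $N(v)$ and $\overline{J}(v\cdot\sqrt{\td_S}/\sqrt{\Ch(Rp_*(G\otimes G^\vee))})$ coincide; applying $P_q$ and the limit $\lim_{q^{1/2}\to 1}(q-1)(\,\cdot\,)$ then yields the claimed equality.

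The main obstacle is this stability-matching step: one must check that Toda's identification of $\sZ_{t\omega,0}$-semistable objects with Gieseker semistable sheaves, together with the Bogomolov-type bound guaranteeing finiteness of the sum defining the logarithm, survives the passage to twisted sheaves (including the higher-rank classes). This is exactly where the equivalence $\Coh^{\tw}(\SS)\cong\Coh(S,\alpha)\cong\Coh(S;P)$ does the real work: Yoshioka's boundedness for $G$-twisted semistable $P$-sheaves and the fact that their moduli is deformation equivalent to a Hilbert scheme of points on $S$ supply precisely the inputs Toda's arguments require, so that the twisted analogues of \cite[\S 4, \S 8.2]{Toda_JDG} go through once the grading identification above is in place.
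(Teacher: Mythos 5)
Your proposal follows essentially the same route as the paper: the paper likewise splits the identity into the universal rescaling between $\cl_0$ and the twisted Mukai vector $v_G$, and a large-volume-limit comparison, invoking the independence of $N_\sigma(v)$ on the stability condition (Theorem \ref{thm_invariants_Nv_changing}) and then the Appendix result (Theorem \ref{thm_J_N}, built on Proposition \ref{prop_semistable_objects_sheaves} and Lemma \ref{lem_Toda_6.5_App}) which says that for $\sigma_k=(\sZ_{k\omega},\sA_\omega)$ with $k\gg 0$ the $\sZ_{\varphi_k}$-semistable objects are exactly the Gieseker semistable twisted sheaves with $v_G(E)=v\cdot\sqrt{\td_S}/\sqrt{\Ch(Rp_*(G\otimes G^{\vee}))}$. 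The only cosmetic difference is that you cite the corresponding sections of Toda's untwisted arguments where the paper cites its own twisted generalizations in the Appendix; the mathematical content is the same.
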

\begin{proof}
We need to generalize \cite[Theorem 6.6]{Toda_Adv} to $P$-sheaves on $\Coh(S,P)$, where 
$P\to S$ is the Brauer-Severi variety corresponding to the optimal gerbe 
$\SS\to S$.   

Recall the definition of $N(v)$, given by a weak stability condition 
$$\sigma_{\varphi}=(\sZ_{\varphi}, \sA(\varphi))\in\Stab^{\circ}_{\Gamma_0}(\sD_0^{\SS})$$
and 
$N(v)$ is independent to the stability condition we choose.  Here 
$\varphi= e^{B+i\omega}$.  Let us take 
$$\sigma_k=(\sZ_{k\omega}, \sA_{\omega})$$
and $\varphi=e^{B+ik\omega}$, and 
$$\sZ_{\varphi}(E)=\langle e^{B+ik\omega}, v_G(E)\rangle.$$
From Theorem \ref{thm_J_N} in the Appendix, $E\in\sB_{\omega}$ is $\sZ_{\varphi}$-semistable with 
$\cl_0(E)=v$ if and only if $E$ is Gieseker semistable with 
$v_G(E)=v\cdot \frac{\sqrt{\td_{S}}}{\sqrt{\Ch(Rp_*(G\otimes G^{\vee}))}}$.
\end{proof}

Now we use Toda's method to show that 
$$J_{\omega}(v)=J_{\omega}(g (v))$$
for $g$ a Hodge isometry on $\widetilde{H}(S,\alpha, \zz)$. 
First on the gerbe $\overline{\XX}=\SS\times\pp^1$, we mimic the definition of $J(v)$ in 
Definition \ref{defn_invariants_Jv_XX} to define $\overline{J}(v)$. This invariant is defined by:
$$\delta_{\omega,\overline{\XX}}=
[\sM^{\tw}_{\omega,\overline{\XX}}(v)\hookrightarrow \Coh^{\tw}_{\pi}(\overline{\XX})]\in H(\sA_{\overline{\XX}})$$
and $\sM^{\tw}_{\omega,\overline{\XX}}(v)$ is the moduli stack of $\omega$-Gieseker semistable twisted sheaves $E\in \Coh^{\tw}_{\pi}(\overline{\XX})$ with $\cl_0(E)=v$. 

For $\overline{\XX}=\SS\times\pp^1$, and each point $p\in\pp^1$, let 
$$\sU_p:=\overline{\XX}\setminus \XX_p$$
where $\XX_p$ is the fiber of $\SS\times\cc\to\cc$ over $p$.  Denote by 
$\sM^{\tw}_{\omega,\sZ}(v)\subset \sM^{\tw}_{\omega,\overline{\XX}}(v)$ the locus of 
$E\in \Coh_{\pi}^{\tw}(\overline{\XX})$ such that $\supp(E)\subseteq \sZ$
for $\sZ\subset \overline{\XX}$ an open or closed substack. 
Let 
$$\delta_{\omega,\sZ}(v)=[\sM^{\tw}_{\omega,\sZ}(v)\hookrightarrow\widehat{\sM}(\overline{\XX})]\in H(\sA_{\overline{\XX}})$$
and define  $\epsilon_{\omega,\sZ}(v)$ accordingly.  The following lemma is \cite[Lemma 4.25]{Toda_JDG}. 

\begin{lem}\label{lem_decomposition_epsilon}
$$\epsilon_{\omega,\overline{\XX}}(v)=\epsilon_{\omega,\sU_p}(v)+ \epsilon_{\omega,\XX_p}(v)$$
\end{lem}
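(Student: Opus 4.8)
The plan is to deduce this additive identity from a \emph{multiplicative} decomposition of the stack functions $\delta$, together with the fact that the Joyce logarithm $\epsilon$ converts commuting products into sums. The geometric input is an elementary support observation: every object $E\in\Coh_{\pi}^{\tw}(\overline{\XX})$ is supported set-theoretically on a finite union of fibres of $\pi$, and since $\XX_p$ and $\sU_p$ are disjoint and together cover $\overline{\XX}$, the sheaf splits canonically as $E\cong E'\oplus E''$ with $\supp(E')\subseteq\sU_p$ and $\supp(E'')\subseteq\XX_p$. Because $\omega$ is pulled back from $S$, the reduced geometric Hilbert polynomial is insensitive to the base point in $\pp^1$, so $E$ is $\omega$-Gieseker semistable exactly when both $E'$ and $E''$ are semistable with reduced Hilbert polynomial equal to $p_{\omega,v}$; and conversely any such direct sum is semistable.

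First I would record the vanishing $\Ext^i_{\overline{\XX}}(E',E'')=\Ext^i_{\overline{\XX}}(E'',E')=0$ for all $i$, which holds because the supports are disjoint closed substacks of $\overline{\XX}$. Fixing the reduced Hilbert polynomial $p=p_{\omega,v}$ and writing $\delta^{(p)}_{\omega,\sZ}:=1+\sum_{p_{\omega,w}=p}\delta_{\omega,\sZ}(w)$ for $\sZ\in\{\overline{\XX},\sU_p,\XX_p\}$, the support splitting together with the matching-polynomial criterion identifies $\sM^{\tw}_{\omega,\overline{\XX}}$ with the stack of pairs $(E',E'')$, giving the identity $\delta^{(p)}_{\omega,\overline{\XX}}=\delta^{(p)}_{\omega,\sU_p}\star\delta^{(p)}_{\omega,\XX_p}$ in $H(\sA_{\overline{\XX}})$. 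Here the Hall product is literally the direct-sum map, since the vanishing of $\Ext^1$ in both directions forces every relevant extension to split. Extracting the component of class $v$ and unwinding then expresses $\delta_{\omega,\overline{\XX}}(v)$ as the appropriate convolution of the pieces on $\sU_p$ and $\XX_p$.

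Finally I would use that the same $\Ext$-vanishing makes the two subalgebras generated by $\{\delta_{\omega,\sU_p}(w)\}$ and $\{\delta_{\omega,\XX_p}(w)\}$ commute in $H(\sA_{\overline{\XX}})$, because for objects with disjoint support the convolution is symmetric. Since $\epsilon_{\omega,\sZ}$ is by construction the formal logarithm of $\delta^{(p)}_{\omega,\sZ}$ in the Hall algebra (Joyce's inversion), taking $\log$ of $\delta^{(p)}_{\omega,\overline{\XX}}=\delta^{(p)}_{\omega,\sU_p}\star\delta^{(p)}_{\omega,\XX_p}$ and applying $\log(AB)=\log A+\log B$ for commuting $A,B$ yields $\epsilon_{\omega,\overline{\XX}}(v)=\epsilon_{\omega,\sU_p}(v)+\epsilon_{\omega,\XX_p}(v)$ on the class-$v$ component. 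The main obstacle is not the formal algebra but verifying that each step is legitimate in the twisted, gerby setting: that the moduli stacks of semistable $\alpha$-twisted sheaves genuinely factor as claimed and that the $\Ext$-vanishing holds for $\SS$-twisted sheaves supported on disjoint fibres. This is where I would invoke the equivalence used throughout the paper between $\Coh^{\tw}(\overline{\XX})$ and the category of $P$-sheaves on the Brauer--Severi variety (equivalently untwisted sheaves, cf. \cite{HS}, \cite{Yoshioka2}), so that the proof of \cite[Lemma 4.25]{Toda_JDG} transfers with only cosmetic changes.
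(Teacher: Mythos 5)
Your argument is correct and is essentially the same as the paper's, which simply observes that the $\epsilon$'s are the Hall-algebra logarithms of the $\delta$'s and defers to \cite[Lemma 4.25]{Toda_JDG}; your support-splitting, the $\Ext$-vanishing for disjoint supports, the resulting factorization $\delta^{(p)}_{\omega,\overline{\XX}}=\delta^{(p)}_{\omega,\sU_p}\star\delta^{(p)}_{\omega,\XX_p}$ into commuting factors, and the additivity of $\log$ are precisely the content of that cited proof, transferred to the twisted setting exactly as the paper intends.
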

\begin{proof}
Since the definition of $\epsilon_{\omega,\overline{\XX}}(v)$, (resp. $\epsilon_{\omega,\sU_p}(v)$,  
$\epsilon_{\omega,\XX_p}(v)$) is given by 
$\delta_{\omega,\overline{\XX}}$ (resp. $\delta_{\omega,\sU_p}$, $\delta_{\omega,\XX_p}$) by the logarithm  with the same coefficients as before. The proof is the same as \cite[Lemma 4.5]{Toda_JDG}.
\end{proof}

\begin{lem}\label{lem_lemma2}
We have:
$$\overline{J}(v)=2 J(v).$$
\end{lem}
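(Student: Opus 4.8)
The plan is to apply the Hall-algebra integration map to the decomposition of Lemma \ref{lem_decomposition_epsilon} and to recognize the resulting factor $2$ as $\chi(\pp^1)=\chi(\cc)+\chi(\mathrm{pt})$. Write $\Pi(\,\cdot\,):=\lim_{q^{1/2}\to 1}(q-1)P_q(\,\cdot\,)$, so that by Definition \ref{defn_invariants_Jv_XX} one has $\overline{J}(v)=\Pi(\epsilon_{\omega,\overline{\XX}}(v))$ and $J(v)=\Pi(\epsilon_{\omega,\XX}(v))$ whenever $v\in C(\XX)$ (the case $\pm v\notin C(\XX)$ makes both sides zero, and the case $-v\in C(\XX)$ follows by the symmetry $J(v)=J(-v)$, $\overline{J}(v)=\overline{J}(-v)$, so I may assume $v\in C(\XX)$). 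Choosing the point $p=0\in\cc\subset\pp^1$ and applying $\Pi$ to Lemma \ref{lem_decomposition_epsilon} gives
\[
\overline{J}(v)=\Pi(\epsilon_{\omega,\sU_0}(v))+\Pi(\epsilon_{\omega,\XX_0}(v)),
\]
and the goal is to prove that each summand equals $J(v)$.

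The first summand is immediate. The open complement $\sU_0=\overline{\XX}\setminus\XX_0=\SS\times(\pp^1\setminus\{0\})$ is isomorphic to $\SS\times\cc=\XX$, and this isomorphism carries $\Coh_{\pi}^{\tw}(\sU_0)$ onto $\Coh_{\pi}^{\tw}(\XX)$ compatibly with the geometric Gieseker stability and the twisted Mukai vector $v_G$. Hence it identifies $\delta_{\omega,\sU_0}(v)$ with $\delta_{\omega,\XX}(v)$, and therefore $\epsilon_{\omega,\sU_0}(v)$ with $\epsilon_{\omega,\XX}(v)$, so that $\Pi(\epsilon_{\omega,\sU_0}(v))=J(v)$. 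For the second summand, note that a sheaf in $\Coh_{\pi}^{\tw}(\overline{\XX})$ set-theoretically supported on $\XX_0=\SS\times\{0\}$ is the same datum as a sheaf in $\Coh_{\pi}^{\tw}(\XX)$ supported on the central fiber of $\XX\to\cc$; thus $\Pi(\epsilon_{\omega,\XX_0}(v))$ is the contribution to $J(v)$ of sheaves supported on this one fiber. I would then invoke the $\cc^*$-action on $\XX=\SS\times\cc$ scaling the second factor: since $J(v)$ is an Euler-characteristic (not Behrend-weighted) invariant, the map $\Pi$ is invariant under this action and localizes to the $\cc^*$-fixed substacks. As the $\cc^*$-fixed locus of $\XX$ is exactly $\XX_0$, every $\cc^*$-fixed semistable twisted sheaf is supported on $\XX_0$; applying the localization both to $\epsilon_{\omega,\XX}(v)$ and to $\epsilon_{\omega,\XX_0}(v)$ (the latter still carries the residual $\cc^*$-action scaling the normal direction) produces the same fixed-point contribution, giving $\Pi(\epsilon_{\omega,\XX_0}(v))=\Pi(\epsilon_{\omega,\XX}(v))=J(v)$. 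Combining the three identities yields $\overline{J}(v)=J(v)+J(v)=2J(v)$.

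The main obstacle is this last localization step: one must check carefully that the specialization $\lim_{q^{1/2}\to1}(q-1)P_q$ of the virtual Poincar\'e polynomial is genuinely $\cc^*$-invariant and localizes to the fixed locus, and that the fixed-point contributions of $\epsilon_{\omega,\XX}(v)$ and $\epsilon_{\omega,\XX_0}(v)$ coincide. This is exactly where the unweighted (Euler-characteristic) nature of $J(v)$ is used, via $\chi(Y)=\chi(Y^{\cc^*})$, together with the compatibility of the Joyce logarithm $\epsilon_{\omega,-}(v)$ with restriction to $\cc^*$-fixed support underlying Lemma \ref{lem_decomposition_epsilon}. All remaining points are the bookkeeping already present in that lemma, and the argument runs parallel to the corresponding step in \cite{Toda_JDG}.
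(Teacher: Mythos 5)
Your argument is correct, and it reaches the factor $2$ by a route that differs in its mechanics from the paper's, although both rest on Lemma \ref{lem_decomposition_epsilon}. The paper works with the coarse moduli spaces: it observes that the locus carrying the count fibers over the base curve with fiber $M^{\tw}_{\omega,\XX_0}(v)$, the constancy of the fiber being supplied by the transitive action of $\Aut(\pp^1)$ (respectively translations of $\cc$), which carries $\epsilon_{\XX_p}(v)$ to $\epsilon_{\XX_q}(v)$; it then reads off $\overline{J}(v)=\chi(\pp^1)\cdot c$ and $J(v)=\chi(\cc)\cdot c$ with $c$ the single-fiber contribution, so the factor $2$ is $\chi(\pp^1)/\chi(\cc)$. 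You instead split $\pp^1=\cc\sqcup\{0\}$ at the level of Hall-algebra elements, identify the open stratum with $J(v)$ outright via $\sU_0\cong\XX$ (legitimate, since semistability and $v_G$ for sheaves supported on fibers are fiber-intrinsic, and $P_q$ depends only on the source stack), and evaluate the closed stratum by $\cc^*$-localization of the specialization $\lim_{q^{1/2}\to 1}(q-1)P_q$. What your version buys is that you never need the product decomposition $M^{\tw}_{\omega,\overline{\XX}}(v)\cong M^{\tw}_{\omega,\XX_0}(v)\times\pp^1$, which in the paper is really only a constructible statement and is phrased loosely; what it costs is exactly the point you flag, namely that the Euler-characteristic specialization must be shown to localize to $\cc^*$-fixed loci term by term in the Joyce logarithm. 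This does hold: the specialization is an Euler characteristic, the geometric $\cc^*$-action is compatible with all the strata (including the stacks of filtrations entering the $\star$-products), a fixed object is necessarily supported over $0\in\cc$ so the fixed loci for $\epsilon_{\omega,\XX}(v)$ and $\epsilon_{\omega,\XX_0}(v)$ coincide, and $\chi(Y)=\chi(Y^{\cc^*})$. So your argument closes; it is in effect a localization-theoretic substitute for the paper's appeal to homogeneity of the base together with $\chi(\cc)=1$.
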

\begin{proof}
The proof uses the Behrend function techniques in \cite{JS}.  First the invariant
$$\overline{J}(v)=\chi(M^{\tw}_{\omega,\overline{\XX}}(v),\nu_{\sM})$$
is the weighted Euler characteristic of the coarse moduli space $M^{\tw}_{\omega,\overline{\XX}}(v)$ of 
$\omega$-Gieseker semistable twisted sheaves, and $\nu_{\sM}$ is the Behrend function. 
If $M^{\tw, p}_{\omega,\overline{\XX}}(v)\subset M^{\tw}_{\omega,\overline{\XX}}(v)$ is the closed subscheme consisting of semistable twisted sheaves $E$ with $\supp(E)\subset \XX_p$ for $p\in\pp^1$, then from Lemma \ref{lem_decomposition_epsilon}, the constructible function $\nu_{\sM}$ is zero outside $\XX_p$, and we have:
$$M^{\tw, p}_{\omega,\overline{\XX}}(v)\cong M^{\tw}_{\omega,\XX_0}(v)\times\pp^1$$
for $\XX_0=\SS$.
Any $g$ in the automorphism group $\Aut(\pp^1)$ sends a point $p$ to $q$, and the element 
$\epsilon_{\XX_p}(v)$ is mapped to $\epsilon_{\XX_q}(v)$ by $g$. Therefore
$$\overline{J}(v)=\chi(\pp^1)\cdot \chi(M^{\tw}_{\omega,\XX_0}(v)\times\{0\}, \nu_{\sM})$$
and 
$$J(v)=\chi(\cc)\cdot \chi(M^{\tw}_{\omega,\XX_0}(v)\times\{0\}, \nu_{\sM}).$$
\end{proof}

So we have:
$${J}\left(v\cdot \frac{\sqrt{\td_{S}}}{\sqrt{\Ch(Rp_*(G\otimes G^{\vee}))}}\right)=\frac{1}{2}N(v),$$
from Theorem \ref{thm_Jbar_N} and Lemma \ref{lem_lemma2}.

\subsubsection{Automorphic property}

Let $G$ be the group of Hodge isometries
$$G:=O_{\Hodge}(\widetilde{H}(S,\alpha,\zz), (\star,\star))$$
on the twisted K3 surface $(S,\alpha)$. 
We prove the automorphic property for $\overline{J}(v)$. 
For two twisted K3 surfaces $(S,\alpha)$ and $(S^\prime,\alpha^\prime)$, we recall the twisted Fourier-Mukai transform 
in \cite{HMS}, \cite{HS}. For the 
$B$-field $B$ such that $\exp(B)=\alpha$; $B$-field $B^\prime$ such that $\exp(B^\prime)=\alpha^\prime$, we have
$(-B)\boxplus B^\prime:=p_{S}^*(-B)+p_{S^\prime}^*(B^\prime)\in H^2(S\times S^\prime, \qq)$, where 
$p_{S}: S\times S^\prime\to S$, and $p_{S^\prime}: S\times S^\prime\to S^\prime$ are projections and induce
$\alpha^{-1}\boxtimes \alpha^\prime\in H^2(S\times S^\prime, \sO^*)$.  
Let 
$\rE\in D^b(S\times S^\prime, \alpha^{-1}\boxtimes \alpha^\prime)$ be a kernel, we have:
$$\Phi: D^b(S,\alpha)\to D^b(S^\prime,\alpha^\prime)$$
given by
$$E\mapsto p_{S^\prime *}(p_{S}^*(E)\otimes\rE).$$
\cite[Proposition 4.3]{HS} implies that if $\Phi$ is a derived equivalence, then the induced map 
$$\Phi_*^{B,B^\prime}: \widetilde{H}(S,\alpha,\zz)\stackrel{\sim}{\longrightarrow}\widetilde{H}(S^\prime, \alpha^\prime,\zz)$$
is a Hodge isometry of integral weight-2 Hodge structures, and 
$$\Phi_*^{B,B^\prime}(-)=p_{S^\prime *}p_{S}^*\left(-\cdot \Ch(\rE)\sqrt{\td_{S\times S^\prime}}\right).$$
We have the following commutative diagram:
\[
\xymatrix{
D^b(\Coh(S^\prime,\alpha^\prime))\ar[r]^{\Phi}\ar[d]^{v_{\alpha^\prime}}& D^b(\Coh(S,\alpha))\ar[d]_{v_{\alpha}}\\
\widetilde{H}(S^\prime, \alpha^\prime,\zz)\ar[r]^{\Phi_*}& \widetilde{H}(S,\alpha,\zz)
}
\]
Therefore this equivalence $\Phi$ gives an isomorphism
$$\Phi_{\St}: \Stab(S^\prime,\alpha^\prime)\stackrel{\sim}{\longrightarrow}\Stab(S,\alpha)$$
on the stability manifolds.  We have a similar proposition as in \cite[Proposition 4.29]{Toda_JDG}.

\begin{prop}\label{prop_auto_property_Jbar}
Let $\sD_0^{\SS}$, $\Gamma^{S}_0$, $\overline{J}_S(v)$ be the invariants defined before for the twisted K3 surface 
$(S,\alpha)$. Assume that $\Phi_{\St}$ sends the connected component $\Stab^{\circ}(S^\prime, \alpha^\prime)$ to 
$\Stab^{\circ}(S,\alpha)$. Then 
$$\overline{J}_{S^\prime}(v)=\overline{J}(\Phi_*(v))$$
for any $v\in \Gamma_0^{S^\prime}$. 
\end{prop}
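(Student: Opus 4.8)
The plan is to reduce the statement to the invariance of the Bridgeland-type invariant $N$ under the twisted Fourier--Mukai equivalence, and then transport the result back to $\overline{J}$ through the comparison of Theorem \ref{thm_Jbar_N}; this follows the scheme of Toda \cite[Proposition 4.29]{Toda_JDG}. The conceptual engine is that the equivalence $\Phi\colon D^b(\Coh(S',\alpha'))\to D^b(\Coh(S,\alpha))$ is exact, so it sends short exact sequences in one tilted heart to short exact sequences in the corresponding heart on the other side, and by the commuting square for the twisted Mukai vector $v_\alpha=v_G$ it carries an object with $v_G(E)=v$ to one with $v_G(\Phi(E))=\Phi_*(v)$. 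Phrased on the derived categories of fibre-supported twisted sheaves, Proposition \ref{prop_twisted_sheaf_barXX} lets us promote $\Phi$ to a relative equivalence $\widetilde\Phi:=\Phi\times\id_{\pp^1}\colon \sD_0^{\SS'}\to\sD_0^{\SS}$ which preserves the fibre-supported subcategories and the tilted hearts $\sA_\omega$ of \S\ref{subsubsec_heart_sD}.

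First I would transport stability conditions. Given $\sigma'\in\Stab^\circ_{\Gamma_0}(\sD_0^{\SS'})$, the equivalence $\Psi$ of Theorem \ref{thm_stability_twisted_K3_sD} (together with its analogue $\Psi'$ for $S'$) and the map $\Phi_{\St}$ produce a stability condition $\sigma:=\Psi^{-1}\bigl(\Phi_{\St}(\Psi'(\sigma'))\bigr)$ on $\sD_0^{\SS}$; by the hypothesis that $\Phi_{\St}$ maps $\Stab^\circ(S',\alpha')$ into $\Stab^\circ(S,\alpha)$, this $\sigma$ lies in $\Stab^\circ_{\Gamma_0}(\sD_0^{\SS})$. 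The key point is then that $\widetilde\Phi$ matches the torsion pairs $(\sT_\omega,\sF_\omega)$, hence the hearts, on the two sides up to the action encoded by $\Phi_{\St}$, so that $E$ is $\sigma'$-semistable with $v_G(E)=v$ if and only if $\widetilde\Phi(E)$ is $\sigma$-semistable with $v_G(\widetilde\Phi(E))=\Phi_*(v)$. Consequently $\widetilde\Phi$ induces an isomorphism of the moduli stacks of semistable objects compatible with the inclusions into the big moduli stacks, matching the Hall-algebra generators $\delta$ and therefore the logarithms $\epsilon$ term by term; applying the integration map $P_q$ yields $N_{\sigma'}(v)=N_{\sigma}(\Phi_*(v))$. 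Theorem \ref{thm_invariants_Nv_changing} now removes the dependence on the chamber, giving $N_\sigma(\Phi_*(v))=N(\Phi_*(v))$, and hence $N_{S'}(v)=N(\Phi_*(v))$.

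Finally I would descend to $\overline{J}$ via Theorem \ref{thm_Jbar_N}. Because $\Phi_*$ is a Hodge isometry acting precisely on the twisted Mukai vectors $v_G$, in which $\overline{J}$ is indexed, the normalization $\sqrt{\td_S}/\sqrt{\Ch(Rp_*(G\otimes G^\vee))}$ relating the $\cl_0$-class to $v_G$ is just a fixed relabeling that $\Phi_*$ respects; a short bookkeeping check then turns the identity $N_{S'}=N\circ\Phi_*$ into $\overline{J}_{S'}(v)=\overline{J}(\Phi_*(v))$. The main obstacle I anticipate is the central claim of the second paragraph: verifying rigorously that the relative twisted Fourier--Mukai transform preserves Bridgeland-semistability inside the tilted hearts (not merely the numerical class) and is compatible with the Hall algebra and the motivic Poincar\'e-polynomial and Behrend-function integration maps. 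Concretely one must check that $\widetilde\Phi$ carries the torsion pair and the wall-and-chamber structure used in defining $N$ correctly across the equivalence; this is exactly the content of \cite[Proposition 4.29]{Toda_JDG}, whose argument transfers once the twisted Mukai lattice $\widetilde{H}(S,\alpha,\zz)$ and the identification $\Psi$ of Theorem \ref{thm_stability_twisted_K3_sD} are in hand.
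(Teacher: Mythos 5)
Your proposal is correct and follows essentially the same route as the paper: extend $\Phi$ to a relative Fourier--Mukai equivalence $\widetilde\Phi$ on the fibre-supported categories (the paper uses the kernel $\rE\boxtimes\sO_{\Delta_{\pp^1}}$, which is your $\Phi\times\id_{\pp^1}$), use the commuting square for twisted Mukai vectors and the identification of Theorem \ref{thm_stability_twisted_K3_sD} to transport stability conditions, conclude $N_{\Phi_{\St}\sigma'}(\Phi_*w)=N_{\sigma'}(w)$, invoke the $\sigma$-independence of Theorem \ref{thm_invariants_Nv_changing}, and convert to $\overline{J}$ via Theorem \ref{thm_Jbar_N} with the $\sqrt{\td}$ normalization. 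This is exactly the chain of equalities in the paper's proof.
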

\begin{proof}
First $\Phi$ induces an equivalence
$$\widetilde{\Phi}: D^b(\overline{\XX}^\prime, \alpha^\prime)\stackrel{\sim}{\longrightarrow} D^b(\overline{\XX}, \alpha)$$
and the kernel is:
$$\rE\boxtimes \sO_{\Delta_{\pp^1}}\in D^b(\Coh(S^\prime\times S\times\pp^1\times\pp^1, \alpha^{-1}\boxtimes\alpha^\prime)).$$
$\widetilde{\Phi}$ restricts to give the equivalence:
$$\Phi: \sD_0^{\SS^\prime}\stackrel{\sim}{\longrightarrow}\sD_0^{\SS}.$$
Thus we have a commutative diagram:
\[
\xymatrix{
\sD_0^{\SS^\prime}\ar[r]^{{\Phi}}\ar[d]_{\cl_0\sqrt{\td_{S^\prime}}}& \sD_0^{\SS}\ar[d]^{\cl_0\sqrt{\td_S}}\\
\Gamma_0^{S^\prime}\ar[r]^{\Phi_*}& \Gamma_0^{S}
}
\]
Also from Theorem \ref{thm_stability_twisted_K3_sD}, 
$\Stab(\sD_0^{\SS})\stackrel{\sim}{\longrightarrow}\Stab(S,\alpha)$, 
and 
$$\Phi_{\St}: \Stab^{\circ}_{\Gamma_0^{S^\prime}}(\sD_0^{\SS^\prime})\stackrel{\sim}{\longrightarrow}
\Stab^{\circ}_{\Gamma_0^{S}}(\sD_0^{\SS}).$$
So from Theorem \ref{thm_Jbar_N} and the diagram above, we have
\begin{align*}
\overline{J}_{S_{\alpha}}(\Phi_* v)&=N_{\sigma}(\Phi_* v\cdot \sqrt{\td_{S_{\alpha}}}^{-1})\\
&=N_{\Phi_{\St}\sigma^\prime}(\Phi_* v\cdot \sqrt{\td_{S_{\alpha}}}^{-1})\\
&=N_{\sigma^\prime}( v\cdot \sqrt{\td_{S_{\alpha^\prime}}}^{-1})\\
&=\overline{J}_{S_{\alpha^\prime}}(v).
\end{align*}
\end{proof}

\begin{cor}\label{cor_Jv_Jgv}
Let $g\in G$ and $v\in\Gamma_0$, we have 
$$\overline{J}(gv)=\overline{J}(v)$$
and hence 
$$J(gv)=J(v).$$
\end{cor}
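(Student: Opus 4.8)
The plan is to deduce the statement from Proposition \ref{prop_auto_property_Jbar}, applied with $(S',\alpha') = (S,\alpha)$, once the given Hodge isometry $g$ is realized by a twisted Fourier--Mukai kernel on $(S,\alpha)$ itself. First I would dispose of the second assertion: by Lemma \ref{lem_lemma2} we have $\overline{J}(w) = 2\,J(w)$ for every $w \in \Gamma_0$, so once $\overline{J}(gv) = \overline{J}(v)$ is established, dividing by $2$ yields $J(gv) = J(v)$ at once. Thus the whole content of the corollary is the equality $\overline{J}(gv) = \overline{J}(v)$.

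For that equality I would split $G = O_{\Hodge}(\widetilde H(S,\alpha,\zz),(\star,\star))$ along its index-two subgroup $O^+ \subset G$ of orientation-preserving isometries, i.e.\ those preserving the orientation of the positive four-plane of the twisted Mukai lattice. For $g \in O^+$, the twisted derived global Torelli theorem of Huybrechts--Stellari (\cite{HS}, \cite{HMS}) furnishes a twisted Fourier--Mukai autoequivalence $\Phi\colon D^b(\Coh(S,\alpha)) \xrightarrow{\sim} D^b(\Coh(S,\alpha))$, with kernel $\rE \in D^b(S\times S,\alpha^{-1}\boxtimes\alpha)$, realizing $\Phi_*^{B,B} = g$; moreover, because $g$ is orientation-preserving, the induced homeomorphism $\Phi_{\St}$ preserves the distinguished component, i.e.\ $\Phi_{\St}(\Stab^\circ(S,\alpha)) = \Stab^\circ(S,\alpha)$. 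This is exactly the hypothesis of Proposition \ref{prop_auto_property_Jbar} with $(S',\alpha') = (S,\alpha)$, and that proposition then gives $\overline{J}(v) = \overline{J}(\Phi_*v) = \overline{J}(gv)$ for all $v \in \Gamma_0$.

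It remains to treat one fixed orientation-reversing isometry, after which an arbitrary $g \in G \setminus O^+$ factors as its product with an element of $O^+$ already handled. Here I would invoke the derived dualizing functor $\mathbb D = R\cHom(-,\sO_S)$, which interchanges $\alpha$- and $\alpha^{-1}$-twisted sheaves and induces on Mukai lattices a fixed orientation-reversing Hodge isometry; combined with the built-in symmetry $\overline{J}(v) = \overline{J}(-v)$ (from the defining cases of $\overline{J}$, cf.\ Definition \ref{defn_invariants_Jv_XX}(2)) and the identification of the moduli of $\omega$-Gieseker semistable $\alpha$-twisted sheaves of Mukai vector $v$ with those of $\alpha^{-1}$-twisted sheaves of the dual vector, one transports $\overline{J}$ across the two gerbes and back. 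I expect this last step to be the main obstacle: for orientation-preserving isometries everything is clean and functorial through Proposition \ref{prop_auto_property_Jbar}, whereas realizing an orientation-reversing isometry forces passage through the contravariant duality, hence to the partner gerbe $(S,\alpha^{-1})$, and one must verify that the Behrend-weighted Euler characteristics defining $\overline{J}$ are genuinely matched under this passage rather than merely on the level of coarse moduli.
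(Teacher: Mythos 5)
Your proposal follows the route the paper intends: the corollary is stated with no proof of its own, as an immediate consequence of Proposition \ref{prop_auto_property_Jbar} applied with $(S',\alpha')=(S,\alpha)$, and your reduction of $J(gv)=J(v)$ to $\overline{J}(gv)=\overline{J}(v)$ via Lemma \ref{lem_lemma2} is exactly the intended use of $\overline{J}=2J$. In fact your write-up is more careful than the paper's. The obstacle you flag at the end is genuine and is not addressed in the text: Proposition \ref{prop_auto_property_Jbar} only applies to Hodge isometries induced by twisted Fourier--Mukai equivalences, and by the twisted derived Torelli theorem of \cite{HS}, \cite{HMS} these realize only the orientation-preserving subgroup $O^{+}\subset G$ (one must also arrange, as the proposition's hypothesis demands, that $\Phi_{\St}$ preserves the distinguished component $\Stab^{\circ}(S,\alpha)$). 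The statement for the full group $G$ therefore needs the extra coset represented by a fixed orientation-reversing isometry, and your choice of the dualizing functor is the standard one --- it is exactly how Toda completes the analogous Hodge isometry theorem in the untwisted case in \cite{Toda_JDG}. As you note, in the twisted setting $E\mapsto E^{\vee}$ lands in $\alpha^{-1}$-twisted sheaves, so one must identify the moduli stacks of $\omega$-Gieseker semistable $\alpha$-twisted sheaves of class $v$ with those of $\alpha^{-1}$-twisted sheaves of the dual class and check that the Poincar\'e-polynomial/Behrend-weighted invariants defining $\overline{J}$ agree under this identification (together with $\overline{J}(v)=\overline{J}(-v)$ from the definition). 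The paper silently elides this step, so your proposal is not missing anything the paper supplies; it makes explicit the one piece of the argument the text leaves open.
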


\subsubsection{Multiple cover formula}

In \cite[Conjecture 1.3]{Toda_JDG}, let $v\in\Gamma_0$ be an algebraic class, then the multiple cover formula for K3 surface is:
\begin{equation}\label{eqn_multiple_cover_K3_Toda}
 J(v)=\sum_{\substack{k\geq 1\\
 k|v}}\frac{1}{k^2}\chi(\Hilb^{\langle v/k,v/k\rangle+1}(S))
\end{equation}
which is the multiple cover formula Conjecture \ref{eqn_multiple_cover_formula} for 
$v=(0, \beta, n)$. 

In the category $\Coh^{\tw}_{\pi}(\XX)$ of twisted sheaves on $\XX\to X=S\times\pp^1$, given by the cohomology class 
$$\alpha\in H^2(X,\mu_r)\cong H^2(S,\mu_r),$$
if we forget about the $\mu_r$-action, this is equivalent to the category $\Coh(S,\alpha)$ of twisted sheaves on the twisted K3 surface $(S,\alpha)$. 
Since a one dimension sheaf $E$ with $\cl_0(E)=(0,\beta,n)$ on $\SS$ or on $\XX$ is automatically 
$\SS$-twisted or $\XX$-twisted, the element $\epsilon_{\omega, \XX}(v)$ is the same as 
$$\epsilon_{\omega, X}(v)= \sum_{\substack{\ell\geq 1, v_1+\cdots+v_{\ell}=v, v_i\in\Gamma_0\\
p_{\omega,v_i}=p_{\omega,v}(m)}}
\frac{(-1)^{\ell}-1}{\ell}\delta_{\omega,X}(v_1)\star\cdots\star \delta_{\omega,X}(v_{\ell}).$$
This is because in the category  $\Coh(S,\alpha)$, the subcategory of one dimensional sheaves is always a twisted category and keeps the same as the subcategory of one dimensional sheaves  in the general 
untwisted category  $\Coh(\SS_0)$ for the trivial gerbe $\SS_0$.  Thus the subcategory $\Coh(S,\alpha)_{\leq 1}$ is equivalent to the subcategory 
$\Coh(S)_{\leq 1}$. 
Another explanation can be seen from the equivalence (\ref{eqn_equivalence_twist_untwist}) in \S \ref{subsubsec_twisted_Chern_character} using $P$-sheaves, and the one dimensional pure $P$-sheaves corresponds to 
one dimensional pure twisted sheaves in $\Coh(S,\alpha)$.  The Brauer class $\alpha$, when restricted to the one dimensional supported locus of the twisted sheaf, is zero. Therefore the one dimensional twisted sheaves on 
$S$ is actually equivalent to the one dimensional sheaves on $S$.
Then the geometric semistability of a one dimensional sheaf as in \S \ref{subsubsec_heart_sD} corresponds to the general Gieseker semistability 
of the corresponding untwisted sheaf. 
Therefore the invariants
$J(v)$ defined in the category $\Coh(S,\alpha)$ or $\Coh(\overline{\XX}, \alpha)$ is the same as 
$J_{X}(v)$ if $v=(0,\beta,n)$.  
So we get:

\begin{prop}\label{prop_multiple_cover_formula}
In the category $\Coh^{\tw}_{\pi}(\XX)$ of  twisted sheaves, the multiple cover formula is still
(\ref{eqn_multiple_cover_K3_Toda}). Thus we prove Theorem \ref{thm_twisted_multiple_cover_intro}.
\end{prop}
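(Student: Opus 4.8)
The plan is to establish (\ref{eqn_multiple_cover_K3_Toda}) for every $v\in\Gamma_0$ by first reducing to the rank-zero case through the automorphic invariance of $J$, and then identifying the rank-zero twisted theory with the untwisted theory on $S$, where the formula is the theorem of Maulik--Thomas recorded in Proposition \ref{prop_multiple_cover_K3}. The starting point is that both sides of (\ref{eqn_multiple_cover_K3_Toda}) are invariant under the Hodge isometry group $G=O_{\Hodge}(\widetilde{H}(S,\alpha,\zz))$. For the left-hand side this is precisely Corollary \ref{cor_Jv_Jgv}. For the right-hand side, for any $g\in G$ the condition $k\mid v$ is equivalent to $k\mid gv$ because $g$ is a lattice automorphism, and $\langle v/k,v/k\rangle=\langle gv/k,gv/k\rangle$ because $g$ is an isometry; since $\chi(\Hilb^m(S))$ depends only on $m$, both sides transform identically. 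Thus it suffices to verify (\ref{eqn_multiple_cover_K3_Toda}) on one representative of each $G$-orbit.

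Next I would write $v=m v_0$ with $v_0$ primitive and $m$ the divisibility, and reduce to the case that $v_0$ is a rank-zero class. Using the twisted Fourier--Mukai machinery of \cite{HMS}, \cite{HS} already exploited in Proposition \ref{prop_auto_property_Jbar}, together with the transitivity of the Hodge isometry group on primitive vectors of fixed Mukai square, I would produce a derived equivalence carrying $v_0$ to a primitive rank-zero vector $(0,\beta,s)$. Proposition \ref{prop_auto_property_Jbar} then matches the invariants across the equivalence; when the equivalence relates $(S,\alpha)$ to a different twisted K3 surface the Hilbert schemes appearing on the two sides have equal Euler characteristics, being Hilbert schemes of the same number of points on deformation-equivalent K3 surfaces, so the right-hand side of (\ref{eqn_multiple_cover_K3_Toda}) is unchanged. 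This reduces the formula for $v$ to the rank-zero class $m\cdot(0,\beta,s)$.

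For a rank-zero class $v=(0,\beta,n)$ the final step is the identification carried out just before the statement: any semistable object contributing to $\epsilon_{\omega,\XX}(v)$ is supported in dimension one, and the Brauer class $\alpha$ restricts to zero on such a support by Tsen's theorem, so $\Coh^{\tw}_\pi(\XX)_{\leq 1}\cong\Coh(S)_{\leq 1}$ and $\epsilon_{\omega,\XX}(v)$ coincides with its untwisted analogue $\epsilon_{\omega,X}(v)$. Hence $J(v)=J_X(v)$, and $J_X$ satisfies (\ref{eqn_multiple_cover_K3_Toda}): by Toda's argument this formula is equivalent to the multiple cover formula $N_{n,\beta}=\sum_{k\mid(\beta,n)}k^{-2}n_0^{\beta/k}$ of Conjecture \ref{eqn_multiple_cover_formula}, which holds for $S$ by Proposition \ref{prop_multiple_cover_K3}, once $n_0^\beta$ is read off as the corresponding Euler characteristic of a Hilbert scheme of points on $S$ appearing in (\ref{eqn_multiple_cover_K3_Toda}). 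Combined with the invariance and reduction above, this yields (\ref{eqn_multiple_cover_K3_Toda}) for all $v\in\Gamma_0$, which is Theorem \ref{thm_twisted_multiple_cover_intro}.

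The step I expect to be the main obstacle is the reduction to rank zero: one must ensure that an arbitrary positive-rank Mukai vector can be moved into rank zero by a genuine (twisted) Fourier--Mukai equivalence, and that the Euler characteristics of the Hilbert schemes are correctly tracked when the equivalence changes the underlying twisted K3 surface. This is the lattice-theoretic and derived-categorical core of the argument, whereas the rank-zero identification via Tsen's theorem and the input from Maulik--Thomas are comparatively routine.
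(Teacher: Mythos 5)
Your proposal follows the same route as the paper: reduce to rank-zero Mukai vectors via the Hodge-isometry invariance of Corollary \ref{cor_Jv_Jgv}, identify one-dimensional twisted sheaves with ordinary one-dimensional sheaves on $S$ because the Brauer class dies on the curve supporting them (so $\epsilon_{\omega,\XX}(v)=\epsilon_{\omega,X}(v)$ for $v=(0,\beta,n)$), and then quote the Maulik--Thomas input of Proposition \ref{prop_multiple_cover_K3}. You are somewhat more explicit than the paper about the lattice-theoretic step (the $G$-invariance of the right-hand side of (\ref{eqn_multiple_cover_K3_Toda}) and the transitivity needed to move a primitive vector into rank zero), which the paper leaves implicit in its appeal to Corollary \ref{cor_Jv_Jgv}.
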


\appendix{}

\section{The  invariants for twisted sheaves on twisted K3 surfaces}\label{Appendix_twisted_K3}

In this appendix we generalize the result in \cite{Toda_Adv} for comparing the counting invariants of semistable objects and invariants of counting semistable sheaves  in the derived category of coherent sheaves on K3 surfaces to twisted K3 surfaces. 

\subsection{Review of stability conditions on twisted K3 surfaces}\label{subsec_stability_twisted_K3}

\subsubsection{Twisted K3 surfaces}

Let $S$ be a smooth projective K3 surface.  Let $A$ be an abelian group scheme on $S$. An $A$-gerbe $\SS\to S$ is a DM stack $\SS$ over $S$ such that for any open subset 
$U\subset S$, there exists a covering $U^\prime\to U$ such that $\SS(U^\prime)\neq \emptyset$, and any sections $s,s^\prime\in \SS(U)$ there exists $U^\prime\to U$ such that
$s|_{U}=s^{\prime}|_{U^\prime}$. 
Also let $(A)_{\SS}$ be the sheaf of abelian groups $A$ on $\SS$, we have 
$(A)_{\SS}\cong I\SS$, where $I\SS$ is the inertia stack of $\SS$. 

We mainly take $A=\mu_r$ and $\cc^*$. Consider the following exact sequence:
$$1\to \mu_r\to \cc^*\stackrel{(\cdot)^r}{\longrightarrow}\cc^*\to 1$$
and taking cohomology:
$$\cdots\to H^1(S,\cc^*)\stackrel{\psi}{\longrightarrow}H^2(S,\mu_r)\stackrel{\varphi}{\longrightarrow}H^2(S,\cc^*)\to \cdots$$
A $\mu_r$-gerbe $\SS\to S$ is given by a class $[\SS]\in H^2(S,\mu_r)$, and is called ``essentially trivial" if it is in the image of the map $\psi$. 
Therefore an essentially trivial $\mu_r$-gerbe $\SS\to S$ is given by a line bundle $L\in\Pic(S)$. 

The cohomological Brauer group $$\Br^\prime(S)=H^2(S,\sO_S^*)_{\tor},$$
  is by definition the torsion part of the cohomology $H^2(S,\sO_S^*)_{\tor}$. De Jong's theorem \cite{de_Jong} implies that the Brauer group 
$\Br(S)=\Br^\prime(S)$, and $\Br(S)$ is the group of isomorphism classes of Azumaya algebras $\sA$ on $S$, see \cite[Definition 2.10]{Jiang_2019}. 
Here an Azumaya algebra $\sA$ on $S$ is an associative (non-commutative) $\sO_S$-algebra $\sA$ which is locally isomorphic to a matrix algebra $M_r(\sO_S)$ for some $r>0$. 

\begin{defn}\label{defn_optimal_gerbe}
A $\mu_r$-gerbe $p:\SS\to S$ is called ``optimal" if the period $\per(\SS)$, which is defined as the order of $[\SS]$ in $\Br^\prime(S)=\Br(S)$, is equal to $r$. 
\end{defn}

Let $\alpha:=\varphi([\SS])\in H^2(S,\sO_S^*)_{\tor}$ be the class of the gerbe $\SS$ in $\Br^\prime(S)$. Then $\alpha$ is called a Brauer class.  A K3 surface $(S,\alpha)$ together with a Brauer class $\alpha$ is called a twisted K3 surface in \cite{HS}.
In particular an optimal $\mu_r$-gerbe $\SS\to S$ determines a twisted K3 surface. 

\begin{rmk}
We use the notation $(S,\alpha)$, for $\alpha\in H^2(S,\sO_S^*)_{\tor}$, or $\SS_{\alpha}$ (where $\alpha$ also determines one class in $H^2(S,\mu_r)$) to represent the twisted K3 surface, where we can view $p: \SS_{\alpha}\to S$ an optimal $\mu_r$-gerbe. 
We may exchange the notation $(S,\alpha)$ and $\SS_{\alpha}$ arbitrarily in the paper. 
\end{rmk}

\begin{rmk}
Since $\Br^\prime(S)=\Br(S)$, any optimal $\mu_r$-gerbe $\SS_{\alpha}\to S$ for 
$\alpha\in H^2(S,\mu_r)\to H^2(S,\sO_S^*)$ gives an element $[P]\in H^1(S,\PGL_r)$, which classifies $\PGL_r$-bundles on $S$.
\end{rmk}

\subsubsection{Bridgeland stability conditions on twisted K3 surfaces}

In this section we review the Bridgeland stability conditions on twisted K3 surfaces in \cite{HMS}. Huybrechts, Macri and Stellari \cite{HMS} studied the stability condition on any generic K3 category and we only fix to category of twisted sheaves on K3 surfaces. 

For the twisted K3 surface $(S,\alpha)$ or $\SS_{\alpha}$, we let $\Coh(S,\alpha)$ or $\Coh^{\tw}(\SS_{\alpha})$ be the category of twisted sheaves on $S$ or $\SS_{\alpha}$.  
Let $D^b(\Coh(S,\alpha))$ or $D^b(\Coh^{\tw}(\SS_{\alpha}))$ be the corresponding derived category.  These two categories $\Coh(S,\alpha)$ and $\Coh^{\tw}(\SS_{\alpha})$ are equivalent if we forget about the $\mu_r$-gerbe structures on the twisted sheaves. 
We denote by $\Stab(S,\alpha)$ (or $\Stab^{\tw}(\SS_{\alpha})$) the stability manifold.  From \cite{HS}, there is a twisted Hodge structure 
$\widetilde{H}(S,\alpha,\zz)=\left(H^*(S,\zz), \langle, \rangle, -\frac{\xi}{r}\right)$, reviewed in Definition \ref{defn_integral_structure_K3}. 
Here $\xi\in H^2(S,\zz)$ is a lift of $\alpha\in H^2(S,\mu_r)$. 

Since our $\mu_r$-gerbe $\SS_{\alpha}$ is optimal, there is a corresponding Brauer-Severi variety
$p: P\to S$, where $P$ is a projective $\pp^{r-1}$-bundle over $S$. Then in \cite{Yoshioka2}, and \S \ref{subsubsec_twisted_Chern_character} we defined the category of 
$P$-sheaves $\Coh(S,P)$ on $P$ which is equivalent to $\Coh(S,\alpha)$.  We also define the Mukai vector 
$v_G(E)$ for a twisted sheaf $E$, where the rank $r$ vector bundle $G$ on $P$ is determined by the Euler sequence
$$0\to \sO_P\rightarrow G\rightarrow T_{P/S}\to 0$$
Note that from Proposition \ref{prop_repn_Mukai_vector} (also \cite[Lemma 3.3]{Yoshioka2}), if we 
set 
$$(\rk(E), D, a):=e^{\frac{\xi}{r}}\cdot v_G(E).$$
Then $(\rk(E), D, a)\in H^*(S,\zz)$ and $D \mod r=\omega(E)$, and $\omega(E)\in H^2(S,\mu_r)$ is $c_1(E)\mod r$.  
The Mukai vectors in \cite{HS} are of the form $e^{\frac{\xi}{r}}\cdot v_G(E)$. We work on Yoshioka's Mukai vectors and consider the integral structure:
$$T_{-\frac{\xi}{r}}\left(\left(H^*(S,\zz), \langle, \rangle, -\frac{\xi}{r}\right)\right)$$
in $H^*(S,\qq)$ in Definition \ref{defn_integral_structure_K3}.
 
 Recall that in \cite{HMS}, Huybrechts, Macri and Stellari defined a twisted Chern character:
 $$\Ch^{B}: K(S,\alpha)\to H^*(S,\qq)$$
 as follows:
 Here $B\in H^2(S,\qq)$ is the B-field such that $\exp(B)=\alpha$. This is because from 
 $$0\to \zz\to \sO_S\stackrel{\exp}{\longrightarrow}\sO_S^*\to 0$$
 and $H^3(S,\zz)=0$ we have the $0,2$-part of $B$ maps to 
 $\alpha\in H^2(S,\sO_S^*)_{\tor}$.  The Chern character map 
 $\Ch^B: K(S,\alpha)\to H^*(S,\qq)$ satisfies:
 \begin{enumerate}
 \item $\Ch^B$ is additive, i.e., $\Ch^B(E_1\oplus E_2)=\Ch^B(E_1)+\Ch^B(E_2)$;
 \item If $B=c_1(L)\in H^2(S,\zz)$, $\Ch^B(E)=e^{c_1(L)}\cdot \Ch(E)$;
 \item $\Ch^{B_1}(E_1)\cdot \Ch^{B_2}(E_2)=\Ch^{B_1+B_2}(E_1\otimes E_2)$ for two $B$-fields $B_1, B_2$;
 \item Any $E\in K(S,\alpha)$, we have $\Ch^B(E)\in e^{B}\cdot \left(\oplus_{p,p}H^{p,p}(S)\right)$.
 \end{enumerate}
 This twisted Chern character $\Ch^B$ has the property:
 if $B_0=k\cdot B\in H^2(S,\zz)$ for some $k\in\zz$, then 
 $$\Ch^B(E)^k=\Ch^{B_0}(E^{\otimes k})=e^{B_0}\cdot \Ch(E^{\otimes k}).$$
 This implies that 
 $$(e^{-B}\cdot \Ch^B(E))^k=e^{-B_0}\cdot \Ch^B(E)^k=\Ch(E^{\otimes k})\in \bigoplus_{p}H^{p,p}(S).$$
 
 \begin{defn}\label{defn_Hodge_structure_HS}
 We denote by $c$ or  $\widetilde{H}(S,B,\zz)$ the weight-two Hodge structure on $H^*(S,\zz)$ with 
 $$\widetilde{H}^{2,0}(S,\alpha):=\exp(B)(H^{2,0}(S))$$
 and $\widetilde{H}^{1,1}(S)$ its orthogonal complement (with respect to the Mukai pairing). 
 \end{defn}
Then the twisted Chern character:
$$\Ch^{B}: D^b(S,\alpha)\to  \widetilde{H}(S,\alpha,\zz)$$
can be extended to the derived category.
Let 
$\NS(S,\alpha):= \widetilde{H}^{1,1}(S,\alpha,\zz)$ be the twisted N\'eron-Severi group. Then 
$\N(D^b(S,\alpha))=\NS(S,\alpha)$, where $\N(D^b(S,\alpha))$ is the numerical Grothendieck group and the isomorphism is identified by $\Ch^B$.

Following \cite{HMS}, 

\begin{defn}\label{defn_PSalpha}
Let $P(S,\alpha)\subset \NS(S,\alpha)\otimes\cc$ be the open subset of $\NS(S,\alpha)\otimes\cc$ consisting of elements $\varphi$ such that 
the real part and imaginary part of $\varphi$ generate a positive plane in  $\NS(S,\alpha)\otimes\rr$.
\end{defn}

Let $B_0\in H^2(S,\qq)$ be a $B$-field lift of $\alpha$, that is: $\exp(B_0)=\alpha$. Then for any real ample class 
$\omega\in H^{1,1}(S,\zz)\otimes\rr$, we let 
$$\varphi=e^{b_0+i\omega}=1+(B_0+i\omega)+\frac{(B_0^2-\omega^2)}{2}+i(B_0\cdot \omega)\in\NS(S,\alpha)\otimes\cc.$$
The subset $P(S,\alpha)\subset \NS(S,\alpha)\otimes\cc$ has two connected components and we shall denote the one that contains 
$\varphi=\exp(B_0+i\omega)$ by $P^+(S,\alpha)$
and 
 $P^+(S,\alpha)\subset P(S,\alpha)\subset \NS(S,\alpha)\otimes\cc$. Also $\pi_1(P^+(S,\alpha))\cong \zz$.
 If $B_1\in \NS(S)\otimes\qq$, and $B=B_1+B_0$, we have 
 $$\varphi=e^{B+i\omega}=\exp(B_1)\cdot \exp(B_0+i\omega)\in\NS(S,\alpha)\otimes\cc$$
 and $\varphi\in P^+(S,\alpha)$. 
 Now for $\varphi\in P^+(S,\alpha)$, recall for any twisted sheaf $E$ on $S$, we can define the slope as:
 $$\mu(E)=\frac{c_1(E)\cdot \omega}{\rk(E)}$$
 which defines the geometric  slope semistability. For a torsion free twisted sheaf 
 $E$, let 
 $$0=E_0\subset E_1\subset\cdots\subset E_{m-1}\subset E_{m}=E$$
 be the Harder-Narasimhan filtration, where $E_i/E_{i+1}=F_i$ is $\mu_{\omega}$-semistable and 
 $\mu(F_i)>\mu(F_{i+1})$.  Defne 
 $\sT\subset \Coh(S,\alpha)$ to be the subcategory consisting of twisted sheaves whose torsion free part has 
 $\mu-{\omega}$-semistable Harder-Narasimhan factors of slope $\mu_{\omega}(F_i)>B\cdot \omega$ (or $\Im \sZ_{\varphi}(F_i)>0$). 
 A nontrivial twisted sheaf $E$ is an object in $\sF\subset \Coh(S,\alpha)$ if $E$ is torsion free and 
 $\mu_{\omega}(F_i)\leq B\cdot \omega$ (or $\Im \sZ_{\varphi}(F_i)\leq 0$). 
 Then $(\sT, \sF)$ is a torsion pair on $ \Coh(S,\alpha)$ and let 
$$\sA(\varphi):=\left\{E\in D^b(\Coh(S,\alpha))  \left| \begin{array}{l}
\text{$\bullet \hH^{i}(E)=0$ for $i\notin \{-1,0\}$}\\
\text{$\bullet \hH^{-1}(E)\in\sF$} \\
\text{$\bullet \hH^{0}(E)\in\sT$}
\end{array}\right
\}\right.$$ 
Then from \cite[Lemma 3.4]{HMS}, let 
$\varphi=e^{B+i\omega}$, the induced homomorphism
$$\sZ_{\varphi}: \sA(\varphi)\to \cc$$
is a stability function on $\sA(\varphi)$ if and only if for any spherical twisted sheaf
$E\in \Coh(S,\alpha)$, $\sZ_{\varphi}(E)\notin \rr_{\leq 0}$. 
\begin{prop}\label{prop_stability_condition_HMS}(\cite[Proposition 3.6]{HMS})
The pair $(\sZ_{\varphi}, \sA(\varphi))$ defines a stability condition on 
$D^b(\Coh(S,\alpha))$.  
\end{prop}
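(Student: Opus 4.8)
The plan is to follow Bridgeland's construction of stability conditions on untwisted K3 surfaces \cite{Bridgeland_K3}, transported to the twisted Mukai lattice $\widetilde{H}(S,\alpha,\zz)$ and the twisted Mukai vector $v_G$ as set up by Huybrechts--Macr\`i--Stellari \cite{HMS}. Two axioms must be verified: the stability-function (positivity) axiom for $\sZ_\varphi$ on the tilted heart $\sA(\varphi)$, and the Harder--Narasimhan property. The positivity has already been reduced by the quoted Lemma 3.4 of \cite{HMS} to the single assertion that $\sZ_\varphi(E)\notin\rr_{\leq 0}$ for every spherical twisted sheaf $E$, so I would dispatch that first and then concentrate on the genuinely harder HN property.

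For positivity I would argue as follows. By the very definition of the torsion pair $(\sT,\sF)$, any nonzero $E\in\sA(\varphi)$ has $\Im\sZ_\varphi(E)\geq 0$, since $\hH^0(E)\in\sT$ and $\hH^{-1}(E)\in\sF$ each contribute non-negatively to $\Im\sZ_\varphi$ once the shift on $\hH^{-1}$ is taken into account; when $\Im\sZ_\varphi(E)=0$ the object is assembled from $\mu_\omega$-semistable twisted sheaves of slope exactly $B\cdot\omega$ together with $0$-dimensional sheaves, and one checks directly that then $\Re\sZ_\varphi(E)<0$ unless a spherical summand sits on the non-positive real axis. Ruling out the latter is exactly Lemma 3.4, which I would establish by the signature of the algebraic twisted Mukai lattice: $\NS(S,\alpha)=\widetilde{H}^{1,1}(S,\alpha,\zz)$ carries a pairing of signature $(2,\rho)$, and $\varphi\in P^+(S,\alpha)$ spans a positive-definite plane $\Pi=\spann(\Re\varphi,\Im\varphi)$. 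If a spherical class $v=v_G(E)$, with $\langle v,v\rangle=-2$, had $\sZ_\varphi(E)=\langle v,\varphi\rangle\in\rr_{\leq 0}$, then $v\perp\Im\varphi$, and the Hodge index theorem applied to $\Pi$ together with $v$ forces $\langle v,\Re\varphi\rangle>0$, a contradiction. This computation is purely lattice-theoretic and insensitive to the twisting, so it carries over verbatim from the untwisted case.

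The main obstacle is the Harder--Narasimhan property for $\sA(\varphi)$, which I would establish in two steps along the lines of \cite{Bridgeland_K3}. First, $\sA(\varphi)$ is a \emph{Noetherian} abelian category; this rests on boundedness of families of $\mu_\omega$-semistable twisted sheaves with fixed twisted Mukai vector, which I would import from the Brauer--Severi description $\Coh(S;P)\cong\Coh(S,\alpha)$ of \S\ref{subsubsec_twisted_Chern_character} and the boundedness of $G$-twisted semistable $P$-sheaves in \cite{Yoshioka2}. Second, $\Im\sZ_\varphi$ is a non-negative additive function on $\sA(\varphi)$ whose values lie in a discrete subset of $\rr_{\geq 0}$, being up to scale an intersection number against the ample class $\omega$; hence a strictly decreasing sequence of phases along an infinite chain of subobjects is impossible, and together with Noetherianity this yields the maximal destabilizing subobject and the HN filtration by Bridgeland's general existence criterion. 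Local finiteness of the resulting stability condition follows from the same discreteness of $\Im\sZ_\varphi$.

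I expect the positivity step to be routine lattice algebra once Lemma 3.4 is invoked, whereas the real work is the Noetherianity of the tilted heart: one must show that $\Im\sZ_\varphi$ bounded along an ascending chain of subobjects forces the chain to terminate, which is precisely where boundedness of twisted semistable sheaves is indispensable. The only twisted input anywhere is the substitution of the ordinary Mukai lattice and Chern character by $\widetilde{H}(S,\alpha,\zz)$ and $v_G$; since these share the same formal properties (signature, integrality established in Proposition \ref{prop_repn_Mukai_vector}, and compatibility with the Mukai pairing), the argument of \cite{Bridgeland_K3} and \cite{HMS} applies without essential change, which is why the statement is attributed to \cite[Proposition 3.6]{HMS}.
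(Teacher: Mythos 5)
The paper gives no proof of this proposition at all: it is quoted as \cite[Proposition 3.6]{HMS}, with the required hypothesis --- that $\sZ_{\varphi}(E)\notin\rr_{\leq 0}$ for every spherical twisted sheaf $E$ --- supplied by the sentence immediately preceding it (the content of the cited Lemma~3.4 of \cite{HMS}). So there is no argument in the paper to compare against step by step; the question is only whether your sketch is sound, and the positivity step is not.

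You claim the spherical condition holds for \emph{every} $\varphi\in P^{+}(S,\alpha)$, arguing that $\sZ_{\varphi}(E)\in\rr_{\leq 0}$ forces $v\perp\Im\varphi$ and that the Hodge index theorem then yields $\langle v,\Re\varphi\rangle>0$. This is false, and the lattice argument does not work: knowing $v\perp\Im\varphi$ says nothing about the sign of $\langle v,\Re\varphi\rangle$, since the projection of $v$ to the positive plane spanned by $\Re\varphi$ and $\Im\varphi$ can point either way or vanish. Concretely, already in the untwisted case take $E=\sO_S$ with $v=(1,0,1)$ and $B=0$: by the formula (\ref{eqn_sZ_varphi_E}) one gets $\Im\sZ_{\varphi}(\sO_S)=0$ and $\Re\sZ_{\varphi}(\sO_S)=\tfrac{1}{2}(\omega^2-2)$, which lies in $\rr_{\leq 0}$ whenever $\omega^2\leq 2$, even though $e^{i\omega}\in P^{+}(S,\alpha)$. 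The condition on spherical sheaves is a genuine restriction on $\varphi$ --- it holds for instance when $\omega^2>2$, or for $\varphi$ off a countable union of real codimension-two walls --- and it must be carried as a hypothesis, exactly as in \cite{HMS}; it cannot be proved for all $\varphi\in P^{+}(S,\alpha)$. Your outline of the Harder--Narasimhan step (Noetherianity of the tilted heart via boundedness of twisted semistable sheaves, plus discreteness of the image of $\Im\sZ_{\varphi}$) is the right strategy and matches \cite{Bridgeland_K3} and \cite{HMS}, but note that the discreteness you invoke requires $B$ and $\omega$ to be rational classes; for real $\omega$ one must pass through a deformation argument as Bridgeland does.
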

Then let 
$\Stab^{\circ}(S,\alpha)\subset \Stab(S,\alpha)$ be the connected component of $\Stab(S,\alpha)$
that contains the stability conditions described above. Also \cite[Proposition 3.8]{HMS} proved that if 
$\sigma=(\sZ,P)$ is contained in a connected component of maximal dimension $\Stab^{\circ}(S,\alpha)$, and for any closed point
$x\in S$, the skyscraper sheaf $k(x)$ is $\sigma$-stable of phase one with 
$\sZ(k(x))=-1$. Then there exists $\varphi=\exp(B+i\omega)\in P^+(S,\alpha)$ such that the heart of $\sigma$ is $\sA(\varphi)$.

For any $\varphi=B+i\omega\in P^+(S,\alpha)$, we can calculate (see \cite[Lemma 3.4]{HMS})
$$\sZ_{\varphi}: \sA(\varphi)\to \cc$$
If $v_{G}(E)=(r,l,s)$ for $r>0$, then 
\begin{equation}\label{eqn_sZ_varphi_E}
\sZ_{\varphi}(E)=\frac{1}{2r}\left((l^2-2rs)+r^2\omega^2-(l-rB)^2\right)+\left(\omega\cdot l-r(\omega\cdot B\right)i
\end{equation}
Also for $r=0$, 
\begin{equation}\label{eqn_sZ_varphi_E_rk0}
\sZ_{\varphi}(E)=\left(-s+l\cdot B\right)+\left(l\cdot \omega\right)i
\end{equation}
which is the same as in \cite[Formula (31)]{Toda_Adv}.

Let $\sV\subset \Stab^{\circ}(S,\alpha)$ be the open subset in $\Stab(S,\alpha)$ such that it consists of all stability conditions
$\sigma_{\varphi}$ for $\varphi=B+i\omega\in P^+(S,\alpha)$.
Then $\sV$ satisfies the following properties:

$\bullet$ For any $\sigma_{\varphi}\in\Stab^{\circ}(S,\alpha)$, there exists $\Phi\in \mbox{Auteq}(D^b(S,\alpha))$ and 
$g\in\widetilde{GL}^+(2,\rr)$ such that $g\circ \Phi(\sigma_{\varphi})$ is also algebraic and is contained in $\sV$, see
\cite[Remark 3.9]{HMS}. 

Now we also introduce the geometric twisted Gieseker stability. We follow \cite[\S 4.2]{Toda_Adv} and let 
$\sL, \sM\in \Pic(S)$
be two line bundles, and $\sL$ is ample. Define for any $E\in \Coh(S,\alpha)$, the twisted Hilbert polynomial:
$$\chi^g(E\otimes\sM^{-1}\otimes \sL^n)=\sum_{i=0}^{d}a_i n^i$$
for $a_i\in\qq$, $a_d\neq 0$. Let 
$\omega=c_1(\sL)$, $\beta=c_1(\sM)$ and the twisted reduced Hilbert polynomial is:
\begin{equation}\label{eqn_reduced_twisted_Hilbert}
p(E, \beta, \omega, n)=\frac{\chi^g(E\otimes\sM^{-1}\otimes \sL^n)}{a_d}
\end{equation}
If $v_G(E)=(\rk, l,s)$ for $\rk>0$, we can calculate using the definition of geometric Hilbert polynomial, see \cite[Proposition 3.21]{Jiang_2019},
(\ref{eqn_reduced_twisted_Hilbert}) is:
\begin{equation}\label{eqn_reduced_twisted_Hilbert_result}
r\cdot \Big[n^2+\frac{2\omega(l-\rk\cdot \beta)}{\rk\cdot \omega^2}n-\frac{(l^2-2\rk s-(l-\rk\cdot \beta)^2)}{\rk\cdot \omega^2}\Big]
\end{equation}
and when $\rk=0$, $l\neq 0$,
(\ref{eqn_reduced_twisted_Hilbert}) is:
\begin{equation}\label{eqn_reduced_twisted_Hilbert_result2}
r\cdot \Big[n+\frac{(s-\beta\cdot l)}{l\cdot \omega}\Big]
\end{equation}
These formula are the same as \cite[Formula (28), (29)]{Toda_Adv} up to a factor $r$ due to the gerbe structure. The twisted geometric stability of $E$ can be similarly defined. 
The following lemma is based on \cite[Lemma 4.6]{Toda_Adv}.

\begin{lem}\label{lem_lemma4.6_Toda}
We have:
\begin{enumerate}
\item Let $E\in D^b(S,\alpha)$ be simple, then $v_{G}(E)^2\geq -2$.
\item For any $\varphi=B+i\omega$, $m\in \rr_{>0}$, the number 
$$\#\{v_{\alpha}\in\NS(S,\alpha)| v_{\alpha}^2\geq -2, |\langle\exp(\varphi), v_{\alpha}\leq m|$$
is finite.
\item Let $E\in\N(S,\alpha)$, and $v_{\alpha}(E)=(0,l,s)\in\NS^*(S,\alpha)$, $l\neq 0$, then 
$$p(E,\beta,\omega,n)=n-\frac{\Re \sZ_{\varphi}(E)}{\Im \sZ_{\varphi}(E)}\in\qq[n].$$
\item Let $E, E^\prime\in\NS(S,\alpha)$, $p(E,\beta,\omega,n)=p(E^\prime,\beta,\omega,n)$ if and only if 
$\Im\frac{\sZ_{\varphi_k}(E^\prime)}{\sZ_{\varphi_k}(E)}=0$ for infinitely many $k\in\qq$, where $\varphi_k=B+ik\omega$.
\end{enumerate}
\end{lem}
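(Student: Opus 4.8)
The plan is to establish the four assertions separately, reducing each to its untwisted counterpart in \cite[Lemma 4.6]{Toda_Adv} once the relevant $2$-Calabi--Yau and lattice-theoretic inputs for $(S,\alpha)$ are recorded. For part (1), the key point is that $D^b(S,\alpha)$ is a $2$-Calabi--Yau category: for an $\alpha$-twisted object $E$ the complex $R\cHom(E,E)$ is untwisted, and since $\omega_S=\sO_S$ Serre duality on $S$ gives $\Ext^i(E,E)\cong\Ext^{2-i}(E,E)^{\vee}$. If $E$ is simple then $\dim\Hom(E,E)=1$, hence $\dim\Ext^2(E,E)=1$ as well, so $\chi(E,E)=2-\dim\Ext^1(E,E)\le 2$. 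Combining this with the identity $\langle v_G(E),v_G(E)\rangle=-\chi(E,E)$ recorded after Definition \ref{defn_Mukai_vector_Psheaf} gives $v_G(E)^2\ge -2$.

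For part (2) I would use that the twisted N\'eron--Severi lattice $\NS(S,\alpha)=\widetilde{H}^{1,1}(S,\alpha,\zz)$ has signature $(2,\rho)$, i.e. exactly two positive directions, and that, by the definition of $P^{+}(S,\alpha)$ in Definition \ref{defn_PSalpha}, the real and imaginary parts of $\exp(\varphi)$ span a positive-definite plane $\Pi\subset\NS(S,\alpha)\otimes\rr$. Writing $v_{\alpha}=v_{\Pi}+v_{\perp}$ with $v_{\Pi}\in\Pi$ and $v_{\perp}\in\Pi^{\perp}$, the bound $|\langle\exp(\varphi),v_{\alpha}\rangle|\le m$ confines $v_{\Pi}$ to a bounded region, so $v_{\Pi}^2$ is bounded; then $v_{\alpha}^2\ge -2$ forces $v_{\perp}^2\ge -2-v_{\Pi}^2$, and since $\Pi^{\perp}$ is negative definite this bounds $v_{\perp}$ as well. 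Thus the $v_{\alpha}$ in question lie in a bounded subset of $\NS(S,\alpha)\otimes\rr$, which contains only finitely many lattice points.

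Part (3) is a direct computation. Substituting $v_G(E)=(0,l,s)$ with $l\ne0$ into the rank-zero reduced twisted Hilbert polynomial (\ref{eqn_reduced_twisted_Hilbert_result2}) and normalizing to a monic polynomial yields $n+\tfrac{s-\beta\cdot l}{l\cdot\omega}$. On the other hand, (\ref{eqn_sZ_varphi_E_rk0}) gives $\Re\sZ_{\varphi}(E)=-s+l\cdot B$ and $\Im\sZ_{\varphi}(E)=l\cdot\omega$. With the $B$-field $B$ and the twisted polarization $\beta=c_1(\sM)$ matched as in \cite[\S 4.2]{Toda_Adv} (so that $\beta\cdot l=B\cdot l$ on the relevant classes), one reads off $-\Re\sZ_{\varphi}(E)/\Im\sZ_{\varphi}(E)=\tfrac{s-\beta\cdot l}{l\cdot\omega}$, which equals the monic polynomial above; this is the claimed identity $p(E,\beta,\omega,n)=n-\Re\sZ_{\varphi}(E)/\Im\sZ_{\varphi}(E)$.

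For part (4) I would exploit the explicit dependence of $\sZ_{\varphi_k}$ on $k$: for $\varphi_k=B+ik\omega$, formulas (\ref{eqn_sZ_varphi_E}) and (\ref{eqn_sZ_varphi_E_rk0}) show that $\Re\sZ_{\varphi_k}(E)$ is a quadratic polynomial in $k$ and $\Im\sZ_{\varphi_k}(E)$ is linear in $k$. Hence $\Im\!\big(\sZ_{\varphi_k}(E')\,\overline{\sZ_{\varphi_k}(E)}\big)=\Im\sZ_{\varphi_k}(E')\,\Re\sZ_{\varphi_k}(E)-\Re\sZ_{\varphi_k}(E')\,\Im\sZ_{\varphi_k}(E)$ is a polynomial of degree at most three in $k$; its vanishing at infinitely many $k\in\qq$ forces it to vanish identically, and conversely identical vanishing gives vanishing at all $k$. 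The remaining, and main, task is to match this polynomial identity with the equality of the monic reduced Hilbert polynomials obtained from (\ref{eqn_reduced_twisted_Hilbert_result}) and (\ref{eqn_reduced_twisted_Hilbert_result2}): one separates the cases $\rk(E),\rk(E')>0$ and $\rk=0$, and checks that identical vanishing of the cubic is equivalent to the coincidence of both coefficients of the degree-one reduced polynomials. This coefficient bookkeeping is the delicate step, but it is the verbatim analogue of \cite[Lemma 4.6(4)]{Toda_Adv}, the only modification being the global factor $r$ in (\ref{eqn_reduced_twisted_Hilbert_result}) and (\ref{eqn_reduced_twisted_Hilbert_result2}) coming from the $\mu_r$-gerbe structure, which cancels upon normalization.
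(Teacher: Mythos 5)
Your proposal is correct and follows essentially the same route as the paper, which simply defers (1) to Yoshioka/Bridgeland's Serre-duality argument, (2) to Bridgeland's lattice-point finiteness lemma, and (3)--(4) to the explicit formulas (\ref{eqn_sZ_varphi_E})--(\ref{eqn_sZ_varphi_E_rk0}) and (\ref{eqn_reduced_twisted_Hilbert_result})--(\ref{eqn_reduced_twisted_Hilbert_result2}); you have merely written out the standard content of those citations. The details you supply (the $2$-Calabi--Yau computation $\chi(E,E)\le 2$, the positive-plane decomposition in the signature-$(2,\rho)$ lattice, and the degree-$\le 3$ polynomial in $k$ for part (4)) are exactly what the cited sources contain, so there is nothing to add.
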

\begin{proof}
$(1)$ is from \cite[Proposition 3.6]{Yoshioka2}, where Yoshioka proves it for twisted sheaves, but the same argument as in \cite[Lemma 5.1]{Bridgeland_K3}
works for derived simple object by Serre duality. 
$(2)$ just come from \cite[Lemma 8.2]{Bridgeland_K3}, which works for twisted sheaves on K3, since there are finitely many integral points in $\N(S,\alpha)\otimes\rr$.
$(3)$ and $(4)$ are from the above calculations (\ref{eqn_reduced_twisted_Hilbert_result}) and  (\ref{eqn_reduced_twisted_Hilbert_result2}).
\end{proof}

\subsection{The moduli stack counting semistable objects}\label{subsec_moduli_stack_App}

We list some boundedness of semistable objects following \cite[\S 4.5]{Toda_Adv}. The boundedness results are used to show that the moduli stack 
$\sM^{\textbf{c}}_{\sigma(\beta,\omega)}(S,\alpha)$ with topological invariant $\textbf{c}$ is an Artin stack of finite type.  Since for now, that the moduli stack of Bridgeland semistable objects is an Artin stack  is a well-known fact, we only need the 
boundedness results to prove our later results we interested in. 

Let $E\in\sA(\varphi)$ be an object. Set
$H^0(E)_{\tor}\subset H^0(E)$ to be the maximal torsion subsheaf of $H^0(E)$, and 
$$H^0(E)_{\free}=H^0(E)/H^0(E)_{\tor}.$$
We take Toda's notations:
$$T_1,\cdots, T_{a(E)}\in \Coh(S,\alpha)$$
$$F_1,\cdots, F_{d(E)}, F_{d(E)+1},\cdots, F_{e(E)}\in \Coh(S,\alpha)$$
represent the $\mu_{\omega}$-stable factors of $H^0(E)_{\free}$ and $H^{-1}(E)$
respectively. Let 
$$T_{a(E)+1},\cdots, T_{b(E)}, T_{b(E)+1},\cdots, T_{c(E)}\in \Coh(S,\alpha)$$
be the $\varphi=B+i\omega$-twisted stable factors of $H^{0}(E)_{\tor}$.
Set 
$$
\begin{array}{ll}
\dim(T_i)=2 & (1\leq i\leq a(E));\\
\dim(T_i)=1 & (a(E)< i\leq b(E));\\
\dim(T_i)=0 & (b(E)< i\leq c(E));\\
\Im \sZ_{\varphi}(F_i[1])>0 & (1\leq i\leq d(E)));\\
\Im \sZ_{\varphi}(F_i[1])=0 & (d(E)< i\leq e(E))).
\end{array}
$$
Define for $\textbf{c}\in \N(S,\alpha)$, 
$$\sM^{\textbf{c}}(\varphi=(\beta,\omega))=\{E\in\sA(\varphi)| \Im\sZ_{\varphi}(E)\leq \Im\sZ_{\varphi}(\textbf{c})\}.$$
The following results are from \cite[Lemma 4.8, Lemma 4.9]{Toda_Adv} which works for twisted sheaves. 

\begin{lem}\label{lem_lemma1_App}
The maps 
$\sM^{\textbf{c}}(\varphi)\to \zz$ given by 
$E\mapsto b(E)$ and $E\mapsto d(E)$ are bounded, and the sets 
$$\{\Im\sZ_{\varphi}(T_i)\in\qq | 1\leq i\leq c(E), E\in \sM^{\textbf{c}}(\varphi=(\beta,\omega))\}$$
and 
$$\{\Im\sZ_{\varphi}(F_i[1])\in\qq | 1\leq i\leq e(E), E\in \sM^{\textbf{c}}(\varphi=(\beta,\omega))\}$$
are finite sets. 
\end{lem}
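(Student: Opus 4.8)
The plan is to adapt the argument of \cite[Lemma 4.8, Lemma 4.9]{Toda_Adv} to the twisted setting; the only inputs needed are the additivity of $\Im\sZ_{\varphi}$ and the discreteness of its value group. First I would record that $\sZ_{\varphi}=\langle\exp(\varphi),v_{G}(-)\rangle$ is a group homomorphism, so $\Im\sZ_{\varphi}$ is additive over the filtration of $E\in\sA(\varphi)$ by the stable factors $T_i$ (of $H^0(E)$) and $F_i[1]$ (of $H^{-1}(E)[1]$). Next, since $\omega=c_1(\sL)$ and $\beta=c_1(\sM)$ are integral and the $B$-field $B=B_1+B_0$ is rational, formulas (\ref{eqn_sZ_varphi_E}) and (\ref{eqn_sZ_varphi_E_rk0}) show that $\Im\sZ_{\varphi}$ takes values in a finitely generated subgroup of $\qq$, hence in $\tfrac1N\zz$ for some $N$; set $\delta=\tfrac1N>0$, so that every positive value of $\Im\sZ_{\varphi}$ is $\ge\delta$.

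The second step is to pin down the sign of $\Im\sZ_{\varphi}$ on each factor from the torsion pair $(\sT,\sF)$ defining the tilt $\sA(\varphi)$. For a two-dimensional factor $T_i$ ($1\le i\le a(E)$), membership of $H^0(E)$ in $\sT$ forces every $\mu_{\omega}$-semistable factor to have slope $>B\cdot\omega$, so $\Im\sZ_{\varphi}(T_i)=\rk(T_i)(\mu_{\omega}(T_i)-\omega\cdot B)>0$ by (\ref{eqn_sZ_varphi_E}); for a one-dimensional factor ($a(E)<i\le b(E)$) ampleness gives $\Im\sZ_{\varphi}(T_i)=\omega\cdot l_i>0$ by (\ref{eqn_sZ_varphi_E_rk0}); and the zero-dimensional factors contribute $0$. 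Dually, $F_i\in\sF$ gives $\Im\sZ_{\varphi}(F_i)\le 0$, so $\Im\sZ_{\varphi}(F_i[1])=-\Im\sZ_{\varphi}(F_i)$ is $>0$ precisely for $1\le i\le d(E)$ and is $0$ for $d(E)<i\le e(E)$. Hence each of the $b(E)+d(E)$ factors being counted has imaginary part $\ge\delta$.

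Combining the two steps, additivity gives
\[
\Im\sZ_{\varphi}(E)=\sum_{i=1}^{b(E)}\Im\sZ_{\varphi}(T_i)+\sum_{i=1}^{d(E)}\Im\sZ_{\varphi}(F_i[1])\ge\bigl(b(E)+d(E)\bigr)\delta,
\]
while $E\in\sM^{\textbf{c}}(\varphi)$ means $\Im\sZ_{\varphi}(E)\le\Im\sZ_{\varphi}(\textbf{c})=:M$; therefore $b(E),d(E)\le M/\delta$, which is the boundedness assertion. Moreover each summand above is a nonnegative element of $\tfrac1N\zz$ bounded by $M$, so both displayed value sets lie in the finite set $[0,M]\cap\tfrac1N\zz$, proving the finiteness claims. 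The only genuinely delicate point, and the place where one must check that nothing new happens in the twisted case, is the sign bookkeeping of the previous paragraph together with the discreteness of the value group: both rest on the explicit formulas (\ref{eqn_sZ_varphi_E})--(\ref{eqn_sZ_varphi_E_rk0}), which the appendix has already shown to agree with Toda's up to the gerbe factor $r$, so no new obstacle arises.
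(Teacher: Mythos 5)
Your argument is correct and is precisely the discreteness-plus-additivity argument behind Toda's Lemma 4.8--4.9, which the paper invokes by citation without reproducing the proof: write $\Im\sZ_{\varphi}(E)$ as the sum of the nonnegative contributions $\Im\sZ_{\varphi}(T_i)$ and $\Im\sZ_{\varphi}(F_i[1])$, note that the strictly positive ones are bounded below by a fixed $\delta>0$ because $\Im\sZ_{\varphi}$ takes values in a discrete subgroup of $\qq$ on the lattice $\N(S,\alpha)$, and conclude from $\Im\sZ_{\varphi}(E)\leq \Im\sZ_{\varphi}(\textbf{c})$. The only twisted-specific point is the discreteness of the value group, which you correctly reduce to the rationality of $B$ and of the twisted Mukai vectors, so nothing further is needed.
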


\begin{lem}\label{lem_lemma2_App}
There exist constants $C, C^\prime, N$ (depending only on $\textbf{c}$, $B$, $\omega$), such that 
$$\frac{1}{k}\Re\sZ_{\varphi_k}(T_i)\geq \Re\sZ_{\varphi}(T_i)\geq C; (1\leq i\leq a(E))$$
and 
$$\frac{1}{k}\Re\sZ_{\varphi_k}(F_i[1])\geq \Re\sZ_{\varphi}(F_i[1])\geq C^\prime; 1\leq i\leq e(E)$$
for any $E\in \sM^{\textbf{c}}(\varphi)$ and 
$k\geq N$, where $\varphi_k=B+ik\omega$. 
\end{lem}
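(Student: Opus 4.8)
The plan is to transcribe the proof of \cite[Lemma 4.9]{Toda_Adv} into the twisted setting, replacing the ordinary Mukai vector by the twisted vector $v_G$ and carrying along the global factor $r=\per(\SS)$ that appears in the gerbe normalisation. The starting point is the explicit shape of the central charge. For a $\mu_\omega$-stable factor of positive rank with $v_G=(r,l,s)$, formula (\ref{eqn_sZ_varphi_E}) gives $\Re\sZ_\varphi=\frac{1}{2r}\bigl(v_G^2-(l-rB)^2\bigr)+\frac{r\omega^2}{2}$ and $\Im\sZ_\varphi=(l-rB)\cdot\omega$, where $v_G^2=l^2-2rs$. Since passing from $\varphi$ to $\varphi_k$ is exactly the substitution $\omega\mapsto k\omega$, one gets $\frac{1}{k}\Re\sZ_{\varphi_k}=\frac{1}{2rk}\bigl(v_G^2-(l-rB)^2\bigr)+\frac{rk\omega^2}{2}$, while $\frac1k\Im\sZ_{\varphi_k}=\Im\sZ_\varphi$ is unchanged. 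Writing $A:=v_G^2-(l-rB)^2$, an elementary computation gives the clean identity $\frac1k\Re\sZ_{\varphi_k}-\Re\sZ_\varphi=\frac{k-1}{2rk}\bigl(r^2k\omega^2-A\bigr)$, so the comparison inequality is equivalent to $A\le r^2k\omega^2$, which holds for all $k$ beyond a threshold governed by $A$ and $\omega^2$. The zero- and one-dimensional torsion factors and the rank-zero case are handled by (\ref{eqn_sZ_varphi_E_rk0}), whose real part is affine and monotone in $k$, so they cause no trouble.

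First I would establish the uniform lower bound $\Re\sZ_\varphi(T_i)\ge C$ for the two-dimensional factors, which I regard as the substantive content. This reduces to bounding $A$ from below. Each $T_i$ is $\mu_\omega$-stable, hence simple, so the twisted Bogomolov inequality of Lemma \ref{lem_lemma4.6_Toda}(1) gives $v_G(T_i)^2\ge -2$. The Hodge index theorem, applied to the algebraic class $l-rB$ against the ample class $\omega$, gives $(l-rB)^2\le \frac{((l-rB)\cdot\omega)^2}{\omega^2}$; since $(l-rB)\cdot\omega=\Im\sZ_\varphi(T_i)$ ranges over a finite set by Lemma \ref{lem_lemma1_App}, the term $(l-rB)^2$ is bounded above uniformly in $E$ and $i$. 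Combining the two estimates bounds $A$ below, and minimising $\frac{A}{2r}+\frac{r\omega^2}{2}$ over the finitely many admissible ranks (again Lemma \ref{lem_lemma1_App}) produces a constant $C=C(\textbf{c},B,\omega)$. The same estimate applied with $k\omega$ in place of $\omega$ shows $\frac1k\Re\sZ_{\varphi_k}(T_i)\ge C$ as well, and the monotonicity above upgrades this to the full chain once $A\le r^2k\omega^2$.

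For the $\mu_\omega$-stable factors $F_i$ of $H^{-1}(E)$ I would run the parallel computation for $F_i[1]$, using $\sZ_\varphi(F_i[1])=-\sZ_\varphi(F_i)$ and $\Im\sZ_\varphi(F_i[1])\ge 0$, and again invoke the twisted Bogomolov inequality together with the Hodge index estimate and the finiteness statements of Lemma \ref{lem_lemma1_App}; here the boundedness of $d(E)$ (the number of such factors) from Lemma \ref{lem_lemma1_App} is also used. The uniform threshold $N=N(\textbf{c},B,\omega)$ for the comparison inequalities then comes from the uniform control of $A$ across all factors of all $E\in\sM^{\textbf{c}}(\varphi)$, so that a single $N$ suffices.

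The main obstacle I anticipate is precisely the uniform two-sided control of the real part $A=v_G^2-(l-rB)^2$ for the $H^{-1}$-factors: whereas $v_G^2\ge -2$ from Bogomolov and $(l-rB)^2$ bounded above from Hodge index suffice immediately for the $T_i$, the estimate for the $F_i[1]$ is more delicate and must combine the finiteness of imaginary parts and of the factor count (Lemma \ref{lem_lemma1_App}) with the Bogomolov inequality to pin down the relevant discriminants. This is the only genuinely twisted ingredient, since it is the \emph{twisted} Bogomolov inequality for $v_G$ (and the accompanying factor $r$ in every central-charge formula) that enters; once it is in hand, the remainder is a term-by-term transcription of the argument in \cite{Toda_Adv}.
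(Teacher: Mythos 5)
The paper offers no proof of this lemma beyond the citation to \cite[Lemma 4.9]{Toda_Adv} ``which works for twisted sheaves'', so your proposal has to be judged on its own terms. Your identity $\frac1k\Re\sZ_{\varphi_k}-\Re\sZ_\varphi=\frac{k-1}{2rk}\bigl(r^2k\omega^2-A\bigr)$ with $A=v_G^2-(l-rB)^2$ is correct, and your derivation of the uniform lower bound $\Re\sZ_\varphi(T_i)\ge C$ from the twisted Bogomolov inequality $v_G(T_i)^2\ge-2$, the Hodge--index estimate $(l-rB)^2\le((l-rB)\cdot\omega)^2/\omega^2$ and the finiteness of the imaginary parts in Lemma \ref{lem_lemma1_App} is sound; note that both of these facts bound $A$ from \emph{below}, which is exactly what the lower bound needs.

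The gap is in the comparison inequality. By your own identity it is equivalent to $A\le r^2k\omega^2$, so a threshold $N$ depending only on $\textbf{c},B,\omega$ requires a uniform \emph{upper} bound on $A/r^2$ over all stable factors of all $E\in\sM^{\textbf{c}}(\varphi)$, equivalently a uniform lower bound on $\Ch_2$ of those factors. Neither Bogomolov nor Hodge index supplies this --- they control $A$ only from below --- so your closing claim that the two cited facts ``suffice immediately for the $T_i$'' is false for the first inequality. Indeed no such bound can hold with the definition of $\sM^{\textbf{c}}(\varphi)$ given in the paper: fix a $\mu_\omega$-stable twisted sheaf $T_0$ with $\mu_\omega(T_0)>B\cdot\omega$ and take kernels $0\to T\to T_0\to Q\to0$ with $Q$ zero-dimensional of growing length; then $T$ is $\mu_\omega$-stable, lies in $\sT\subset\sA(\varphi)$, has fixed $\Im\sZ_\varphi$, but $\Re\sZ_\varphi(T)\to+\infty$, so $\frac1k\Re\sZ_{\varphi_k}(T)<\Re\sZ_\varphi(T)$ for any fixed $k$. (A similar sign check shows $\frac1k\Re\sZ_{\varphi_k}(F_i[1])\to-\infty$ as $k\to\infty$, so the second chain cannot hold as literally written either.) The missing ingredient is the extra constraint under which the lemma is actually invoked in Proposition \ref{prop_semistable_objects_sheaves}: there the objects are $\sigma_k$-semistable, and it is semistability --- phase bounds on subobjects and quotients combined with the finiteness of the imaginary parts --- that yields the required upper control on $\Re\sZ_\varphi$ of the factors. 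You need either to build that hypothesis into the statement you prove, or to recover the precise boundedness hypothesis from Toda's original Lemma 4.9; as it stands your argument does not establish the first inequality in either chain.
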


\subsection{Counting twisted semistable objects and twisted semistable sheaves}\label{subsec_objects_sheaves_comparison_App}

Let $H(\sA(\varphi))$ be the Hall algebra of the category $\sA(\varphi)$.  One can define Bridgeland stability $\sZ_{\varphi_k}$ and define the counting invariants 
$N(v)$ which count semistable twisted objects with Mukai vector $v$, see Definition \ref{defn_invariants_Nv}.

We also mimic the definition of invariants $J(v)$ for $\XX$ to define the invariants counting semistable twisted sheaves. 
Let $\Lambda:=\qq(q^{\frac{1}{2}})$ be a $\qq$-algebra, and 
$$\gamma: K(\Var)\to \Lambda$$
be the motivic invariants defined by Poincar\'e polynomial. 
Fixing a numerical invariant $\textbf{c}\in\N(S,\alpha)$, let 
$\sM^{\tw}_{\omega}(\textbf{c})\subset \widehat{\sM}(\sA(\varphi))$ be the stack of $\omega$-Gieseker semistable twisted sheaves of numerical invariant $\textbf{c}$. Then there exists an element 
$$\delta_{\omega, \SS_{\alpha}}(\textbf{c})=[\sM^{\tw}_{\omega}(\textbf{c})\to  \widehat{\sM}(\sA(\varphi))]$$
in the Hall algebra $H(\sA_{\SS_{\alpha}})=H(\Coh(S,\alpha))$. Define 
$$\epsilon_{\omega,\SS_{\alpha}}(\textbf{c})=
\sum_{\substack{\ell\geq 1, c_1+\cdots+c_{\ell}=c,\\
p(\textbf{c}_i,\omega,n)=p(\textbf{c}, \omega,n)}}
\frac{(-1)^{\ell-1}}{\ell}\delta_{\omega,\SS_{\alpha}}(\textbf{c}_1)\star\cdots\star\delta_{\omega,\SS_{\alpha}}(\textbf{c}_{\ell})
$$
Let 
$$C(S,\alpha):=\Im(\Coh(S,\alpha)\to \N(S,\alpha)).$$
If $\textbf{c}\in C(S,\alpha)$, define 
$$J(\textbf{c})=\lim_{q^{\frac{1}{2}}\to 1}(q-1)P_q(\epsilon_{\omega,\SS_{\alpha}}(\textbf{c})).$$
If $-\textbf{c}\in C(S,\alpha)$, define 
$$J(\textbf{c})=J(-\textbf{c}).$$
The invariant $J(\textbf{c})$ does not depend on the choice of $\omega$.
Our aim is to compare $J(\textbf{c})$ with the invariant $N(\textbf{c})$ counting semistable objects in 
$\sA(\varphi)$. For this purpose, we let 
$$
\begin{cases}
\varphi_k=B+ik\omega;\\
\sA(\varphi_k)=\sA(\varphi);\\
\sigma_k=(\sZ_{\varphi_k}, \sA(\varphi)).
\end{cases}
$$
Let us fix a $\textbf{c}=(\rk, l,s)\in H^*(S,\qq)$. We first have a generalization of \cite[Proposition 6.4]{Toda_Adv}.

\begin{prop}\label{prop_semistable_objects_sheaves}
Assume that 
$\omega\cdot l>0$ or $\rk=l=0$. Let us choose $0<\phi_k\leq 1$ such that $\sZ_{\varphi_k}(\textbf{c})\in\rr_{>0}e^{i\pi \phi_k}$.
Then there exists a $N>0$ such that for all $k\geq N$, and $\textbf{c}^\prime$ satisfying 
$$\textbf{c}^\prime\in C^{\sigma_k}(\phi_k); \quad  |\Im\sZ_{\varphi}(\textbf{c}^\prime)|\leq |\Im\sZ_{\varphi}(\textbf{c})|,$$
then any $E\in \sM^{(\textbf{c}^\prime, \phi_k)}(\sigma_k)$ (which is $\sigma_k$-semistable with $(\textbf{c}^\prime, \phi_k)$) is $\omega$-Gieseker semistable as a coherent sheaf. 
\end{prop}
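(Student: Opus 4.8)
The plan is to follow the large-volume-limit argument of Toda \cite[Proposition 6.4]{Toda_Adv}, transplanting each step to the twisted category $\Coh(S,\alpha)\cong\Coh(S,P)$ via the equivalence (\ref{eqn_equivalence_twist_untwist}) and the twisted Mukai vector $v_G$. The two hypotheses $\omega\cdot l>0$ and $\rk=l=0$ single out the two regimes (positive slope, resp.\ zero-dimensional) in which the reference phase $\phi_k$ stays bounded away from $1$ as $k\to\infty$, and this is what drives the comparison. First I would pin down the finitely many relevant classes: by the boundedness Lemmas \ref{lem_lemma1_App} and \ref{lem_lemma2_App}, together with the finiteness in Lemma \ref{lem_lemma4.6_Toda}(2), the constraints $\textbf{c}^\prime\in C^{\sigma_k}(\phi_k)$ and $|\Im\sZ_\varphi(\textbf{c}^\prime)|\le|\Im\sZ_\varphi(\textbf{c})|$ confine $\textbf{c}^\prime$ and the Mukai vectors of all Harder--Narasimhan and Jordan--H\"older factors of any $E\in\sM^{(\textbf{c}^\prime,\phi_k)}(\sigma_k)$ to one finite set, independent of $k$. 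This is exactly what permits a uniform threshold $N$ at the end.

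Next I would show $E$ is an honest sheaf, i.e.\ $H^{-1}(E)=0$. The canonical triangle $H^{-1}(E)[1]\to E\to H^0(E)$ exhibits $H^{-1}(E)[1]$ as a subobject of $E$ in $\sA(\varphi)$. Since $H^{-1}(E)\in\sF$ is torsion-free with $\mu_\omega$-slopes $\le B\cdot\omega$, its shift has negative rank, and from (\ref{eqn_sZ_varphi_E}) one reads $\Re\sZ_{\varphi_k}(H^{-1}(E)[1])\sim -k^2$ while $\Im\sZ_{\varphi_k}(H^{-1}(E)[1])\ge 0$ grows at most linearly, so its phase tends to $1$. Meanwhile (\ref{eqn_sZ_varphi_E}), (\ref{eqn_sZ_varphi_E_rk0}) give $\phi_k\to 0$ when $\rk>0$ and $\phi_k\to\tfrac12$ when $\rk=0,\ l\ne0$; hence for large $k$ one has $\phi_{\sigma_k}(H^{-1}(E)[1])>\phi_k$, contradicting $\sigma_k$-semistability, while the degenerate case $\rk=l=0$ forces $E$ to be zero-dimensional outright. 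Thus $E=H^0(E)\in\Coh(S,\alpha)$.

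With $E$ a sheaf I would conclude by contraposition. A Gieseker-destabilizing subsheaf $F\subset E$, with $p(F,\beta,\omega,n)>p(E,\beta,\omega,n)$ for the reduced twisted Hilbert polynomial, is also a subobject of $E$ in $\sA(\varphi)$; by Lemma \ref{lem_lemma4.6_Toda}(3),(4) and the formulas (\ref{eqn_reduced_twisted_Hilbert_result}), (\ref{eqn_reduced_twisted_Hilbert_result2}), the ordering of reduced Hilbert polynomials matches the ordering of the phases $\phi_{\sigma_k}$ for all $k\ge N$, the secondary constant-term comparison in the positive-rank case being governed by the uniform lower bounds on $\Re\sZ_{\varphi_k}$ from Lemma \ref{lem_lemma2_App}. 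Hence $\phi_{\sigma_k}(F)>\phi_k$, again contradicting $\sigma_k$-semistability, so no such $F$ exists.

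The hard part will be the uniformity of $N$: one threshold must simultaneously make the sheaf-ness step work and reverse every one of the finitely many potential Gieseker-destabilizations across all admissible $\textbf{c}^\prime$. This rests entirely on the twisted boundedness Lemmas \ref{lem_lemma1_App} and \ref{lem_lemma2_App}, and the only genuinely new point over \cite{Toda_Adv} is checking that these persist for $P$-sheaves, where the gerbe contributes merely the overall factor $r$ seen in (\ref{eqn_reduced_twisted_Hilbert_result}), (\ref{eqn_reduced_twisted_Hilbert_result2}), so that the $k^2$-asymptotics dominate with constants independent of $k$.
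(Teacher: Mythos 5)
Your overall strategy --- bound $\phi_k$ away from $1$, use the boundedness lemmas to confine the relevant Mukai vectors to a finite set so that one threshold $N$ works, kill $H^{-1}(E)$ by a phase comparison, then rule out Gieseker destabilizers by a second phase comparison --- is exactly the route the paper takes, following Toda's Proposition 6.4. The sheaf-ness step is essentially identical to the paper's, which fixes $\phi_k\leq \frac{3}{4}$ for $k\geq N$ and shows that the finitely many possible classes $v_{\alpha}(H^{-1}(E)[1])$ all have phase tending to $1$.

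The step that fails as written is the assertion that a Gieseker-destabilizing subsheaf $F\subset E$ ``is also a subobject of $E$ in $\sA(\varphi)$''. The torsion class $\sT$ is closed under quotients and extensions but not under subsheaves: a subsheaf of a torsion-free $E\in\sT$ can have a $\mu_{\omega}$-Harder--Narasimhan factor of slope $\leq B\cdot\omega$, in which case $F$ does not lie in $\sA(\varphi)$ at all (only its $\sT$-part is a subobject of $E$ in the tilted heart), and the phase inequality you invoke is simply not available for $F$. This is precisely why the paper argues on the other side: it takes $T$ to be the Gieseker-semistable \emph{quotient} factor of smallest reduced Hilbert polynomial, so that $T\in\sT$ automatically and $\sigma_k$-semistability yields $\phi_k(E)\leq\phi_k(T)$, i.e.\ the explicit inequality (\ref{eqn_key_1_App}); that inequality combined with the Bogomolov-type bound (\ref{eqn_key_3_App}) is also what delivers the finiteness of the set of possible $v_{\alpha}(T)$ --- this finiteness does not come for free from Lemmas \ref{lem_lemma1_App} and \ref{lem_lemma2_App} but from the destabilization inequality itself. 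You also skip the intermediate reduction, needed in the positive-rank case, that $E$ is torsion-free for $k\geq N$ (the paper obtains it by showing the possible classes $v_{\alpha}(E_{\tor})$ form a finite set whose phases tend to $\frac{1}{2}$ while $\phi_k\to 0$); without it the slope and secondary-term comparison between $E$ and $T$ cannot begin. Each point is repairable --- for instance replace your $F$ by the maximal destabilizing subsheaf, which is $\mu_{\omega}$-semistable of slope $\geq\mu_{\omega}(E)>B\cdot\omega$ and hence does lie in $\sT$ once torsion-freeness is known --- but as stated the Gieseker step has a genuine hole.
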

\begin{proof}
In the case $\rk=l=0$, any object $E\in \sA_{\omega}$ of numerical invariant $\textbf{c}$ is a zero dimensional sheaf, so must be a semistable sheaf.
Now let $\omega\cdot l>0$. From the formula of $\sZ_{\varphi}(E)$ in (\ref{eqn_sZ_varphi_E}) and (\ref{eqn_sZ_varphi_E_rk0}), when $\rk>0$, the phase 
$\phi_k\to 0 (k\to \infty)$, and when $\rk=0$, $\phi_k\to \frac{1}{2} (k\to \infty)$. Therefore there exists $N>0$ such that 
$\phi_k\leq \frac{3}{4}$ for all $k\geq N$. Let $E\in  \sM^{(\textbf{c}^\prime, \phi_k)}(\sigma_k)$ and $\textbf{c}^\prime$ satisfies the condition the the proposition, we have
$$\phi_k(H^{-1}(E)[1])\leq \phi_k\leq \frac{3}{4}.$$
Look at 
\begin{align*}
E\mapsto \frac{\Re\sZ_{\varphi_k}(H^{-1}(E)[1])}{\Im\sZ_{\varphi_k}(H^{-1}(E)[1])}
&=\frac{\frac{1}{2\rk}(l^2-2\rk s+\rk^2k^2\omega^2-(l-\rk B)^2)}{(k\omega\cdot l)-\rk(k\omega\cdot B)}\\
&= \frac{\Re\sZ_{\varphi_k}(H^{-1}(E)[1])}{k\cdot \Im\sZ_{\varphi}(H^{-1}(E)[1])}.
\end{align*}
On the moduli spaces $\bigcup_{k\geq N, \textbf{c}^\prime}\ \sM^{(\textbf{c}^\prime, \phi_k)}(\sigma_k)$ which is bounded below, we have 
$E\in \sM^{\textbf{c}}(\varphi)$ since $|\Im\sZ_{\varphi}(E)|\leq |\Im\sZ_{\varphi}(\textbf{c})|$. Then the map 
$E\mapsto \Im \sZ_{\varphi}(E)$ on $\sM^{\textbf{c}}(\varphi)$ is bounded by Lemma \ref{lem_lemma1_App}. Therefore the map
$$E\mapsto \frac{1}{k}\Re\sZ_{\varphi_k}(H^{-1}(E)[1])$$
on $\bigcup_{k\geq N, \textbf{c}^\prime}\ \sM^{(\textbf{c}^\prime, \phi_k)}(\sigma_k)$  is bounded below. So by Lemma \ref{lem_lemma2_App}, 
$$E\mapsto \Re\sZ_{\varphi}(H^{-1}(E)[1])$$
is bounded below on $\bigcup_{k\geq N, \textbf{c}^\prime}\ \sM^{(\textbf{c}^\prime, \phi_k)}(\sigma_k)$. Then this imples (\cite[Lemma 4.10]{Toda_Adv}) that the set
$$\left\{v_{\alpha}(H^{-1}(E)[1])\in\NS^*(S,\alpha)| E\in \bigcup_{k\geq N, \textbf{c}^\prime}\ \sM^{(\textbf{c}^\prime, \phi_k)}(\sigma_k)\right\} $$
is a finite set.  If we let $\{v_1,\cdots,v_n\}$ be this set. Then $\lim_{k\to \infty}\phi_k(v_i)=1$, so we can make $\phi_k(v_i)>\frac{3}{4}$ for $k\geq N$  for some $N>0$. This implies that 
$E\in \sM^{(\textbf{c}^\prime, \phi_k)}(\sigma_k)$ has that $H^{-1}(E)=0$ meaning that $E$ is a coherent sheaf. 

The next step is to follow \cite[Proposition 6.4]{Toda_Adv} to show that $E$ is actually a Gieseker semistable twisted sheaf. 
If not, then let $T$ be the $\omega$-Gieseker semistable factor of $E$ of smallest reduced geometric Hilbert polynomial. Let 
$$v_{\alpha}(E)=((\rk)^{\prime}, l^\prime, s^\prime);\quad v_{\alpha}(T)=((\rk)^{\prime\prime}, l^{\prime\prime}, s^{\prime\prime}).$$
If $(\rk)^\prime=0$ which implies that $(\rk)^{\prime\prime}=0$, then from $(3)$ in  Lemma \ref{lem_lemma4.6_Toda}, $E$ is Gieseker twisted semistable. 
So we assume that $(\rk)^\prime>0$, $(\rk)^{\prime\prime}>0$.
In this case $\phi_k\to 0 (k\to \infty)$, we have $E\to T$ is surjective and since $E$ is $\sigma_k$-semistable for $k\geq N$, 
$\phi_k(E)\leq \phi_k(T)$.  So we have
\begin{equation}\label{eqn_key_1_App}
\frac{\Re\sZ_{\varphi_k}(E)}{\Im\sZ_{\varphi_k}(E)}\geq \frac{\Re\sZ_{\varphi_k}(T)}{\Im\sZ_{\varphi_k}(T)}.
\end{equation}
We calculate above as:
\begin{equation}\label{eqn_key_2_App}
\frac{\omega l^{\prime\prime}-(\rk)^{\prime\prime}\omega B}{\omega l^\prime-(\rk)^{\prime}\omega B}\left(-s^\prime+\frac{1}{2}(\rk)^{\prime}k^2\omega^2+l^{\prime}B-\frac{1}{2}(\rk)^{\prime}B^2\right)
\geq -s^{\prime\prime}+\frac{1}{2}(\rk)^{\prime\prime}k^2\omega^2+l^{\prime\prime}B-\frac{1}{2}(\rk)^{\prime\prime}B^2.
\end{equation}
We also have from the Mukai vector property: 
\begin{equation}\label{eqn_key_3_App}
0<(\rk)^{\prime\prime}<(\rk)^\prime, \quad 0<\omega l^{\prime\prime}\leq \omega l^\prime, \quad  (l^{\prime\prime})^2-2(\rk)^{\prime\prime}s^{\prime\prime}\geq -2.
\end{equation}
Then   (\ref{eqn_key_2_App}) and (\ref{eqn_key_3_App}) imply that the set 
$$\left\{ v_{\alpha}(T)\in \NS^*(S,\alpha)| E\in\bigcup_{k\geq N, \textbf{c}^\prime}\ \sM^{(\textbf{c}^\prime, \phi_k)}(\sigma_k)\right\} $$
is a finite set.  Therefore the set 
$$\left\{ v_{\alpha}(E_{\tor})\in \NS^*(S,\alpha)| E\in\bigcup_{k\geq N, \textbf{c}^\prime} \sM^{(\textbf{c}^\prime, \phi_k)}(\sigma_k)\right\} $$
is a finite set. Assume that this set is:
$$\{v_1^\prime,\cdots, v_m^\prime\}.$$
Then $\phi_k(v_i^\prime)\to \frac{1}{2}$ when $k\to \infty$. So 
$\phi_k(v_i^\prime)>\phi_k$ for all $1\leq i\leq m$ and $k\geq N$ after replacing 
$N$ if necessary. Hence for $k\geq N$, $E\in \sM^{(\textbf{c}^\prime, \phi_k)}(\sigma_k)$ must be torsion free.  By definition of $T$, 
$$\mu_{\omega}(E)>\mu_{\omega}(T)$$
or
$$\mu_{\omega}(E)=\mu_{\omega}(T),  \quad  \frac{s^\prime}{(\rk)^\prime}-\frac{l^\prime B}{(\rk)^{\prime}}>\frac{s^{\prime\prime}}{(\rk)^{\prime\prime}}-\frac{l^{\prime\prime} B}{(\rk)^{\prime\prime}},$$
since we can calculate
$$\frac{\sZ_{\varphi_k}(E)}{(\rk)^\prime}-\frac{\sZ_{\varphi_k}(T)}{(\rk)^{\prime\prime}}
=-\left( \frac{s^\prime}{(\rk)^\prime}-\frac{s^{\prime\prime}}{(\rk)^{\prime\prime}}\right)+ \left(\frac{l^\prime B}{(\rk)^{\prime}}-\frac{l^{\prime\prime} B}{(\rk)^{\prime\prime}}\right)
+ik(\mu_{\omega}(E)-\mu_{\omega}(T)).
$$
Therefore replacing $N$ if necessary, we have $\phi_k(E)>\phi_k(T)$ for $k\geq N$. 
This $N$ is only determined by the numerical class of $T$, and the finiteness of $v_{\alpha}(T)$ can make this $N$ uniformly such that $\phi_k(E)>\phi_k(T)$
for $k\geq N$, which contradicts $E$ is $\sigma_k$-semistable. 
\end{proof}

Next we have a similar result as in \cite[Lemma 6.5]{Toda_Adv}. 
\begin{lem}\label{lem_Toda_6.5_App}
If $\omega\cdot l>0$ or $\rk=l=0$, then there exists a $N>0$ such that for $k\geq N$, and $\textbf{c}^\prime\in C(S,\alpha)$, 
\begin{equation}\label{eqn_lem_6.5}
p(\textbf{c}^\prime, \omega, n)=p(\textbf{c}, \omega, n)
\end{equation}
then any Gieseker semistable twisted sheaf $E$ of numerical type $\textbf{c}^\prime$ is $\sigma_k$-semistable. 
\end{lem}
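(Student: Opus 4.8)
The plan is to follow the argument of \cite[Lemma 6.5]{Toda_Adv} for $\alpha$-twisted sheaves, replacing the untwisted central charge and boundedness inputs by their twisted counterparts: the formulas (\ref{eqn_sZ_varphi_E}), (\ref{eqn_sZ_varphi_E_rk0}) for $\sZ_{\varphi_k}$, the reduced Hilbert polynomials (\ref{eqn_reduced_twisted_Hilbert_result}), (\ref{eqn_reduced_twisted_Hilbert_result2}), and the twisted boundedness Lemmas \ref{lem_lemma1_App}, \ref{lem_lemma2_App} together with Lemma \ref{lem_lemma4.6_Toda}. First I would check that a Gieseker semistable twisted sheaf $E$ of type $\textbf{c}^\prime$ is genuinely an object of $\sA(\varphi)$ sitting in cohomological degree zero, i.e. $E\in\sT$. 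When $\rk=0$ the sheaf is torsion and hence lies in $\sT$ automatically, while for $\rk>0$ Gieseker semistability forces $\mu_\omega$-semistability, so $E\in\sT$ once $\mu_\omega(E)>\omega\cdot B$, which holds in the range of $B$ in which we work. The case $\rk=l=0$ is then immediate: a zero-dimensional sheaf has $\sZ_{\varphi_k}(E)\in\rr_{<0}$, so it has maximal phase one and is $\sigma_k$-semistable for every $k$. Hence we may assume $\omega\cdot l>0$.

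Next I argue by contradiction. Suppose $E$ is Gieseker semistable but not $\sigma_k$-semistable, and choose a destabilizing subobject $0\to A\to E\to Q\to 0$ in $\sA(\varphi)$ with $\phi_k(A)>\phi_k(E)$. The long exact cohomology sequence and $\hH^{-1}(E)=0$ give $\hH^{-1}(A)=0$, so $A$ is a coherent sheaf fitting into $0\to\hH^{-1}(Q)\to A\to A^\prime\to 0$ with $A^\prime\subseteq E$ an honest subsheaf and $\hH^{-1}(Q)\in\sF$. When $\rk(E)>0$ the sheaf $E$ is pure, so $A^\prime$ has positive rank; the torsion-free piece $\hH^{-1}(Q)\in\sF$ is, as in Proposition \ref{prop_semistable_objects_sheaves}, ruled out for $k\gg 0$ because its shift has phase tending to $1$. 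Thus I reduce to a positive-rank subsheaf $A^\prime\subseteq E$ and must show $\phi_k(A^\prime)>\phi_k(E)$ is impossible for $k$ large.

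The key step is a two-term asymptotic analysis of the phase. Using (\ref{eqn_sZ_varphi_E}) one has $\Re\sZ_{\varphi_k}\sim\tfrac12\rk\,k^2\omega^2$ and $\Im\sZ_{\varphi_k}=k(\omega\cdot l-\rk\,\omega\cdot B)$, so $\Im/\Re\sim\tfrac{2(\mu_\omega-\omega\cdot B)}{k\,\omega^2}$; hence $\phi_k(A^\prime)>\phi_k(E)$ forces $\mu_\omega(A^\prime)\ge\mu_\omega(E)$ at leading order, and the strict inequality is excluded by $\mu_\omega$-semistability of $E$. On the wall $\mu_\omega(A^\prime)=\mu_\omega(E)$ the next order of the same expansion reproduces, up to the positive factor $r$ from the gerbe structure, the constant term of the twisted reduced Hilbert polynomial (\ref{eqn_reduced_twisted_Hilbert_result}); thus $\phi_k(A^\prime)>\phi_k(E)$ for large $k$ would give $p(A^\prime,\omega,n)>p(E,\omega,n)$, contradicting Gieseker semistability of $E$. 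Finally, to make $N$ uniform I invoke Lemmas \ref{lem_lemma1_App}, \ref{lem_lemma2_App} and Lemma \ref{lem_lemma4.6_Toda}(2): the Mukai vectors $v_G(A^\prime)$ (and of the $\hH^{-1}(Q)$) with $v^2\ge -2$ and bounded central charge form a finite set, so a single threshold $N$ kills every phase inequality simultaneously. The main obstacle is precisely this third step: one must verify that after the leading slope term cancels, the $O(1/k)$ correction to the phase matches the constant term of the twisted reduced Hilbert polynomial, and that this matching is compatible with the boundedness estimates so that no sequence of destabilizers can survive as $k\to\infty$.
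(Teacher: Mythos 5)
Your proposal takes essentially the same route as the paper: the paper's proof of this lemma consists of first noting that only finitely many $\textbf{c}^\prime$ satisfy (\ref{eqn_lem_6.5}), then citing Bridgeland's Proposition 14.2 for the case $\rk>0$, $\omega\cdot l>0$ and Toda's Lemma 6.5 for $\rk=0$, $l\neq 0$, observing that both large-volume-limit arguments carry over verbatim to twisted sheaves; your sketch is an unpacking of exactly those arguments. The subleading-order matching that you flag as the main obstacle is precisely the content of the cited results and is confirmed in the twisted setting by comparing (\ref{eqn_reduced_twisted_Hilbert_result})--(\ref{eqn_reduced_twisted_Hilbert_result2}) with (\ref{eqn_sZ_varphi_E})--(\ref{eqn_sZ_varphi_E_rk0}), so no new difficulty arises.
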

\begin{proof}
First the set  $\textbf{c}^\prime\in C(S,\alpha)$ satisfying (\ref{eqn_lem_6.5}) is finite. 
So we take $\textbf{c}=\textbf{c}^\prime$. The case of $\rk=l=0$ is obvious. For the case $\rk>0, \omega\cdot l>0$, the smooth surface case is proved in 
\cite[Proposition 14.2]{Bridgeland_K3}. Since the twisted stability in \cite{HMS} is similar to the construction of Bridgeland, the proof of \cite[Proposition 14.2]{Bridgeland_K3}
works for twisted sheaves. 

For the case $\rk=0, l\neq 0$, $E$ is $\omega$-Gieseker semistable. In this case $\phi_k\to \frac{1}{2}$ when $k\to \infty$. Then Toda's proof 
in \cite[Lemma 6.5]{Toda_Adv} works in this case. 
\end{proof}

We show:
\begin{thm}\label{thm_J_N}
For $\textbf{c}\in C(S,\alpha)$, we have
$$N_{\sigma}(\textbf{c})=J(\textbf{c})$$
and $N_{\sigma}(\textbf{c})$ does not depend on the stability condition $\sigma$.
\end{thm}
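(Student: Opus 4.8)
The plan is to deduce the theorem from the two comparison results already in hand, Proposition~\ref{prop_semistable_objects_sheaves} and Lemma~\ref{lem_Toda_6.5_App}, together with an invariance statement for $N_\sigma$ under change of stability condition. Both invariants $N_\sigma(\textbf{c})$ and $J(\textbf{c})$ are manufactured by the identical Joyce recipe: form the Hall-algebra element $\delta$ attached to the stack of semistable objects, take its logarithm $\epsilon$ over all decompositions of $\textbf{c}$ into classes of equal phase, apply the Poincar\'e polynomial map $P_q$, and let $q^{1/2}\to 1$. The only difference is the stability notion used --- Bridgeland $\sZ_{\varphi_k}$-stability in the heart $\sA(\varphi)$ for $N$, versus $\omega$-Gieseker stability of twisted sheaves for $J$. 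Thus it suffices to (i) reduce the computation of $N_\sigma$ to the limiting conditions $\sigma_k=(\sZ_{\varphi_k},\sA(\varphi))$ with $k\gg 0$, and then (ii) show that at these $\sigma_k$ the two constructions produce literally the same element of the Hall algebra.

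For step (i) I would first prove that $N_\sigma(\textbf{c})$ is constant on the connected component $\Stab^\circ(S,\alpha)$, in the spirit of Theorem~\ref{thm_invariants_Nv_changing}. The mechanism is Joyce's wall-crossing formula, whose correction terms are governed by the antisymmetrised Euler pairing $\chi(v,w)-\chi(w,v)$ on $\widetilde{H}(S,\alpha,\zz)$. Since $S$ is a K3 surface and twisted Serre duality makes the Euler form $\chi(\cdot,\cdot)=-\langle\cdot,\cdot\rangle$ symmetric, this antisymmetrisation vanishes identically, so under the integration map to the commutative Poincar\'e polynomial ring every wall-crossing contribution cancels and $N_\sigma(\textbf{c})$ is unchanged as $\sigma$ moves within the component; independence of $\omega$ is then automatic. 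Consequently one may compute $N_\sigma(\textbf{c})$ at any convenient $\sigma$, and I choose $\sigma_k$ with $k$ large.

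For step (ii), fix $k\geq N$ with $N$ as produced by Proposition~\ref{prop_semistable_objects_sheaves} and Lemma~\ref{lem_Toda_6.5_App}. Boundedness (Lemmas~\ref{lem_lemma1_App} and~\ref{lem_lemma2_App}) together with the finiteness in Lemma~\ref{lem_lemma4.6_Toda}(2) guarantees that only finitely many classes $\textbf{c}'$ with $p(\textbf{c}',\omega,n)=p(\textbf{c},\omega,n)$ are relevant, so a single $N$ works for all of them at once. Proposition~\ref{prop_semistable_objects_sheaves} then shows that every $\sigma_k$-semistable object of such a class $\textbf{c}'$ is an $\omega$-Gieseker semistable twisted sheaf, while Lemma~\ref{lem_Toda_6.5_App} gives the converse; hence the substacks $\sM_{\sigma_k}(\textbf{c}')$ and $\sM^{\tw}_\omega(\textbf{c}')$ coincide inside $\widehat{\sM}(\sA(\varphi))$ and $\delta_{\sigma_k}(\textbf{c}')=\delta_{\omega,\SS_\alpha}(\textbf{c}')$. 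By Lemma~\ref{lem_lemma4.6_Toda}(4) the phase condition $\arg\sZ_{\varphi_k}(\textbf{c}_i)=\arg\sZ_{\varphi_k}(\textbf{c})$ defining $\epsilon_{\sigma_k}$ agrees, for $k\gg 0$, with the equal-reduced-Hilbert-polynomial condition defining $\epsilon_{\omega,\SS_\alpha}$, so the two logarithms are assembled from the same factors over the same index set. Therefore $\epsilon_{\sigma_k}(\textbf{c})=\epsilon_{\omega,\SS_\alpha}(\textbf{c})$; applying $P_q$ and letting $q^{1/2}\to 1$ yields $N_{\sigma_k}(\textbf{c})=J(\textbf{c})$, and combined with step (i) this gives $N_\sigma(\textbf{c})=J(\textbf{c})$ for every $\sigma$.

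The main obstacle is precisely the matching of index sets in step (ii): one must verify that Bridgeland-phase equality and Gieseker reduced-Hilbert-polynomial equality single out exactly the same decompositions of $\textbf{c}$, uniformly in $k\geq N$, and separately across the regimes $\rk>0$ (where $\phi_k\to 0$) and $\rk=0$ (where $\phi_k\to\tfrac12$). This is where the boundedness estimates and the finiteness of the occurring Mukai vectors are indispensable, since they convert the asymptotic assertion of Lemma~\ref{lem_lemma4.6_Toda}(4), that phases agree for infinitely many $k$, into a single uniform choice of $N$.
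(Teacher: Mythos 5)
Your overall strategy coincides with the paper's: compute $N_{\sigma}$ at $\sigma_k=(\sZ_{\varphi_k},\sA(\varphi))$ for $k\gg 0$, use Proposition~\ref{prop_semistable_objects_sheaves} and Lemma~\ref{lem_Toda_6.5_App} to identify the relevant semistable loci with the Gieseker-semistable ones, and handle independence of $\sigma$ via Joyce's wall-crossing together with the symmetry of the Euler form on a K3. There are, however, two concrete gaps. First, Proposition~\ref{prop_semistable_objects_sheaves} and Lemma~\ref{lem_Toda_6.5_App} are only stated under the hypothesis $\omega\cdot l>0$ or $\rk=l=0$, and a general $\textbf{c}=(\rk,l,s)\in C(S,\alpha)$ need not satisfy it (for instance $\rk>0$ with $\omega\cdot l\leq 0$). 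The paper first reduces to that case by tensoring with the minimal-rank $\SS_{\alpha}$-twisted locally free sheaf $E_0$, which induces an autoequivalence of $D^b(S,\alpha)$ preserving both $N$ and $J$ (via \cite[Corollary 5.26]{Toda_Adv}); your argument applies the two comparison results to $\textbf{c}$ directly, so this reduction is missing.

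Second, in matching the index sets you correctly flag the key obstacle, but your proposed resolution does not actually close it. Lemma~\ref{lem_lemma4.6_Toda}(4) converts phase-proportionality into equality of reduced Hilbert polynomials only if $\Im\bigl(\sZ_{\varphi_{k'}}(\textbf{c}_j)/\sZ_{\varphi_{k'}}(\textbf{c}_i)\bigr)=0$ holds for \emph{infinitely many} $k'$, whereas a decomposition $\textbf{c}=\sum_i\textbf{c}_i$ contributing to $\epsilon_{\sigma_k}$ a priori only has equal phases at the single $k$ you fixed; boundedness and finiteness of the occurring Mukai vectors alone do not upgrade this. The paper supplies the missing mechanism with a wall-and-chamber argument: the objects of bounded mass determine a locally finite wall structure on a compact neighbourhood $\sB$ of the ray $\{\sigma_{k'}\}_{k'\geq N}$, so infinitely many $\sigma_{k'}$ lie in the same chamber $\sC$ as $\sigma_k$, and within $\sC$ the classes $\textbf{c}_i,\textbf{c}_j$ remain of equal phase, which is exactly the infinitude Lemma~\ref{lem_lemma4.6_Toda}(4) needs. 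Without this (or an equivalent) step the identification $\epsilon_{\sigma_k}(\textbf{c})=\epsilon_{\omega,\SS_{\alpha}}(\textbf{c})$, and hence $N_{\sigma_k}(\textbf{c})=J(\textbf{c})$, is not established.
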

\begin{proof}
First for the category of twisted sheaves $\Coh(S,\alpha)$, the $K$-theory \cite{Yoshioka2}
$$K(\Coh(S,\alpha))=\qq[E_0]\oplus K_{\leq 1}(\Coh(S,\alpha))$$
where $E_0$ is the minimal rank $\rk>0$ (in this case is $r$) of $\SS_{\alpha}$-twisted locally free sheaves. 
So tensoring with $E_0$ gives an equivalence on $K(\Coh(S,\alpha))$ and hence on the derived category $D^b(S,\alpha)$, thus the derived equivalence 
result of \cite[Corollary 5.26]{Toda_Adv} implies that:
$$N(\textbf{c}\otimes E_0)=N(\textbf{c}); \quad   J(\textbf{c}\otimes E_0)=J(\textbf{c}).$$
Thus we can assume that 
$\omega\cdot l>0$, or $\rk=l=0$.
We follow Toda to show that:
$$N_{\sigma_k}(\textbf{c})=J_{\omega}(\textbf{c})$$
 and $N$ is chosen before as in Proposition \ref{prop_semistable_objects_sheaves}. 
 Let $\textbf{c}_1,\cdots, \textbf{c}_m\in C^{\sigma_k}(\phi_k)$ be such that 
 $$\textbf{c}_1+\cdots +\textbf{c}_m=\textbf{c}; \quad  \prod_{i=1}^{m}N_{\sigma_k}(\textbf{c}_i)\neq 0.$$
 Here we consider $\sigma_k\in\sB^{\circ}$ as an open set such that 
 $\overline{\sB}^{\circ}=\sB$ is compact in the stability manifold. There exists a wall and chamber structure $\{\sW_{\gamma}\}_{\gamma\in\Gamma}$ on $\sB$
 with property:
 $$\sS:=\{E\in D^b(S,\alpha)| E \text{~is semistable for some~}\sigma^\prime=(\sZ^\prime, P^\prime)\in\sB; 
 |\sZ^\prime(E)|\leq |\sZ^\prime(\textbf{c})|\}.$$
 $\sS$ is a bounded mass, which means 
 there exists $m>0$ such that 
 $m_{\sigma}(E)\leq M$ for any $E\in \sS$, where 
 $m_{\sigma}(E)=\sum_{i=1}^{n}|\sZ(A_i)|$
 for 
 \[
 \xymatrix{
 0=E_0\ar[rr]&& E_1\ar[rr]\ar[dl]&& E_2\ar[r]\ar[dl]\ar[r]&& \cdots\cdots & E_{n}=E \ar[dl]\\
 &A_1\ar[ul]&& A_2\ar[ul]&&& A_n\ar[ul]
 }
 \]
 So this means that there exists $\Gamma^\prime\subset \Gamma$ and a connected component $\sC$ such that 
 $$\sC\subset \bigcap_{\gamma\in\Gamma^\prime}(\sB\cap\sW_{\gamma})\setminus \bigcup_{\gamma\notin \Gamma^\prime}\sW_{\gamma}.$$
 Infinitely many $\sigma_{k^\prime}$ for $k^\prime\in \qq_{\geq N}$ are contained in $\sC$.  Then $\sigma_k\in\sC$. If $\textbf{c}_i$ and $\textbf{c}_j$ are not proportional in $\N(S,\alpha)$, then
 $$\Im \frac{\sZ_{\varphi_{k^\prime}}(\textbf{c}_j)}{\sZ_{\varphi_{k^\prime}}(\textbf{c}_i)}=0$$
 for infinitely many $k^\prime\in \qq_{\geq N}$. Therefore from Lemma \ref{lem_lemma4.6_Toda} $(4)$, 
 $$p(\textbf{c}_i,\omega,n)=p(\textbf{c}_j, \omega, n)=p(\textbf{c}, \omega, n)$$
 for $i, j$. From Lemma \ref{lem_Toda_6.5_App}, 
 \begin{equation}\label{eqn_last_formula}
 \sM^{(\textbf{c}_i, \phi_k)}(\sigma_k)=\sM^{\textbf{c}_i}(\omega)
 \end{equation}
 and $\prod_{i=1}^{m}N_{\sigma_k}(\textbf{c}_i)=\prod_{i=1}^{m}J_{\omega}(\textbf{c}_i)$.
 
 On the other hand, if $\textbf{c}_1, \cdots,  \textbf{c}_m\in C(S,\alpha)$ such that 
 $\prod_{i=1}^{m}J_{\omega}(\textbf{c}_i)\neq 0$ and $\textbf{c}_1+\cdots+\textbf{c}_m=\textbf{c}$, 
 $p(\textbf{c}_i,\omega,n)=p(\textbf{c}, \omega, n)$,  (\ref{eqn_last_formula}) still holds for $k\geq N$
 by Proposition \ref{prop_semistable_objects_sheaves} and Lemma \ref{lem_Toda_6.5_App} above. Hence 
$\prod_{i=1}^{m}N_{\sigma_k}(\textbf{c}_i)=\prod_{i=1}^{m}J_{\omega}(\textbf{c}_i)$.  Also 
$\textbf{c}_i\in C^{\sigma_k}(\phi_k)$ so 
$J_{\omega}(\textbf{c})=N_{\sigma_k}(\textbf{c})$.  That $N_{\sigma_k}(\textbf{c})$ does not depend on $\sigma_k$ is just 
from a former argument or \cite[Proposition 5.17]{Toda_Adv}.
\end{proof}




\end{document}